\documentclass[reqno,english,11pt]{amsart}

\usepackage{tcolorbox}
\usepackage{multicol}

\usepackage{mathtools}

\usepackage[margin=1in]{geometry}

\usepackage{iftex}
\ifPDFTeX
  \usepackage[T1]{fontenc}
  \usepackage[utf8]{inputenc}
\fi
\usepackage{amsbsy}
\usepackage{amstext}
\usepackage{amsthm}
\usepackage{amssymb}
\usepackage{bm}
\usepackage{xcolor}
\usepackage{amsfonts,euscript,mathrsfs,color,amsmath,latexsym}

\numberwithin{equation}{section}
\numberwithin{figure}{section}
\theoremstyle{plain}
\newtheorem{thm}{\protect\theoremname}[section]
\theoremstyle{remark}
\newtheorem{rem}[thm]{\protect\remarkname}
\theoremstyle{definition}
\newtheorem{defn}{\protect\definitionname}[section]

\newtheorem{lemma}{Lemma}[section]
\newtheorem{proposition}{Proposition}[section]

\newtheorem{cor}{Corollary}[section]

\usepackage{hyperref}

\newcommand{\norm}[1]{\left\lVert#1\right\rVert}

\makeatother

\usepackage{babel}
\providecommand{\definitionname}{Definition}
\providecommand{\remarkname}{Remark}
\providecommand{\theoremname}{Theorem}

\usepackage{etoolbox}
\AtEndEnvironment{displaymath}{\noindent}

\begin{document}
	
\title{Global regularity for 2D gravity water waves with two retreating point vortices}	
	
\author{Lei Su}
\author{Qingtang Su}
\author{Siwei Wang}
\address[L.\ Su]{Morningside Center of Mathematics, Academy of Mathematics and Systems Sciences,
Chinese Academy of Sciences (CAS), Beijing, 100080, People's Republic of China}
\email{sulei@amss.ac.cn}
\address[Q.\ Su]{Morningside Center of Mathematics, Academy of Mathematics and Systems Sciences,
Chinese Academy of Sciences (CAS), Beijing, 100080, People's Republic of China}
\email{suqingtang@amss.ac.cn}
\address[Wang]{Department of Mathematics, University of Michigan, Ann Arbor, MI 48109，USA}
\email{wsiwei@umich.edu}

\begin{abstract}
We prove global well-posedness for the two-dimensional infinite-depth gravity water wave system coupled to a pair of point vortices of opposite strengths. The vortex dipole is initially placed deep below the free surface and oriented so that its leading motion is away from the interface. We show that the vortices retreat almost linearly, and that the velocity induced on the free boundary is therefore integrable in time. Combining this decay with Wu's cubic formulation, together with a transition-of-derivatives and localization argument, we obtain global bounds for small localized perturbations of the flat surface.
\end{abstract}

\maketitle

\section{Introduction}

\subsection{Background}
The motion of a two-dimensional incompressible inviscid fluid with a free
surface under gravity, normalized to $(0,-1)$, is governed by the Euler
equations in the fluid domain $\Omega(t)$:
\begin{equation}\label{vortex_model}
\begin{cases}
v_t+v\cdot \nabla v = -\nabla P - (0,1) \quad \text{in } \Omega(t),\\
\operatorname{div} v = 0,
\end{cases}
\end{equation}
with boundary conditions on the free surface $\Sigma(t)$
\begin{equation}\label{vortex_model_boundary}
P\equiv 0,\qquad (1,v) \text{ is tangent to } (t,\Sigma(t)).
\end{equation}
Here $v$ is the fluid velocity and $P$ is the pressure. The evolution of
incompressible inviscid free-boundary flows is a central problem in fluid
dynamics and nonlinear partial differential equations. When the flow is
irrotational, the motion is completely determined by the motion of the free boundary, and the system reduces to a quasilinear
dispersive equation on the free surface.

The local theory for irrotational water waves began with the small-data works
of Nalimov \cite{Nalimov}, Yosihara \cite{Yosihara}, and Craig \cite{Craig}.
For large Sobolev data, Wu \cite{Wu1997,Wu1999} proved the Taylor sign
condition and established the first general local existence results. Since
then many local well-posedness results have been obtained; see for instance
\cite{alazard2014cauchy, ambrose2005zero, christodoulou2000motion,
coutand2007well, iguchi2001well, lannes2005well, lindblad2005well,
ogawa2002free, shatah2006geometry, zhang2008free} and the references therein.
See also \cite{Wu1,Wu2,Wu3,wu2019wellposedness} for water waves with
non-smooth interfaces, and \cite{castro2012finite, castro2013finite,
coutand2014finite, coutand2016impossibility} for splash singularities.

The study of long-time dynamics requires a more precise understanding of the
nonlinear structure. It has been known since the work of Dyachenko and
Zakharov \cite{dyachenko1994free} that weakly nonlinear two-dimensional
infinite-depth gravity waves have no three-wave resonances and that the
four-wave interaction coefficient vanishes on the non-trivial resonant
manifold. However, a rigorous justification is missed for a long time. In physical-space coordinates,
Wu \cite{Wu2009} discovered an explicit cubic structure for the
two-dimensional gravity water wave equation, which led to almost global
existence for small localized data. Ionescu and Pusateri \cite{Ionescu2015}
combined Wu's cubic structure with modified scattering to prove global
existence. Around the same time, Alazard and Delort \cite{alazard2015global}
used paradifferential methods to obtain sharp decay estimates and global
solutions. More recently, Deng, Ionescu, and Pusateri \cite{deng2022wave}
established a
quintic-type
deterministic energy
inequality, consistent
with approximate
quartic integrability, and Ai, Ifrim, and Tataru \cite{ai2022two}
proved global results at low regularity. Further developments include
\cite{Wu2011, germain2012global, HunterTataruIfrim1, HunterTataruIfrim2,
wang2018global, wang2019global, wang2020global, zheng2022long}. A rigorous Birkhoff-normal-form justification was obtained by Berti,
Feola, and Pusateri \cite{berti2023birkhoff}. 

For the rotational case, i.e., $\omega:=\operatorname{curl} v \neq 0$, the problem is far less understood. For local well-posedness with
regular vorticity, see \cite{iguchi1999free, ogawa2002free,
ogawa2003incompressible, christodoulou2000motion, lindblad2005well,
zhang2008free}; for long-time results involving point vortices, see
\cite{su2020long}. Other long-time investigations include \cite{ifrim2015two,
bieri2017motion, ginsberg2018lifespan, ginsberg2024long, langella2026transfer}. Nevertheless,
global existence remains open for general vorticity configurations. A basic
obstruction is that vorticity is transported by the flow,
\begin{equation}\label{eq:vorticity}
  \omega_t+v\cdot\nabla\omega=0,
\end{equation}
and may form long filamentary structures whose interaction with the moving
free boundary is difficult to control. Even without a free boundary,
two-dimensional Euler flows with non-trivial vorticity may generate strong
small-scale structures, including growth of vorticity gradients and
filamentation; see for example \cite{kiselev2012small,Zlatos2015,
Zlatos2026HalfPlaneEuler}. The free surface introduces an additional
dispersive boundary dynamics which interacts with the interior vorticity on
long time scales.

\subsection{A first step: water waves with point vortices}
In this paper we consider the simplest non-trivial vorticity configuration: a
finite number of point vortices. Thus
\begin{equation}\label{vorticitydistribution}
    \omega(\cdot,t)=\sum_{j=1}^N\lambda_j\delta_{z_j(t)}(\cdot),
\end{equation}
where $z_j(t)\in\Omega(t)$ and $\lambda_j\in\mathbb R$ are the vortex
locations and strengths. Point-vortex models are classical reduced models for
coherent vortex interactions. In the free-boundary problem, a point vortex is
a Dirac mass of vorticity, and its trajectory is governed by the
desingularized velocity field at the vortex location. The system (\ref{vortex_model})-(\ref{vortex_model_boundary})-(\ref{vorticitydistribution}) is a model for the motion of submerged bodies (see e.g. \cite{Chang2001},\cite{DalrympleRogers}) and it is believed to give some  insight into the problem of turbulence (\cite{MarchioroPulvirenti2012}, chap 4, \S 4.6). 
By expressing the velocity field in terms of the free-surface parametrization
and the vortex locations, this system can be reduced to a coupled system
for the boundary and the vortex trajectories. Using Lagrangian coordinates, i.e., choosing $\alpha$ such that $z_t(\alpha,t)=v(z(\alpha,t),t)$, the system takes the form
\begin{equation}\label{vortex_boundary}
\begin{cases}
z_{tt}-iaz_{\alpha}=-i,\\[4pt]
\displaystyle\frac{d}{dt}z_j(t)=\Bigl(v-\frac{\lambda_j i}{2\pi(\overline{z-z_j})}\Bigr)\Big|_{z=z_j},\\[10pt]
\displaystyle(I-\mathfrak{H})\Bigl(\bar{z}_t+\sum_{j=1}^N \frac{\lambda_j i}{2\pi(z(\alpha,t)-z_j(t))}\Bigr)=0.
\end{cases}
\end{equation}
Here $a$ is the rescaled normal derivative of the pressure and
$\mathfrak H$ is the Hilbert transform associated with the fluid domain; see
\cite{su2020long} for the derivation. The quantity $a|z_{\alpha}|=-\frac{\partial P}{\partial\hat{n}}\Big|_{\Sigma(t)}$ is called the Taylor sign coefficient and $a|z_{\alpha}|\geq 0$ is called the Taylor-sign condition. If the Taylor sign condition fails, the system is, in general, unstable, see for example, \cite{beale1993growth},\cite{birkhoff1962helmholtz},\cite{taylor1950instability},\cite{ebin1987equations}. In the irrotational case and without a bottom the validity of the Taylor sign condition was shown by Wu \cite{Wu1997, Wu1999}, and was the key to obtaining the first local-in-time existence results for large data in Sobolev spaces. In the case of non-trivial vorticity or with a bottom the Taylor sign condition can fail and the sign condition has to be part of the assumptions for the initial data. In the case of point vortices analyzed in this paper, the second author showed that the Taylor sign condition can fail if the point vortices are close to the interface, see \cite{su2020long, su2023transition}. In \cite{su2020long}, Su studied two-dimensional infinite-depth gravity water waves coupled to a pair of point vortices. The analysis was carried out under a vertical symmetry assumption on the water wave, with initial data of size $O(\epsilon)$. More precisely, Su considered the case $N=2$, with vortex strengths $\lambda_1=\lambda$ and $\lambda_2=-\lambda$, and assumed that the initial distance from the vortices to the free boundary satisfies $H_0=O(1)$. The parameters $H_0, \lambda$, and $\epsilon$ were chosen so that the vortex pair initially moves downward, away from the interface. Under these assumptions, Su proved that the lifespan of the water-wave solution is at least $O(\epsilon^{-2})$, and that the vortices move away from the free boundary with an almost constant downward velocity.

In the present paper, we work within Su’s framework, but remove the symmetry assumption on the water wave. Moreover, our assumptions on $H_0$ and $\lambda$ are more general. Under suitable localized smallness assumptions on the initial data, we prove global well-posedness for this class of water waves with point vortices and investigate the long-time behavior of the coupled water-wave/vortex system.

\subsection{Main theorem}
We parametrize the free surface in Lagrangian coordinates and denote by
$z(\alpha,t)$ the interface, by $v=z_t$ the boundary velocity, and by
$z_1(t),z_2(t)$ the two point vortices with strengths $\lambda$ and
$-\lambda$. The initial data are small localized perturbations of the flat
equilibrium $z(\alpha,0)=\alpha$, $v(\alpha,0)=0$, and the vortices are
initially located at depth near $H_0$ with a downward leading velocity. Denote $z_0(\alpha):=z(\alpha,0)$, $\xi_0(\alpha):=z_0(\alpha)-\alpha$, and $v_0(\alpha):=z_t(\alpha,0)$. In the region $\Omega(t)$, the vorticity is caused solely by point vortices, i.e., $(I-\mathcal{H})(\bar z_t-\mathfrak{q})=0$ where $\mathfrak{q}$ is the velocity induced by point vortices and
$$
\mathfrak{q}=-\frac{\lambda i}{2\pi}
    \frac{z_1(t)-z_2(t)}
    {(z(\alpha,t)-z_1(t))(z(\alpha,t)-z_2(t))}.
$$
The notation $\mathcal{H}$ stands for the Hilbert transform associated with $z$ whose definition is in \eqref{H_z_def}. We now make some assumptions about the initial conditions.
\subsubsection{Initial data}
\begin{enumerate}
\item $z_0(\alpha)$ is a non‑self‑intersecting curve. We assume that\begin{equation}
    z_1(0) = (x_R, -H_0),\quad z_2(0) = (x_L, -H_0),\quad \norm{\operatorname{Im} z_0}_{L^\infty}\leq1
  \end{equation}
  where $x_L<0$ and $x_R>0$. The distance between two point vortices satisfies $|x_R-x_L|=1$. $H_0$ is sufficiently large to guarantee that the point vortices stay in $\Omega(0)$.
\item There exists a constant $c_T>0$ such that $a(\alpha,0)\geq c_T$. Therefore, we can guarantee that the Taylor-sign condition is satisfied and apply the local existence theorem.
\end{enumerate}
We state the main result under such notations:

\begin{thm}[Main theorem]\label{main}
  Let $s\in\mathbb{N}$, $s\geq 10$. There exist $\epsilon_0>0$ and $\lambda_0>0$ sufficiently small such that for all $0<\epsilon\leq\epsilon_0$ and $0<\lambda\leq\lambda_0$, if the initial data satisfy the condition above and
  \begin{equation}\label{ini_z}
    \norm{v_0}_{H^{s+1/2}}+\norm{z_0-\alpha}_{\dot{H}^{1/2}\cap \dot{H}^{s+1}} +\norm{\alpha\partial_{\alpha}v_0}_{H^{s-1/2}}+\norm{\alpha\partial_{\alpha}\partial_{\alpha}\xi_0}_{H^{s-1}}   \le \epsilon,
  \end{equation}
   the point vortex strengths and $H_0$ satisfy 
  \begin{equation}\label{lambda_H_0}
      \frac{\lambda}{c_sH_0^\frac{1}{2}}\leq\epsilon\leq c_s\lambda H_0^{\frac{1}{4}},\quad c_sH_0^{\frac{1}{2}}\geq1
  \end{equation}
  where $c_s$ is a small constant dependent on $s$, then the water wave system with point vortices is globally well-posed. Moreover,
  \begin{equation}\label{main_one}
    \sup_{t\ge 0}\bigl(\norm{z_t}_{H^{s+1/2}}+\norm{z-\alpha}_{\dot{H}^{1/2}}+ \norm{z_{\alpha}-1}_{H^s}+\norm{z_{tt}}_{H^s}\bigr)\leq C\epsilon,
\end{equation}
and for all $t\geq 1$,
\begin{equation}\label{main_two}
   \norm{z_{\alpha}(\cdot,t)-1}_{W^{s-2,\infty}}+ \norm{z_{tt}(\cdot,t)}_{W^{s-2,\infty}}+\norm{\partial_{\alpha}z_t}_{W^{s-3,\infty}}\leq C\epsilon \langle t\rangle^{-1/2}.
\end{equation}
  Furthermore, the distance between the point vortices and the free surface satisfies
  \begin{equation}\label{main_three}
    \operatorname{dist}(z_j(t), \Sigma(t)) \ge c\,(H_0+\lambda t),\qquad j=1,2,
  \end{equation}
  for some constants $c,C>0$ independent of $\epsilon$, $\lambda$, $H_0$ and $t$.
\end{thm}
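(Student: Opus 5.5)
The proof runs as a bootstrap (continuity) argument on a maximal interval $[0,T]$ given by the local existence theory, which applies because $a(\alpha,0)\ge c_T$. The bootstrap hypotheses are four:
\begin{itemize}
\item the energy bound \eqref{main_one} holds with $2C\epsilon$;
\item the decay bound \eqref{main_two} holds with $2C\epsilon\langle t\rangle^{-1/2}$;
\item the vortex trajectories satisfy $|z_1(t)-z_2(t)-(x_R-x_L)|\le \tfrac14$;
\item the vortices move downward, $-\operatorname{Im} z_j(t)\ge \tfrac12(H_0+\tfrac{\lambda}{4\pi}t)$.
\end{itemize}
We close each hypothesis with the constant improved, which gives $T=\infty$. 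The order is: vortex motion first, then the free-surface forcing, then the energy estimates, and finally decay.

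\textbf{Step 1: vortex motion.} Write the velocity at $z_j$, minus the self-induced part, as three contributions:
\begin{itemize}
\item the velocity induced by the partner vortex;
\item the velocity of an image-type correction coming from the boundary;
\item a term given by a Cauchy integral of $\bar z_t-\mathfrak q$ over $\Sigma(t)$.
\end{itemize}
The partner contributes the dipole's self-propulsion, of size $\lambda/(2\pi|x_R-x_L|)$ and pointing downward by the orientation assumption. The boundary terms are bounded by $\|\bar z_t-\mathfrak q\|$ times powers of $1/\mathrm{dist}(z_j,\Sigma)$. Precisely, the Cauchy integral of $\bar z_t$ evaluated at distance $d$ is bounded by $\|z_t\|_{L^2}d^{-1/2}\lesssim\epsilon d^{-1/2}$, and the $\mathfrak q$ part is $O(\lambda d^{-2})$. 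Integrating these gives the linear retreat with rate $\gtrsim\lambda$. The first inequality in \eqref{lambda_H_0}, $\lambda\le c_s\epsilon H_0^{1/2}$, together with $\epsilon\le c_s\lambda H_0^{1/4}$, guarantees that the perturbations $\epsilon d^{-1/2}\le \epsilon H_0^{-1/2}$ are small relative to $\lambda$. The horizontal separation stays nearly constant because the relative velocity is a difference of these small corrections. This closes the two vortex hypotheses, and \eqref{main_three} follows.

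\textbf{Step 2: forcing on the free surface.} On $\Sigma$, $|\mathfrak q|\lesssim \lambda\,|z_1-z_2|/d(t)^2$ with $d(t)\gtrsim H_0+\lambda t$. In $L^2_\alpha$ and in all derivatives,
\[
\|\partial_\alpha^k\mathfrak q\|_{L^2}\lesssim \lambda\,(H_0+\lambda t)^{-3/2-k}.
\]
Hence $\int_0^\infty\|\mathfrak q\|_{H^s}\,dt\lesssim H_0^{-1/2}$, which is integrable, and the same holds for $\partial_t\mathfrak q$.

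\textbf{Step 3: energy estimates.} Following Wu's cubic formulation, introduce the change of variables $k$ with $\Re k$ such that $\bar z_t-\mathfrak q$ and $z-\bar z$ become boundary values of holomorphic functions in the new coordinates. Derive the equation for $\theta=(I-\mathcal H)(z-\bar z)$ and $\sigma=(I-\mathcal H)(z_t-\bar z_t)$. This yields $(\partial_t^2-ia\partial_\alpha)\theta=G$, where $G$ is cubic in the wave plus linear-in-$\mathfrak q$ terms. The energy identity then gives
\[
\frac{d}{dt}E_s\lesssim \|\partial_\alpha z_t\|_{W^{s-3,\infty}}^2E_s+\|\mathfrak q\|_{H^{s+1}}E_s^{1/2}.
\]
Under the bootstrap, $\|\partial_\alpha z_t\|_{W^{s-3,\infty}}^2\lesssim \epsilon^2t^{-1}$. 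This logarithmic loss is handled, as in Wu's almost-global argument, by the transition-of-derivatives trick: trade the $t^{-1}$ factor using the vector field $L_0=\tfrac12t\partial_t+\alpha\partial_\alpha$ weighted energies. These are controlled because the initial data carry $\alpha\partial_\alpha$ weights in \eqref{ini_z}. Combined with Step 2 we get $E_s(t)\le C\epsilon^2+C\epsilon H_0^{-1/2}\lambda$-type terms, which is $\le C\epsilon^2$ by \eqref{lambda_H_0}.

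\textbf{Step 4: decay.} Prove \eqref{main_two} via the Klainerman–Sobolev-type inequality for $\partial_t^2-i\partial_\alpha$,
\[
|f|\lesssim t^{-1/2}\bigl(\|f\|_{H^1}+\|L_0f\|_{H^1}\bigr),
\]
applied to $\theta$ and $\sigma$. The $\mathfrak q$ part is itself pointwise bounded by $\lambda(H_0+\lambda t)^{-2}\le\epsilon t^{-1/2}$ by \eqref{lambda_H_0}, and the localization of the forcing near the vortex horizontal position keeps $\|L_0\mathfrak q\|$ bounded.

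\textbf{Main obstacle.} I expect the hardest step to be the weighted $L_0$ energy. Commuting $L_0$ with the $\mathfrak q$ terms produces $t\partial_t\mathfrak q$ and $\alpha\partial_\alpha\mathfrak q$. These decay only like $\lambda t(H_0+\lambda t)^{-5/2}$, so they are barely integrable, and the losses must be absorbed by the gap between the two inequalities in \eqref{lambda_H_0}. My first attempt would be to show that the weighted forcing contributes at most $\epsilon\,\lambda^{-1/2}H_0^{-1/2}\cdot\lambda$ after time integration. I would also localize with a cutoff around $\operatorname{Re}z_j(t)$ to handle the $\alpha$ weight.
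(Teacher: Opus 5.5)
The central gap is in Steps~2--3, where you treat the dipole field as a time-integrable forcing. Your own bound gives $\int_0^\infty\|\mathfrak q\|_{H^s}\,dt\lesssim\int_0^\infty\lambda(H_0+\lambda t)^{-3/2}\,dt=2H_0^{-1/2}$, because the factor $\lambda$ cancels. So the differential inequality you write only yields $E_s^{1/2}(t)\le E_s^{1/2}(0)+CH_0^{-1/2}$. The factor $\lambda$ in ``$C\epsilon H_0^{-1/2}\lambda$'' does not follow from it.

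Under \eqref{lambda_H_0} this is not $O(\epsilon)$. The choice $\lambda=\epsilon$, $H_0=c_s^{-4}$ is admissible, and then $H_0^{-1/2}=c_s^2$ regardless of how small $\epsilon$ is. Bounding $\partial_t\mathfrak q$ at the same rate, as you do, does not help.

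What is missing is the structural fact the paper builds on. Work with Wu's good unknown $\tilde\theta=(I-\mathfrak H)(\zeta-\bar\zeta)$, where $\mathfrak F=D_t\bar\zeta-q$ is holomorphic. Then the vortex part of the equation is
\[
G_d=-2[\bar q,\mathfrak H]\frac{\bar{\mathfrak F}_\alpha}{\zeta_\alpha}-2[\bar{\mathfrak F},\mathfrak H]\frac{\bar q_\alpha}{\zeta_\alpha}-2[\bar q,\mathfrak H]\frac{\bar q_\alpha}{\zeta_\alpha}-4D_tq .
\]
The only linear forcing is the material derivative $D_tq$. Every term of $D_tq$ carries a velocity factor, either $\dot z_1-\dot z_2$ or $D_t\zeta-\dot z_j$. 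Hence $\|D_tq\|_{H^s}\lesssim\lambda(\lambda+\epsilon t^{-1/4})H^{-3/2}$, whose time integral is $\lesssim\lambda H_0^{-1/2}+\lambda^{1/7}\epsilon\lesssim(c_s+\lambda^{1/7})\epsilon$. The bilinear terms only produce a Gronwall factor $\exp(C\int\lambda H^{-5/2})$. The field $q$ itself enters only pointwise in time, through $D_t\tilde\theta-2D_t\zeta$.

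The same obstruction reappears, more sharply, in the $L_0$ energy. There $D_tL_0q$ contains $t\ddot z_j$. The paper controls it by extending $\tilde L\mathfrak F=\tfrac12t\,\mathfrak U_t+z\,\mathfrak U_z$ holomorphically into $\Omega(t)$ and using Cauchy estimates at $z_j$; a cutoff around $\operatorname{Re}z_j$ does not address this.

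There is also a gap in Step~1. Bounding each boundary correction by $\epsilon d^{-1/2}$ integrates to $\frac{\epsilon}{\lambda}(H_0+\lambda t)^{1/2}$, which is unbounded, so your separation hypothesis does not close. You need the mean-value bound $|\mathfrak U(z_1)-\mathfrak U(z_2)|\le d_{12}\sup_{[z_1,z_2]}|\mathfrak U_z|\lesssim(\epsilon+\lambda)H^{-3/2}$.

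Two further steps do not give the uniform-in-time conclusions.

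First, with $\frac{d}{dt}E_s\lesssim\epsilon^2t^{-1}E_s$ you only get $E_s(t)\lesssim\epsilon^2t^{C\epsilon^2}$. That is not \eqref{main_one}, and it does not close your bootstrap with a fixed constant; Wu's almost-global argument does not remove this loss. Transition of derivatives disposes of the non-resonant cubic interactions, but not the resonant one, which has the phase of the linear wave. The paper shows, using transition of derivatives and the localization lemmas near $|\alpha|\sim t$, that
\[
\partial_\alpha^kG_c=C_k1_{\{t^{3/4}\le|\alpha|\le t^{5/4}\}}\frac{it}{2\alpha}|\zeta_\alpha-1|^2D_t\tilde\theta_k
\]
up to an $L^2$ error $O(\epsilon^3t^{-1-\delta})$. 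The main term is a purely imaginary multiple of $D_t\tilde\theta_k$, so it vanishes in $\operatorname{Re}\int\overline{D_t\tilde\theta_k}\,\partial_\alpha^kG_c/A$.

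Second, Klainerman--Sobolev cannot give the sharp rate in \eqref{main_two}; it also needs $\Omega_0$, which is controlled through the equation. The vector-field energies are not uniformly bounded; the paper only gets $\epsilon(1+t)^{\delta_0}$. So vector fields give sharp decay only for $|\alpha|\notin[t^{1-\mu},t^{1+\mu}]$. Near $|\alpha|\sim t$ the paper uses the wave-packet functional $\tilde E(\nu,t)=\int D_t^2\tilde\theta\,\bar u-D_t\tilde\theta\,D_t\bar u$. It satisfies
\[
\frac{d}{dt}\tilde E=i\Bigl(\frac{|D_t^2\zeta(\nu t,t)|^2}{4\nu}-\frac{\hat b(\nu t,t)}{2\pi\nu}\Bigr)\tilde E
\]
plus an integrable error. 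The coefficient is real, so $|\tilde E|\lesssim\epsilon$, and therefore $|\tilde\theta_\alpha(\nu t,t)|\lesssim\epsilon t^{-1/2}$.
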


\begin{rem}
    Indeed, (i) we need only $\dot{z}_1(0)\cdot (0,-1)\geq \lambda c_0>0$; (ii) $|x_R-x_L|=1$ is assumed for convenience, we can assume $|x_R-x_L|\sim 1$; (iii) More generally, we can assume $z_1(0)=(x_R, y_R)$, $z_2(0)=(x_L, y_L)$, with $\dot{z}_j(0)\cdot (0,-1)\geq \lambda c_0>0$, $j=1,2$.
\end{rem}

\subsection{Difficulties and main new ideas}\label{subsec:difficulties}
We highlight three points where the present problem differs from a direct
perturbative application of the irrotational global theory.

\smallskip
\noindent\emph{1. No low-frequency control of the velocity potential.}
Our initial assumption is formulated directly on the boundary velocity,
\[
      z_t(0)\in H^{s+\frac12},
\]
and we do not impose an additional low-frequency condition on a velocity
potential. This is weaker than the assumptions used in the global theories of
\cite{Wu2009,Ionescu2015,alazard2015global,HunterTataruIfrim1,HunterTataruIfrim2,ai2022two},
where one controls, in one form or another, a primitive of the boundary velocity,
for instance a velocity potential in $L^2$ or in a low homogeneous space
$\dot H^a$, $a<3/4$. Since $v=\nabla\phi$ and $z_t=v|_{\Sigma(t)}$, such a
condition imposes an extra assumption at low frequency which is not assumed here.

We illustrate the difficulty that would arise in doing so with a heuristic derivation. Assuming $Q_\alpha\sim z_t$, $Q$ is then a quantity comparable to a velocity potential. Let us introduce the vector fields
\[
\Omega_0=\alpha\partial_t+\frac{1}{2}ti,\ L_0=\frac{1}{2}t\partial_t+\alpha\partial_\alpha.
\]
By virtue of the identity
\[
\Omega_0Q_\alpha=L_0Q_t-\frac{t}{2}(\partial_t^2-i\partial_\alpha)Q,
\]
a direct bound for $\Omega_0z_t$ would require control of $L_0Q_t$, which involves a low-frequency primitive that is out of control. We avoid this difficulty by using and sharpening the
transition-of-derivatives method of \cite{su2025new}. Such a method allows us to transfer the vector-field derivative to a better
factor, to an ordinary derivative, or to the equation. Compared with
\cite{su2025new}, the present version is more systematic and includes fewer spatial
cutoffs. It is particularly suited to the non-convolution singular integrals
coming from the moving Hilbert transform and from the point vortices.

\smallskip
\noindent\emph{2. The vortex field is not small merely because the vortices retreat.}
It is tempting to regard the point vortices as a small time-integrable external
forcing. This is not immediate in the natural regime $H_0\sim1$ and
$\lambda\sim\epsilon$. Then the initial boundary field satisfies only
\[
     \|q(0)\|_{H^s}+\|D_tq(0)\|_{H^s}\lesssim\epsilon,
\]
and the dipole retreats with speed of size $\lambda/(4\pi)\sim\epsilon$. Hence
on the interval $0\le t\lesssim\epsilon^{-1}$ the depth has increased only by
an amount comparable to one, and a naive estimate gives
\[
     \int_0^{\epsilon^{-1}}
     \bigl(\|q(t)\|_{H^s}+\|D_tq(t)\|_{H^s}\bigr)\,dt\sim1.
\]
Such a bound is insufficient to preserve the small-data regime for long-time existence.

To overcome this difficulty, we exploit the cancellation structure inside the coupled system. Our key observation is that, after embedding the holomorphicity constraint into the good unknown, the equation is not forced by $q$ itself. Rather, it takes the form
\[
(D_t^2-iA\partial_\alpha)\theta=G_c+G_d,
\]
where $G_c$ is the cubic irrotational nonlinearity, and the leading vortex contribution in $G_d$ appears as $-4D_tq$. This reveals that the physically relevant quantity is not the dipole field itself, but its material variation. By a precise use of the point-vortex ODE, the dipole cancellation, and the retreat estimate, we can establish the integrable bound
\[
      \int_0^\infty \|D_tq(t)\|_{H^s}\,dt\lesssim\epsilon.
\]
It is precisely this estimate that allows us to treat the vortex contribution perturbatively.

\smallskip
\noindent\emph{3. The vector-field vortex term requires a holomorphic estimate.}
To obtain the $t^{-1/2}$ decay, we need Klainerman--Sobolev type bounds and
therefore vector-field energies such as $\|L_0D_t\theta\|_{H^{s-1}}$. The
corresponding vortex term is $L_0D_tq$. A direct boundary estimate is too weak;
in the same natural regime it gives an $O(1)$ cumulative contribution up to
time $\epsilon^{-1}$.

We overcome this by using the analytic structure inside the fluid. Since
\[
      \mathfrak{F}=D_t\bar\zeta-q
\]
is holomorphic in the fluid domain, we extend $L_0\mathfrak{F}$ into $\Omega(t)$ and
estimate the associated holomorphic velocity at the vortex locations. This
gives a sharper bound for the regular part of the vortex ODE and hence for
$L_0D_tq$. In the wave-packet decay estimate we also isolate the principal part
of $b$ by introducing an auxiliary coefficient $\hat b$ with better derivative
decay; and in the high-order commutators we decompose
\[
      b=\frac12\bigl[(I-\mathfrak{H})b+(I-\bar{\mathfrak{H}})b
        +(\mathfrak{H}+\bar{\mathfrak{H}})b\bigr].
\]
These refinements close the vector-field and pointwise decay estimates without
assuming that $H_0$ is artificially large.

\subsection{Overall strategy of the proof}\label{subsec:overall-strategy}
The proof follows the physical-space framework established in \cite{su2025new}, supplemented with novel estimates to handle the point-vortex contributions. According to the local well-posedness theorem in \cite{su2020long}, it suffices to control $\norm{(z_t,z_{tt})}_{H^{s+\frac{1}{2}}\times H^s}$, $\sup_{\alpha\neq\beta}\left|\frac{\alpha-\beta}{z(\alpha,t)-z(\beta,t)}\right|$, $\sup_{\alpha\neq\beta}\left|\frac{z(\alpha,t)-z(\beta,t)}{\alpha-\beta}\right|$, $\operatorname{dist}(\Gamma(t),z_1(t))^{-1}$, $\operatorname{dist}(\Gamma(t),z_2(t))^{-1}$, $\operatorname{dist}(z_1(t),z_2(t))^{-1}$ and guarantee that the Taylor-sign condition is satisfied. We begin by isolating the singular vortex velocity from the holomorphic remainder. Specifically, in the dipole case, we write
\[
    q(\alpha,t)=
    -\frac{\lambda i}{2\pi}
    \frac{z_1(t)-z_2(t)}
    {(\zeta(\alpha,t)-z_1(t))(\zeta(\alpha,t)-z_2(t))},
    \qquad
    \mathfrak{F}=D_t\bar\zeta-q,
\]
where $\mathfrak{F}$ represents the boundary trace of a function holomorphic in the fluid domain. This splitting is consistently utilized in both the boundary governing equation and the point-vortex ODE system.

Next, we introduce Wu's good unknowns
\[
      \tilde\theta=(I-\mathfrak H)(\zeta-\bar\zeta),
      \qquad \tilde\sigma=D_t\tilde\theta.
\]
By virtue of Wu's modified coordinates and the commutator identities for the Hilbert transform, we derive the structural equation
\[
      (D_t^2-iA\partial_\alpha)\tilde\theta=G_c+G_d.
\]
Here, the irrotational nonlinearity $G_c$ enjoys the same favorable cubic structure as in \cite{su2025new}, whereas the vortex contribution $G_d$ is managed by exploiting the time-integrability of $D_tq$.

The energy analysis consists of three interconnected components. First, the retreating dipole estimate ensures that the vortices remain at a distance $H(t)\gtrsim H_0+\lambda t$, which yields the crucial integrable bound for $D_tq$. Second, the high-order Sobolev and vector-field energies are successfully closed by utilizing the cubic structure and the transition-of-derivatives method, thereby avoiding direct control of the problematic terms. Third, the pointwise decay is established via a refined wave-packet energy estimate; the sole non-perturbative contribution arising from the top-order coefficients is handled through a delicate analysis of the advection-like terms $b$ and $\hat b$.

These bounds ultimately allow us to verify and strengthen the bootstrap assumptions concerning the chord-arc geometry, the Taylor coefficient, the high-order Sobolev and vector-field energies, the $t^{-1/2}$ decay norm, and the retreat rate of the vortices. A standard continuity argument then establishes the global-in-time existence of the solution and yields the uniform bounds stated in Theorem~\ref{main}.

\subsection{Outline of the paper}
Section 2 recalls the formulation of water waves with point vortices,
introduces Wu's modified coordinates, and records the Hilbert-transform and
analytic estimates used later. Section 3 sets up the bootstrap assumptions.
Section 4 derives the basic consequences of the bootstrap assumptions,
including the point-vortex estimates, the bounds for $q$, and the estimates
for the coefficients $b$ and $A$. Section 5 proves the high-order energy and
vector-field estimates, including the refined commutator and localization
arguments. Section 6 proves the pointwise decay estimates based on the
oscillatory energy and completes the proof of Theorem \ref{main}.

\subsection{Notations}
We follow the notation of \cite{Wu2009,su2020long}. The Hilbert transform
associated with the fluid domain is denoted by $\mathfrak H$, and
$D_t=\partial_t+b\partial_\alpha$ is the material derivative in Wu's modified
coordinates. The spaces $\dot H^s$ and $H^s$ are the homogeneous and
inhomogeneous Sobolev spaces. We write $f=O(A)_{L^2}$ to mean
$\|f\|_{L^2}\le CA$, and similarly for other norms. Throughout the article, we let $C$ denote a large positive constant independent of $t$, $\lambda$, $H$, $H_0$, $\epsilon$ and let $\tilde{C}$ denote a large positive constant independent of $t$, $\lambda$, $H$, $H_0$, $\epsilon$ and $M$. The constants $C$ and $\tilde{C}$ may take different values from line to line. The notation $A\lesssim B$ means that $A\leq CB$ for some $C$.

\section{Preliminaries and formulation}\label{sec_pre}

\subsection{Geometry, chord-arc curves, and Hilbert transform}

\begin{defn}[Hilbert transform]
Assume that $z(\alpha)$ satisfies
\begin{equation}\label{chordchordarcarc}
    \beta_0|\alpha-\beta|\leq |z(\alpha)-z(\beta)|\leq \beta_1|\alpha-\beta|, \quad \quad \forall \alpha,\beta\in \mathbb{R},
\end{equation}
where $0<\beta_0<\beta_1<\infty$ are two absolute constants.
We define the Hilbert transform associated to a curve $z(\alpha)$ as 
\begin{equation}\label{H_z_def}
\mathcal{H}f(\alpha):=\frac{1}{\pi i}p.v.\int_{-\infty}^{\infty}\frac{z_{\beta}(\beta)}{z(\alpha)-z(\beta)}f(\beta)d\beta.
\end{equation}
We use the notation $\mathfrak{H}$ to denote the Hilbert transform associated with $\zeta$:
\begin{equation}\label{H_def}\mathfrak{H}f(\alpha,t):=\frac{1}{\pi i}p.v.\int_{-\infty}^{\infty}\frac{\zeta_{\beta}}{\zeta(\alpha,t)-\zeta(\beta,t)}f(\beta,t)d\beta.
\end{equation}
The standard Hilbert transform is the Hilbert transform associated with $z(\alpha)=\alpha$, which is denoted by
\begin{equation}
\mathbb{H}f(\alpha):=\frac{1}{\pi i}p.v.\int_{-\infty}^{\infty}\frac{1}{\alpha-\beta}f(\beta)d\beta.
\end{equation}
\end{defn}
It's well-known (see \cite{david1984operateurs}, Theorem 6) that if $\zeta(\alpha)$ satisfies (\ref{chordchordarcarc}), then $\mathfrak{H}$ is bounded on $L^2$.
\begin{lemma}\label{boundednesshilbert}
Assume that $\zeta(\alpha)$ satisfies (\ref{chordchordarcarc}), then 
\begin{equation}
    \|\mathfrak{H}f\|_{L^2}\leq C\|f\|_{L^2},
\end{equation}
for some constant that depends on $\beta_0$ and $\beta_1$ only.
\end{lemma}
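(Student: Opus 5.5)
The plan is to reduce the bound to the classical $L^2$ boundedness of the Cauchy integral on Lipschitz graphs, via the Coifman--David--Meyer/David theorem for chord-arc curves, as the citation to \cite{david1984operateurs} suggests.

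\emph{Step 1: normalize the parametrization.} The chord-arc condition (\ref{chordchordarcarc}) gives $\beta_0\le|\zeta_\alpha|\le\beta_1$ a.e. Let $s(\alpha)=\int_0^\alpha|\zeta_\beta|\,d\beta$ be arclength. Then $s$ is bi-Lipschitz with constants $\beta_0,\beta_1$, and the change of variables $f\mapsto f\circ s^{-1}$ changes $L^2$ norms only by factors depending on $\beta_0,\beta_1$. After reparametrizing, the operator becomes the Cauchy integral
\[
\mathcal C_\Gamma g(w)=\frac{1}{\pi i}\,p.v.\int_\Gamma\frac{g(w')}{w-w'}\,dw'
\]
on the rectifiable curve $\Gamma=\zeta(\mathbb R)$. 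This curve is chord-arc: $|w-w'|\ge(\beta_0/\beta_1)\,|s-s'|$.

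\emph{Step 2: boundedness on chord-arc curves.} David's theorem states that the Cauchy integral is bounded on $L^2(\Gamma,ds)$ whenever $\Gamma$ is Ahlfors-regular, and in particular chord-arc. The operator norm depends only on the regularity constant, here a function of $\beta_1/\beta_0$. Ahlfors regularity follows at once from the chord-arc bound: a disc of radius $r$ meets $\Gamma$ in arclength at most $2(\beta_1/\beta_0)r$. Since $dw'=\zeta_\beta\,d\beta$ and $|\zeta_\beta|\le\beta_1$, we can compose with the bi-Lipschitz reparametrization from Step 1 and obtain $\|\mathfrak Hf\|_{L^2}\le C(\beta_0,\beta_1)\|f\|_{L^2}$.

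\emph{The main obstacle} would be proving David's theorem itself rather than quoting it. If a self-contained argument were required, I would first handle Lipschitz graphs through Coifman--McIntosh--Meyer, for example via the $T(b)$ theorem with accretive function $b=\zeta_\beta$. I would then pass from chord-arc curves to Lipschitz graphs using David's big-pieces decomposition and a good-$\lambda$ inequality. Since the paper explicitly defers to \cite{david1984operateurs}, I would simply invoke it and verify the hypotheses, namely Ahlfors regularity and the bi-Lipschitz reparametrization, as above.
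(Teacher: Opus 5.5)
Your proposal is correct and takes the same route as the paper, which simply quotes David's theorem (Theorem 6 of \cite{david1984operateurs}) for chord-arc curves. Your arclength reparametrization and your Ahlfors-regularity check are exactly the hypothesis verification the paper leaves implicit: the arclength of $\Gamma$ inside any disc of radius $r$ is at most $2(\beta_1/\beta_0)r$, because the preimage of the disc has diameter less than $2r/\beta_0$.
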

We can use the Hilbert transform to characterize the boundary value of holomorphic functions. 
\begin{lemma}\label{holomorphic}
Let $f\in L^2(\mathbb{R})$. Then $f$ is the boundary value of a holomorphic function in $\Omega(t)$ if and only if $(I-\mathfrak{H})f=0$. $f$ is the boundary value of a holomorphic function in $\Omega(t)^c$ if and only if $(I+\mathfrak{H})f=0$. 
\end{lemma}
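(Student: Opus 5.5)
The plan is to prove Lemma~\ref{holomorphic} via the Cauchy integral representation along the chord-arc curve $\zeta$, combined with the Plemelj--Sokhotski jump relations and the $L^2$ boundedness of $\mathfrak H$ from Lemma~\ref{boundednesshilbert}. For $f\in L^2(\mathbb R)$, define the Cauchy integral
\[
F(w):=\frac{1}{2\pi i}\int_{-\infty}^{\infty}\frac{\zeta_\beta(\beta)}{\zeta(\beta)-w}f(\beta)\,d\beta,\qquad w\notin\Sigma(t).
\]
By the chord-arc condition \eqref{chordchordarcarc}, $|\zeta(\beta)-w|\gtrsim \langle\beta\rangle$ for fixed $w$ off the curve, so the integral converges absolutely by Cauchy--Schwarz. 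Hence $F$ is holomorphic separately in $\Omega(t)$ and in $\Omega(t)^c$, and it decays at infinity.

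The key step is the Plemelj formula. The nontangential boundary limits from $\Omega(t)$ and from $\Omega(t)^c$ are
\[
F_\pm=\tfrac12(\pm f+\mathfrak H f),
\]
with the sign convention fixed by the orientation of $\zeta$, which has the fluid on one side. I will verify these limits first for Schwartz-type $f$ (or $f$ with compact support), where the classical argument applies: subtract $f(\alpha)$ and note that the constant contributes exactly half the jump. I then extend to all $f\in L^2$. For this extension I use that the Cauchy maximal operator on chord-arc (Lipschitz-graph-like) curves is $L^2$-bounded (Coifman--McIntosh--Meyer, David), together with Lemma~\ref{boundednesshilbert}. The density argument is the step I expect to be the main obstacle. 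I would simply cite David \cite{david1984operateurs} for the nontangential maximal bound rather than reprove it.

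With the limits in hand, both implications for $\Omega(t)$ are short.
\begin{itemize}
\item[($\Leftarrow$)] If $(I-\mathfrak H)f=0$, then the boundary value from the exterior vanishes. Therefore $f=F_+-F_-=F_+$ is the trace of the holomorphic function $F|_{\Omega(t)}$.
\item[($\Rightarrow$)] If $f=G|_{\Sigma}$ with $G$ holomorphic in $\Omega(t)$, decaying at infinity and of $L^2$ class along the boundary, apply Cauchy's theorem on truncated domains bounded by $\Sigma$ and a large arc, and let the arc escape to infinity using the decay. This gives $F=G$ in $\Omega(t)$ and $F\equiv0$ in $\Omega(t)^c$. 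Taking boundary limits then yields $\tfrac12(-f+\mathfrak Hf)=0$, i.e.\ $(I-\mathfrak H)f=0$.
\end{itemize}

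The exterior statement follows from the symmetric argument with the roles of $F_\pm$ swapped, giving $(I+\mathfrak H)f=0$. Here "holomorphic" is understood in the Hardy-space sense with decay at infinity, which is the standing convention in \cite{Wu2009,su2020long}. As a corollary, the argument also gives $\mathfrak H^2=I$ on $L^2$.
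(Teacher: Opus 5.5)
The paper gives no proof of this lemma: it records it as a standard fact right after David's $L^2$ bound for $\mathfrak H$, so there is no competing argument to compare against. Your route is the standard proof behind that citation, and its logic is sound. It consists of the Cauchy integral, the Plemelj jump relations, the nontangential maximal bound of David or Coifman--McIntosh--Meyer for the density step, and Cauchy's theorem for the converse.

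There is, however, a sign slip that runs through your formulas.

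\begin{itemize}
\item With your normalization $F(w)=\frac{1}{2\pi i}\int\frac{\zeta_\beta f}{\zeta(\beta)-w}\,d\beta$, the principal-value term at $w=\zeta(\alpha)$ is $-\tfrac12\mathfrak Hf(\alpha)$.
\item $\Sigma(t)$, traversed in the direction of increasing $\alpha$, has the fluid on its right.
\end{itemize}

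The correct limits are therefore
\[
F_{\mathrm{ext}}=\tfrac12\,(f-\mathfrak Hf),\qquad F_{\mathrm{int}}=-\tfrac12\,(f+\mathfrak Hf).
\]
So $f=F_{\mathrm{ext}}-F_{\mathrm{int}}$, and Cauchy's theorem gives $F=-G$ in $\Omega(t)$ rather than $F=G$. You can check this in the flat case with $f(\alpha)=1/(\alpha-i)$: there $F\equiv0$ above the line and $F(w)=-1/(w-i)$ below it. Every identity you wrote is off by the same overall sign, so the characterizations $(I\mp\mathfrak H)f=0$ come out unchanged. The displayed identities themselves are false as written and should be corrected.

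Two further remarks.

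\begin{itemize}
\item Your Hardy-class proviso is genuinely necessary, not merely conventional. In the flat case, $G(w)=e^{iw}/(w-i)$ is holomorphic in the lower half-plane with trace $f\in L^2$, yet $(I-\mathbb H)f\neq0$. The reason is that multiplication by $e^{i\alpha}$ moves part of the spectrum of $1/(\alpha-i)$ across $\xi=0$.
\item The truncated-domain argument for ($\Rightarrow$), as written, needs $G$ to be continuous up to $\Sigma(t)$ and uniformly small on large arcs. That fails for a general $E^2$-class function with only nontangential $L^2$ boundary values. An example is $F|_{\Omega(t)}$ itself, which is exactly what your corollary $\mathfrak H^2=I$ requires. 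In that case you should exhaust $\Omega(t)$ by curves $\Sigma_\delta\subset\Omega(t)$ and pass to the limit using the $L^2$ nontangential maximal function, the same tool you already invoke for the Plemelj step.
\end{itemize}
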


\subsection{Wu's modified coordinates and basic unknowns}

We now introduce the modified Lagrangian coordinates of Wu. The purpose of
this change of variables is to remove the leading quadratic terms in the
water wave system and to reveal the cubic structure of the two-dimensional
gravity wave equation. In the present paper the same coordinate change is
used, while the point vortices generate additional lower-order perturbative
terms.

Let \(z=z(\alpha,t)\) be the Lagrangian parametrization of the free surface.
Following Wu, we introduce a time-dependent change of variables
\[
    \kappa=\kappa(\alpha,t)
\]
satisfying
$$
\kappa+\Phi\circ z=z+\bar z
$$
where $\Phi$ is a holomorphic function defined on the region $\Omega(t)$ 
and define
\[
    \zeta(\alpha,t):=z(\kappa^{-1}(\alpha,t),t).
\]
All quantities in the modified coordinates will be written as functions of
\((\alpha,t)\). The material derivative in the new coordinates is
\[
    D_t:=\partial_t+b\partial_\alpha,
\]
where
\[
    b(\alpha,t):=\kappa_t(\kappa^{-1}(\alpha,t),t).
\]
The rescaled Taylor coefficient is denoted by
\[
    A(\alpha,t):=(a\kappa_\alpha)(\kappa^{-1}(\alpha,t),t).
\]
With these definitions, the free boundary equation becomes
\begin{equation}\label{main_equation}
    D_t^2\zeta-iA\zeta_\alpha=-i.
\end{equation}
Here \(A\) is real-valued, and the Taylor sign condition corresponds to
\(A>0\). For small perturbations of the flat interface one has
\[
    A=1+O(\epsilon),\qquad b=O(\epsilon),
\]
in the norms used below.

The Hilbert transform associated with the curve \(\zeta(\cdot,t)\) is
\[
    \mathfrak{H}f(\alpha,t)
    =
    \frac{1}{\pi i}\,{\rm p.v.}
    \int_{\mathbb R}
    \frac{\zeta_\beta(\beta,t)}
    {\zeta(\alpha,t)-\zeta(\beta,t)}
    f(\beta,t)\,d\beta.
\]
In the modified coordinates the holomorphicity constraint takes the form
\[
    (I-\mathfrak{H})\mathfrak F=0,
\]
where
\[
    \mathfrak F(\alpha,t):=D_t\bar\zeta(\alpha,t)-q(\alpha,t).
\]
Here \(q\) denotes the boundary value of the singular velocity generated by
the point-vortex pair. In the dipole case
\[
    \lambda_1=\lambda,\qquad \lambda_2=-\lambda,
\]
we write
\[
    q(\alpha,t)
    :=
    -\frac{\lambda i}{2\pi}
    \frac{z_1(t)-z_2(t)}
    {(\zeta(\alpha,t)-z_1(t))(\zeta(\alpha,t)-z_2(t))}.
\]
Thus \(\mathfrak F\) is the boundary trace of a holomorphic function in the fluid
domain. We shall use the same letter \(\mathfrak U\) for its holomorphic extension:
\[
    \mathfrak U(\zeta(\alpha,t),t)=\mathfrak F(\alpha,t).
\]
Equivalently,
\[
    \mathfrak U(\zeta(\alpha,t),t)
    =
    D_t\bar\zeta(\alpha,t)-q(\alpha,t).
\]

The quantities \(b\) and \(A\) can be characterized by applying the projection
\(I-\mathfrak{H}\) to the equations above. First, since \(D_t\bar\zeta-q\) is
holomorphic, one has
\[
    (I-\mathfrak{H})(D_t\bar\zeta-q)=0.
\]
This identity may be used to determine \(b\), because
\[
    D_t\bar\zeta
    =
    \bar\zeta_t+b\bar\zeta_\alpha.
\]
Equivalently,
\[
    (I-\mathfrak{H})b
    =
    -(I-\mathfrak{H})\frac{\bar\zeta_t-q}{\bar\zeta_\alpha}
    + \text{lower order commutator terms},
\]
or, in the perturbative regime \(\zeta_\alpha\approx1\),
\[
    b
    =
    O(D_t\zeta(\zeta_\alpha-1))+O(q).
\]
The precise estimates for \(b\) will be derived later.

Similarly, applying \(I-\mathfrak{H}\) to the equation
\[
    D_t^2\zeta-iA\zeta_\alpha=-i
\]
and using the holomorphicity of the appropriate boundary traces gives an
equation for \(A-1\). In the small-data regime this yields schematically
\[
    A-1
    =
    O((D_t\zeta)^2)+O(D_tq)+O(qD_t\zeta),
\]
again in the norms used below. The exact bounds for \(A-1\) are postponed to
Section~4.

We next introduce the main good unknowns. Define
\[
    \tilde\theta:=(I-\mathfrak{H})(\zeta-\bar\zeta),
\]
and
\[
    \tilde\sigma:=D_t\tilde\theta.
\]
Since \((I-\mathfrak{H})(\bar\zeta-\alpha)=0\) and
\(\zeta-\alpha\) is small, the quantity \(\tilde\theta\) captures the
non-holomorphic part of the interface. More precisely, it is equivalent to
the perturbation of the interface in the sense that
\[
    \tilde\theta_\alpha
    =
    2(\zeta_\alpha-1)
    +
    \text{quadratic terms}.
\]
Likewise,
\[
    D_t\tilde\theta
    =
    2D_t\zeta
    +
    \text{quadratic terms}
    +
    O(q),
\]
and
\[
    D_t\tilde\sigma
    =
    2D_t^2\zeta
    +
    \text{quadratic terms}
    +
    O(D_tq).
\]
These equivalences will be made quantitative in Section~4.

The advantage of \(\tilde\theta\) is that it satisfies a quasilinear dispersive
equation whose leading nonlinear part is cubic. More precisely,
\[
    (D_t^2-iA\partial_\alpha)\tilde\theta
    =
    G_c+G_d,
\]
where \(G_c\) denotes the cubic term already present in the irrotational
gravity water wave equation, and \(G_d\) denotes the contribution of the point
vortices. In the present notation,
\[
\begin{aligned}
    G_c
    &=
    -2\left[\bar {\mathfrak {F}},
        \mathfrak{H}\frac{1}{\zeta_\alpha}
        +\bar {\mathfrak{H}}\frac{1}{\bar\zeta_\alpha}
      \right]\bar {\mathfrak{F}}_\alpha
   +\frac{1}{\pi i}
    \int_{\mathbb R}
    \left(
        \frac{D_t\zeta(\alpha,t)-D_t\zeta(\beta,t)}
        {\zeta(\alpha,t)-\zeta(\beta,t)}
    \right)^2
    (\zeta-\bar\zeta)_\beta(\beta,t)\,d\beta,
\end{aligned}
\]
while the point-vortex contribution is
\[
    G_d
    =
    -2[\bar q,\mathfrak{H}]\frac{\bar {\mathfrak{F}}_\alpha}{\zeta_\alpha}
    -2[\bar {\mathfrak{F}},\mathfrak{H}]\frac{\bar q_\alpha}{\zeta_\alpha}
    -2[\bar q,\mathfrak{H}]\frac{\bar q_\alpha}{\zeta_\alpha}
    -4D_tq.
\]
Here \([f,\mathfrak{H}]g=f\mathfrak{H}g-\mathfrak{H}(fg)\). The term \(G_c\) has the same cubic
structure as in the irrotational problem. The term \(G_d\) is new and will be
treated perturbatively using the decay of \(q\), which follows from the
retreat of the point vortices from the free boundary.

For the weighted energy estimates we also use the vector fields
\[
    L_0:=\frac12 t\partial_t+\alpha\partial_\alpha,
\]
and
\[
    \Omega_0:=\alpha\partial_t+\frac12 ti.
\]
They are adapted to the linearized gravity wave operator
\[
    \partial_t^2-i\partial_\alpha.
\]
In the nonlinear problem they do not commute exactly with
\(D_t^2-iA\partial_\alpha\), and the resulting commutator terms will be
controlled by the transition-of-derivatives argument. The basic energy
variables used later are
\[
    \tilde\theta,\quad D_t\tilde\theta,\quad
    \tilde\sigma,\quad D_t\tilde\sigma,
\]
together with their \(L_0\)- and \(\Omega_0\)-derivatives.

Finally, throughout the rest of the paper we use the shorthand
\[
    \xi:=\zeta-\alpha.
\]
Thus smallness of the interface will be measured by Sobolev norms of
\(\xi\), \(\zeta_\alpha-1\), \(D_t\zeta\), and \(D_t^2\zeta\). The equivalence
between these quantities and the good unknowns \(\tilde\theta\),
\(D_t\tilde\theta\), and \(D_t\tilde\sigma\) is one of the main elementary
steps in the energy argument.

\subsection{Commutator identities}

In this subsection we collect several commutator identities for the Hilbert
transform associated with the moving interface. These identities are used
throughout the paper to derive the equations for the good unknowns and to
estimate the coefficients \(b\) and \(A\).

Recall that
\[
    \mathfrak H f(\alpha,t)
    =
    \frac{1}{\pi i}\,{\rm p.v.}
    \int_{\mathbb R}
    \frac{\zeta_\beta(\beta,t)}
    {\zeta(\alpha,t)-\zeta(\beta,t)}
    f(\beta,t)\,d\beta.
\]
We also write
\[
    D_t:=\partial_t+b\partial_\alpha.
\]
The following identities are understood in the principal value sense. They
are first verified for smooth rapidly decaying functions and then extended
by density to the Sobolev spaces used below.

For any sufficiently regular function \(f\), one has
\[
    [\partial_t,\mathfrak H]f
    =
    [\zeta_t,\mathfrak H]\frac{f_\alpha}{\zeta_\alpha}.
\]
Equivalently,
\[
    \partial_t(\mathfrak H f)
    =
    \mathfrak H f_t
    +
    [\zeta_t,\mathfrak H]\frac{f_\alpha}{\zeta_\alpha}.
\]
Similarly, for the material derivative \(D_t\),
\[
    [D_t,\mathfrak H]f
    =
    [D_t\zeta,\mathfrak H]\frac{f_\alpha}{\zeta_\alpha}.
\]
That is,
\[
    D_t(\mathfrak H f)
    =
    \mathfrak H D_t f
    +
    [D_t\zeta,\mathfrak H]\frac{f_\alpha}{\zeta_\alpha}.
\]

We shall also need the second material derivative commutator. A direct
calculation gives
\[
\begin{aligned}
    [D_t^2,\mathfrak H]f
    &=
    [D_t^2\zeta,\mathfrak H]\frac{f_\alpha}{\zeta_\alpha}
    +
    2[D_t\zeta,\mathfrak H]\frac{\partial_\alpha D_t f}{\zeta_\alpha}
    \\
    &\quad
    -
    \frac{1}{\pi i}
    \int_{\mathbb R}
    \left(
        \frac{D_t\zeta(\alpha,t)-D_t\zeta(\beta,t)}
             {\zeta(\alpha,t)-\zeta(\beta,t)}
    \right)^2
    f_\beta(\beta,t)\,d\beta .
\end{aligned}
\]
Equivalently,
\[
\begin{aligned}
    D_t^2(\mathfrak H f)
    &=
    \mathfrak H D_t^2 f
    +
    [D_t^2\zeta,\mathfrak H]\frac{f_\alpha}{\zeta_\alpha}
    +
    2[D_t\zeta,\mathfrak H]\frac{\partial_\alpha D_t f}{\zeta_\alpha}
    \\
    &\quad
    -
    \frac{1}{\pi i}
    \int_{\mathbb R}
    \left(
        \frac{D_t\zeta(\alpha,t)-D_t\zeta(\beta,t)}
             {\zeta(\alpha,t)-\zeta(\beta,t)}
    \right)^2
    f_\beta(\beta,t)\,d\beta .
\end{aligned}
\]

The spatial derivative satisfies
\[
    [\partial_\alpha,\mathfrak H]f
    =
    [\zeta_\alpha,\mathfrak H]\frac{f_\alpha}{\zeta_\alpha}.
\]
More explicitly,
\[
    [\zeta_\alpha,\mathfrak H]\frac{f_\alpha}{\zeta_\alpha}
    =
    \frac{1}{\pi i}\,{\rm p.v.}
    \int_{\mathbb R}
    \left(
        \frac{\zeta_\alpha(\alpha,t)-\zeta_\beta(\beta,t)}
             {\zeta(\alpha,t)-\zeta(\beta,t)}
    \right)
    f_\beta(\beta,t)\,d\beta .
\]
Thus this commutator is at least quadratic in the perturbative regime
\(\zeta_\alpha\approx1\).

We shall frequently use the explicit formula
\[
    [g,\mathfrak H]f(\alpha)
    =
    \frac{1}{\pi i}\,{\rm p.v.}
    \int_{\mathbb R}
    \frac{g(\alpha)-g(\beta)}
         {\zeta(\alpha)-\zeta(\beta)}
    f(\beta)\zeta_\beta(\beta)\,d\beta .
\]
In particular,
\[
    [g,\mathfrak H]\frac{f_\alpha}{\zeta_\alpha}(\alpha)
    =
    \frac{1}{\pi i}\,{\rm p.v.}
    \int_{\mathbb R}
    \frac{g(\alpha)-g(\beta)}
         {\zeta(\alpha)-\zeta(\beta)}
    f_\beta(\beta)\,d\beta .
\]
This form is useful because the numerator \(g(\alpha)-g(\beta)\) provides
one cancellation.

We also record the corresponding identities for the conjugate Hilbert
transform \(\overline{\mathfrak H}\), defined by
\[
    \overline{\mathfrak H}f(\alpha,t)
    =
    -\frac{1}{\pi i}\,{\rm p.v.}
    \int_{\mathbb R}
    \frac{\bar\zeta_\beta(\beta,t)}
         {\bar\zeta(\alpha,t)-\bar\zeta(\beta,t)}
    f(\beta,t)\,d\beta .
\]
With this convention,
\[
    [D_t,\overline{\mathfrak H}]f
    =
    [D_t\bar\zeta,\overline{\mathfrak H}]
    \frac{f_\alpha}{\bar\zeta_\alpha}.
\]
Moreover,
\[
\begin{aligned}
    [D_t^2,\overline{\mathfrak H}]f
    &=
    [D_t^2\bar\zeta,\overline{\mathfrak H}]
    \frac{f_\alpha}{\bar\zeta_\alpha}
    +
    2[D_t\bar\zeta,\overline{\mathfrak H}]
    \frac{\partial_\alpha D_t f}{\bar\zeta_\alpha}
    \\
    &\quad
    +
    \frac{1}{\pi i}
    \int_{\mathbb R}
    \left(
        \frac{D_t\bar\zeta(\alpha,t)-D_t\bar\zeta(\beta,t)}
             {\bar\zeta(\alpha,t)-\bar\zeta(\beta,t)}
    \right)^2
    f_\beta(\beta,t)\,d\beta .
\end{aligned}
\]
The sign in the last integral is opposite to the one for \(\mathfrak H\),
because of the convention used in the definition of
\(\overline{\mathfrak H}\).

We next state the commutators with the scaling vector field
\[
    L_0:=\frac12 t\partial_t+\alpha\partial_\alpha .
\]
Since \(\mathfrak H\) depends on the curve \(\zeta\), \(L_0\) does not commute
with \(\mathfrak H\). One has
\[
    [L_0,\mathfrak H]f
    =
    [L_0\zeta,\mathfrak H]\frac{f_\alpha}{\zeta_\alpha}.
\]
Equivalently,
\[
    L_0(\mathfrak H f)
    =
    \mathfrak H L_0 f
    +
    [L_0\zeta,\mathfrak H]\frac{f_\alpha}{\zeta_\alpha}.
\]
Similarly,
\[
    [L_0,\overline{\mathfrak H}]f
    =
    [L_0\bar\zeta,\overline{\mathfrak H}]
    \frac{f_\alpha}{\bar\zeta_\alpha}.
\]
These identities will be combined with the transition-of-derivatives
argument in the vector-field estimates.

Finally, for any operator \(T\) we use the notation
\[
    [f,T]g:=fTg-T(fg).
\]
Therefore
\[
    T(fg)=fTg-[f,T]g.
\]
In particular,
\[
    (I-\mathfrak H)(fg)
    =
    f(I-\mathfrak H)g+[f,\mathfrak H]g.
\]
When \(g\) is the boundary value of a holomorphic function in the fluid
domain, \((I-\mathfrak H)g=0\), and hence
\[
    (I-\mathfrak H)(fg)=[f,\mathfrak H]g.
\]

\subsection{Analytic estimates away from the boundary}

In this subsection we record several elementary estimates for holomorphic
functions evaluated at points away from the free boundary. These estimates
will be used repeatedly to control the influence of the holomorphic
remainder of the velocity field on the point vortices.

Let \(\Omega(t)\) be the fluid domain bounded above by the chord-arc curve
\(\Sigma(t)=\zeta(\mathbb R,t)\). For \(z\in\Omega(t)\), we write
\[
    d(z,t):=\operatorname{dist}(z,\Sigma(t)).
\]
For the point vortices we use the notation
\[
    H_j(t):=\operatorname{dist}(z_j(t),\Sigma(t)),
    \qquad
    H(t):=\min_{z\in[z_1,z_2]}d(z,t)
\]
where $[z_1,z_2]$ stands for the line segment connecting $z_1$ and $z_2$. 
We also write
\[
    d_{12}(t):=|z_1(t)-z_2(t)|.
\]
In the bootstrap argument the relevant regime is
\[
    d_{12}(t)\approx d_{12}(0)\approx 1,
    \qquad
    H(t)\gtrsim H_0+\lambda t.
\]

We first state a Cauchy-type estimate for holomorphic functions in the fluid
domain.

\begin{lemma}[Cauchy estimates away from the boundary]
Let \(\mathfrak{U}(\cdot,t)\) be holomorphic in \(\Omega(t)\), and let
\[
    \mathfrak F(\alpha,t):=\mathfrak U(\zeta(\alpha,t),t)
\]
be its boundary trace. Assume that \(\mathfrak F(\cdot,t)\in L^2(\mathbb R)\). Then for
every integer \(k\ge 0\) and every \(z\in\Omega(t)\),
\[
    |\partial_z^k \mathfrak U(z,t)|
    \lesssim
    d(z,t)^{-k-\frac12}\|\mathfrak F(\cdot,t)\|_{L^2}.
\]
The implicit constant depends only on the chord-arc constants of the curve
\(\zeta(\cdot,t)\).
\end{lemma}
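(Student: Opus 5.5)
We will use the Cauchy integral representation of $\mathfrak U$ in terms of its boundary trace, followed by Cauchy--Schwarz and a chord-arc estimate of the kernel in $L^2$.

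\textbf{Representation formula.} Since $\mathfrak U(\cdot,t)$ is holomorphic in $\Omega(t)$ with trace $\mathfrak F\in L^2$, Lemma~\ref{holomorphic} gives $(I-\mathfrak H)\mathfrak F=0$. Correspondingly $\mathfrak U$ is the Cauchy integral of its trace:
\[
    \mathfrak U(z,t)=\frac{1}{2\pi i}\int_{\mathbb R}\frac{\zeta_\beta(\beta,t)}{\zeta(\beta,t)-z}\,\mathfrak F(\beta,t)\,d\beta,\qquad z\in\Omega(t),
\]
with the orientation convention matching the definition \eqref{H_def}. To justify this for a general $L^2$ trace, we first take $\mathfrak F$ smooth and decaying. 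We apply Cauchy's formula on the truncated domain $\Omega(t)\cap\{|z|<R\}$ and let $R\to\infty$. The arc contribution vanishes along a suitable sequence $R_n\to\infty$, since $\mathfrak U$ is the Cauchy integral of an $L^2$ function and hence in the Hardy space on the chord-arc domain. The general case then follows by density together with the uniform estimate below. We expect this justification, in particular the decay at infinity for a non-compact chord-arc boundary, to be the only delicate point. It is standard (Hardy space theory on chord-arc/Lipschitz-graph domains, cf. \cite{david1984operateurs}), and we will take it from that theory.

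\textbf{Differentiating.} Differentiating under the integral sign, which is legitimate because $z$ stays at distance $d(z,t)>0$ from the curve, gives
\[
    \partial_z^k\mathfrak U(z,t)=\frac{k!}{2\pi i}\int_{\mathbb R}\frac{\zeta_\beta(\beta,t)}{(\zeta(\beta,t)-z)^{k+1}}\,\mathfrak F(\beta,t)\,d\beta .
\]
By Cauchy--Schwarz and $|\zeta_\beta|\le\beta_1$, this reduces the claim to the kernel bound
\[
    \int_{\mathbb R}\frac{d\beta}{|\zeta(\beta,t)-z|^{2k+2}}\lesssim d^{-2k-1},\qquad d=d(z,t).
\]

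\textbf{Kernel bound.} Choose $\beta_*$ with $|\zeta(\beta_*)-z|=d$; such a point exists since the curve is closed and asymptotically flat. For all $\beta$ we then have $|\zeta(\beta)-z|\ge d$. By the chord-arc condition \eqref{chordchordarcarc},
\[
    |\zeta(\beta)-z|\ge |\zeta(\beta)-\zeta(\beta_*)|-d\ge \beta_0|\beta-\beta_*|-d .
\]
Hence $|\zeta(\beta)-z|\gtrsim d+|\beta-\beta_*|$, by taking the maximum of the two lower bounds. It follows that
\[
    \int_{\mathbb R}\frac{d\beta}{(d+|\beta-\beta_*|)^{2k+2}}\lesssim d^{-2k-1},
\]
with constants depending only on $\beta_0,\beta_1$. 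Taking square roots gives $|\partial_z^k\mathfrak U(z,t)|\lesssim d^{-k-1/2}\|\mathfrak F\|_{L^2}$, as claimed.
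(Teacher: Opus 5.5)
Your proof is the same as the paper's: Cauchy representation of $\partial_z^k\mathfrak U$ in terms of the trace $\mathfrak F$, then Cauchy--Schwarz, then the chord-arc lower bound $|\zeta(\beta,t)-z|\gtrsim d(z,t)+|\beta-\beta_*|$, which you derive more explicitly than the paper by taking the maximum of the lower bounds $d$ and $\beta_0|\beta-\beta_*|-d$. Like the paper, you take the representation formula for granted, but your sketch of it is circular as written, since you use that $\mathfrak U$ is a Cauchy integral to show it is one; the real assumption is that $\mathfrak U$ is the Cauchy integral of its trace (Hardy class), which is how the paper defines its holomorphic extension, and without it the bound fails, e.g.\ for $e^{iz^2}/(z-i)$ on the lower half-plane.
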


\begin{proof}
By the Cauchy representation formula for the lower chord-arc domain,
\[
    \partial_z^k \mathfrak U(z,t)
    =
    c_k
    \int_{\mathbb R}
    \frac{\mathfrak F(\beta,t)\zeta_\beta(\beta,t)}
         {(\zeta(\beta,t)-z)^{k+1}}\,d\beta .
\]
Hence, by Cauchy--Schwarz,
\[
    |\partial_z^k \mathfrak U(z,t)|
    \lesssim
    \|\mathfrak F(\cdot,t)\|_{L^2}
    \left\|
        \frac{1}{|\zeta(\cdot,t)-z|^{k+1}}
    \right\|_{L^2_\beta}.
\]
Let \(z_0=\zeta(\alpha_0,t)\) be a point on \(\Sigma(t)\) such that
\[
    |z-z_0|=d(z,t).
\]
Using the chord-arc condition, we have
\[
    |\zeta(\beta,t)-z|
    \gtrsim d(z,t)+|\beta-\alpha_0|.
\]
Therefore
\[
    \int_{\mathbb R}
    \frac{d\beta}
         {|\zeta(\beta,t)-z|^{2k+2}}
    \lesssim
    \int_{\mathbb R}
    \frac{d\beta}
         {(d(z,t)+|\beta-\alpha_0|)^{2k+2}}
    \lesssim
    d(z,t)^{-2k-1}.
\]
Taking the square root gives the desired bound.
\end{proof}

As an immediate consequence, if \(z\) belongs to the line segment joining the
two point vortices and the whole segment stays at distance at least \(H(t)\)
from the free boundary, then
\[
    |\partial_z^k \mathfrak U(z,t)|
    \lesssim
    H(t)^{-k-\frac12}\|\mathfrak F(t)\|_{L^2}.
\]
In particular,
\[
    |\mathfrak U_z(z,t)|
    \lesssim H(t)^{-\frac32}\|\mathfrak F(t)\|_{L^2},
    \qquad
    |\mathfrak U_{zz}(z,t)|
    \lesssim H(t)^{-\frac52}\|\mathfrak F(t)\|_{L^2}.
\]

We shall also use the same estimate for time derivatives of the holomorphic
remainder. If \(\mathfrak U_t(\cdot,t)\) is holomorphic in \(\Omega(t)\) and has
boundary trace \(\mathfrak F_t^{\rm hol}\), then
\[
    |\partial_z^k \mathfrak U_t(z,t)|
    \lesssim
    d(z,t)^{-k-\frac12}\|\mathfrak F_t^{\rm hol}(t)\|_{L^2}.
\]
Similarly,
\[
    |\partial_z^k \mathfrak U_{tt}(z,t)|
    \lesssim
    d(z,t)^{-k-\frac12}\|\mathfrak F_{tt}^{\rm hol}(t)\|_{L^2},
\]
provided the corresponding boundary traces are well defined.

We next record the elementary estimates for the vortex-induced boundary
velocity. Recall that in the dipole case
\[
    q(\alpha,t)
    =
    -\frac{\lambda i}{2\pi}
    \frac{z_1(t)-z_2(t)}
    {(\zeta(\alpha,t)-z_1(t))(\zeta(\alpha,t)-z_2(t))}.
\]
The following bounds express the fact that \(q\) gains inverse powers of the
distance from the point vortices to the free boundary.

\begin{lemma}[Estimates for the dipole field]
Assume that
\[
    d_{12}(t)=|z_1(t)-z_2(t)|\approx 1,
    \qquad
    H(t)=\min_{z\in[z_1,z_2]}\operatorname{dist}(z,\Sigma(t))\gg1.
\]
For every integer \(s\ge0\),
\[
    \|q(t)\|_{H^s}
    \lesssim
    \frac{\lambda}{H(t)^{\frac32}},
    \qquad
    \|q(t)\|_{W^{s,\infty}}
    \lesssim
    \frac{\lambda}{H(t)^2},
\]
where the constants may depend on \(s\) and on the chord-arc constants, but
not on \(H(t)\), \(\lambda\), or \(t\).
\end{lemma}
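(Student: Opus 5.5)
The key mechanism is the dipole cancellation: the two poles of $q$ nearly cancel, so $q$ behaves like $\lambda\, d_{12}/|\zeta-z_1|^2$ rather than $\lambda/|\zeta-z_1|$. To exploit it, write $q$ by partial fractions:
\[
  q(\alpha,t)=-\frac{\lambda i}{2\pi}\Bigl(\frac{1}{\zeta(\alpha,t)-z_1}-\frac{1}{\zeta(\alpha,t)-z_2}\Bigr)
  =-\frac{\lambda i}{2\pi}\int_0^1\frac{z_1-z_2}{(\zeta(\alpha,t)-w_\tau)^2}\,d\tau,
\]
with $w_\tau=z_2+\tau(z_1-z_2)\in[z_1,z_2]$. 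Since every $w_\tau$ lies on the segment, $\operatorname{dist}(w_\tau,\Sigma(t))\ge H(t)$. Let $\alpha_\tau$ be a nearest-point parameter, so that $|\zeta(\alpha_\tau)-w_\tau|=d(w_\tau,t)$. Exactly as in the proof of the Cauchy estimate lemma, the chord-arc condition gives
\[
  |\zeta(\beta,t)-w_\tau|\gtrsim H(t)+|\beta-\alpha_\tau|.
\]
Using $d_{12}\approx1$, this yields the pointwise bound
\[
  |q(\alpha,t)|\lesssim\lambda\sup_\tau\bigl(H+|\alpha-\alpha_\tau|\bigr)^{-2}.
\]
This immediately gives $\|q\|_{L^\infty}\lesssim\lambda H^{-2}$. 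Squaring and integrating $\int(H+|\beta|)^{-4}d\beta\sim H^{-3}$, after Minkowski in $\tau$, gives $\|q\|_{L^2}\lesssim\lambda H^{-3/2}$.

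For derivatives, differentiate the integral representation in $\alpha$. By Faà di Bruno, $\partial_\alpha^k(\zeta-w)^{-2}$ is a sum of terms
\[
  (\zeta-w)^{-2-m}\prod_{i=1}^m\partial_\alpha^{j_i}\zeta,\qquad \sum_i j_i=k,\ j_i\ge1 .
\]
Each factor $\partial_\alpha^{j}\zeta$ equals $1+\partial_\alpha^{j}\xi$ when $j=1$ and $\partial_\alpha^j\xi$ when $j\ge2$. These factors are bounded in $L^\infty$ (for $j\le s-1$, via Sobolev embedding of the regularity assumed on $\zeta_\alpha-1$ in the bootstrap). The extra powers $(\zeta-w)^{-m}$ only improve the bound, since $H\gg1$. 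The same weighted integral then gives $\lambda H^{-3/2-m}\le\lambda H^{-3/2}$ in $L^2$ and $\lambda H^{-2-m}$ in $L^\infty$.

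The main obstacle is the top-order terms, where a single factor $\partial_\alpha^{j}\xi$ with $j$ close to $s$ is controlled only in $L^2$ and not in $L^\infty$. For these I would put that factor in $L^2$ and the kernel $(\zeta-w)^{-2-m}$ in $L^\infty$, bounded by $H^{-2-m}\le H^{-2}$. This costs a factor $\lambda H^{-2}\|\xi_\alpha\|_{H^{s}}$, which is even better than $\lambda H^{-3/2}$ given the smallness/boundedness of $\zeta_\alpha-1$ in $H^s$. For the $W^{s,\infty}$ bound the top factor needs an $L^\infty$ bound on $\partial_\alpha^{s+1}\zeta$. Here I would either invoke Sobolev embedding with the $\dot H^{s+1}$ control of $\xi$, so that the constant depends on that norm (as permitted, the constants depending on the bootstrap/chord-arc data), or, if the available regularity falls short by one derivative, accept the statement at the level of derivatives for which $\partial_\alpha^j\xi\in L^\infty$ is available. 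All constants depend only on $s$, the chord-arc constants and the bootstrap bounds on $\xi$, and not on $H$, $\lambda$ or $t$.
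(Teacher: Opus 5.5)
Your argument is correct, and it organizes the dipole cancellation differently from the paper. The paper keeps the product form $q=-\frac{\lambda i}{2\pi}\frac{z_1-z_2}{g}$ with $g=(\zeta-z_1)(\zeta-z_2)$ and applies Fa\`a di Bruno to $1/g$. The resulting terms have two poles with mixed powers $(\zeta-z_1)^{-a}(\zeta-z_2)^{-b}$, $a+b\ge 2$. It then uses a two-pole weighted bound $\int|\zeta-z_1|^{-k}|\zeta-z_2|^{-l}\,d\beta\lesssim H^{1-k-l}$, proved by splitting near and far from the point of $\Sigma(t)$ nearest to $z_1$. Along the way it must track how the factors $\partial_\alpha^l g$, which grow like the distance to the vortices, are absorbed by the powers of $g^{-1}$.

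Your identity $q=-\frac{\lambda i}{2\pi}\int_0^1 (z_1-z_2)(\zeta-w_\tau)^{-2}\,d\tau$ reduces everything to a single-pole kernel. You then only need the one-point bound $|\zeta(\beta)-w|\gtrsim H+|\beta-\alpha_w|$ from the Cauchy lemma, a one-variable Fa\`a di Bruno expansion, and Minkowski in $\tau$. The price is that you use the distance of the whole segment $[z_1,z_2]$ to $\Sigma(t)$. That is exactly the hypothesis in the statement, and here it is comparable to $\min(H_1,H_2)$ since $d_{12}\approx 1\ll H$.

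Your route also treats the top-order term more carefully than the paper's sketch, which tacitly bounds every derivative of $\zeta$ in $L^\infty$. Putting the single highest derivative in $L^2$ and the kernel $(\zeta-w)^{-3}$ in $L^\infty$ is what actually justifies the $H^{s+1}$ bound in Proposition~\ref{pro_q}, where $\partial_\alpha^{s+1}\zeta$ is only in $L^2$.

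One correction to your last paragraph: you are off by one derivative. $\partial_\alpha^k q$ involves $\partial_\alpha^j\zeta$ only for $j\le k$, so the $W^{s,\infty}$ bound needs $\partial_\alpha^s\zeta\in L^\infty$, not $\partial_\alpha^{s+1}\zeta$. Moreover, if $\zeta_\alpha-1\in H^s$, then $\partial_\alpha^j\zeta\in L^\infty$ for all $j\le s$, not just $j\le s-1$, because $\partial_\alpha^{s-1}(\zeta_\alpha-1)\in H^1\subset L^\infty$. So your first option, Sobolev embedding, closes the argument with no loss. The fallback of proving the bound for fewer derivatives would give a weaker statement and is unnecessary. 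As in the paper, the constants really depend on these bounds for derivatives of $\zeta$, not only on the chord-arc constants.
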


\begin{proof}
We only sketch the proof, since the same argument will be used several times
below. Differentiating \(q\) in \(\alpha\), every term is a linear combination
of expressions of the form
\[
    \lambda\,
    \frac{
        P(\zeta_\alpha,\ldots,\partial_\alpha^m\zeta)
    }
    {(\zeta-z_1)^{a}(\zeta-z_2)^{b}},
\]
where \(a+b\ge 2\), and \(P\) is a polynomial in derivatives of \(\zeta\).
Under the small chord-arc perturbation assumption, the derivatives of
\(\zeta\) appearing in \(P\) are bounded in the norms considered here.

For the \(L^\infty\) bound, since
\[
    |\zeta(\alpha,t)-z_j(t)|\ge H(t),
\]
we immediately get
\[
    |\partial_\alpha^m q(\alpha,t)|
    \lesssim
    \lambda H(t)^{-2}.
\]
For the \(L^2\) bound, choose a nearest point
\(\zeta(\alpha_j,t)\) to \(z_j(t)\). The chord-arc condition gives
\[
    |\zeta(\alpha,t)-z_j(t)|
    \gtrsim H(t)+|\alpha-\alpha_j|.
\]
Thus, for \(p>1\),
\[
    \int_{\mathbb R}
    \frac{d\alpha}{|\zeta(\alpha,t)-z_j(t)|^p}
    \lesssim
    H(t)^{1-p}.
\]
Applying this estimate to the differentiated expression for \(q\) gives
\[
    \|\partial_\alpha^m q(t)\|_{L^2}
    \lesssim
    \lambda H(t)^{-\frac32}.
\]
This proves the lemma.
\end{proof}

We shall frequently use the preceding lemma in the following form. If, under
the bootstrap assumptions,
\[
    H(t)\gtrsim H_0+\lambda t,
\]
then
\[
    \|q(t)\|_{L^\infty}
    \lesssim
    \frac{\lambda}{(H_0+\lambda t)^2},
    \qquad
    \|q(t)\|_{L^2}
    \lesssim
    \frac{\lambda}{(H_0+\lambda t)^{3/2}}.
\]
In particular,
\[
    \int_0^\infty
    \frac{\lambda}{(H_0+\lambda t)^2}\,dt
    \lesssim
    H_0^{-1}.
\]
This integrability is the basic mechanism by which the vortex contribution
is treated as a perturbative forcing term in the energy estimates.

\section{Bootstrap assumptions}

In this section we formulate the bootstrap assumptions used in the proof of
the main theorem. Let \(s\ge 10\) be fixed, and let \(0<\delta_0\ll1\). Let
\(T>1\). We assume that a smooth solution exists on \([0,T]\), that the point
vortices remain inside the fluid domain, and that the following estimates
hold on \([0,T]\). The case $0\leq t\leq 1$ is handled by the local well-posedness theory. Throughout the rest of this paper, we suppose that $t>1$ and write
\[
    \xi:=\zeta-\alpha,
    \qquad
    H_j(t):=\operatorname{dist}(z_j(t),\Sigma(t)),
    \qquad
    H(t):=\min_{z\in[z_1(t),z_2(t)]}\operatorname{dist}(z,\Sigma(t))
\]
where $[z_1(t),z_2(t)]$ represents the line segment connecting $z_1$ and $z_2$, 
and
\[
    d_{12}(t):=|z_1(t)-z_2(t)|.
\]
We also recall that
\[
    D_t=\partial_t+b\partial_\alpha.
\]
The energy quantities \(\mathcal E_s\), \(\mathcal E_{s,L}\), and \(\mathcal X_s\) are defined by the left-hand
sides of the estimates below.

\subsection{Bootstrap assumptions}

We fix a large constant \(M\gg1\), independent of
\[
    t,\quad \epsilon,\quad \lambda,\quad H_0,
\]
and assume that the following estimates hold for all \(t\in[0,T]\).

\paragraph{(B1) Chord-arc and Taylor sign bounds.}
The interface remains a small chord-arc perturbation of the flat interface:
\begin{equation}\label{ass_diff}
    \frac12|\alpha-\beta|
    \le
    |\zeta(\alpha,t)-\zeta(\beta,t)|
    \le
    2|\alpha-\beta|,
    \qquad \alpha,\beta\in\mathbb R.
\end{equation}
Moreover, the Taylor coefficient satisfies
\[
    \frac12\le A(\alpha,t)\le 2,
    \qquad \alpha\in\mathbb R.
\]

\paragraph{(B2) High-order Sobolev energy.}
The basic Sobolev energy satisfies
\[
\begin{aligned}
    \mathcal E_s^{1/2}(t):=
    &\,
    \|D_t\zeta(t)\|_{H^{s+\frac12}}
    +
    \|\zeta_\alpha(t)-1\|_{H^s}
    +
    \|D_t^2\zeta(t)\|_{H^s}
    \\
    &\le M\epsilon .
\end{aligned}
\]

\paragraph{(B3) Scaling vector-field energy.}
The \(L_0\)-weighted energy satisfies
\[
\begin{aligned}
    \mathcal E_{s,L}^{1/2}(t):=
    &\,
    \|L_0(\zeta_\alpha-1)(t)\|_{H^{s-1}}
    +
    \|L_0D_t\zeta(t)\|_{H^{s-1}}
    +
    \|L_0D_t^2\zeta(t)\|_{H^{s-1}}
    \\
    &\le M\epsilon(1+t)^{\delta_0}.
\end{aligned}
\]

\paragraph{(B4) Pointwise decay.}
The dispersive norm satisfies
\begin{equation}\label{boot_X_s}
\begin{aligned}
    \mathcal X_s(t):=
    &\,
    \|\zeta_\alpha(t)-1\|_{W^{s-2,\infty}}
    +
    \|D_t^2\zeta(t)\|_{W^{s-2,\infty}}
    +
    \|\partial_\alpha D_t\zeta(t)\|_{W^{s-3,\infty}}
    \\
    &\le M\epsilon(1+t)^{-1/2}.
\end{aligned}
\end{equation}

\paragraph{(B5) Separation of the point vortices.}
The two vortices remain separated from each other:
\[
    \frac12 d_{12}(0)
    \le
    d_{12}(t)
    \le
    2d_{12}(0),
\]
where
\[
    d_{12}(0)=|z_1(0)-z_2(0)|\approx1.
\]

\paragraph{(B6) Retreat of the point vortices from the interface.}
The point vortices stay away from the free surface:
\begin{equation}\label{boot_H}
    H(t)\ge c_0(H_0+\lambda t),
    \qquad t\in[0,T],
\end{equation}
for some fixed constant \(c_0\in(0,\frac{1}{8\pi})\).

The estimates for \(b\), \(A-1\), \(q\), \(D_tq\), \(L_0q\), and the point
vortex accelerations will be derived from (B1)--(B6) in the next section.

\subsection{Bootstrap improvement}

The goal of the rest of the paper is to improve the above bounds. More
precisely, assuming (B1)--(B6) on \([0,T]\), we shall prove that, provided
\(\epsilon>0\) is sufficiently small and \(H_0\) is sufficiently large in the
parameter regime of Theorem~1.1, there exists a constant \(c_1>c_0\) such
that
\begin{equation}\label{boot_B_2}
    \mathcal E_s^{1/2}(t)\le \frac{M}{2}\epsilon,
\end{equation}
\begin{equation}\label{boot_B_3}
    \mathcal E_{s,L}^{1/2}(t)
    \le \frac{M}{2}\epsilon(1+t)^{\delta_0},
\end{equation}
\begin{equation}\label{boot_B_4}
    \mathcal X_s(t)
    \le \frac{M}{2}\epsilon(1+t)^{-1/2},
\end{equation}
\begin{equation}\label{boot_B_5}
    \frac34 d_{12}(0)
    \le d_{12}(t)\le
    \frac32 d_{12}(0),
\end{equation}
and
\begin{equation}\label{boot_B_6}
    H(t)\ge c_1(H_0+\lambda t).
\end{equation}
By the standard continuity argument, these improved estimates close the
bootstrap and imply global existence.

\subsection{The closure of bootstrap assumptions}
Now we close the bootstrap assumptions under the results established in Section \ref{sec_b}, \ref{sec_energy} and \ref{sec_decay}
\begin{proof}
    We close the bootstrap assumptions step by step.\\
    (B1): It follows from (B4) that
    $$
    \left|\frac{\zeta(\alpha,t)-\zeta(\beta,t)}{\alpha-\beta}-1\right|=\left|\frac{1}{\alpha-\beta}\int_\alpha^\beta( \zeta_s(s,t)-1)ds\right|\leq\norm{\zeta_\alpha(\cdot,t)-1}_{L^\infty}\leq M\epsilon.
    $$
    From \eqref{boot_X_s} we deduce that if $M\epsilon<\frac{1}{4}$, then
    $$
    \frac{3}{4}|\alpha-\beta|
    \le
    |\zeta(\alpha,t)-\zeta(\beta,t)|
    \le
    \frac{5}{4}|\alpha-\beta|,
    \qquad \alpha,\beta\in\mathbb R.
    $$
    Moreover, from Lemma \ref{lem_A} we deduce that
    $$
    |A-1|\leq C\epsilon^2+C\frac{\lambda(\epsilon+\lambda)}{H_0^2}\leq\frac{1}{4}.
    $$
    (B2)-(B4): We refer to Lemma \ref{lem_d_t_zeta_E}, Corollary \ref{cor_E}, Lemma \ref{lem_L_0D_t_zeta_E}, Corollary \ref{cor_E^L_0}, Lemma \ref{lem_away_infty} with $\mu=\frac{1}{5}$, Lemma \ref{lem_tilde_E}, Corollary \ref{cor_tilde_E} and recall \eqref{ini_z} and \eqref{lambda_H_0}. Note that $\tilde{C}$ is a constant independent of $M$, and we can assume that $M\gg \tilde{C}$ to close the bootstrap assumptions. Thus if $\epsilon$ and $c_s$ are small enough, \eqref{boot_B_2}-\eqref{boot_B_4} hold.\\
    (B5): From \eqref{z_1-z_2_v} we have
    \begin{equation}\label{d_12}
    \begin{aligned}
    |d_{12}(t)-d_{12}(0)|&\leq|z_1(t)-z_2(t)-z_1(0)+z_2(0)|\\
    &\leq
    \left|\int_0^t C\frac{\epsilon+\lambda}{H(\tau)^{\frac{3}{2}}}d\tau\right|\leq\left|\int_0^t C\frac{\epsilon+\lambda}{c_0^{\frac{3}{2}}(H_0+\lambda \tau)^{\frac{3}{2}}}d\tau\right|\leq\frac{C(\epsilon+\lambda)}{c_0^{\frac{3}{2}}H_0^{\frac{1}{2}}\lambda}.
    \end{aligned}
    \end{equation}
    By \eqref{lambda_H_0} we derive that
    $$
    |d_{12}(t)-d_{12}(0)|\leq Cc_0^{-\frac{3}{2}}c_s.
    $$
    If $c_s$ is small enough such that
    $$
    Cc_0^{-\frac{3}{2}}c_s\leq\frac{1}{4}d_{12}(0),
    $$
    then we obtain \eqref{boot_B_5}.\\
    (B6): Set $\hat{z}=z_1(T)-z_2(T)$. From \eqref{z_1-z_2_v} and \eqref{boot_H} we obtain that 
    \begin{equation}\label{z_1-z_2-hat_z}
    |z_1-z_2-\hat{z}|\leq \left|\int_t^T C\frac{\epsilon+\lambda}{H(\tau)^{\frac{3}{2}}}d\tau\right|\leq\frac{C(\epsilon+\lambda)}{c_0^{\frac{3}{2}}\lambda(H_0+\lambda t)^{\frac{1}{2}}}.
    \end{equation}
    So from \eqref{dot_z_1_exp} and note the fact that $\mathfrak{U}$ is holomorphic:
    $$
    \left|\dot{z_1}+\frac{\lambda i}{2\pi\bar{\hat{z}}}\right|\leq\left|\frac{\lambda i}{2\pi}\frac{\overline{z_1-z_2-\hat{z}}}{\overline{\hat{z}(z_1-z_2)}}\right|+|\bar{\mathfrak{U}}(z_1,t)|\leq\frac{C(\epsilon+\lambda)}{c_0^{\frac{3}{2}}(H_0+\lambda t)^{\frac{1}{2}}}+\frac{C\epsilon}{H^{\frac{1}{2}}}.
    $$
    Integration with respect to $t$ yields that
    $$
    \left|z_1(t)-z_1(0)+\frac{\lambda i}{2\pi\bar{\hat{z}}}t\right|\leq\left|\int_0^t\frac{C(\epsilon+\lambda)}{c_0^{\frac{3}{2}}(H_0+\lambda\tau)^{\frac{1}{2}}}+\frac{C\epsilon}{H(\tau)^{\frac{1}{2}}}d\tau\right|\leq C\left(\frac{\epsilon+\lambda}{c_0^{\frac{3}{2}}\lambda^{\frac{1}{2}}}+\frac{\epsilon}{\lambda^{\frac{1}{2}}}\right)t^{\frac{1}{2}}.
    $$
    Then we consider the vector $\hat{z}$ and recall \eqref{z_1-z_2-hat_z} to find that 
    $$
    |z_1(0)-z_2(0)-\hat{z}|\leq\frac{C(\epsilon+\lambda)}{c_0^{\frac{3}{2}}\lambda H_0^{\frac{1}{2}}}\leq \frac{Cc_s}{c_0^{\frac{3}{2}}}.
    $$
    Suppose that $\epsilon$ is small enough such that 
    $$
    |z_1(0)-z_2(0)-\hat{z}|\leq\frac{1}{10}|z_1(0)-z_2(0)|,
    $$
    then 
    $$
    \operatorname{Re}\hat{z}\geq \operatorname{Re}(z_1(0)-z_2(0))-|z_1(0)-z_2(0)-\hat{z}|\geq\frac{9}{10}
    $$
    which means
    $$
    \operatorname{Im}\frac{\lambda i}{2\pi \bar{\hat{z}}}=\frac{\lambda}{2\pi|\hat{z}|^2}\operatorname{Re}\hat{z}\geq\frac{1}{3\pi}\lambda.
    $$
    Thus for $\forall \alpha\in\mathbb{R}$ we have
    $$
    \begin{aligned}
    |z_1(t)-\zeta(\alpha,t)|\geq\ &\left|-\frac{\lambda i}{2\pi\bar{\hat{z}}}t+z_1(0)-\zeta(\alpha,0)\right|-\left|z_1(t)-z_1(0)+\frac{\lambda i}{2\pi\bar{\hat{z}}}t\right|-|\zeta(\alpha,0)-\zeta(\alpha,t)|\\
    \geq\ &\left|\operatorname{Im}\left(-\frac{\lambda i}{2\pi\bar{\hat{z}}}t+z_1(0)-\zeta(\alpha,0)\right)\right|-C\left(\frac{\epsilon+\lambda}{c_0^{\frac{3}{2}}\lambda^{\frac{1}{2}}}+\frac{\epsilon}{\lambda^{\frac{1}{2}}}\right)t^{\frac{1}{2}}-\int_0^t|\zeta_t(\alpha,\tau)|d\tau\\
    \geq\ &\frac{1}{3\pi}\lambda t+H_0-1-C\left(\frac{\epsilon+\lambda}{c_0^{\frac{3}{2}}\lambda^{\frac{1}{2}}}+\frac{\epsilon}{\lambda^{\frac{1}{2}}}\right)t^{\frac{1}{2}}-C\epsilon t^{\frac{3}{4}}.
    \end{aligned}
    $$
    From \eqref{lambda_H_0} and Young's inequality we deduce that for $\epsilon$ small enough and $H_0$ large enough
    $$
    |z_1(t)-\zeta(\alpha,t)|\geq\frac{7}{24\pi}\lambda t+\frac{1}{2}H_0-\frac{C}{c_0^{\frac{3}{2}}}\lambda^{\frac{1}{2}}t^{\frac{1}{2}}-C\epsilon t^{\frac{3}{4}}\geq\frac{1}{4\pi}\lambda t+\left(\frac{1}{2}H_0-\frac{C^2}{c_0^3}\pi-Cc_s\lambda H_0\right)\geq\frac{1}{4\pi}(\lambda t+H_0)
    $$
    From (B5) we know that $d_{12}\leq 2$. Triangle inequality implies that for $z\in [z_1(t),z_2(t)]$
    $$
    |z-\zeta(\alpha,t)|\geq|z_1-\zeta(\alpha,t)|-|z-z_1|\geq\frac{3}{16\pi}(\lambda t+H_0).
    $$ 
    Since $\alpha$ is chosen arbitrarily, we have proved \eqref{boot_B_6} with $c_1=\frac{3}{16\pi}$.
\end{proof}
\begin{proof}[Proof of Theorem \ref{main}]
From Lemma 5.3 and Lemma 5.4 in \cite{Wu2009} we obtain \eqref{main_one} and \eqref{main_two}. \eqref{main_three} follows from the closure of (B6).
\end{proof}

\section{Consequences of the bootstrap assumptions}\label{sec_b}

The goal of this section is to derive estimates for the point-vortex
trajectories, the dipole field \(q\), the good unknowns
\(\tilde\theta,\tilde\sigma\), and the coefficients \(b,A\). These estimates
will be used in the energy and decay arguments in Sections~\ref{sec_energy} and~\ref{sec_decay}. We split this section into four parts. In subsection \ref{subs_point}, we study the behavior of the point vortices and show that their depth grows approximately linearly in time. Subsection \ref{subs_dipole} suggests that due to the symmetry of the point vortices, their effect on the water-wave surface decays rapidly over time. Next we demonstrate that the new unknowns $\tilde{\theta}$ and $\tilde{\sigma}$ differ little from the old ones in subsection \ref{subs_good}. Finally, subsection \ref{subs_b} contains the estimates for $b$ and $A-1$ which are vital in energy estimates.

We introduce three useful lemmas whose proofs can be found in our previous work \cite{su2025new}.
\begin{lemma}\label{lem_D_t_zeta_L^infty}
    Assume the bootstrap assumptions. There holds
    \begin{equation}
    \begin{aligned}
    \norm{D_t\zeta}_{L^\infty}\lesssim&\ \epsilon t^{-\frac{1}{4}},\\
        \norm{\partial_\alpha^{s-1}D_t\zeta}_{L^\infty}\lesssim&\ \epsilon t^{-\frac{1}{4}}\ln (2+t),\\
        \norm{\partial_\alpha ^{s-1}D^2_t\zeta}_{L^\infty}+\norm{\partial_\alpha^s \zeta}_{L^\infty}\lesssim&\ \epsilon t^{-\frac{1}{6}}\ln (2+t).
    \end{aligned}
    \end{equation}
\end{lemma}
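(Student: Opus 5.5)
The three bounds in Lemma \ref{lem_D_t_zeta_L^infty} all follow from one interpolation principle. For a function that is bounded in $L^2$-based Sobolev norms by $\epsilon$ and in $L^\infty$ by $\epsilon t^{-1/2}$, Gagliardo--Nirenberg interpolation gives intermediate $L^\infty$ decay rates. Each rate is fixed by how many derivatives separate the quantity from one controlled by the dispersive norm (B4).

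\textbf{The bound $\|D_t\zeta\|_{L^\infty}\lesssim\epsilon t^{-1/4}$.} (B4) controls $\partial_\alpha D_t\zeta$ in $L^\infty$ but not $D_t\zeta$ itself. I will use the one-dimensional interpolation
\[
\|f\|_{L^\infty}^2\lesssim \|f\|_{L^2}\,\|\partial_\alpha f\|_{L^\infty}^{}\cdot R+\|f\|_{L^2}^2R^{-1},
\]
valid for every $R>0$. It comes from writing $f(\alpha)$ as the average of $f$ over an interval of length $R$ plus an error of size $R\|f_\alpha\|_{L^\infty}$. Concretely, this gives
\[
|f(\alpha)|\le R^{-1/2}\|f\|_{L^2}+R\|f_\alpha\|_{L^\infty}.
\]
Taking $f=D_t\zeta$, (B2) gives $\|f\|_{L^2}\lesssim\epsilon$ and (B4) gives $\|f_\alpha\|_{L^\infty}\lesssim\epsilon t^{-1/2}$. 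The choice $R=t^{1/3}$ would give $\epsilon t^{-1/6}$, which is weaker than claimed. So to reach $t^{-1/4}$ the argument must exploit the dispersive structure through the vector field $L_0$, or through $\Omega_0$ via the identity $\Omega_0 Q_\alpha=L_0Q_t-\tfrac t2(\partial_t^2-i\partial_\alpha)Q$.

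The route I plan is the following. Write $\partial_\alpha D_t\zeta$ in terms of $D_t^2\zeta$ and $L_0$-quantities:
\[
\alpha\partial_\alpha D_t\zeta=L_0D_t\zeta-\tfrac t2\partial_tD_t\zeta,
\]
and $\partial_t D_t\zeta\approx D_t^2\zeta$ up to $b\partial_\alpha$ terms. In the region $|\alpha|\gtrsim t$ this shows that $D_t\zeta$ behaves like $t^{-1}$ times $L^2$-bounded quantities. In the region $|\alpha|\lesssim t$, the equation $D_t^2\zeta-iA\zeta_\alpha=-i$ converts the $\alpha$-derivative into $D_t^2\zeta$, which is controlled in $L^\infty$ by $\epsilon t^{-1/2}$ from (B4). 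Then a dyadic localization at scale $\sim t^{1/2}$, combined with the $L^2$ bound, should yield $t^{-1/4}$. This matches the dispersion relation: a frequency-$\xi$ packet has group velocity $\sim|\xi|^{-1/2}$, and at scale $t^{1/2}$ one gets $\epsilon\, t^{-1/2}\cdot t^{1/4}$.

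\textbf{The higher-derivative bounds.} The bound on $\|\partial_\alpha^{s-1}D_t\zeta\|_{L^\infty}$ uses the same decomposition at top regularity. Since (B4) only reaches $W^{s-3,\infty}$ for $\partial_\alpha D_t\zeta$, I interpolate between $\partial_\alpha^{s-2}D_t\zeta$ (which has $t^{-1/2}$ decay in $L^\infty$) and $\partial_\alpha^{s}D_t\zeta\in H^{1/2}$. Here I use the $H^{s+1/2}$ control from (B2) together with a logarithmic Sobolev-type endpoint: the $H^{1/2}$ endpoint fails to embed in $L^\infty$ only logarithmically, which produces the $\ln(2+t)$ loss. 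The remaining bounds are $\partial_\alpha^{s-1}D_t^2\zeta$ and $\partial_\alpha^s\zeta$. Both are controlled in $H^1$ at the borderline of (B2) relative to (B4)'s $W^{s-2,\infty}$. Interpolating $L^\infty$ decay $t^{-1/2}$ against an $H^1$ norm that is only $L^2$-bounded, with frequency splitting at $t^{\gamma}$, gives $t^{-1/6}\ln(2+t)$ after optimizing $\gamma$.

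\textbf{Main obstacle.} The hardest step is the first estimate. The rate $t^{-1/4}$ needs the vector-field input; plain interpolation gives only $t^{-1/6}$. The difficulty is the low-frequency part of $D_t\zeta$, which is exactly the primitive not controlled by the initial data. I would handle it through $L_0D_t\zeta$ from (B3), which costs a $t^{\delta_0}$ factor, absorbing the $\delta_0$ loss with the spare decay in the region $|\alpha|\gtrsim t$.
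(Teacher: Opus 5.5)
The paper gives no proof of this lemma; it takes it from the authors' earlier work, so I judge your plan on its own.

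\textbf{Second and third bounds.} These are fine. They are plain Gagliardo--Nirenberg interpolation between (B4) and (B2):
\[
\|g'\|_{L^\infty}\lesssim\|g\|_{L^\infty}^{1/2}\|g\|_{\dot H^{5/2}}^{1/2},\qquad g=\partial_\alpha^{s-2}D_t\zeta,
\]
gives $\epsilon t^{-1/4}$, even without the logarithm. Likewise $\|g'\|_{L^\infty}\lesssim\|g\|_{L^\infty}^{1/3}\|g''\|_{L^2}^{2/3}$ with $g=\partial_\alpha^{s-2}D_t^2\zeta$ or $g=\partial_\alpha^{s-1}\zeta$ gives $\epsilon t^{-1/6}$.

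\textbf{First bound: the gap.} The identity $\alpha\partial_\alpha D_t\zeta=L_0D_t\zeta-\frac t2\partial_tD_t\zeta$ controls $\partial_\alpha D_t\zeta$, not $D_t\zeta$, and it gains only when $|\alpha|\gg t$. Averaging $D_t\zeta$ over a window of length $R$ around $\alpha_0$ with it gives
\[
|D_t\zeta(\alpha_0)|\lesssim R^{-1/2}\epsilon+R^{1/2}|\alpha_0|^{-1}\epsilon t^{\delta_0}+R\,t^{1/2}|\alpha_0|^{-1}\epsilon .
\]
This is $O(\epsilon t^{-1/4})$ only when $|\alpha_0|\gtrsim t^{5/4}$. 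In the region $|\alpha_0|\sim t$, where the frequency-one waves sit, the best choice $R=t^{1/3}$ gives only $\epsilon t^{-1/6}$. So there is no spare decay in $|\alpha|\gtrsim t$.

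$L_0$ alone cannot fix this. Trading $\partial_tD_t\zeta\approx i(\zeta_\alpha-1)$ for vector fields brings in $L_0(\zeta-\alpha)$, which is exactly the uncontrolled low-frequency primitive.

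Your treatment of $|\alpha|\lesssim t$ fails too, for two reasons:
\begin{enumerate}
\item The equation turns $\zeta_\alpha$, not $\partial_\alpha D_t\zeta$, into $D_t^2\zeta$.
\item Even $|\partial_\alpha D_t\zeta|\lesssim\epsilon t^{-1/2}$ bounds the oscillation of $D_t\zeta$ over a window of length $t^{1/2}$ only by $\epsilon$.
\end{enumerate}
Localizing at scale $t^{1/2}$ requires a H\"older-$\frac12$ oscillation bound of size $t^{-1/2}$, and neither of your ingredients provides it.

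\textbf{The missing ingredient is $\Omega_0$.} You mention it and then never use it. Write $f=D_t\zeta=\frac{2}{it}(\Omega_0f-\alpha f_t)$. Then
\[
|f(\alpha)-f(\beta)|\le\frac2t|\alpha-\beta|^{1/2}\|\partial_\alpha\Omega_0f\|_{L^2}+\frac2t\bigl(|\alpha|+|\beta|\bigr)\|\partial_tD_t\zeta\|_{L^\infty}.
\]
This needs only $\partial_\alpha\Omega_0D_t\zeta=\Omega_0\partial_\alpha D_t\zeta+\partial_tD_t\zeta$, not the uncontrolled $\Omega_0D_t\zeta$. The former is controlled at size $\epsilon t^{\delta_0}$ from (B3) and the equation, via $\Omega_0\partial_\alpha=L_0\partial_t-\frac t2(\partial_t^2-i\partial_\alpha)$; the paper uses bounds of this kind throughout. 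Also $\|\partial_tD_t\zeta\|_{L^\infty}\lesssim\epsilon t^{-1/2}$ by (B4).

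Now average over $|\beta-\alpha_0|\le t^{1/2}$:
\begin{itemize}
\item For $|\alpha_0|\le t^{5/4}$, the $\Omega_0$ estimate gives $|f(\alpha_0)|\lesssim\epsilon t^{-1/4}+\epsilon t^{-3/4+\delta_0}$.
\item For $|\alpha_0|\ge t^{5/4}$, your $L_0$ identity with the same $R=t^{1/2}$ gives $\epsilon t^{-1/4}+\epsilon t^{-1+\delta_0}$.
\end{itemize}
The exponent $\frac14$ comes from balancing the two regions at $|\alpha_0|\sim t^{5/4}$.
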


\begin{lemma}\label{lem_Hf_infty}
    Assume the bootstrap assumptions. If $f\in H^1\cap L^\infty$, then
    \begin{equation}
        \norm{\mathfrak{H}f}_{L^\infty}\lesssim\frac{1}{t^2}\norm{f}_{H^1}+\norm{f}_{L^\infty}\ln (2+t).
    \end{equation}
\end{lemma}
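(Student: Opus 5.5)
The estimate is a weighted $L^\infty$ bound for $\mathfrak{H}f$, with a logarithmic loss on $\|f\|_{L^\infty}$ and a strongly decaying weight on $\|f\|_{H^1}$. I would prove it by splitting the principal value integral into a near region, an intermediate region and a far region, using only the chord-arc bound (B1) and the Sobolev bound (B2).

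Fix $\alpha$ and write
\[
\mathfrak H f(\alpha)=\frac{1}{\pi i}\,\mathrm{p.v.}\int\frac{\zeta_\beta(\beta)f(\beta)}{\zeta(\alpha)-\zeta(\beta)}\,d\beta .
\]
Split $\mathbb R$ into three regions: $|\alpha-\beta|<\delta$, $\delta\le|\alpha-\beta|\le R$, and $|\alpha-\beta|>R$. I would take $\delta=t^{-2}$ and $R=t^{2}$ (any polynomial powers of $t$ would work).

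\emph{Near region.} Write $f(\beta)=f(\alpha)+(f(\beta)-f(\alpha))$.
\begin{itemize}
\item For the difference term, use $|f(\beta)-f(\alpha)|\le |\alpha-\beta|^{1/2}\|f_\alpha\|_{L^2}$ together with $|\zeta(\alpha)-\zeta(\beta)|\ge\frac12|\alpha-\beta|$ from (B1). This gives a contribution $\lesssim \delta^{1/2}\|f\|_{H^1}$.
\item For the constant term $f(\alpha)$, the kernel integral
\[
\mathrm{p.v.}\int_{|\alpha-\beta|<\delta}\frac{\zeta_\beta}{\zeta(\alpha)-\zeta(\beta)}\,d\beta=-\log\frac{\zeta(\alpha)-\zeta(\alpha+\delta)}{\zeta(\alpha)-\zeta(\alpha-\delta)}
\]
is bounded by an absolute constant. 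This holds because $\zeta_\alpha\approx 1$ in $L^\infty$, so the two chords are nearly antipodal and the argument of the logarithm stays near $-1$ with a controlled branch.
\end{itemize}

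\emph{Intermediate region.} Here the kernel is bounded by $2/|\alpha-\beta|$ and $|\zeta_\beta|\le 2$, so the contribution is at most $C\|f\|_{L^\infty}\log(R/\delta)\lesssim \|f\|_{L^\infty}\ln(2+t)$.

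\emph{Far region.} Apply Cauchy--Schwarz with $\|1/(\alpha-\beta)\|_{L^2(|\alpha-\beta|>R)}\sim R^{-1/2}$. This gives a bound $\lesssim R^{-1/2}\|f\|_{L^2}$.

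Collecting the three regions,
\[
\|\mathfrak H f\|_{L^\infty}\lesssim (\delta^{1/2}+R^{-1/2})\|f\|_{H^1}+\|f\|_{L^\infty}\ln(2+t).
\]
This yields the lemma in the form stated in \cite{su2025new} once the powers are adjusted. If one insists on the exact factor $t^{-2}$, it suffices to choose $\delta=t^{-4}$ and $R=t^{4}$, which costs only a constant factor in the logarithm.

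The main obstacle is the near-region constant term. Its cancellation relies on the curve being nearly straight at scale $\delta$, which (B1) and (B4) supply through $\|\zeta_\alpha-1\|_{L^\infty}\lesssim M\epsilon$. All the other steps are routine.
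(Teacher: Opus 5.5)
Your proof is correct. With cut-offs at $|\alpha-\beta|=t^{-4}$ and $t^{4}$, the three regions contribute $t^{-2}\|f\|_{H^1}$, then $\|f\|_{L^\infty}\ln(2+t)$, then $t^{-2}\|f\|_{L^2}$. The near-diagonal term $O(1)\,|f(\alpha)|$ is absorbed into the middle contribution because $\ln(2+t)\ge\ln 3$.

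The paper does not reprove this lemma; it quotes it from \cite{su2025new}. Your near/intermediate/far decomposition is the same device the paper uses for $S_{12}$ in the proof of Lemma~\ref{lem_S_1}.

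One small correction: your identity for the truncated kernel integral keeps only the outer endpoint term. The correct formula is
\[
\int_{\eta<|\alpha-\beta|<\delta}\frac{\zeta_\beta\,d\beta}{\zeta(\alpha)-\zeta(\beta)}
=\log\frac{\zeta(\alpha)-\zeta(\alpha-\delta)}{\zeta(\alpha)-\zeta(\alpha+\delta)}
-\log\frac{\zeta(\alpha)-\zeta(\alpha-\eta)}{\zeta(\alpha)-\zeta(\alpha+\eta)},
\]
where $\log(\zeta(\alpha)-\zeta(\beta))$ is taken continuous in $\beta$ on each half-interval. This vanishes for the flat curve, whereas your expression gives $\log(-1)$. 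Since $\|\zeta_\alpha-1\|_{L^\infty}\le M\epsilon$, both ratios lie within $O(M\epsilon)$ of $-1$, so the difference is $O(M\epsilon)$ uniformly in $\eta$. Your conclusion therefore stands.

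Note also which assumptions you actually use. The smallness of $\zeta_\alpha-1$ in $L^\infty$ from (B4) is what prevents the argument of $\zeta(\alpha)-\zeta(\beta)$ from winding. The chord-arc bound (B1) alone would allow spiral-type curves, and (B2) is not needed at all.
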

\begin{lemma}\label{lem_real_proj}
    Let f be a real-valued function. Suppose that $(I-\mathfrak{H})f=g$ or $(I-\mathfrak{H})f\bar\zeta_\alpha=g$, then
    for $0\leq k\leq s$,
    $$
        \norm{f}_{H^k}\lesssim\norm{g}_{H^k}.
    $$
    Furthermore, for $0\leq l\leq s-2$,
    $$
    \norm{f}_{W^{l,\infty}}\lesssim\norm{g}_{W^{l,\infty}}.
    $$
\end{lemma}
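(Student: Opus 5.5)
The plan is to exploit that, for the flat interface, $\mathbb H=\frac{1}{\pi i}\,\mathrm{p.v.}\int\frac{\cdot}{\alpha-\beta}\,d\beta$ maps real-valued functions to purely imaginary ones. Hence $\operatorname{Re}(I-\mathbb H)f=f$ for real $f$, and the whole lemma is a perturbation argument around this identity. For real $f$ with $(I-\mathfrak H)f=g$, taking real parts gives
\[
f=\operatorname{Re} g+\operatorname{Re}\,\mathfrak H f
 =\operatorname{Re} g+\operatorname{Re}\,(\mathfrak H-\mathbb H)f .
\]
The second case reduces to the first by writing $f=f\bar\zeta_\alpha-f(\bar\zeta_\alpha-1)$. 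This gives
\[
(I-\mathfrak H)f=g-(I-\mathfrak H)\bigl(f(\bar\zeta_\alpha-1)\bigr),
\]
and the correction is bounded in $H^k$ by $\|\zeta_\alpha-1\|_{W^{k,\infty}\cap H^{k}}\|f\|_{H^k}\lesssim M\epsilon\|f\|_{H^k}$, using Lemma~\ref{boundednesshilbert} and the commutator formula $[\partial_\alpha,\mathfrak H]f=[\zeta_\alpha,\mathfrak H]\frac{f_\alpha}{\zeta_\alpha}$ to move derivatives through $\mathfrak H$. So in both cases it suffices to show that the operator $T:=\operatorname{Re}(\mathfrak H-\mathbb H)$, restricted to real functions, has norm $\lesssim M\epsilon$ on $H^k$ ($k\le s$) and on $W^{l,\infty}$ ($l\le s-2$). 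Once this holds, the error is absorbed into the left side because $M\epsilon\ll1$.

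For $H^k$, the plan is to write
\[
(\mathfrak H-\mathbb H)f=\frac{1}{\pi i}\int\Bigl(\frac{\zeta_\beta}{\zeta(\alpha)-\zeta(\beta)}-\frac{1}{\alpha-\beta}\Bigr)f(\beta)\,d\beta .
\]
The kernel is a Calder\'on commutator-type kernel built from $\frac{\zeta_\beta-1}{\zeta(\alpha)-\zeta(\beta)}$ and $\frac{\xi(\alpha)-\xi(\beta)}{(\alpha-\beta)(\zeta(\alpha)-\zeta(\beta))}$. By Coifman--McIntosh--Meyer type bounds under (B1), its $L^2$ operator norm is $\lesssim\|\zeta_\alpha-1\|_{L^\infty}\lesssim M\epsilon$ by (B4). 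Higher derivatives are handled by commuting $\partial_\alpha^j$ through $\mathfrak H$ with the identity above. The resulting terms are either of the same form acting on $\partial_\alpha^j f$, or contain at least one factor $\partial_\alpha^m\zeta$ with $m\ge2$ (hence of size $M\epsilon$), with the lower-order factor of $f$ placed in $L^\infty$ or $L^2$ as appropriate; the bound uses (B2) for the top-order factors.

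The main obstacle is the $W^{l,\infty}$ bound, since $\mathfrak H$ is not bounded on $L^\infty$. The point is that only the real part appears. For real $f$,
\[
Tf(\alpha)=\int K(\alpha,\beta)f(\beta)\,d\beta,\qquad K=\frac{1}{\pi}\operatorname{Im}\frac{\zeta_\beta}{\zeta(\alpha)-\zeta(\beta)}
\]
(up to sign), and the real Cauchy-kernel singularity $\frac{1}{\alpha-\beta}$ cancels. I will show $\sup_\alpha\int|K(\alpha,\beta)|\,d\beta\lesssim M\epsilon$ in three regions.
\begin{itemize}
\item For $|\alpha-\beta|\le1$, Taylor expansion gives $|K|\lesssim\|\zeta_{\alpha\alpha}\|_{L^\infty}+\|\zeta_\alpha-1\|_{L^\infty}$.
\item For $|\alpha-\beta|\ge1$, the part $\frac{|\zeta_\beta-1|}{|\alpha-\beta|}$ is integrable by Cauchy--Schwarz against $\|\zeta_\alpha-1\|_{L^2}$.
\item Also for $|\alpha-\beta|\ge1$, the part $\frac{|\operatorname{Im}(\xi(\alpha)-\xi(\beta))|}{|\alpha-\beta|^2}$ is controlled via $|\xi(\alpha)-\xi(\beta)|\le|\alpha-\beta|^{1/2}\|\zeta_\alpha-1\|_{L^2}$, which makes the integrand $O(|\alpha-\beta|^{-3/2})$ and hence integrable.
\end{itemize}
All of these quantities are $\lesssim M\epsilon$ by (B2) and (B4). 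For derivatives up to order $l\le s-2$, the same kernel bound is applied to the terms generated by the commutator identities. The extra factors are $\partial_\alpha^m\zeta$ with $m\le s-1$, bounded in $L^\infty$ by Lemma~\ref{lem_D_t_zeta_L^infty} and (B4). Absorbing $M\epsilon\|f\|_{W^{l,\infty}}$ then finishes the proof.
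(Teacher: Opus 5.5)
Your first case is sound and is the natural route; the paper defers this lemma to \cite{su2025new}. For real $f$ you have $f=\operatorname{Re}g+\frac12(\mathfrak H+\bar{\mathfrak H})f$, and $\frac12(\mathfrak H+\bar{\mathfrak H})$ has the real kernel $\frac1\pi\operatorname{Im}\frac{\zeta_\beta}{\zeta(\alpha)-\zeta(\beta)}$. This kernel is bounded near the diagonal by $\norm{\zeta_{\alpha\alpha}}_{L^\infty}$. Its far field is absolutely integrable, because the small factors there are $\zeta_\beta(\beta)-1\in L^2_\beta$ and $\xi(\alpha)-\xi(\beta)=O(|\alpha-\beta|^{1/2})$.

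\textbf{Gap: the second case in $W^{l,\infty}$.} You check the reduction $f=f\bar\zeta_\alpha-f(\bar\zeta_\alpha-1)$ only in $H^k$. In $W^{l,\infty}$ you would need $\norm{(I-\mathfrak H)(f(\bar\zeta_\alpha-1))}_{W^{l,\infty}}\lesssim M\epsilon\norm{f}_{W^{l,\infty}}$, and this is false: $f(\bar\zeta_\alpha-1)$ is complex-valued, so the cancellation you rely on is lost. Already at the flat level,
\[
\operatorname{Re}(I-\mathbb H)\bigl(f(\bar\zeta_\alpha-1)\bigr)=f\operatorname{Re}(\zeta_\alpha-1)+\frac1\pi\,\mathrm{p.v.}\int\frac{f(\beta)\operatorname{Im}\zeta_\beta(\beta)}{\alpha-\beta}\,d\beta .
\]
The last term is a genuine Hilbert transform of $f\operatorname{Im}\zeta_\alpha$ and is not bounded by $\norm{f}_{L^\infty}$; a jump-like $f$ produces a logarithm. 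Bounding it costs a derivative of $f$ or a logarithm, so there is nothing you can absorb. This is the obstacle you identified for the first case, reappearing in the correction term.

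\textbf{How to repair it.} The second case needs its own argument, and with your tools it comes with an extra lower-order term. Two options:
\begin{enumerate}
\item Multiply by $\zeta_\alpha$ before taking real parts:
\[
\operatorname{Re}(\zeta_\alpha g)=|\zeta_\alpha|^2f-\frac1\pi\int\operatorname{Im}\frac{\zeta_\alpha(\alpha)}{\zeta(\alpha)-\zeta(\beta)}\,|\zeta_\beta|^2f(\beta)\,d\beta .
\]
This removes the diagonal singularity. The far field, however, still contains $\operatorname{Im}\zeta_\alpha(\alpha)\operatorname{Re}\frac{1}{\zeta(\alpha)-\zeta(\beta)}$. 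Its small factor sits at $\alpha$ rather than at $\beta$, so it is not integrable in $\beta$. You can only bound it by $\norm{\zeta_\alpha-1}_{L^\infty}\norm{f}_{L^2}\lesssim\norm{\zeta_\alpha-1}_{L^\infty}\norm{g}_{L^2}$.
\item Use Lemma~\ref{lem_Hf_infty}:
\[
\norm{(I-\mathfrak H)(f(\bar\zeta_\alpha-1))}_{L^\infty}\lesssim t^{-2}\norm{f(\bar\zeta_\alpha-1)}_{H^1}+\ln(2+t)\norm{\zeta_\alpha-1}_{L^\infty}\norm{f}_{L^\infty}.
\]
The last term is absorbable thanks to the $t^{-1/2}$ decay in (B4).
\end{enumerate}
Either way you obtain the $W^{l,\infty}$ bound only up to an additional Sobolev norm of $g$ with a small or decaying coefficient. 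That term is harmless in the applications, but it is not the inequality as stated, and you must account for it.

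\textbf{Two smaller points.}
\begin{enumerate}
\item For $k=s$ your product bound uses $\norm{\zeta_\alpha-1}_{W^{s,\infty}}$, which the bootstrap does not provide. Use the Moser estimate together with $\norm{f}_{L^\infty}\lesssim\norm{f}_{H^1}$ instead.
\item In the first case, the derivative commutators $\operatorname{Re}[\zeta_\alpha,\mathfrak H]\frac{\partial_\alpha f}{\zeta_\alpha}$ do not have absolutely integrable kernels. Their far field again carries the factor $\zeta_\alpha(\alpha)-1$ at $\alpha$. There you must integrate by parts in $\beta$ to move the derivative off $f$; the remark that ``the same kernel bound applies'' does not cover them.
\end{enumerate}
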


Deriving $L^\infty$ bounds for certain quantities calls for more delicate analysis. Thus the following lemma will be of use in the latter part of this section:
\begin{lemma}\label{lem_S_1}
    Consider the integral$$
    S_1=\int\frac{g(\alpha,t)-g(\beta,t)}{(\zeta(\alpha,t)-\zeta(\beta,t))^2}f(\beta,t)d\beta
    $$
for $f(\cdot,t)\in H^1\cap L^\infty$ and $g_\alpha(\cdot,t)\in W^{1,\infty}$. We have
\begin{equation}\label{S_1}
    \norm{S_{1}}_{L^\infty}\lesssim\frac{1}{t^\frac{1}{2}}\norm{g_\alpha}_{W^{1,\infty}}\norm{f}_{H^1}+\ln (2+t)\norm{g_\alpha}_{L^\infty}\norm{f}_{L^\infty}.
    \end{equation}
\end{lemma}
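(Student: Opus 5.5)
We treat $S_1$ as a singular integral of Calderón commutator type and split the $\beta$-integration into a near region $|\alpha-\beta|\le \rho$ and a far region $|\alpha-\beta|>\rho$. We will eventually take $\rho=t^{-2}$ or a comparable negative power of $t$. Throughout we use the chord-arc bound \eqref{ass_diff}, so $|\zeta(\alpha)-\zeta(\beta)|\ge \frac12|\alpha-\beta|$. The kernel $\frac{g(\alpha)-g(\beta)}{(\zeta(\alpha)-\zeta(\beta))^2}$ then behaves like $\frac{g_\alpha}{\zeta_\alpha^2(\alpha-\beta)}$, a Hilbert-type kernel, plus a remainder.

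\textbf{Far region.} Here we bound directly $|g(\alpha)-g(\beta)|\le \|g_\alpha\|_{L^\infty}|\alpha-\beta|$, so the integrand is controlled by $\|g_\alpha\|_{L^\infty}|f(\beta)|/|\alpha-\beta|$.
\begin{itemize}
\item On $\rho<|\alpha-\beta|\le 1$ (or up to a threshold $R$), this gives $\|g_\alpha\|_{L^\infty}\|f\|_{L^\infty}\ln(R/\rho)$.
\item On $|\alpha-\beta|>R$, Cauchy–Schwarz gives $\|g_\alpha\|_{L^\infty}\|f\|_{L^2}R^{-1/2}$.
\end{itemize}
Choosing $R=t$ and $\rho=t^{-1}$, both logarithms are $\lesssim\ln(2+t)$. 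The tail term is $\lesssim t^{-1/2}\|g_\alpha\|_{L^\infty}\|f\|_{H^1}$, which fits the first term of \eqref{S_1}.

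\textbf{Near region.} Here we must exploit cancellation, since the kernel is of principal-value size. We Taylor expand:
\begin{itemize}
\item $g(\alpha)-g(\beta)=g_\alpha(\alpha)(\alpha-\beta)+O(\|g_{\alpha\alpha}\|_{L^\infty}|\alpha-\beta|^2)$;
\item $\zeta(\alpha)-\zeta(\beta)=\zeta_\alpha(\alpha)(\alpha-\beta)+O(\|\zeta_{\alpha\alpha}\|_{L^\infty}|\alpha-\beta|^2)$.
\end{itemize}
The leading piece is $\frac{g_\alpha(\alpha)}{\zeta_\alpha(\alpha)^2}\,p.v.\!\int_{|\alpha-\beta|<\rho}\frac{f(\beta)}{\alpha-\beta}d\beta$. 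By odd symmetry we may replace $f(\beta)$ by $f(\beta)-f(\alpha)$, which yields $\lesssim \|g_\alpha\|_{L^\infty}\|f_\beta\|_{L^2}\rho^{1/2}$. The remainder integrand is bounded by $(\|g_{\alpha\alpha}\|_{L^\infty}+\|g_\alpha\|_{L^\infty}\|\zeta_{\alpha\alpha}\|_{L^\infty})|f(\beta)|$. Integrated over $|\alpha-\beta|<\rho$, it gives $\lesssim\|g_\alpha\|_{W^{1,\infty}}\rho^{1/2}\|f\|_{L^2}$, using Cauchy–Schwarz and $\|\zeta_{\alpha\alpha}\|_{L^\infty}\lesssim 1$ from (B4). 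With $\rho\le t^{-1}$, all near-region contributions are $\lesssim t^{-1/2}\|g_\alpha\|_{W^{1,\infty}}\|f\|_{H^1}$.

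\textbf{Main obstacle.} The delicate point is the principal-value step. Symmetrizing the truncated integral requires that the leading kernel be exactly odd in $\alpha-\beta$. Any non-odd correction must be absorbed into the quadratic remainder, so the expansion of $(\zeta(\alpha)-\zeta(\beta))^{-2}$ must be done carefully. We will write
\[
\frac{1}{(\zeta(\alpha)-\zeta(\beta))^2}-\frac{1}{\zeta_\alpha(\alpha)^2(\alpha-\beta)^2}=O\!\left(\frac{\|\zeta_{\alpha\alpha}\|_{L^\infty}}{|\alpha-\beta|}\right)
\]
via the chord-arc bound. Multiplied by $|g(\alpha)-g(\beta)|\lesssim\|g_\alpha\|_{L^\infty}|\alpha-\beta|$, this correction is bounded and integrable on the short interval. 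Summing the near and far contributions then gives \eqref{S_1}.
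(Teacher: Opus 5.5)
Your argument is correct, and its skeleton matches the paper's proof. The paper also cuts at $|\alpha-\beta|=t^{-1}$ and $|\alpha-\beta|=t$. On the inner region it Taylor-expands $g$ and symmetrizes, then uses Cauchy--Schwarz and Hardy. The middle annulus gives $\ln(2+t)$, and Cauchy--Schwarz handles the tail.

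The difference is where the curvature of $\zeta$ is handled.

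\emph{The paper's route.} It first subtracts the flat kernel, writing $S_1=S_{11}+S_{12}$, where $S_{12}$ has kernel $(g(\alpha)-g(\beta))/(\alpha-\beta)^2$. The three-region splitting is then done only for $S_{12}$, whose leading part is exactly odd. The correction $S_{11}$ has kernel numerators carrying the factor $(\zeta-\alpha)(\alpha)-(\zeta-\alpha)(\beta)$. It is bounded in $L^\infty$ through $H^1$, using the commutator estimates of Lemma \ref{app_S1}. Its $t^{-1/2}$ comes from the bootstrap decay $\|\zeta_\alpha-1\|_{W^{1,\infty}}\lesssim\epsilon t^{-1/2}$.

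\emph{Your route.} You keep the curved kernel throughout. On the middle and far regions the chord-arc bound \eqref{ass_diff} alone suffices. On the inner region you freeze $\zeta_\alpha$ at $\alpha$, so the leading kernel $g_\alpha(\alpha)\zeta_\alpha(\alpha)^{-2}(\alpha-\beta)^{-1}$ is exactly odd. The error is then bounded by $\|g_{\alpha\alpha}\|_{L^\infty}+\|g_\alpha\|_{L^\infty}\|\zeta_{\alpha\alpha}\|_{L^\infty}$. This step also uses $|\zeta_\alpha|\ge\frac12$, which follows from \eqref{ass_diff}; you should say so explicitly.

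\emph{Comparison.} Your route is more elementary: there is no commutator lemma and no $H^1$ estimate of an auxiliary operator. It needs only the chord-arc condition and $\|\zeta_{\alpha\alpha}\|_{L^\infty}\lesssim1$, and the $t^{-1/2}$ comes purely from the choice of scales rather than from decay of $\zeta_\alpha-1$. The paper's route isolates an exact flat-kernel computation and pushes all curvature dependence into a smoother operator. That operator is handled by the same standard commutator machinery used elsewhere in the paper.

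A minor point: you first announce $\rho=t^{-2}$ and then use $\rho=t^{-1}$. Either choice works.
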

\begin{proof}
    Notice that
$$
    \begin{aligned}
    S_1&=\int\left(\frac{1}{(\zeta(\alpha,t)-\zeta(\beta,t))^2}-\frac{1}{(\alpha-\beta)^2}\right)(g(\alpha,t)-g(\beta,t))f(\beta,t)d\beta\\
    &+\int\frac{g(\alpha,t)-g(\beta,t)}{(\alpha-\beta)^2}f(\beta,t)d\beta\\
    &\coloneqq S_{11}+S_{12}.
    \end{aligned}
    $$
Direct calculation shows that
$$
    \begin{aligned}
    \norm{S_{11}}_{L^\infty}\lesssim&\norm{S_{11}}_{H^1}\\
    \lesssim&\norm{\int\frac{(\alpha-\zeta(\alpha,t)-\beta+\zeta(\beta,t))(g(\alpha,t)-g(\beta,t))}{(\zeta(\alpha,t)-\zeta(\beta,t))^2(\alpha-\beta)}f(\beta,t)d\beta}_{H^1}\\
    &+\norm{\int\frac{(\alpha-\zeta(\alpha,t)-\beta+\zeta(\beta,t))(g(\alpha,t)-g(\beta,t))}{(\zeta(\alpha,t)-\zeta(\beta,t))(\alpha-\beta)^2}f(\beta,t)d\beta}_{H^1}\\
    \lesssim&\frac{1}{t^\frac{1}{2}}\norm{\partial_\alpha g}_{W^{1,\infty}}\norm{f}_{H^1}.
    \end{aligned}
    $$
    We split the integration region of $S_{12}$ into three parts:
    
    $$
    \begin{aligned}
    S_{12}=&\int_{|\alpha-\beta|\leq t^{-1}}\frac{g(\alpha,t)-g(\beta,t)}{(\alpha-\beta)^2}f(\beta,t)d\beta+\int_{t^{-1}<|\alpha-\beta|\leq t}\frac{g(\alpha,t)-g(\beta,t)}{(\alpha-\beta)^2}f(\beta,t)d\beta\\
    &+\int_{|\alpha-\beta|> t}\frac{g(\alpha,t)-g(\beta,t)}{(\alpha-\beta)^2}f(\beta,t)d\beta\\
    =&\int_{|\alpha-\beta|\leq t^{-1}}\frac{1}{\alpha-\beta}\left(\frac{g(\alpha,t)-g(\beta,t)}{\alpha-\beta}-g_\alpha(\alpha,t)\right)f(\beta,t)d\beta\\
    &+g_\alpha(\alpha,t)\int_{|\alpha-\beta|\leq t^{-1}}\frac{f(\beta,t)-f(\alpha,t)}{\alpha-\beta}d\beta+\int_{t^{-1}<|\alpha-\beta|\leq t}\frac{g(\alpha,t)-g(\beta,t)}{(\alpha-\beta)^2}f(\beta,t)d\beta\\
    &+\int_{|\alpha-\beta|> t}\frac{g(\alpha,t)-g(\beta,t)}{(\alpha-\beta)^2}f(\beta,t)d\beta.
    \end{aligned}
    $$
Then by Young's inequality and Hardy's inequality:
$$
    \begin{aligned}
    \norm{S_{12}}_{L^\infty}\lesssim& \frac{1}{t}\norm{\partial_\alpha^2g}_{L^\infty}\norm{f}_{L^\infty}+\frac{1}{t^\frac{1}{2}}\norm{g_\alpha}_{L^\infty}\left[\int\left(\frac{f(\beta,t)-f(\alpha,t)}{\alpha-\beta}\right)^2d\beta\right]^\frac{1}{2}\\
    &+\norm{g_\alpha}_{L^\infty}\norm{f}_{L^\infty}\int_{t^{-1}<|\alpha-\beta|\leq t}\frac{1}{|\alpha-\beta|}d\beta+\norm{\frac{1}{\alpha}1_{|\alpha|>t}}_{L^2}\norm{g_\alpha}_{L^\infty}\norm{f}_{L^2}\\
    \lesssim&\frac{1}{t}\norm{\partial_\alpha^2g}_{L^\infty}\norm{f}_{L^\infty}+\frac{1}{t^\frac{1}{2}}\norm{g_\alpha}_{L^\infty}\norm{f}_{H^1}+\ln (2+t)\norm{g_\alpha}_{L^\infty}\norm{f}_{L^\infty}
    \end{aligned}
    $$
where we obtain (\ref{S_1}). 
\end{proof}
Moreover, we introduce the following expressions with respect to $b$, $A$ and $\frac{a_t}{a}$ which appear in \cite{su2020long}.
\begin{lemma}\label{lem_b_exp}
    With the notation in Section \ref{sec_pre}, there holds
    \begin{equation}\label{b_exp}
       (I-\mathfrak{H})b= -[D_t\zeta,\mathfrak{H}]\frac{\bar\zeta_\alpha-1}{\zeta_\alpha}+2q,
    \end{equation}
    \begin{equation}\label{A_exp}
        (I-\mathfrak{H})(A-1)=i[ D_t\zeta,\mathfrak{H}]\frac{\partial_\alpha\mathfrak{F}}{\zeta_\alpha}
  + i[ D_t^2\zeta,\mathfrak{H}]\frac{\bar{\zeta}_\alpha-1}{\zeta_\alpha}
  - (I-\mathfrak{H})\frac{1}{2\pi}\sum_{j=1}^{2}\lambda_j
    \frac{D_t\zeta(\alpha,t)-\dot{z}_j(t)}{(\zeta(\alpha,t)-z_j(t))^2}
    \end{equation}
    and
    \begin{equation}\label{a_t/a}
\begin{aligned}
  (I-\mathfrak{H})\left(\frac{a_t}{a}\circ\kappa^{-1}A\bar{\zeta}_\alpha\right)
  = &2i[ D_t^2\zeta,\mathfrak{H}]\frac{\partial_\alpha D_t\bar{\zeta}}{\zeta_\alpha}
   + 2i[D_t\zeta,\mathfrak{H}]\frac{\partial_\alpha D_t^2\bar{\zeta}}{\zeta_\alpha}\\
  &- \frac{1}{\pi}\int
    \left(\frac{D_t\zeta(\alpha,t)-D_t\zeta(\beta,t)}{\zeta(\alpha,t)-\zeta(\beta,t)}\right)^{2}
    (D_t\bar{\zeta})_\beta\,\mathrm{d}\beta\\
  &- \frac{1}{\pi}\sum_{j=1}^{2}\lambda_j
    \left(
      \frac{2D_t^2\zeta + i - \partial_t^2 z_j}{\left(\zeta(\alpha,t)-z_j(t)\right)^2}
      - 2\frac{\left(D_t\zeta-\dot z_j(t)\right)^2}{\left(\zeta(\alpha,t)-z_j(t)\right)^3}
    \right).
\end{aligned}
\end{equation}
\end{lemma}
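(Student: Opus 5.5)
We derive the three identities by applying $(I-\mathfrak H)$ to the basic relations of Section~\ref{sec_pre}, and then using holomorphicity of the appropriate traces together with the commutator formula $(I-\mathfrak H)(fg)=[f,\mathfrak H]g$ when $(I-\mathfrak H)g=0$.

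\emph{Identity \eqref{b_exp}.} We start from $\kappa+\Phi\circ z=z+\bar z$. Precomposing with $\kappa^{-1}$ and applying $D_t$ gives $b+\partial_t(\Phi\circ z)\circ\kappa^{-1}=D_t\zeta+D_t\bar\zeta$. Since $\Phi$ is holomorphic in $\Omega(t)$, the function $\partial_t(\Phi\circ z)=\Phi_t\circ z+\Phi_z\circ z\, z_t$ has boundary trace of the form (holomorphic) $+$ (holomorphic)$\cdot D_t\zeta$. We apply $(I-\mathfrak H)$ and use three facts. First, $(I-\mathfrak H)(D_t\bar\zeta-q)=0$. Second, $q$ is the trace of a function holomorphic in $\Omega(t)^c$ minus poles; more precisely, $q$ extends holomorphically to $\Omega(t)^c$ and decays, so by Lemma~\ref{holomorphic} we have $(I+\mathfrak H)q=0$, hence $(I-\mathfrak H)q=2q$. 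Third, the term $\Phi_z\circ z\,D_t\zeta$ is handled by writing $\Phi_z\circ\zeta=(\bar\zeta_\alpha-1)/\zeta_\alpha\cdot(\ldots)$. To obtain this, we differentiate the defining relation in $\alpha$: $\kappa_\alpha$ is real, so $\Phi_z\circ\zeta\,\zeta_\alpha=\zeta_\alpha+\bar\zeta_\alpha-1$ in modified coordinates. Hence $\Phi_z\circ\zeta-1=(\bar\zeta_\alpha-1)/\zeta_\alpha$, and the holomorphic factor yields the commutator $[D_t\zeta,\mathfrak H]\frac{\bar\zeta_\alpha-1}{\zeta_\alpha}$ with a minus sign. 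Collecting terms gives \eqref{b_exp}.

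\emph{Identity \eqref{A_exp}.} We rewrite \eqref{main_equation} as $D_t^2\zeta+i=iA\zeta_\alpha$ and multiply by $\bar\zeta_\alpha$ to get $iA|\zeta_\alpha|^2$. Following Wu, it is more convenient to compute $(I-\mathfrak H)$ of $-i(D_t^2\zeta+i)\bar\zeta_\alpha=A|\zeta_\alpha|^2$ and then relate $A|\zeta_\alpha|^2$ to $A$ through the real-valuedness of $A$. The key step is to express $D_t^2\zeta\,\bar\zeta_\alpha$ as $\overline{D_t^2\bar\zeta\,\zeta_\alpha}$ and to use $\bar\zeta_\alpha\,\partial_\alpha$ acting on $D_t\bar\zeta=\mathfrak F+q$. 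We commute $D_t$ past $\mathfrak H$ using $[D_t,\mathfrak H]f=[D_t\zeta,\mathfrak H]\frac{f_\alpha}{\zeta_\alpha}$ applied to $f=\mathfrak F$, and use $(I-\mathfrak H)\mathfrak F=0$. This gives $(I-\mathfrak H)D_t\mathfrak F=[D_t\zeta,\mathfrak H]\frac{\partial_\alpha\mathfrak F}{\zeta_\alpha}$, which produces the first term. The term $i[D_t^2\zeta,\mathfrak H]\frac{\bar\zeta_\alpha-1}{\zeta_\alpha}$ comes, exactly as in the $b$ computation, from $\bar\zeta_\alpha-1$ being the anti-holomorphic part. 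The vortex term arises from $D_tq$. Differentiating $q=-\frac{1}{2\pi i}\sum\lambda_j\frac{1}{\zeta-z_j}$ (equal to the given dipole form after combining fractions, up to the sign convention) along $D_t$ gives $\sum\lambda_j\frac{D_t\zeta-\dot z_j}{(\zeta-z_j)^2}$, with the $\frac{1}{2\pi}$ prefactor.

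\emph{Identity \eqref{a_t/a}.} We apply $D_t$ once more to the equation, giving $D_t^3\bar\zeta+i\bar A\ldots$, and use the standard formula $\frac{a_t}{a}\circ\kappa^{-1}A\bar\zeta_\alpha$ obtained from $D_t(A\bar\zeta_\alpha)$. We then project with $(I-\mathfrak H)$, now using the second-order commutator $[D_t^2,\mathfrak H]$ applied to $\mathfrak F$. This yields the two commutator terms and the quadratic double-difference integral, while $D_t^2 q$ generates the explicit vortex expression containing $2D_t^2\zeta+i-\ddot z_j$ and $(D_t\zeta-\dot z_j)^2/(\zeta-z_j)^3$. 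The term $i$ enters via $D_t^2\zeta=iA\zeta_\alpha-i$.

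\emph{Main obstacle.} The main difficulty is bookkeeping: getting the signs and the factor $2$ right for the vortex terms, which depends on $(I-\mathfrak H)$ acting on the pole terms. Here the poles $z_j\in\Omega(t)$ make $\frac{1}{(\zeta-z_j)^k}$ the traces of functions holomorphic in $\Omega(t)^c$, so $(I-\mathfrak H)$ doubles them. Since these identities already appear in \cite{su2020long}, we would follow that derivation, translating it into modified coordinates via $\kappa$. The only new point is checking that the change of variables preserves $\mathfrak H$ (it does, by substitution in the Cauchy integral).
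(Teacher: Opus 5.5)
The paper gives no derivation here (it imports the formulas from \cite{su2020long}), so your sketch has to stand on its own. Your argument for \eqref{b_exp} is fine: it is equivalent to applying $D_t$ to $(I-\mathfrak H)(\bar\zeta-\alpha)=0$ and using $(I-\mathfrak H)D_t\bar\zeta=(I-\mathfrak H)q=2q$. For \eqref{A_exp}, however, the route through $A|\zeta_\alpha|^2$ and ``real-valuedness of $A$'' does not produce $(I-\mathfrak H)(A-1)$. The projection does not commute with multiplication by the non-holomorphic factor $|\zeta_\alpha|^2$, and you never give an identity converting one into the other. So the three terms you list are asserted, not derived. The missing step is the algebraic splitting obtained from $A\bar\zeta_\alpha=1+iD_t^2\bar\zeta$ and $A\zeta_\alpha=1-iD_t^2\zeta$:
\[
A-1=iD_t^2\bar\zeta+iD_t^2\zeta\,\frac{\bar\zeta_\alpha-1}{\zeta_\alpha}-\frac{\bar\zeta_\alpha-1}{\zeta_\alpha}.
\]
Applying $I-\mathfrak H$ kills the last term and turns the middle one into $i[D_t^2\zeta,\mathfrak H]\frac{\bar\zeta_\alpha-1}{\zeta_\alpha}$. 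It also gives $i(I-\mathfrak H)D_t^2\bar\zeta=i[D_t\zeta,\mathfrak H]\frac{\partial_\alpha\mathfrak F}{\zeta_\alpha}+i(I-\mathfrak H)D_tq$, with $iD_tq=-\frac{1}{2\pi}\sum_j\lambda_j\frac{D_t\zeta-\dot z_j}{(\zeta-z_j)^2}$; this vortex term keeps its projection.

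For \eqref{a_t/a} there are two concrete errors. With $P=D_t^2+iA\partial_\alpha$ one has $\frac{a_t}{a}\circ\kappa^{-1}A\bar\zeta_\alpha=iPD_t\bar\zeta$ (differentiate $\bar z_{tt}+ia\bar z_\alpha=i$ in $t$). You only account for $[D_t^2,\mathfrak H]$. You also need $[iA\partial_\alpha,\mathfrak H]f=[iA\zeta_\alpha,\mathfrak H]\frac{f_\alpha}{\zeta_\alpha}=[D_t^2\zeta,\mathfrak H]\frac{f_\alpha}{\zeta_\alpha}$; without it the coefficient of $[D_t^2\zeta,\mathfrak H]\frac{\partial_\alpha D_t\bar\zeta}{\zeta_\alpha}$ comes out as $i$ instead of $2i$. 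Second, applying the commutator to $\mathfrak F$ and letting $I-\mathfrak H$ ``double'' the pole terms is wrong. $D_t^2q$ contains $D_t^2\zeta$ and $(D_t\zeta-\dot z_j)^2$ multiplying the poles, and these are not traces of functions holomorphic in $\Omega(t)^c$. Hence $(I-\mathfrak H)Pq=2Pq+[P,\mathfrak H]q$, and the extra $[P,\mathfrak H]q$ is exactly what turns $[P,\mathfrak H]\mathfrak F$ into the $[P,\mathfrak H]D_t\bar\zeta$ in the statement. The clean computation is
\[
(I-\mathfrak H)\Bigl(\frac{a_t}{a}\circ\kappa^{-1}A\bar\zeta_\alpha\Bigr)=i[P,\mathfrak H]D_t\bar\zeta+iP(I-\mathfrak H)D_t\bar\zeta=i[P,\mathfrak H]D_t\bar\zeta+2i\bigl(D_t^2q+iAq_\alpha\bigr).
\]
Here $iAq_\alpha$ (via $iA\zeta_\alpha=D_t^2\zeta+i$) supplies the second $D_t^2\zeta$ and the $+i$ in $2D_t^2\zeta+i-\ddot z_j$; $D_t^2q$ alone does not. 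Finally, $q=-\frac{i}{2\pi}\sum_j\frac{\lambda_j}{\zeta-z_j}$, so your prefactor $-\frac{1}{2\pi i}=\frac{i}{2\pi}$ has the wrong sign. This flips every vortex term and is not a matter of convention.
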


\subsection{Point-vortex trajectories}\label{subs_point}
When we study the pair of point vortices, we note that their velocities are almost parallel with their accelerations decreasing rapidly. This observation provides insight into the motion of the point vortices and the velocity field caused by them. 

\begin{lemma}\label{lem_z_1}
    For $t\in[0,T]$, we have
    \begin{equation}
    |\dot{z_1}|+|\dot{z_2}|\lesssim\lambda+\frac{\epsilon}{t^{\frac{1}{4}}},
    \end{equation}
    \begin{equation}\label{z_1-z_2_v}
    |\dot{z_1}-\dot{z_2}|\lesssim\frac{\epsilon+\lambda}{H^{\frac{3}{2}}},
    \end{equation}
    \begin{equation}\label{ddot_z_1}
    |\ddot{z_1}|+|\ddot{z_2}|\lesssim\frac{(\epsilon+\lambda)^2}{H^{\frac{3}{2}}}+\frac{\epsilon}{t^{\frac{1}{2}}},
    \end{equation}
    \begin{equation}\label{ddot_z_1-z_2}
    |\ddot{z_1}-\ddot{z_2}|\lesssim\frac{\epsilon+\lambda}{H^{\frac{3}{2}}},
    \end{equation}
    \begin{equation}
    |\dddot{z_1}|+|\dddot{z_2}|\lesssim\frac{(\epsilon+\lambda)^2}{H^{\frac{3}{2}}}+\frac{\epsilon}{t^{\frac{1}{2}}}
    \end{equation}
    and
    \begin{equation}
    |\dddot{z_1}-\dddot{z_2}|\lesssim\frac{\epsilon+\lambda}{H^{\frac{3}{2}}}.
    \end{equation}
\end{lemma}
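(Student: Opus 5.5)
We start from the point-vortex ODE written through the holomorphic splitting. By the second equation of \eqref{vortex_boundary} and the definition of $\mathfrak U$, the regular velocity at $z_1$ is the full velocity minus the self-induced field. The field of $z_2$ is the remaining singular part, and $\bar{\mathfrak U}$ is the holomorphic remainder. This gives
\[
\dot z_1=-\frac{\lambda i}{2\pi\,\overline{(z_1-z_2)}}+\overline{\mathfrak U(z_1,t)},\qquad
\dot z_2=-\frac{\lambda i}{2\pi\,\overline{(z_1-z_2)}}+\overline{\mathfrak U(z_2,t)}.
\]
All six estimates are read off from these two formulas by differentiation in $t$, combined with the Cauchy estimates away from the boundary.

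\textbf{First bound.} By (B5), $|z_1-z_2|\approx1$, so the dipole term is $O(\lambda)$. For $\mathfrak U(z_j,t)$ the $L^2$ Cauchy estimate is not good enough, since $\|\mathfrak F\|_{L^2}$ is not controlled. Instead we use the maximum principle: $|\mathfrak U(z_j)|\le\|\mathfrak F\|_{L^\infty}\le \|D_t\zeta\|_{L^\infty}+\|q\|_{L^\infty}$. Lemma \ref{lem_D_t_zeta_L^infty} and the dipole lemma then give $\epsilon t^{-1/4}+\lambda H^{-2}$.

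\textbf{Difference bounds.} Subtracting the two formulas, the dipole terms cancel exactly, so
\[
\dot z_1-\dot z_2=\overline{\mathfrak U(z_1)-\mathfrak U(z_2)}.
\]
This is bounded by $d_{12}\sup_{[z_1,z_2]}|\mathfrak U_z|\lesssim H^{-3/2}\|\mathfrak F\|_{\dot H^{1/2}}$-type quantities. Concretely, we apply the Cauchy estimate with $k=1$ to $\mathfrak U$. To avoid the low-frequency $L^2$ norm, we integrate by parts in the Cauchy integral, $\partial_z\mathfrak U=c\int \mathfrak F_\beta(\zeta(\beta)-z)^{-1}d\beta$. We then use Cauchy–Schwarz with $\|\mathfrak F_\alpha\|_{L^2}\lesssim\|\partial_\alpha D_t\zeta\|_{L^2}+\|q_\alpha\|_{L^2}\lesssim\epsilon+\lambda$ and $\|(\zeta-z)^{-1}\|_{L^2}\lesssim H^{-1/2}$. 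This gives $|\mathfrak U_z|\lesssim(\epsilon+\lambda)H^{-1/2}$, which is still too weak. We therefore integrate by parts once more, or use $|(\zeta-z)^{-2}|$ with $\|\mathfrak F\|_{L^\infty}$-type splitting, and expect to reach the stated $(\epsilon+\lambda)H^{-3/2}$ by placing the extra derivative on the kernel and using $\|D_t\zeta\|_{\dot H^{1/2}}$. This is consistent with the way the Cauchy lemma is stated, with an $L^2$-type norm of $\mathfrak F$ up to derivatives.

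\textbf{Higher derivatives.} Differentiating gives
\[
\ddot z_1=\frac{\lambda i\,\overline{(\dot z_1-\dot z_2)}}{2\pi\,\overline{(z_1-z_2)}^2}+\overline{\mathfrak U_t(z_1)+\mathfrak U_z(z_1)\dot z_1}.
\]
The first term is $\lambda(\epsilon+\lambda)H^{-3/2}$ by the difference bound. For the second term, $\mathfrak U_t$ is holomorphic with boundary trace $D_t\mathfrak F-\mathfrak U_z D_t\zeta$. Since $\|D_t\mathfrak F\|_{L^\infty}\lesssim\|D_t^2\zeta\|_{L^\infty}+\|D_tq\|_{L^\infty}\lesssim\epsilon t^{-1/2}+\dots$ by (B4), the maximum principle gives the $\epsilon t^{-1/2}$ term. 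The product $\mathfrak U_z\dot z_1$ is bounded by $(\epsilon+\lambda)^2H^{-3/2}$. In the difference $\ddot z_1-\ddot z_2$, the holomorphic parts again appear as $\mathfrak U_t(z_1)-\mathfrak U_t(z_2)$ and are bounded through $\partial_z\mathfrak U_t$ on the segment. The third derivatives repeat this procedure with $\mathfrak U_{tt}$, whose trace involves $D_t^3\zeta$; this is expressed via the equation $D_t^2\zeta=iA\zeta_\alpha-i$ and differentiated.

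\textbf{Main obstacle.} The main obstacle is obtaining the $H^{-3/2}$ rate, rather than $H^{-1/2}$, in the difference estimates without any control of a low-frequency primitive. The first thing to try is expressing $\mathfrak U_z$ through $\mathfrak F_\alpha$ and splitting the Cauchy kernel into near and far regions relative to the nearest boundary point.
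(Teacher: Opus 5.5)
Your overall scheme matches the paper's: the two ODE formulas, the maximum principle for the first bound, exact cancellation of the dipole term in differences, and differentiation in $t$ through the traces of $\mathfrak U_t$ and $\mathfrak U_{tt}$. The gap is that the velocity-difference bound $|\dot z_1-\dot z_2|\lesssim(\epsilon+\lambda)H^{-3/2}$ is never established; you only say you ``expect'' it. Everything after it depends on it.

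The premise that pushes you away from the $L^2$ Cauchy estimate is false. (B2) controls $\|D_t\zeta\|_{H^{s+\frac12}}$ in the \emph{inhomogeneous} space, so $\|D_t\zeta\|_{L^2}\le M\epsilon$. The low-frequency difficulty in the introduction concerns a primitive of $z_t$ (a velocity potential), not $z_t$ itself. Hence $\|\mathfrak F\|_{L^2}\le\|D_t\zeta\|_{L^2}+\|q\|_{L^2}\lesssim\epsilon+\lambda$ by the dipole-field lemma. The $k=1$ Cauchy estimate then gives $\sup_{[z_1,z_2]}|\mathfrak U_z|\lesssim(\epsilon+\lambda)H^{-3/2}$ immediately: write $\mathfrak U_z(z)=-\frac{1}{2\pi i}\int\mathfrak F(\alpha)\zeta_\alpha(\zeta(\alpha)-z)^{-2}d\alpha$, apply Cauchy--Schwarz, and use $\int|\zeta(\beta)-z|^{-4}d\beta\lesssim H^{-3}$ for $z\in[z_1,z_2]$. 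That is exactly how the paper proves it.

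The substitute you sketch cannot reach this rate. In the flat model, the kernel $(\beta-z)^{-2}$ at depth $H$ has Fourier profile of size $|\xi|e^{-H|\xi|}$. Measuring $\mathfrak F$ in $\dot H^{\sigma}$ therefore gives at best $|\mathfrak U_z|\lesssim H^{-\frac32+\sigma}\|\mathfrak F\|_{\dot H^{\sigma}}$:
\begin{itemize}
\item $\sigma=1$ is your $H^{-1/2}$;
\item $\sigma=\frac12$ gives only $H^{-1}$;
\item further integrations by parts make it worse;
\item $H^{-3/2}$ requires $\sigma=0$.
\end{itemize}
The $L^\infty$ splitting gives $\|\mathfrak F\|_{L^\infty}H^{-1}\sim\epsilon t^{-1/4}H^{-1}$. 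This is not the claimed bound once $H\gg t^{1/2}$, which eventually holds since $H\gtrsim\lambda t$.

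The gap propagates. Your bound $(\epsilon+\lambda)^2H^{-3/2}$ for the term $\mathfrak U_z(z_1)\dot z_1$ in $\ddot z_1$ uses the same unproven estimate. The difference bounds for $\ddot z_j$ and $\dddot z_j$ need $\sup_{[z_1,z_2]}|\mathfrak U_{zz}|\lesssim H^{-5/2}\|\mathfrak F\|_{L^2}$ and $\sup_{[z_1,z_2]}|\mathfrak U_{zt}|\lesssim H^{-3/2}\|\mathfrak U_t\circ\zeta\|_{L^2}$. All of these are repaired the same way. Here $\|\mathfrak U_t\circ\zeta\|_{L^2}\lesssim\|D_t^2\zeta\|_{L^2}+\|D_tq\|_{L^2}+\|\mathfrak F_\alpha\|_{L^\infty}\|D_t\zeta\|_{L^2}$, which is again controlled by (B2) and the dipole bounds.
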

\begin{proof}
    Note that
    \begin{equation}\label{dot_z_1_exp}
    \dot{z_1}=-\frac{\lambda i}{2\pi(\bar{z_1}-\bar{z_2})}+\bar{\mathfrak{U}}(z_1,t)
    \end{equation}
    and
    \begin{equation}\label{dot_z_2_exp}
    \dot{z_2}=-\frac{\lambda i}{2\pi(\bar{z_1}-\bar{z_2})}+\bar{\mathfrak{U}}(z_2,t),
    \end{equation}
    we take advantage of the holomorphicity of $\mathfrak{U}$ and claim that
    $$
    |\dot{z_1}|+|\dot{z_2}|\lesssim\frac{\lambda}{|z_1-z_2|}+\norm{\mathfrak{F}}_{L^\infty}\lesssim\lambda+\norm{D_t\zeta}_{L^\infty}+\norm{q}_{L^\infty}\lesssim\lambda+\frac{\epsilon}{t^{\frac{1}{4}}}
    $$
    and
    $$
    |\dot{z_1}-\dot{z_2}|\lesssim|z_1-z_2|\sup_{z\in[z_1,z_2]}|\mathfrak{U}_z(z,t)|,
    $$
    where $[z_1,z_2]$ represents the line segment connecting $z_1$ and $z_2$. For $z\in[z_1,z_2]$,
    $$
    \mathfrak{U}_z(z,t)=-\frac{1}{2\pi i}\int_{\Sigma(t)}\frac{\mathfrak{U}(\omega,t)}{(\omega-z)^2}d\omega=-\frac{1}{2\pi i}\int\frac{\mathfrak{F}(\alpha,t)}{(\zeta(\alpha,t)-z)^2}\zeta_\alpha d\alpha,
    $$
    which implies
    $$
    \sup_{z\in[z_1,z_2]}|\mathfrak{U}_z(z,t)|\lesssim\norm{\mathfrak{F}}_{L^2}\sup_{z\in[z_1,z_2]}\norm{\frac{1}{(\zeta(\alpha,t)-z)^2}}_{L^2}.
    $$
    Fix $z$ and we set $H_z=d(z,\Sigma(t))$. Denote by $\omega_0$ the nearest point $\zeta(\alpha_0,t)$ to $z$ in $\Sigma(t)$ where $\alpha_0\in\mathbb{R}$. We find that
    $$
    \begin{aligned}
    \int\frac{1}{|\zeta(\beta,t)-z|^4}d\beta
    =&\int_{|\zeta(\beta,t)-\omega_0|\leq 2H_z}\frac{1}{|\zeta(\beta,t)-z|^4}d\beta\\
    &+\int_{|\zeta(\beta,t)-\omega_0|> 2H_z}\frac{1}{|\zeta(\beta,t)-\omega_0+\omega_0-z|^4}d\beta.
    \end{aligned}
    $$
    If $|\zeta(\beta,t)-\omega_0|> 2H_z$, triangle inequality shows that
    $$
    |\zeta(\beta,t)-\omega_0+\omega_0-z|\geq\frac{1}{2}|\zeta(\beta,t)-\omega_0|\gtrsim|\beta-\alpha_0|
    $$
    which means
    $$
    \begin{aligned}
    \int\frac{1}{|\zeta(\beta,t)-z|^4}d\beta
    \lesssim\frac{1}{H_z^3}+\int_{|\beta-\alpha_0|\gtrsim H_z}\frac{1}{|\beta-\alpha_0|^4}d\beta
    \lesssim\frac{1}{H_z^3}\lesssim\frac{1}{H^3}.
    \end{aligned}
    $$
    We see that
    \begin{equation}\label{sup_U_z}
    \sup_{z\in[z_1,z_2]}|\mathfrak{U}_z(z,t)|\lesssim\frac{1}{H^{\frac{3}{2}}}(\norm{D_t\zeta}_{L^2}+\norm{q}_{L^2})\lesssim\frac{\epsilon+\lambda}{H^{\frac{3}{2}}}.
    \end{equation}
    Recalling the bootstrap assumptions, we conclude that
    $$
    |\dot{z_1}-\dot{z_2}|\lesssim\frac{\epsilon+\lambda}{H^{\frac{3}{2}}}.
    $$
    Moreover,
    $$
    \ddot{z_1}=\frac{\lambda i(\bar{\dot{z_2}}-\bar{\dot{z_1}})}{2\pi(\bar{z_1}-\bar{z_2})^2}+\overline{\mathfrak{U}_z}(z_1,t)\bar{\dot{z_1}}+\bar{\mathfrak{U}}_t(z_1,t)
    $$
    and
    $$
    \ddot{z_2}=\frac{\lambda i(\bar{\dot{z_2}}-\bar{\dot{z_1}})}{2\pi(\bar{z_1}-\bar{z_2})^2}+\overline{\mathfrak{U}_z}(z_2,t)\bar{\dot{z_2}}+\bar{\mathfrak{U}}_t(z_2,t)
    $$
    which implies that
    $$
    |\ddot{z_1}|+|\ddot{z_2}|\lesssim\lambda|\dot{z_1}-\dot{z_2}|+(|\overline{\mathfrak{U}_z}(z_1,t)|+|\overline{\mathfrak{U}_z}(z_2,t)|)(|\dot{z_1}|+|\dot{z_2}|)+\sup_{z\in\Sigma(t)}|\mathfrak{U}_t(z,t)|.
    $$
    Since $\mathfrak{F}(\alpha,t)=\mathfrak{U}(\zeta(\alpha,t),t)$, there holds
    \begin{equation}\label{U_t}
    \begin{aligned}
    \mathfrak{U}_t(\zeta(\alpha,t),t)=&\ \mathfrak{F}_t(\alpha,t)-\mathfrak{U}_z(\zeta(\alpha,t),t)\zeta_t(\alpha,t)=\partial_tD_t\bar\zeta-q_t-\mathfrak{U}_z(\zeta(\alpha,t),t)\zeta_t(\alpha,t)\\
    =&\ D_t^2\bar\zeta-b\partial_\alpha D_t\bar\zeta-q_t-\frac{\partial_\alpha D_t\bar\zeta-q_\alpha}{\zeta_\alpha}\zeta_t.
    \end{aligned}
    \end{equation}
    For $\norm{b}_{L^\infty}$ we refer to Lemma \ref{lem_real_proj} and Lemma \ref{lem_b_exp}. Reviewing the definition of $q$, we deduce that
    $$
    q_\alpha=\frac{\lambda i}{2\pi}(z_1-z_2)\zeta_\alpha\left(\frac{1}{(\zeta-z_1)^2(\zeta-z_2)}+\frac{1}{(\zeta-z_1)(\zeta-z_2)^2}\right)
    $$
    and
    $$
    q_t=D_tq-bq_\alpha=-\frac{\lambda i}{2\pi}\frac{\dot{z_1}-\dot{z_2}}{(\zeta-z_1)(\zeta-z_2)}+\frac{\lambda i}{2\pi}(z_1-z_2)\left(\frac{D_t\zeta-\dot{z_1}}{(\zeta-z_1)^2(\zeta-z_2)}+\frac{D_t\zeta-\dot{z_2}}{(\zeta-z_1)(\zeta-z_2)^2}\right)-bq_\alpha.
    $$
    Therefore,
    $$
    \norm{q_\alpha}_{L^\infty}\lesssim\frac{\lambda}{H^3}
    $$
    and
    $$
    \norm{q_t}_{L^\infty}\lesssim|\dot{z_1}-\dot{z_2}|\frac{\lambda}{H^2}+\frac{\lambda(\epsilon+\lambda)}{H^3}+\norm{bq_\alpha}_{L^\infty}\lesssim\frac{\lambda(\epsilon+\lambda)}{H^3}.
    $$
    Hence,
    $$
    \sup_{z\in\Sigma(t)}|\mathfrak{U}_t|\lesssim\frac{\epsilon}{t^{\frac{1}{2}}}+\frac{\lambda(\epsilon+\lambda)}{H^3}.
    $$
    Finally,
    $$
    |\ddot{z_1}|+|\ddot{z_2}|\lesssim\frac{\lambda(\epsilon+\lambda)}{H^{\frac{3}{2}}}+\frac{\epsilon(\epsilon+\lambda)}{H^{\frac{3}{2}}t^{\frac{1}{4}}}+\frac{\epsilon}{t^{\frac{1}{2}}}\lesssim\frac{(\epsilon+\lambda)^2}{H^{\frac{3}{2}}}+\frac{\epsilon}{t^{\frac{1}{2}}}.
    $$
    Meanwhile,
    $$
    \begin{aligned}
    |\ddot{z_1}-\ddot{z_2}|\leq&|\mathfrak{U}_z(z_1,t)-\mathfrak{U}_z(z_2,t)||\dot{z_1}|+|\mathfrak{U}_z(z_2,t)||\dot{z_1}-\dot{z_2}|+|\mathfrak{U}_t(z_1,t)-\mathfrak{U}_t(z_2,t)|\\
    \lesssim&\sup_{z\in[z_1,z_2]}|\mathfrak{U}_{zz}(z,t)|(\lambda+\frac{\epsilon}{t^\frac{1}{4}})+\frac{(\epsilon+\lambda)^2}{H^3}+\sup_{z\in[z_1,z_2]}|\mathfrak{U}_{zt}(z,t)|\\
    \lesssim&\frac{1}{H^{\frac{5}{2}}}\norm{\mathfrak{F}}_{L^2}(\lambda+\frac{\epsilon}{t^\frac{1}{4}})+\frac{(\epsilon+\lambda)^2}{H^3}+\frac{1}{H^{\frac{3}{2}}}\norm{\mathfrak{U}_t(\zeta(\alpha,t),t)}_{L^2}\\
    \lesssim&\frac{\epsilon+\lambda}{H^{\frac{3}{2}}},
    \end{aligned}
    $$
    where we treat $|\mathfrak{U}_{zz}(z,t)|$ and $|\mathfrak{U}_{zt}(z,t)|$ as the preceding argument. It follows from a direct computation that
    $$
    \dddot{z_1}=\partial_t\frac{\lambda i(\bar{\dot{z_2}}-\bar{\dot{z_1}})}{2\pi(\bar{z_1}-\bar{z_2})^2}+\overline{\mathfrak{U}_{zz}}(z_1,t)\bar{\dot{z_1}}^2+2\overline{\mathfrak{U}_{zt}}(z_1,t)\bar{\dot{z_1}}+\overline{\mathfrak{U}_z}(z_1,t)\bar{\ddot{z_1}}+\bar{\mathfrak{U}}_{tt}(z_1,t)
    $$
    and
    $$
    \dddot{z_2}=\partial_t\frac{\lambda i(\bar{\dot{z_2}}-\bar{\dot{z_1}})}{2\pi(\bar{z_1}-\bar{z_2})^2}+\overline{\mathfrak{U}_{zz}}(z_2,t)\bar{\dot{z_2}}^2+2\overline{\mathfrak{U}_{zt}}(z_2,t)\bar{\dot{z_2}}+\overline{\mathfrak{U}_z}(z_2,t)\bar{\ddot{z_2}}+\bar{\mathfrak{U}}_{tt}(z_2,t),
    $$
    which means
    $$
    |\dddot{z_1}|+|\dddot{z_2}|\lesssim\lambda|\ddot{z_1}-\ddot{z_2}|+\lambda|\dot{z_1}-\dot{z_2}|^2+\frac{(\epsilon+\lambda)^3}{H^{\frac{5}{2}}}+\frac{(\epsilon+\lambda)^2}{H^{\frac{3}{2}}}+\sup_{z\in\Sigma(t)}|\mathfrak{U}_{tt}(z,t)|.
    $$
    From (\ref{U_t}) we have
    $$
    \begin{aligned}
    \mathfrak{U}_{tt}(\zeta(\alpha,t),t)=&\ \partial_tD_t^2\bar\zeta-b_t\partial_\alpha D_t\bar\zeta-b\partial_\alpha(D_t-b\partial_\alpha)D_t\bar\zeta-q_{tt}-\partial_t\left(\frac{\partial_\alpha D_t\bar\zeta-q_\alpha}{\zeta_\alpha}\zeta_t\right)\\
    =&\ \partial_t(-iA\bar\zeta_\alpha)-b_t\partial_\alpha D_t\bar\zeta-b\partial_\alpha D_t^2\bar\zeta+bb_\alpha\partial_\alpha D_t\bar\zeta+b^2\partial_\alpha^2D_t\bar\zeta-q_{tt}-\frac{\partial_\alpha\partial_tD_t\bar\zeta-q_{t\alpha}}{\zeta_\alpha}\zeta_t\\
    &+\frac{\partial_\alpha D_t\bar\zeta-q_\alpha}{\zeta_\alpha^2}\zeta_{t\alpha}\zeta_t-\frac{\partial_\alpha D_t\bar\zeta-q_\alpha}{\zeta_\alpha}\zeta_{tt}.
    \end{aligned}
    $$
    For the bound of some functions with respect to $b$ and $A$, we refer to Lemma \ref{lem_real_proj} and Lemma \ref{lem_b_exp}. Since we have obtained the estimates of $\dot{z_1},\ \ddot{z_1},\ \dot{z_2}$ and $\ddot{z_2}$, we are ready to control the terms $b,\ b_\alpha,\ b_t,\ A,\  A_t,\ q_\alpha,\ q_{t\alpha}$ and $q_{tt}$. Thus we have
    $$
    \sup_{z\in\Sigma(t)}|\mathfrak{U}_{tt}(z,t)|\lesssim\frac{\lambda(\epsilon+\lambda)}{H^{\frac{7}{2}}}+\frac{\epsilon}{t^{\frac{1}{2}}}.
    $$
    With the preceding argument in mind, we derive that
    $$
    |\dddot{z_1}|+|\dddot{z_2}|\lesssim\frac{(\epsilon+\lambda)^2}{H^{\frac{3}{2}}}+\frac{\epsilon}{t^{\frac{1}{2}}}.
    $$
    Furthermore,
    $$
    \begin{aligned}
    \dddot{z_1}-\dddot{z_2}=&\ (\overline{\mathfrak{U}_{zz}}(z_1,t)-\overline{\mathfrak{U}_{zz}}(z_2,t))\bar{\dot{z_1}}^2+\overline{\mathfrak{U}_{zz}}(z_2,t)(\bar{\dot{z_1}}^2-\bar{\dot{z_2}}^2)\\
    &+2(\overline{\mathfrak{U}_{zt}}(z_1,t)-\overline{\mathfrak{U}_{zt}}(z_2,t))\bar{\dot{z_1}}+2\overline{\mathfrak{U}_{zt}}(z_2,t)(\bar{\dot{z_1}}-\bar{\dot{z_2}})\\
    &+(\overline{\mathfrak{U}_{z}}(z_1,t)-\overline{\mathfrak{U}_{z}}(z_2,t))\bar{\ddot{z_1}}+\overline{\mathfrak{U}_{z}}(z_2,t)(\bar{\ddot{z_1}}-\bar{\ddot{z_2}})+\bar{\mathfrak{U}}_{tt}(z_1,t)-\bar{\mathfrak{U}}_{tt}(z_2,t),
    \end{aligned}
    $$
    from which we show that
    $$
    \begin{aligned}
    |\dddot{z_1}-\dddot{z_2}|\lesssim&\frac{(\epsilon+\lambda)^2}{H^\frac{7}{2}}\norm{\mathfrak{F}}_{L^2}+\frac{\epsilon+\lambda}{H^{\frac{5}{2}}}\norm{\mathfrak{U}_t(\zeta(\alpha,t),t)}_{L^2}\\
    &+\frac{\epsilon+\lambda}{H^{3}}\norm{\mathfrak{F}}_{L^2}+\frac{1}{H^{\frac{3}{2}}}\norm{\mathfrak{U}_{tt}(\zeta(\alpha,t),t)}_{L^2}\\
    \lesssim&\frac{\epsilon+\lambda}{H^{\frac{3}{2}}}.
    \end{aligned}
    $$
\end{proof}

\subsection{Bounds for the dipole field}\label{subs_dipole}
Next we will investigate the vortex-induced velocity $q$. Thanks to the symmetry of the point vortices, we shall treat $q$ as a small perturbation of the velocity restricted on the surface, which can be easily handled during the proof.
\begin{proposition}[Bounds for the dipole field]\label{pro_q}
    Assume (B1)-(B6). For $t\in [0,T]$, we have
    $$\norm{q}_{H^{s+1}}\lesssim\frac{\lambda}{H^\frac{3}{2}},$$

    $$\norm{q}_{W^{s-1,\infty}}\lesssim\frac{\lambda}{H^2},$$

        $$\norm{D_tq}_{H^s}\lesssim\frac{\lambda(\epsilon+\lambda)}{H^{\frac{5}{2}}},$$

        $$\norm{D_tq}_{W^{s-1,\infty}}\lesssim\frac{\lambda(\epsilon+\lambda)}{H^3}$$

        and
        
        $$\norm{D_t^2q}_{H^s}\lesssim\min\left\{\frac{\lambda(\epsilon+\lambda)}{H^3},\frac{\lambda(\epsilon+\lambda)^2}{H^3}+\frac{\lambda\epsilon}{H^{\frac{3}{2}}t^{\frac{1}{2}}}\right\}.$$
\end{proposition}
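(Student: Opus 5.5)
The plan is to write each quantity as an explicit rational expression in $\zeta-z_j$ and then use the integral bound $\int|\zeta(\alpha,t)-z_j|^{-p}\,d\alpha\lesssim H^{1-p}$ for $p>1$. This bound comes from the chord-arc bound (B1) and the lower bound $|\zeta-z_j|\gtrsim H+|\alpha-\alpha_j|$, exactly as in the proof of the dipole-field lemma of Section~\ref{sec_pre}.

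\textbf{Bounds for $q$.} Each $\partial_\alpha^m q$ is $\lambda(z_1-z_2)$ times a sum of terms
\[
\frac{P(\zeta_\alpha,\dots,\partial_\alpha^m\zeta)}{(\zeta-z_1)^{a}(\zeta-z_2)^{b}},\qquad a+b\ge 2 .
\]
The polynomial factors $P$ are handled as follows. Up to order $s-1$ they are controlled in $L^\infty$ by (B4) and Lemma~\ref{lem_D_t_zeta_L^infty}. For $m=s,s+1$ the top derivative of $\zeta$ is put in $L^2$ via (B2), and the denominator in $L^\infty$, which gives $\lambda H^{-2}\epsilon$ and is better than required. With $|z_1-z_2|\lesssim1$ from (B5), the $L^2$ bound is $\lambda H^{-3/2}$ and the $L^\infty$ bound is $\lambda H^{-2}$.

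\textbf{First material derivative.} Since $D_t$ acts on $\zeta$ as $D_t\zeta$ and on $z_j$ as $\dot z_j$,
\[
D_tq=-\frac{\lambda i}{2\pi}\frac{\dot z_1-\dot z_2}{(\zeta-z_1)(\zeta-z_2)}+\frac{\lambda i}{2\pi}(z_1-z_2)\sum_j\frac{D_t\zeta-\dot z_j}{(\zeta-z_j)^2(\zeta-z_{j'})}.
\]
The first term is bounded using \eqref{z_1-z_2_v}: it is at most $\lambda(\epsilon+\lambda)H^{-3/2}\cdot H^{-3/2}$ in $L^2$. The second term has a cubic denominator, which yields $H^{-5/2}$ in $L^2$ and $H^{-3}$ in $L^\infty$, multiplied by $|D_t\zeta|+|\dot z_j|\lesssim\epsilon+\lambda$ from Lemma~\ref{lem_z_1}. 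Spatial derivatives up to order $s$ are treated in the same way. The only nonstandard factor is $\partial_\alpha^s D_t\zeta$, which goes into $H^{s+1/2}$ via (B2) with the kernel in $L^\infty$. Note that $\partial_\alpha D_t=D_t\partial_\alpha+b_\alpha\partial_\alpha$; the commutator terms carry $b$, which is bounded by Lemma~\ref{lem_b_exp} and Lemma~\ref{lem_real_proj} as $\|b\|\lesssim\epsilon^2+\lambda H^{-3/2}$. For this step it is simpler to differentiate the explicit formula directly rather than commute.

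\textbf{Second material derivative (main obstacle).} Applying $D_t$ again produces three kinds of terms:
\begin{itemize}
\item $(\ddot z_1-\ddot z_2)/((\zeta-z_1)(\zeta-z_2))$, bounded by \eqref{ddot_z_1-z_2} as $\lambda(\epsilon+\lambda)H^{-3}$;
\item $(\dot z_1-\dot z_2)(D_t\zeta-\dot z_j)/(\cdot)^3$, which is even smaller;
\item terms $(z_1-z_2)(D_t^2\zeta-\ddot z_j)/(\cdot)^3$ and $(z_1-z_2)(D_t\zeta-\dot z_j)^2/(\cdot)^4$.
\end{itemize}
The last kind is the delicate one. Putting $D_t^2\zeta$ in $H^s$ by (B2) with the kernel in $L^\infty$ ($H^{-3}$), and bounding $\ddot z_j$ crudely, gives the first alternative $\lambda(\epsilon+\lambda)H^{-3}$. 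For the second alternative I would instead use $\|D_t^2\zeta\|_{W^{s-2,\infty}}\lesssim\epsilon t^{-1/2}$ from (B4), together with Lemma~\ref{lem_D_t_zeta_L^infty} for the top orders, against the kernel in $L^2$ ($H^{-5/2}\le H^{-3/2}$). I would also use \eqref{ddot_z_1}, namely $|\ddot z_j|\lesssim(\epsilon+\lambda)^2H^{-3/2}+\epsilon t^{-1/2}$, against the kernel in $L^2$. Finally, $(D_t\zeta-\dot z_j)^2\lesssim(\epsilon+\lambda)^2$ multiplies $H^{-7/2}$.

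The hardest point is the highest derivative $\partial_\alpha^s D_t^2\zeta$, which is not available in $L^\infty$ with decay. I would place it in $L^2$ and use the kernel's $L^\infty$ bound $\lambda H^{-3}\cdot\epsilon$. That is $\le\lambda(\epsilon+\lambda)^2H^{-3}$ only if $\epsilon\lesssim(\epsilon+\lambda)^2$, which fails. So I would first try to rewrite $D_t^2\zeta=iA\zeta_\alpha-i$ via \eqref{main_equation}, which trades it for $A-1$ and $\zeta_\alpha-1$. Then I would integrate by parts in $\alpha$ against the holomorphic-type kernel $(\zeta-z_j)^{-3}$: the combination $(\zeta_\alpha-1)/(\zeta-z_j)^3$ is the $\alpha$-derivative of $-\tfrac12(\zeta-z_j)^{-2}$ minus $(\zeta-z_j)^{-3}$. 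This exploits the fact that the constant $-i$ and the term $\zeta_\alpha$ combine into exact derivatives, leaving only $A-1$, which is quadratically small by Lemma~\ref{lem_b_exp}.
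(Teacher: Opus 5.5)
Your bounds for $q$ and $D_tq$ follow the paper's proof: the explicit formulas \eqref{D_t_q} and \eqref{D_t^2q}, the chain rule, the estimate $\int|\zeta-z_j|^{-p}\,d\alpha\lesssim H^{1-p}$, and Lemma \ref{lem_z_1} for the vortex velocities. The gap is in the last step for $D_t^2q$.

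You correctly see that the top-order factor $\partial_\alpha^sD_t^2\zeta$ is only $O(\epsilon)$ in $L^2$. Against the kernel $\lambda(\zeta-z_j)^{-3}$ it gives $\lambda\epsilon H^{-3}$. This is compatible with the second alternative only while $t\lesssim H^3$, since then $\lambda\epsilon H^{-3}\le\lambda\epsilon H^{-3/2}t^{-1/2}$. The paper's proof does not address this point either; it just inserts \eqref{ddot_z_1} for $\ddot z_1-\ddot z_2$.

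Your repair cannot work, for three reasons.
\begin{itemize}
\item You are estimating a norm, not an integral, so there is nothing to integrate by parts against.
\item The identity $(\zeta_\alpha-1)(\zeta-z_j)^{-3}=\partial_\alpha\bigl(-\tfrac12(\zeta-z_j)^{-2}\bigr)-(\zeta-z_j)^{-3}$ merely splits $\lambda(\zeta_\alpha-1)(\zeta-z_j)^{-3}$, which is $O(\lambda\epsilon H^{-3})$ pointwise, into two pieces. Each piece has $L^2$ size $\lambda H^{-5/2}$ and no small factor, so estimating them separately discards the cancellation and is strictly worse.
\item After \eqref{main_equation}, the top-order factor becomes $\partial_\alpha^{s+1}\zeta=\partial_\alpha^s(\zeta_\alpha-1)$. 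This has the same defect: it is $O(\epsilon)$ in $L^2$, with neither decay nor $L^\infty$ control.
\end{itemize}

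The obstruction already appears at order $s-1$. There Lemma \ref{lem_D_t_zeta_L^infty} gives only $\epsilon t^{-1/6}\ln(2+t)$. Both placements then fit under $\lambda(\epsilon+\lambda)^2H^{-3}+\lambda\epsilon H^{-3/2}t^{-1/2}$ only if $H\gtrsim t^{1/3}$, up to a logarithm. (B6) does not give this in the range $H_0^3\ll t\ll\lambda^{-3/2}$, which \eqref{lambda_H_0} permits for small $\epsilon$. With these tools you can prove the second alternative in $H^{s-2}$. In particular it holds in $L^2$, the only norm in which it is used later (Lemma \ref{lem_tilde_I_1}). In $H^s$ you only get the first alternative.

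Even the first alternative needs more than ``bounding $\ddot z_j$ crudely''. The individual accelerations $\ddot z_j$, not only their difference, multiply a cubic kernel whose $L^2$ norm is $\lambda H^{-5/2}$. With \eqref{ddot_z_1} this gives $\lambda\epsilon t^{-1/2}H^{-5/2}$. That is not $\lesssim\lambda(\epsilon+\lambda)H^{-3}$ when $t\ll H$, for example when $t\sim1$, $H\sim H_0\gg1$ and $\lambda\lesssim\epsilon$.

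What you need is $|\ddot z_j|\lesssim(\epsilon+\lambda)^2H^{-3/2}+(\epsilon+\lambda)H^{-1/2}$. Get it by bounding $\mathfrak U_t(z_j,t)$ with the Cauchy estimate away from the boundary (Section~\ref{sec_pre}), using $\|\mathfrak U_t(\zeta(\cdot,t),t)\|_{L^2}\lesssim\epsilon+\lambda$ as in the proof of \eqref{ddot_z_1-z_2}. Do not use the supremum of $\mathfrak U_t$ over $\Sigma(t)$.
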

\begin{proof}
    Set $H_j(t)=d(z_j,\Sigma(t))$ for $j=1,2$ and denote by $\omega_1$ the nearest point $\zeta(\alpha_1,t)$ to $z_1$ in $\Sigma(t)$ where $\alpha_1\in\mathbb{R}$. Let $g=(\zeta-z_1)(\zeta-z_2)$. Recall that
    $$
    q=-\frac{\lambda i}{2\pi}\frac{z_1-z_2}{g}.
    $$
    Direct calculation with chain rule shows that
$$
    \partial_\alpha^n q=-\frac{\lambda i}{2\pi}(z_1-z_2)\sum_{k=1}^n\sum_{\substack{\sum_{l=1}^nk_l=k,\\\sum_{l=1}^nlk_l=n}}\frac{n!}{(k_1)!\cdots(k_n)!}(-1)^kk!\left(\frac{1}{g}\right)^{k+1}\prod_{l=1}^n\left(\frac{\partial_\alpha^lg}{l!}\right)^{k_l}
    $$
for $1\leq n\leq s+1$. To estimate $\norm{\partial_\alpha^n q}_{L^2}$, we focus on $\norm{\frac{\Pi_{l=1}^n(\partial_\alpha^lg)^{k_l}}{g^{k+1}}}_{L^2}$ and examine the integral below when $k+l\geq 2$ via method similar to the one in Lemma \ref{lem_z_1}:
$$
    \begin{aligned}
    &\int\frac{1}{|\zeta(\beta,t)-z_1|^k|\zeta(\beta,t)-z_2|^l}d\beta\\
    =&\int_{|\zeta(\beta,t)-\omega_1|\leq 2H_1}\frac{1}{|\zeta(\beta,t)-z_1|^k|\zeta(\beta,t)-z_2|^l}d\beta\\
    &+\int_{|\zeta(\beta,t)-\omega_1|> 2H_1}\frac{1}{|\zeta(\beta,t)-\omega_1+\omega_1-z_1|^k|\zeta(\beta,t)-\omega_1+\omega_1-z_2|^l}d\beta\\
    \lesssim&\ \frac{1}{H_1^{k+l-1}}+\int_{|\beta|\gtrsim H_1}\frac{1}{|\beta|^{k+l}}d\beta\\
    \lesssim&\ \frac{1}{H^{k+l-1}}.
    \end{aligned}
    $$
The first inequality is deduced from $\frac{1}{2}\leq\frac{|\zeta(\alpha,t)-\zeta(\beta,t)|}{|\alpha-\beta|}\leq\frac{3}{2}$, which is a consequence of the bootstrap assumption about $\norm{\zeta_\alpha-1}_{L^\infty}$. Hence there holds
$$
    \norm{q}_{H^{s+1}}=\left(\sum_{n=0}^{s+1}\norm{\partial_\alpha^nq}^2_{L^2}\right)^{\frac{1}{2}}\lesssim\lambda|z_1-z_2|\left(\norm{\frac{1}{g}}_{L^2}+\sum_{n=1}^{s+1}\sum_{k=1}^n\norm{\frac{\Pi_{l=1}^n(\partial_\alpha^lg)^{k_l}}{g^{k+1}}}_{L^2}\right)\lesssim\frac{\lambda}{H^\frac{3}{2}}.
    $$
Meanwhile,
$$
    \norm{q}_{W^{s-1,\infty}}\lesssim\frac{\lambda}{H^2}.
    $$
For
\begin{equation}\label{D_t_q}
    D_tq=-\frac{\lambda i}{2\pi}\frac{\dot{z_1}-\dot{z_2}}{(\zeta-z_1)(\zeta-z_2)}+\frac{\lambda i}{2\pi}(z_1-z_2)\left(\frac{D_t\zeta-\dot{z_1}}{(\zeta-z_1)^2(\zeta-z_2)}+\frac{D_t\zeta-\dot{z_2}}{(\zeta-z_1)(\zeta-z_2)^2}\right),
\end{equation}
we utilize the chain rule and obtain
$$
    \norm{D_tq}_{H^s}\lesssim\frac{\lambda(\epsilon+\lambda)}{H^{\frac{5}{2}}},
    $$
$$
    \norm{D_tq}_{W^{s-1,\infty}}\lesssim\frac{\lambda(\epsilon+\lambda)}{H^3}.
    $$
Straightforward calculation yields that
\begin{equation}\label{D_t^2q}
\begin{aligned}
D_t^2q=&-\frac{\lambda i}{2\pi}\frac{\ddot{z_1}-\ddot{z_2}}{(\zeta-z_1)(\zeta-z_2)}+\frac{\lambda i}{\pi}(\dot{z_1}-\dot{z_2})\left(\frac{D_t\zeta-\dot{z_1}}{(\zeta-z_1)^2(\zeta-z_2)}+\frac{D_t\zeta-\dot{z_2}}{(\zeta-z_1)(\zeta-z_2)^2}\right)\\
&+\frac{\lambda i}{2\pi}(z_1-z_2)D_t\left(\frac{D_t\zeta-\dot{z_1}}{(\zeta-z_1)^2(\zeta-z_2)}+\frac{D_t\zeta-\dot{z_2}}{(\zeta-z_1)(\zeta-z_2)^2}\right).
\end{aligned}
\end{equation}
If we treat $\ddot{z_1}-\ddot{z_2}$ with (\ref{ddot_z_1-z_2}), there holds
$$
\norm{D_t^2q}_{H^s}\lesssim\frac{\lambda(\epsilon+\lambda)}{H^3}.
$$
(\ref{ddot_z_1}) also implies that $|\ddot{z_1}-\ddot{z_2}|\lesssim\frac{(\epsilon+\lambda)^2}{H^{\frac{3}{2}}}+\frac{\epsilon}{t^{\frac{1}{2}}}$. Therefore
$$
\norm{D_t^2q}_{H^s}\lesssim\frac{\lambda(\epsilon+\lambda)^2}{H^3}+\frac{\lambda\epsilon}{H^{\frac{3}{2}}t^{\frac{1}{2}}}.
$$

\end{proof}

\begin{proposition}[Vector-field bounds for $q$]\label{pro_L_0_q}
    Under (B1)-(B6),
    $$
    \norm{L_0q}_{L^2}\lesssim\frac{\lambda(\epsilon+\lambda)t}{H^\frac{5}{2}},
    $$
    $$
    \norm{\partial_\alpha L_0q}_{H^{s-1}}\lesssim\frac{\lambda\epsilon(\epsilon+\lambda)t^{\frac{1}{2}}}{H^{\frac{7}{2}}}+\frac{\lambda\epsilon}{H^{\frac{5}{2}}t^{\frac{1}{2}}},
    $$
    $$
    \norm{D_tL_0q}_{H^{s-1}}\lesssim\min\left\{\frac{\lambda(\epsilon+\lambda)t}{H^3
        }+\frac{\lambda(\epsilon+\lambda) t^{\frac{1}{2}}}{H^{\frac{5}{2}}},\frac{\lambda(\epsilon+\lambda)^2t}{H^3}+\frac{\lambda\epsilon t^{\frac{1}{2}}}{H^{\frac{3}{2}}}\right\}
    $$
    and
    $$
    \norm{L_0D_t^2q}_{H^{s-1}}\lesssim\frac{\lambda(\epsilon+\lambda) t}{H^3}+\frac{\lambda(\epsilon+\lambda) t^{\frac{1}{2}}}{H^{\frac{5}{2}}}.
    $$
\end{proposition}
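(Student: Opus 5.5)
The plan is to use the identity $L_0=\frac{t}{2}D_t+\bigl(\alpha-\frac{t}{2}b\bigr)\partial_\alpha$, which reduces $L_0q$ to $\frac t2 D_tq$ plus a weighted spatial derivative. The term $\frac t2 D_tq$ is already controlled by Proposition \ref{pro_q}: $\|D_tq\|_{H^s}\lesssim\lambda(\epsilon+\lambda)H^{-5/2}$. This accounts for the factor $t$ in the first bound and for the $t$-terms in the remaining bounds. The difficulty is the weight $\alpha$ in $\alpha q_\alpha$.

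\textbf{Step 1: write $\alpha q_\alpha$ in terms of $L_0\zeta$.} Since $\partial_\alpha q=q_\zeta\zeta_\alpha$, where $q_\zeta$ denotes the $\zeta$-derivative of the explicit rational function, we have
\[
\alpha q_\alpha = q_\zeta\,(\alpha\zeta_\alpha-\zeta)+q_\zeta\,\zeta .
\]
Here $\alpha\zeta_\alpha-\zeta=\alpha(\zeta_\alpha-1)-\xi$ is controlled by the vector field $L_0\zeta$ and by $tD_t\zeta$.

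\textbf{Step 2: the homogeneity term $q_\zeta\zeta$.} This term is not small pointwise. However, $\zeta q_\zeta$ has the same homogeneity as $q$ relative to the vortex positions: writing $\zeta=(\zeta-z_j)+z_j$, it splits into terms of type $q$ plus $z_jq_\zeta$. The trouble is that $|z_j|\lesssim H_0+\lambda t+|x|$ is large. I would instead differentiate the scaling directly in the full expression,
\[
L_0q=\frac t2\,\partial_t q+\alpha\partial_\alpha q ,
\]
and use that $q$ is homogeneous of degree $-1$ in $(\zeta,z_1,z_2)$ jointly. Euler's relation then gives
\[
L_0q=-q+\text{(terms involving } L_0\zeta-\zeta,\ \tfrac t2\dot z_j-z_j).
\]
Using Lemma \ref{lem_z_1}, the quantity $\frac t2\dot z_j-z_j$ is $O(H)$ rather than zero, since the vortices move linearly with depth $\approx H_0+\lambda t$. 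These terms come multiplied by $q_{z_j}$ with $\|q_{z_j}\|_{L^2}\lesssim\lambda H^{-5/2}$.

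This is where the main obstacle lies: a crude bound gives $\lambda H^{-3/2}$, whereas the claimed estimate is $\lambda(\epsilon+\lambda)tH^{-5/2}$. The dipole cancellation is what rescues it. Because $\dot z_1\approx\dot z_2$ up to the error $(\epsilon+\lambda)H^{-3/2}$ in \eqref{z_1-z_2_v}, the combination entering $q$ is $(z_1-z_2)$ and its $L_0$-analogue $\frac t2(\dot z_1-\dot z_2)-(z_1-z_2)$. The common translation $\frac t2\dot z_j-z_j$ acts on $q$ only through $\partial_{z_1}q+\partial_{z_2}q=-q_\zeta$. I would combine this with $L_0\zeta$ to form
\[
\bigl(L_0\zeta-(\tfrac t2\dot z_1-z_1)\bigr)q_\zeta .
\]
This expression equals $\frac t2\bigl(D_t\zeta-\dot z_1\bigr)q_\zeta$ plus $(\alpha-\zeta)\zeta_\alpha q_\zeta$-type corrections. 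I expect this to be the delicate bookkeeping step, and I would attempt it first with the exact formula \eqref{D_t_q}, checking that every surviving term carries either $t(D_t\zeta-\dot z_j)$ or $t(\dot z_1-\dot z_2)$ or $L_0(\zeta_\alpha-1)$.

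\textbf{Step 3: derivatives.} For $\partial_\alpha L_0q$ the leading term $t\,\partial_\alpha D_tq$ gains a decay factor $\|\partial_\alpha D_t\zeta\|_{L^\infty}\lesssim\epsilon t^{-1/2}$ from (B4), because $\partial_\alpha$ kills $\dot z_j$. This yields $\lambda\epsilon(\epsilon+\lambda)t^{1/2}H^{-7/2}$, where one extra inverse power of $H$ comes from $q_{\zeta\zeta}$. The remaining term involves $\partial_\alpha L_0\xi$, bounded by (B3), and gives $\lambda\epsilon H^{-5/2}t^{-1/2}$. For $D_tL_0q$ and $L_0D_t^2q$, I would commute using $[D_t,L_0]=-\frac12D_t+(\ldots)\partial_\alpha$ and apply the same scheme to $D_tq$ and $D_t^2q$ via \eqref{D_t_q} and \eqref{D_t^2q}. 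Here $L_0$ falls on $\dot z_j,\ddot z_j$ as $\frac t2\ddot z_j$ and $\frac t2\dddot z_j$. Lemma \ref{lem_z_1} (both options for $\ddot z_1-\ddot z_2$) produces the two branches of the minimum, and $L_0D_t\zeta$ and $L_0D_t^2\zeta$ are bounded by (B3) with $t^{\delta_0}\le t^{1/2}$.
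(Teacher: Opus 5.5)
\textbf{Verdict: genuine gap.} The gap is at the step you flagged as delicate. The dipole cancellation you hope for in Step~2 does not exist, and the first two estimates cannot be proved as stated.

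\textbf{Why no cancellation is possible.} The weighted term $\alpha q_\alpha=\alpha\zeta_\alpha q_\zeta$ carries no small factor. Take a flat interface and a rigidly translating dipole, $z_j(t)=z_j(0)+vt$ with $|v|\approx\lambda/(2\pi)$. Then $q=Q(\alpha-vt)$ and
\[
L_0q=\bigl(\alpha-\tfrac t2 v\bigr)Q'(\alpha-vt),\qquad \partial_\alpha L_0q=Q'(\alpha-vt)+\bigl(\alpha-\tfrac t2 v\bigr)Q''(\alpha-vt).
\]
To leading order in $d_{12}/H$, one has $|Q^{(k)}(\alpha-vt)|\approx\lambda|\alpha-w(t)|^{-k-2}$, where $w$ is the dipole centre. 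The weight $y=\alpha-\operatorname{Re}w$ contributes orthogonally to the constant part, since $\int y\,|y+iH|^{-2m}\,dy=0$. Hence
\[
\|L_0q\|_{L^2}\gtrsim\lambda H^{-3/2},\qquad \|\partial_\alpha L_0q\|_{L^2}\gtrsim\lambda H^{-5/2}.
\]
The first exceeds $\lambda(\epsilon+\lambda)tH^{-5/2}$ whenever $1\le t\ll H_0/(\epsilon+\lambda)$. The second exceeds the stated bound for $\partial_\alpha L_0q$ by a factor $\gtrsim\epsilon^{-1}$.

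Your Euler form shows the same obstruction:
\begin{itemize}
\item The term $-q$ alone has norm $\approx\lambda H^{-3/2}$.
\item $\|q_{z_j}\|_{L^2}\approx\lambda H^{-3/2}$, not $\lambda H^{-5/2}$, because $\partial_{z_j}$ can fall on the numerator $z_1-z_2$.
\item The antisymmetric part of $q_{z_j}$ multiplies $\tfrac t2(\dot z_1-\dot z_2)-(z_1-z_2)=O(1)$.
\end{itemize}
None of these pieces carries a factor $t(D_t\zeta-\dot z_j)$, $t(\dot z_1-\dot z_2)$ or $L_0(\zeta_\alpha-1)$, so the bookkeeping cannot close. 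Step~3 has the matching omission: the term $q_\alpha+\alpha q_{\alpha\alpha}$ has size $\lambda H^{-5/2}$ with no $\epsilon$ gain.

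\textbf{What the paper does.} The paper does not look for a cancellation. It writes $L_0q$ explicitly with the factors $L_0\zeta-\frac t2\dot z_j$. It then shows that the surface point nearest to $z_1$ satisfies $|\alpha_0(t)|\lesssim H$, deduces $\|\alpha\zeta_\alpha/(\zeta-z_1)\|_{L^\infty}\lesssim1$ and hence $\|L_0\zeta/(\zeta-z_1)\|_{L^\infty}\lesssim1$, and combines this with Lemma~\ref{lem_z_1}.

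Carried through honestly, that argument gives
\[
\|L_0q\|_{L^2}\lesssim\lambda H^{-3/2}+\lambda(\epsilon+\lambda)tH^{-5/2},
\]
and the bound for $\partial_\alpha L_0q$ must likewise contain a term $\lambda H^{-5/2}$. The stated estimates drop the contribution of the $\alpha$-weight. So you should replace Step~2 by this pointwise localization bound and aim for the corrected estimates, rather than search for a cancellation.

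\textbf{The remaining two bounds.} For $D_tL_0q$ and $L_0D_t^2q$ your scheme is sound and parallels the explicit formula \eqref{D_t_L_0_q}. There the weight either enters through $L_0D_t\zeta$ and $L_0D_t^2\zeta$, which (B3) controls, or comes multiplied by $D_t\zeta-\dot z_j$, $\dot z_1-\dot z_2$ or $\ddot z_1-\ddot z_2$, which makes it harmless. One small correction: the commutator is $[D_t,L_0]=+\tfrac12D_t+(\tfrac12b-L_0b)\partial_\alpha$, with a plus sign.
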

\begin{proof}
    For $L_0q$, there holds
$$
    \begin{aligned}
    L_0q=&\ L_0\left(-\frac{\lambda i}{2\pi}\frac{z_1-z_2}{(\zeta-z_1)(\zeta-z_2)}\right)\\
    =&-\frac{\lambda it}{4\pi}\frac{\dot{z_1}-\dot{z_2}}{(\zeta-z_1)(\zeta-z_2)}+\frac{\lambda i}{2\pi}(z_1-z_2)\left(\frac{L_0\zeta-\frac{1}{2}t\dot{z_1}}{(\zeta-z_1)^2(\zeta-z_2)}+\frac{L_0\zeta-\frac{1}{2}t\dot{z_2}}{(\zeta-z_1)(\zeta-z_2)^2}\right).
    \end{aligned}
    $$
Under the bootstrap assumptions, we discover that
$$
    \norm{\frac{L_0\zeta}{\zeta-z_1}}_{L^\infty}\lesssim t\norm{\frac{D_t\zeta}{\zeta-z_1}}_{L^\infty}+\norm{\frac{\alpha\zeta_\alpha}{\zeta-z_1}}_{L^\infty}.
    $$
For each $t$, there exists $\alpha_0(t)\in \mathbb{R}$ such that $d(z_1(t),\Sigma(t))=d(z_1(t),\zeta(\alpha_0(t),t))$. We claim that $|\alpha_0(t)|\lesssim H$. Referring to \eqref{ass_diff} and \eqref{boot_H} leads that
$$\begin{aligned}
    |\alpha_0(t)|\lesssim&\ |\zeta(\alpha_0(t),t)-\zeta(0,t)|\\
    \leq&\ |\zeta(\alpha_0(t),t)-z_1(t)|+|z_1(t)-z_1(0)|\\
    &+|z_1(0)-\zeta(\alpha_0(0),0)|+|\zeta(0,t)-\zeta(\alpha_0(0),0)|\\
    \lesssim&\ H(t)+\left(\sup_{t\in[0,T]}|\dot{z_1}|\right)t+H_0+\alpha_0(0)\\
    \lesssim&\ H.
    \end{aligned}
    $$
    If $|\alpha-\alpha_0(t)|\leq 2H$, we see that
$$
    \left|\frac{\alpha\zeta_\alpha}{\zeta-z_1}\right|\lesssim\frac{|\alpha-\alpha_0|+|\alpha_0|}{H}\lesssim1.
    $$
Otherwise,
$$
    \left|\frac{\alpha\zeta_\alpha}{\zeta-z_1}\right|=\left|\frac{\alpha\zeta_\alpha}{\zeta-\zeta(\alpha_0,t)+\zeta(\alpha_0,t)-z_1}\right|\lesssim\frac{\alpha}{|\alpha-\alpha_0|-H}\lesssim 1
    $$
Therefore,
$$
    \norm{\frac{\alpha\zeta_\alpha}{\zeta-z_1}}_{L^\infty}\lesssim 1.
    $$
Hence,
$$
    \norm{\frac{L_0\zeta}{\zeta-z_1}}_{L^\infty}\lesssim\frac{\epsilon t^{\frac{1}{2}}}{H}+ 1\lesssim\frac{\epsilon t^{\frac{1}{2}}}{\sqrt{H_0\lambda t}}+1\lesssim 1.
    $$
With Lemma \ref{lem_z_1} in mind, we claim that
$$
    \norm{L_0q}_{L^2}\lesssim\frac{\lambda(\epsilon+\lambda) t}{H^{\frac{5}{2}}}.
    $$
For higher derivatives of $L_0q$, we proceed as above and deduce that
$$
    \norm{\partial_\alpha L_0q}_{H^{s-1}}\lesssim\frac{\lambda\epsilon(\epsilon+\lambda)t^{\frac{1}{2}}}{H^{\frac{7}{2}}}+\frac{\lambda\epsilon}{H^{\frac{5}{2}}t^{\frac{1}{2}}}.
    $$

    For $D_tL_0q$, straightforward calculation yields that
    \begin{equation}\label{D_t_L_0_q}
    \begin{aligned}
    D_tL_0q=&-\frac{\lambda i}{4\pi}\frac{\dot{z_1}-\dot{z_2}+t(\ddot{z_1}-\ddot{z_2})}{(\zeta-z_1)(\zeta-z_2)}+\frac{\lambda it}{4\pi}(\dot{z_1}-\dot{z_2})\left(\frac{D_t\zeta-\dot{z_1}}{(\zeta-z_1)^2(\zeta-z_2)}+\frac{D_t\zeta-\dot{z_2}}{(\zeta-z_1)(\zeta-z_2)^2}\right)\\
    &+\frac{\lambda i}{2\pi}(\dot{z_1}-\dot{z_2})\left(\frac{L_0\zeta-\frac{1}{2}t\dot{z_1}}{(\zeta-z_1)^2(\zeta-z_2)}+\frac{L_0\zeta-\frac{1}{2}t\dot{z_2}}{(\zeta-z_1)(\zeta-z_2)^2}\right)\\
    &+\frac{\lambda i}{2\pi}(z_1-z_2)D_t\left(\frac{L_0\zeta-\frac{1}{2}t\dot{z_1}}{(\zeta-z_1)^2(\zeta-z_2)}+\frac{L_0\zeta-\frac{1}{2}t\dot{z_2}}{(\zeta-z_1)(\zeta-z_2)^2}\right).
    \end{aligned}
    \end{equation}
    By adopting Lemma \ref{lem_z_1}, we derive that
    $$
    \norm{D_tL_0q}_{H^{s-1}}\lesssim\frac{\lambda(\epsilon+\lambda)t}{H^3
    }+\frac{\lambda(\epsilon+\lambda) t^{\frac{1}{2}}}{H^{\frac{5}{2}}}
    $$
    when we utilize (\ref{ddot_z_1-z_2}) to treat $\ddot{z_1}-\ddot{z_2}$. If we adopt (\ref{ddot_z_1}) instead, there holds
    $$
    \norm{D_tL_0q}_{H^{s-1}}\lesssim\frac{\lambda(\epsilon+\lambda)^2t}{H^3}+\frac{\lambda\epsilon t^{\frac{1}{2}}}{H^{\frac{3}{2}}}.
    $$
    Next we apply $L_0$ to both sides of (\ref{D_t^2q}):
    $$
    \begin{aligned}
    L_0D_t^2q=&-\frac{\lambda i}{4\pi}\frac{t(\dddot{z_1}-\dddot{z_2})}{(\zeta-z_1)(\zeta-z_2)}+\frac{\lambda i}{2\pi}(\ddot{z_1}-\ddot{z_2})\left(\frac{L_0\zeta-\frac{1}{2}t\dot{z_1}}{(\zeta-z_1)^2(\zeta-z_2)}+\frac{L_0\zeta-\frac{1}{2}t\dot{z_2}}{(\zeta-z_1)(\zeta-z_2)^2}\right)\\
    &+\frac{\lambda it}{2\pi}(\ddot{z_1}-\ddot{z_2})\left(\frac{D_t\zeta-\dot{z_1}}{(\zeta-z_1)^2(\zeta-z_2)}+\frac{D_t\zeta-\dot{z_2}}{(\zeta-z_1)(\zeta-z_2)^2}\right)\\
    &+\frac{\lambda i}{\pi}(\dot{z_1}-\dot{z_2})L_0\left(\frac{D_t\zeta-\dot{z_1}}{(\zeta-z_1)^2(\zeta-z_2)}+\frac{D_t\zeta-\dot{z_2}}{(\zeta-z_1)(\zeta-z_2)^2}\right)\\
    &+\frac{\lambda it}{4\pi}(\dot{z_1}-\dot{z_2})D_t\left(\frac{D_t\zeta-\dot{z_1}}{(\zeta-z_1)^2(\zeta-z_2)}+\frac{D_t\zeta-\dot{z_2}}{(\zeta-z_1)(\zeta-z_2)^2}\right)\\
    &+\frac{\lambda i}{2\pi}(z_1-z_2)L_0D_t\left(\frac{D_t\zeta-\dot{z_1}}{(\zeta-z_1)^2(\zeta-z_2)}+\frac{D_t\zeta-\dot{z_2}}{(\zeta-z_1)(\zeta-z_2)^2}\right),
    \end{aligned}
    $$
    which implies that
    $$
    \norm{L_0D_t^2q}_{H^{s-1}}\lesssim\frac{\lambda(\epsilon+\lambda) t}{H^3}+\frac{\lambda(\epsilon+\lambda) t^{\frac{1}{2}}}{H^{\frac{5}{2}}}.
    $$
\end{proof}

\subsection{Comparison estimates for the good unknowns}\label{subs_good}
In this section we compare the original variables with the new variables, such as $\tilde{\theta}$ and $\tilde{\sigma}$, and claim that they differ by an acceptable error. This observation allows us to close the bootstrap assumptions by investigating the new variables.
\begin{lemma}\label{lem_theta_t_zeta_t}
    We have for $t\in [0,T]$
    \begin{equation}\label{theta_t_zeta_t}
    \norm{D_t\tilde{\theta}-2D_t\zeta}_{H^{s+\frac{1}{2}}}\lesssim\frac{\epsilon^2}{t^\frac{1}{4}}+\frac{\lambda}{H^\frac{3}{2}},
    \end{equation}
\begin{equation}\label{sigma_t_zeta_tt}
    \norm{D_t\tilde{\sigma}-2D_t^2\zeta}_{H^s}\lesssim\frac{\epsilon^2}{t^\frac{1}{4}}+\frac{\lambda(\epsilon+\lambda)}{H^{\frac{5}{2}}},
    \end{equation}
    \begin{equation}
        \norm{D_t\tilde{\theta}-2D_t\zeta}_{W^{s-2,\infty}}\lesssim\frac{\epsilon^2}{t^{\frac{2}{3}}}+\frac{\lambda}{H^2}
    \end{equation}
    and
    \begin{equation}
    \norm{D_t\tilde{\sigma}-2D_t^2\zeta}_{W^{s-2,\infty}}\lesssim\frac{\epsilon^2}{t^{\frac{2}{3}}}+\frac{\lambda(\epsilon+\lambda)}{H^3}.
    \end{equation}
\end{lemma}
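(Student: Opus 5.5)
We will derive explicit identities for $D_t\tilde\theta-2D_t\zeta$ and $D_t\tilde\sigma-2D_t^2\zeta$ using the commutator formulas of Section~\ref{sec_pre} and the holomorphicity of $\mathfrak F=D_t\bar\zeta-q$. Then we estimate each term, using Lemmas~\ref{lem_D_t_zeta_L^infty}, \ref{lem_Hf_infty}, \ref{lem_S_1} and Proposition~\ref{pro_q}.

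\textbf{First identity.} Since $\tilde\theta=(I-\mathfrak H)(\zeta-\bar\zeta)$, the commutator identity for $D_t$ gives
\[
D_t\tilde\theta=(I-\mathfrak H)(D_t\zeta-D_t\bar\zeta)-[D_t\zeta,\mathfrak H]\frac{\zeta_\alpha-\bar\zeta_\alpha}{\zeta_\alpha}.
\]
Writing $D_t\bar\zeta=\mathfrak F+q$ and using $(I-\mathfrak H)\mathfrak F=0$, we get $(I-\mathfrak H)D_t\bar\zeta=(I-\mathfrak H)q$. Next we write $(I-\mathfrak H)D_t\zeta=2D_t\zeta-(I+\mathfrak H)D_t\zeta$. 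The conjugate $\overline{\mathfrak F}=D_t\zeta-\bar q$ is holomorphic in $\Omega(t)^c$, so Lemma~\ref{holomorphic} gives $(I+\mathfrak H)D_t\zeta=(I+\mathfrak H)\bar q$. Altogether,
\[
D_t\tilde\theta-2D_t\zeta=-(I+\mathfrak H)\bar q-(I-\mathfrak H)q-[D_t\zeta,\mathfrak H]\frac{\zeta_\alpha-\bar\zeta_\alpha}{\zeta_\alpha}.
\]
\emph{Sobolev bound.} The $q$-terms are bounded in $H^{s+1/2}$ by $\lambda H^{-3/2}$, using Proposition~\ref{pro_q} and $L^2$-boundedness of $\mathfrak H$ together with its commutators with $\partial_\alpha$. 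For the commutator we use a standard estimate of the type
\[
\|[f,\mathfrak H]g_\alpha\|_{H^{k}}\lesssim\|f_\alpha\|_{W^{k,\infty}}\|g\|_{H^k}+\dots,
\]
putting $D_t\zeta$ in $L^\infty$ or $W^{k,\infty}$ and $\zeta_\alpha-1$ in $H^s$. By Lemma~\ref{lem_D_t_zeta_L^infty} and (B2) this yields $\epsilon^2t^{-1/4}$. At the top order, $H^{s+1/2}$ falls on $D_t\zeta$ and is controlled by (B2), with $\zeta_\alpha-1$ in $W^{1,\infty}$ by (B4).

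\emph{$W^{s-2,\infty}$ bound.} The commutator is an integral of the form of $S_1$ (with a factor $\zeta-\bar\zeta$ up to harmless modifications). Lemma~\ref{lem_S_1} with $g=D_t\zeta$ gives
\[
t^{-1/2}\|\partial_\alpha D_t\zeta\|_{W^{1,\infty}}\|f\|_{H^1}+\ln(2+t)\,\|\partial_\alpha D_t\zeta\|_{L^\infty}\|\zeta_\alpha-\bar\zeta_\alpha\|_{L^\infty}.
\]
By (B4) the second term is $\lesssim\epsilon^2t^{-1}\ln t\lesssim\epsilon^2t^{-2/3}$. The first is $\lesssim\epsilon^2 t^{-1}$, or worse only by $\ln t$ where derivatives land on the top factor. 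The $q$-terms are handled by Lemma~\ref{lem_Hf_infty}, giving $\lambda H^{-2}\ln t+\lambda H^{-3/2}t^{-2}$. The logarithm is unpleasant; to avoid it I would estimate $\mathfrak Hq$ directly through the explicit Cauchy integral, exploiting that $q$ is a boundary value of a function meromorphic in $\Omega(t)$ with poles only at $z_1,z_2$. This gives $(I-\mathfrak H)q$ and $(I+\mathfrak H)\bar q$ as explicit residue-type expressions bounded by $\lambda H^{-2}$.

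\textbf{Second identity.} Apply $D_t$ once more. We get $D_t\tilde\sigma=(I-\mathfrak H)(D_t^2\zeta-D_t^2\bar\zeta)+$ commutators. This produces $[D_t^2\zeta,\mathfrak H]$ and $[D_t\zeta,\mathfrak H]\partial_\alpha D_t(\cdot)$ terms, which we treat exactly as above. It also produces a double-difference integral
\[
\frac{1}{\pi i}\int\left(\frac{D_t\zeta(\alpha)-D_t\zeta(\beta)}{\zeta(\alpha)-\zeta(\beta)}\right)^2(\zeta-\bar\zeta)_\beta\,d\beta,
\]
which is cubic and hence better. For the linear part we use $D_t^2\bar\zeta=D_t\mathfrak F+D_tq$. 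The commutator $[D_t,\mathfrak H]\mathfrak F$ is quadratic, and $(I+\mathfrak H)D_t^2\zeta$ is reduced in the same way. What remains of the vortex terms are $D_tq$ and $D_t\bar q$, giving the $\lambda(\epsilon+\lambda)H^{-5/2}$ and $\lambda(\epsilon+\lambda)H^{-3}$ bounds via Proposition~\ref{pro_q}. Crucially, we must never leave a bare $q$ in this identity, since $q$ only gives $\lambda H^{-3/2}$. The cross terms $[q,\mathfrak H]$ are smaller still.

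\textbf{Main obstacle.} The main obstacle is the $L^\infty$ estimates: avoiding logarithmic losses in $\mathfrak H$ acting on $q$ and $D_tq$, and confirming the rate $t^{-2/3}$ from Lemma~\ref{lem_S_1} when top derivatives land on factors with only $t^{-1/6}\ln t$ decay from Lemma~\ref{lem_D_t_zeta_L^infty}. There I would distribute derivatives so that at most one factor is at high order and estimate it in $H^1$ through the $t^{-1/2}$ term of Lemma~\ref{lem_S_1}.
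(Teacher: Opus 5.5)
There is a genuine error in your first identity, and it propagates to the second. You claim that $\bar{\mathfrak F}=D_t\zeta-\bar q$ is the trace of a function holomorphic in $\Omega(t)^c$, i.e.\ $(I+\mathfrak H)\bar{\mathfrak F}=0$. That is true only for the flat interface, where it follows by the reflection $z\mapsto\bar z$. For a curved $\zeta$, conjugating $(I-\mathfrak H)\mathfrak F=0$ gives instead $(I-\bar{\mathfrak H})\bar{\mathfrak F}=0$, with the Hilbert transform taken along $\bar\zeta$: $\bar{\mathfrak F}$ is the trace of an antiholomorphic function in $\Omega(t)$. Consequently
\[
(I+\mathfrak H)D_t\zeta=(\mathfrak H+\bar{\mathfrak H})D_t\zeta+(I-\bar{\mathfrak H})\bar q ,
\]
and your identity drops the term $-(\mathfrak H+\bar{\mathfrak H})D_t\zeta$. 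This term is not negligible. It is quadratic, of size $\norm{\zeta_\alpha-1}\,\norm{D_t\zeta}$, the same structure and order as the commutator you keep. For instance, $\norm{\zeta_\alpha-1}_{H^s}\norm{D_t\zeta}_{W^{s-2,\infty}}\sim\epsilon^2t^{-1/4}$, which is exactly the rate the lemma asserts. The correct identity, which is the paper's starting point, is
\[
D_t\tilde\theta-2D_t\zeta=-(\mathfrak H+\bar{\mathfrak H})D_t\zeta-[D_t\zeta,\mathfrak H]\frac{(\zeta-\bar\zeta)_\alpha}{\zeta_\alpha}-2(q+\bar q).
\]
The vortex part is exact because $q$ extends holomorphically to $\Omega(t)^c$ with decay $|z|^{-2}$, so $(I-\mathfrak H)q=2q$ and $(I-\bar{\mathfrak H})\bar q=2\bar q$. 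This also removes your worry about logarithmic losses from $\mathfrak H q$: Proposition~\ref{pro_q} applies directly and no residue computation is needed. Your term $(I+\mathfrak H)\bar q$ would not have had a residue formula anyway, since $\bar q$ is not meromorphic.

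To repair the argument you must estimate $(\mathfrak H+\bar{\mathfrak H})D_t\zeta$ itself. Subtract $\frac{1}{\alpha-\beta}$ from both kernels, which turns it into integrals of the form
\[
\int\Bigl(\frac{(\zeta(\alpha)-\alpha)-(\zeta(\beta)-\beta)}{(\zeta(\alpha)-\zeta(\beta))(\alpha-\beta)}-\frac{\zeta_\beta-1}{\zeta(\alpha)-\zeta(\beta)}\Bigr)D_t\zeta(\beta)\,d\beta
\]
and its conjugate. These are handled by Lemma~\ref{app_S1} in $H^s$ and by Lemmas~\ref{lem_S_1} and \ref{lem_Hf_infty} in $W^{s-2,\infty}$. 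For the $\dot H^{s+1/2}$ norm, integrate by parts to obtain $\frac{1}{\pi i}\int\ln\frac{\zeta(\alpha)-\zeta(\beta)}{\alpha-\beta}\,\partial_\beta D_t\zeta\,d\beta$. Its $\alpha$-derivative produces kernels such as $\frac{\zeta_\alpha(\alpha)-\zeta_\beta(\beta)}{\zeta(\alpha)-\zeta(\beta)}$, so the extra half derivative can be placed on $D_t\zeta\in H^{s+1/2}$.

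The same mistake recurs in your second identity: $(I+\mathfrak H)D_t^2\zeta$ cannot be reduced in that way. Applying $D_t$ to the corrected first identity yields the additional terms $-(\mathfrak H+\bar{\mathfrak H})D_t^2\zeta$ and $-[D_t\bar\zeta,\bar{\mathfrak H}]\frac{\partial_\alpha D_t\zeta}{\bar\zeta_\alpha}$, and the vortex part is $-2D_t(q+\bar q)$. With these corrections your estimation strategy (commutator bounds with an $L^\infty\times H^s$ split, and Lemma~\ref{lem_S_1} for the pointwise bounds) is essentially the paper's.
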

\begin{proof}
    First there holds
\begin{equation}\label{D_t_theta}
    \begin{aligned}
    D_t\tilde{\theta}=&\ 2D_t\zeta-(\mathfrak{H}+\bar{\mathfrak{H}})D_t\zeta-[D_t\zeta,\mathfrak{H}]\frac{\partial_\alpha}{\zeta_\alpha}(\zeta-\bar\zeta)-2(q+\bar q)\\
    =&\ 2D_t\zeta-\frac{1}{\pi i}\int\left(\frac{\zeta_\beta(\beta,t)}{\zeta(\alpha,t)-\zeta(\beta,t)}-\frac{1}{\alpha-\beta}\right)D_t\zeta d\beta\\
    &+\frac{1}{\pi i}\int\left(\frac{\bar\zeta_\beta(\beta,t)}{\bar\zeta(\alpha,t)-\bar\zeta(\beta,t)}-\frac{1}{\alpha-\beta}\right)D_t\zeta d\beta\\
    &-[D_t\zeta,\mathfrak{H}]\frac{\partial_\alpha}{\zeta_\alpha}(\zeta-\alpha+\alpha-\bar\zeta)-2(q+\bar q)\\
    =&\ 2D_t\zeta+\frac{1}{\pi i}\int\left(\frac{\zeta(\alpha,t)-\alpha-\zeta(\beta,t)+\beta}{(\zeta(\alpha,t)-\zeta(\beta,t))(\alpha-\beta)}-\frac{\zeta_\beta(\beta,t)-1}{\zeta(\alpha,t)-\zeta(\beta,t)}\right)D_t\zeta d\beta\\
    &-\frac{1}{\pi i}\int\left(\frac{\bar\zeta(\alpha,t)-\alpha-\bar\zeta(\beta,t)+\beta}{(\bar\zeta(\alpha,t)-\bar\zeta(\beta,t))(\alpha-\beta)}-\frac{\bar\zeta_\beta(\beta,t)-1}{\bar\zeta(\alpha,t)-\bar\zeta(\beta,t)}\right)D_t\zeta d\beta\\
    &-[D_t\zeta,\mathfrak{H}]\frac{\partial_\alpha}{\zeta_\alpha}(\zeta-\alpha+\alpha-\bar\zeta)-2(q+\bar q)
    \end{aligned}
    \end{equation}
and
\begin{equation}\label{D_t_sigma}
    \begin{aligned}
    D_t\tilde{\sigma}=&\ 2D_t^2\zeta-(\mathfrak{H}+\bar{\mathfrak{H}})D_t^2\zeta-[D_t\zeta,\mathfrak{H}]\frac{\partial_\alpha}{\zeta_\alpha}D_t\zeta-[D_t\bar\zeta,\bar{\mathfrak{H}}]\frac{\partial_\alpha}{\bar\zeta_\alpha}D_t\zeta\\
    &-D_t[D_t\zeta,\mathfrak{H}]\frac{\partial_\alpha}{\zeta_\alpha}(\zeta-\bar\zeta)-2D_t(q+\bar q).
    \end{aligned}
    \end{equation}
Hence by Lemma \ref{app_S1} and Proposition \ref{pro_q}:
$$
    \norm{D_t\tilde{\theta}-2D_t\zeta}_{H^{s}}\lesssim\norm{\zeta_\alpha-1}_{H^s}\norm{D_t\zeta}_{W^{s-2,\infty}}+\norm{\zeta_\alpha-1}_{W^{s-2,\infty}}\norm{D_t\zeta}_{H^s}+\norm{q}_{H^s}\lesssim\frac{\epsilon^2}{t^\frac{1}{4}}+\frac{\lambda}{H^\frac{3}{2}}
    $$
and
$$
  \begin{aligned}  \norm{D_t\tilde{\sigma}-2D_t^2\zeta}_{H^s}\lesssim&\norm{D_t^2\zeta}_{H^s}\norm{\zeta_\alpha-1}_{W^{s-2,\infty}}+\norm{D_t^2\zeta}_{W^{s-2,\infty}}\norm{\zeta_\alpha-1}_{H^s}\\
  &+\norm{D_t\zeta}_{H^s}\norm{D_t\zeta}_{W^{s-2,\infty}}+\norm{D_tq}_{H^s}\\
  \lesssim&\ \frac{\epsilon^2}{t^\frac{1}{4}}+\frac{\lambda(\epsilon+\lambda)}{H^{\frac{5}{2}}}.
    \end{aligned}
    $$
    
For $\norm{D_t\tilde{\theta}-2D_t\zeta}_{\dot{H}^{s+\frac{1}{2}}}$, it's essential to rewrite the expression of $D_t\tilde{\theta}$ by utilizing integration by parts:
$$
    \begin{aligned}
    D_t\tilde{\theta}=&\ 2D_t\zeta-\frac{1}{\pi i}\int \ln \left(\frac{\zeta(\alpha,t)-\zeta(\beta,t)}{\alpha-\beta}\right)\partial_\beta D_t\zeta d\beta+\frac{1}{\pi i}\int \ln \left(\frac{\bar\zeta(\alpha,t)-\bar\zeta(\beta,t)}{\alpha-\beta}\right)\partial_\beta D_t\zeta d\beta\\
    &-[D_t\zeta,\mathfrak{H}]\frac{\partial_\alpha}{\zeta_\alpha}(\zeta-\alpha+\alpha-\bar\zeta)-2(q+\bar q).
    \end{aligned}
    $$
We notice that
$$
    \begin{aligned}
    \partial_\alpha\int \ln \left(\frac{\zeta(\alpha,t)-\zeta(\beta,t)}{\alpha-\beta}\right)\partial_\beta D_t\zeta d\beta=&\int(\partial_\alpha+\partial_\beta-\partial_\beta)\ln \left(\frac{\zeta(\alpha,t)-\zeta(\beta,t)}{\alpha-\beta}\right)\partial_\beta D_t\zeta d\beta\\
    =&\int\frac{\zeta_\alpha(\alpha,t)-\zeta_\beta(\beta,t)}{\zeta(\alpha,t)-\zeta(\beta,t)}\partial_\beta D_t\zeta d\beta\\
    &+\int\left(\frac{\zeta_\beta(\beta,t)}{\zeta(\alpha,t)-\zeta(\beta,t)}-\frac{1}{\alpha-\beta}\right)\partial_\beta D_t\zeta d\beta\\
    =&\int\frac{\zeta_\alpha(\alpha,t)-\zeta_\beta(\beta,t)}{\zeta(\alpha,t)-\zeta(\beta,t)}\partial_\beta D_t\zeta d\beta\\
    &-\int\frac{\zeta(\alpha,t)-\alpha-\zeta(\beta,t)+\beta}{(\zeta(\alpha,t)-\zeta(\beta,t))(\alpha-\beta)}\partial_\beta D_t\zeta d\beta\\
    &+\int\frac{\zeta_\beta(\beta,t)-1}{\zeta(\alpha,t)-\zeta(\beta,t)}\partial_\beta D_t\zeta d\beta,
    \end{aligned}
    $$
which means that
$$
    \begin{aligned}
    \norm{\int \ln \left(\frac{\zeta(\alpha,t)-\zeta(\beta,t)}{\alpha-\beta}\right)\partial_\beta D_t\zeta d\beta}_{\dot{H}^{s+\frac{1}{2}}}\lesssim&\norm{\zeta_\alpha-1}_{W^{s-2,\infty}}\norm{D_t\zeta}_{H^{s+\frac{1}{2}}}+\norm{\zeta_\alpha-1}_{H^s}\norm{D_t\zeta}_{W^{s-2,\infty}}\\
    \lesssim&\frac{\epsilon^2}{t^\frac{1}{4}}.
    \end{aligned}
    $$
    Analogously,
$$
     \norm{\int \ln \left(\frac{\bar\zeta(\alpha,t)-\bar\zeta(\beta,t)}{\alpha-\beta}\right)\partial_\beta D_t\zeta d\beta}_{\dot{H}^{s+\frac{1}{2}}}\lesssim\frac{\epsilon^2}{t^\frac{1}{4}}.
    $$
    In summary, we conclude that
$$
     \norm{D_t\tilde{\theta}-2D_t\zeta}_{\dot H^{s+\frac{1}{2}}}\lesssim\frac{\epsilon^2}{t^\frac{1}{4}}+\frac{\lambda}{H^\frac{3}{2}}.
    $$
    For the $L^\infty$ norm, we use (\ref{D_t_theta}) again and recall Lemma
    \ref{lem_Hf_infty} and Lemma \ref{lem_S_1}:
    $$
    \begin{aligned}
    \norm{D_t\tilde{\theta}-2D_t\zeta}_{W^{s-2,\infty}}\lesssim&\ \frac{1}{t^{\frac{1}{2}}}\norm{\zeta_\alpha-1}_{W^{s-1,\infty}}\norm{D_t\zeta}_{H^{s-1}}+\ln (2+t)\norm{\zeta_\alpha-1}_{W^{s-2,\infty}}\norm{D_t\zeta}_{W^{s-2,\infty}}\\
    &+\norm{q}_{W^{s-2,\infty}}\\
    \lesssim&\ \frac{\epsilon^2}{t^{\frac{2}{3}}}+\frac{\lambda}{H^2}.
    \end{aligned}
    $$
    Similarly,
    $$
    \norm{D_t\tilde{\sigma}-2D_t^2\zeta}_{W^{s-2,\infty}}\lesssim\frac{\epsilon^2}{t^{\frac{2}{3}}}+\frac{\lambda(\epsilon+\lambda)}{H^3}.
    $$
    
\end{proof}

\begin{lemma}\label{lem_theta_alpha}
    For $t\in[0,T]$ we have
    \begin{equation}\label{theta_alpha_2}
    \norm{\tilde{\theta}_\alpha-2(\zeta_\alpha-1)}_{H^s}\lesssim\frac{\epsilon^2}{t^\frac{1}{2}}.
    \end{equation}
    and
    \begin{equation}\label{theta_alpha_infty}
    \norm{\tilde{\theta}_\alpha-2(\zeta_\alpha-1)}_{W^{s-1,\infty}}\lesssim\frac{\epsilon^2}{t^{\frac{2}{3}}}\ln (2+t).
    \end{equation}
\end{lemma}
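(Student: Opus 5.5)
We start from an explicit formula for $\tilde\theta_\alpha$, in the same way that \eqref{D_t_theta} was obtained for $D_t\tilde\theta$. Differentiate $\tilde\theta=(I-\mathfrak H)(\zeta-\bar\zeta)$ in $\alpha$ and use the commutator identity $[\partial_\alpha,\mathfrak H]f=[\zeta_\alpha,\mathfrak H]\frac{f_\alpha}{\zeta_\alpha}$. This gives
\[
\tilde\theta_\alpha=(I-\mathfrak H)(\zeta_\alpha-\bar\zeta_\alpha)-[\zeta_\alpha,\mathfrak H]\frac{\partial_\alpha(\zeta-\bar\zeta)}{\zeta_\alpha}.
\]
Since $\bar\zeta-\alpha$ is the boundary value of a function holomorphic in $\Omega(t)^c$, we have $(I+\mathfrak H)(\bar\zeta_\alpha-1)=0$ up to the same type of commutator. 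Hence $(I-\mathfrak H)(\bar\zeta_\alpha-1)=2(\bar\zeta_\alpha-1)+(\text{commutator})$, or equivalently $-\mathfrak H(\bar\zeta_\alpha-1)=(\bar\zeta_\alpha-1)+\dots$. We write
\[
\zeta_\alpha-\bar\zeta_\alpha=(\zeta_\alpha-1)-(\bar\zeta_\alpha-1)
\]
and, mimicking \eqref{D_t_theta}, replace $\mathfrak H$ by $\mathfrak H+\bar{\mathfrak H}$ acting on $\zeta_\alpha-1$. Using $\bar{\mathfrak H}(\bar\zeta_\alpha-1)$-type identities, this yields
\[
\tilde\theta_\alpha-2(\zeta_\alpha-1)=-(\mathfrak H+\bar{\mathfrak H})(\zeta_\alpha-1)+\text{commutators}.
\]
No vortex term appears, because $q$ does not enter $\tilde\theta$ itself. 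This explains why the right-hand side of the lemma has no $\lambda/H$ contribution, in contrast with Lemma \ref{lem_theta_t_zeta_t}.

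Next, we expand $\mathfrak H+\bar{\mathfrak H}$ exactly as in the second and third lines of \eqref{D_t_theta}. Each kernel difference $\frac{\zeta_\beta}{\zeta(\alpha)-\zeta(\beta)}-\frac{1}{\alpha-\beta}$ becomes
\[
\frac{(\zeta-\alpha)(\alpha)-(\zeta-\alpha)(\beta)}{(\zeta(\alpha)-\zeta(\beta))(\alpha-\beta)}-\frac{\zeta_\beta-1}{\zeta(\alpha)-\zeta(\beta)},
\]
and similarly for the conjugate kernel. Every remaining term is then a bilinear singular integral of the form
\[
\int\frac{F(\alpha)-F(\beta)}{(\zeta(\alpha)-\zeta(\beta))(\alpha-\beta)}\,G(\beta)\,d\beta
\quad\text{or}\quad
[\,\zeta_\alpha,\mathfrak H]\frac{G_\beta}{\zeta_\beta},
\]
with $F_\alpha$ and $G$ each equal to $\zeta_\alpha-1$ or its conjugate. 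Both inputs are therefore the quantity controlled in (B2) and (B4), and unlike in Lemma \ref{lem_theta_t_zeta_t} no $D_t\zeta$ factor appears. For the $H^s$ bound we apply the Coifman–Meyer-type estimate (the lemma cited there as \ref{app_S1}), putting the highest derivatives on one factor in $H^s$ and the other in $W^{s-2,\infty}$. Together with (B2) and (B4), this gives $\|\zeta_\alpha-1\|_{H^s}\|\zeta_\alpha-1\|_{W^{s-2,\infty}}\lesssim \epsilon^2t^{-1/2}$, which is \eqref{theta_alpha_2}.

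For \eqref{theta_alpha_infty} we differentiate up to $s-1$ times, distributing the derivatives via $(\partial_\alpha+\partial_\beta)$ as in the proof of Lemma \ref{lem_theta_t_zeta_t}, and then use Lemma \ref{lem_S_1} and Lemma \ref{lem_Hf_infty}. This produces
\[
t^{-1/2}\|\zeta_\alpha-1\|_{W^{s-1,\infty}}\|\zeta_\alpha-1\|_{H^{s-1}}+\ln(2+t)\,\|\zeta_\alpha-1\|_{W^{s-2,\infty}}\|\partial_\alpha^{s-1}\zeta\|_{L^\infty}+\dots .
\]

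The main obstacle is this top-order $L^\infty$ count. The bootstrap controls $\zeta_\alpha-1$ in $W^{s-1,\infty}$ only up to $\partial_\alpha^{s-1}\zeta$; the top-order piece $\partial_\alpha^s\zeta$ is bounded only via Lemma \ref{lem_D_t_zeta_L^infty}, by $\epsilon t^{-1/6}\ln(2+t)$. Pairing it with a $t^{-1/2}$ factor gives $\epsilon^2t^{-2/3}\ln(2+t)$, which is exactly the stated rate. So at the step where all derivatives fall on one factor, we must ensure the other factor always carries the full $(1+t)^{-1/2}$ decay from (B4), never a weaker Lemma \ref{lem_D_t_zeta_L^infty} bound. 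The intermediate-frequency piece of Lemma \ref{lem_S_1} accounts for the logarithm.
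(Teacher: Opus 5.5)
Your overall plan matches the paper's. You derive an exact identity writing $\tilde\theta_\alpha-2(\zeta_\alpha-1)$ as terms quadratic in $\zeta_\alpha-1$, with no vortex contribution, and then bound each by $\|\zeta_\alpha-1\|_{H^s}\|\zeta_\alpha-1\|_{W^{s-2,\infty}}\lesssim\epsilon^2t^{-1/2}$. The gap is in the derivation of that identity: the holomorphicity you invoke is reversed. By the construction of $\kappa$ ($\kappa+\Phi\circ z=z+\bar z$ with $\Phi$ holomorphic in $\Omega(t)$), one has $\bar\zeta-\alpha=\Phi(\zeta)-\zeta$. This is the trace of a function holomorphic in the fluid domain, so $(I-\mathfrak H)(\bar\zeta-\alpha)=0$. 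It is not holomorphic in $\Omega(t)^c$, and $(I+\mathfrak H)(\bar\zeta_\alpha-1)\approx 2(\bar\zeta_\alpha-1)$ is of linear size, not a commutator.

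This is not a harmless sign slip. Under your premise, $(I-\mathfrak H)(\bar\zeta_\alpha-1)\approx2(\bar\zeta_\alpha-1)$ and, conjugating, $\bar{\mathfrak H}(\zeta_\alpha-1)\approx-(\zeta_\alpha-1)$. Your first formula would then give, at the linear level, $\tilde\theta_\alpha\approx-2(\bar\zeta_\alpha-1)$. Hence $\tilde\theta_\alpha-2(\zeta_\alpha-1)\approx-4\operatorname{Re}(\zeta_\alpha-1)$, which is linear and would make the lemma false. The identity you then write down is essentially the right one, but it does not follow from your preceding steps. The appeal to ``$\bar{\mathfrak H}(\bar\zeta_\alpha-1)$-type identities'' is exactly where the content of the lemma sits.

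The correct route, as in the paper, has two steps.
First, differentiating $(I-\mathfrak H)(\bar\zeta-\alpha)=0$ gives $(I-\mathfrak H)(\bar\zeta_\alpha-1)=[\zeta_\alpha,\mathfrak H]\frac{\bar\zeta_\alpha-1}{\zeta_\alpha}$. So the two $\bar\zeta_\alpha-1$ contributions in your first formula cancel exactly, leaving $\tilde\theta_\alpha=(I-\mathfrak H)(\zeta_\alpha-1)-[\zeta_\alpha,\mathfrak H]\frac{\zeta_\alpha-1}{\zeta_\alpha}$.
Second, since $\frac{\partial_\alpha}{\zeta_\alpha}$ commutes with $\mathfrak H$, also $(I-\mathfrak H)\frac{\bar\zeta_\alpha-1}{\zeta_\alpha}=0$. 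Its conjugate, $(I-\bar{\mathfrak H})\frac{\zeta_\alpha-1}{\bar\zeta_\alpha}=0$, shows that $\bar{\mathfrak H}(\zeta_\alpha-1)$ equals $\zeta_\alpha-1$ up to a quadratic error. This is where the factor $2$ comes from:
\[
\tilde\theta_\alpha=2(\zeta_\alpha-1)-(\mathfrak H+\bar{\mathfrak H})(\zeta_\alpha-1)+(I-\bar{\mathfrak H})\Bigl[(\zeta_\alpha-1)\Bigl(\frac{1}{\bar\zeta_\alpha}-1\Bigr)\Bigr]-[\zeta_\alpha,\mathfrak H]\frac{\zeta_\alpha-1}{\zeta_\alpha}.
\]
The third term is a Hilbert transform of a product, not one of the two bilinear forms you list, but it obeys the same bound. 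With this identity, your $H^s$ estimate is exactly the paper's.

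For the $W^{s-1,\infty}$ bound the paper is no more detailed than you are; it only says the bound follows from this formula. One caution on your sketch concerns the case where all $s-1$ derivatives fall on the numerator $(\zeta-\alpha)(\alpha)-(\zeta-\alpha)(\beta)$. There the $W^{1,\infty}$ norm of $g_\alpha$ required by Lemma \ref{lem_S_1} involves $\partial_\alpha^{s+1}\zeta$, which the bootstrap controls only in $L^2$. So the term $t^{-1/2}\|\zeta_\alpha-1\|_{W^{s-1,\infty}}\|\zeta_\alpha-1\|_{H^{s-1}}$ undercounts by one derivative, and a variant of that lemma is needed there.
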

\begin{proof}
    Recall the definition of $\tilde\theta$ and the fact that $(I-\mathfrak{H})(\bar\zeta-\alpha)=0$:
$$
    \begin{aligned}
    \tilde{\theta}_\alpha=\partial_\alpha(I-\mathfrak{H})(\zeta-\alpha)=-[\zeta_\alpha,\mathfrak{H}]\frac{\partial_\alpha}{\zeta_\alpha}(\zeta-\alpha)+(I-\mathfrak{H})(\zeta_\alpha-1).
    \end{aligned}
    $$
Since $[\frac{\partial_\alpha}{\zeta_\alpha},\mathfrak{H}]=0$, it's obvious that $(I-\bar{\mathfrak{H}})\frac{\zeta_\alpha-1}{\bar\zeta_\alpha}=0$, which implies
\begin{equation}\label{theta_alpha}
    \begin{aligned}
    \tilde{\theta}_\alpha=&\ (I-\bar{\mathfrak{H}})\left[(\zeta_\alpha-1)\left(\frac{1}{\bar{\zeta}_\alpha}-1\right)\right]+(I-\bar{\mathfrak{H}})(\zeta_\alpha-1)+(I-\mathfrak{H})(\zeta_\alpha-1)-[\zeta_\alpha,\mathfrak{H}]\frac{\partial_\alpha}{\zeta_\alpha}(\zeta-\alpha)\\
    =&\ 2(\zeta_\alpha-1)+(I-\bar{\mathfrak{H}})\left[(\zeta_\alpha-1)\left(\frac{1}{\bar{\zeta}_\alpha}-1\right)\right]-(\mathfrak{H}+\bar{\mathfrak{H}})(\zeta_\alpha-1)-[\zeta_\alpha,\mathfrak{H}]\frac{\partial_\alpha}{\zeta_\alpha}(\zeta-\alpha)
    \end{aligned}
    \end{equation}
By imitating the proof of the preceding theorem, we obtain
$$
    \norm{\tilde{\theta}_\alpha-2(\zeta_\alpha-1)}_{H^s}\lesssim\norm{\zeta_\alpha-1}_{W^{s-2,\infty}}\norm{\zeta_\alpha-1}_{H^s}\lesssim\frac{\epsilon^2}{t^\frac{1}{2}}.
    $$
    \eqref{theta_alpha_infty} follows from \eqref{theta_alpha}.
\end{proof}

The next two lemmas are about estimates of vector fields. However, as $\Omega_0$ is not a strict vector fields, we refer to the equation
$$
\Omega_0\partial_\alpha=L_0\partial_t-\frac{1}{2}t(\partial_t^2-i\partial_\alpha).
$$
Note that $(\partial_t^2-i\partial_\alpha)\tilde{\theta}$ and $(\partial_t^2-i\partial_\alpha)\tilde{\sigma}$ are cubic, which implies the slow growth of $\norm{\Omega_0\tilde{\theta}_\alpha}_{L^2}$ and $\norm{\Omega_0\tilde{\sigma}_\alpha}_{L^2}$

\begin{lemma}\label{lem_L_0D_t_theta}
    We have
$$
\norm{L_0D_t\tilde{\theta}-2L_0D_t\zeta}_{H^{s-1}}\lesssim\frac{\epsilon^2}{t^{\frac{1}{6}-\delta_0}}\ln (2+t)+\frac{\lambda(\epsilon+\lambda)t}{H^{\frac{5}{2}}},
$$
$$
    \norm{L_0\tilde{\theta}_\alpha-2L_0\zeta_\alpha}_{H^{s-1}}\lesssim\frac{\epsilon^2}{t^{\frac{1}{6}-\delta_0}}\ln (2+t)
    $$
and
$$
    \norm{L_0D_t\tilde{\sigma}-2L_0D_t^2\zeta}_{H^{s-1}}\lesssim\frac{\epsilon^2}{t^{\frac{1}{6}-\delta_0}}\ln (2+t)+\frac{\lambda(\epsilon+\lambda)t}{H^3
    }+\frac{\lambda(\epsilon+\lambda) t^{\frac{1}{2}}}{H^{\frac{5}{2}}}.
    $$
\end{lemma}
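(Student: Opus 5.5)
We apply $L_0$ to the identities \eqref{D_t_theta}, \eqref{theta_alpha} and \eqref{D_t_sigma} already used in Lemmas \ref{lem_theta_t_zeta_t} and \ref{lem_theta_alpha}, and estimate each resulting term in $H^{s-1}$. Recall
\[
D_t\tilde\theta-2D_t\zeta=-(\mathfrak H+\bar{\mathfrak H})D_t\zeta-[D_t\zeta,\mathfrak H]\tfrac{\partial_\alpha}{\zeta_\alpha}(\zeta-\bar\zeta)-2(q+\bar q).
\]
The right-hand side consists of quadratic singular integrals of the form
\[
\int K(\zeta(\alpha)-\zeta(\beta),\alpha-\beta)\,(f(\alpha)-f(\beta))\,g_\beta\,d\beta,
\]
plus the dipole term. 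Since $L_0=\frac12t\partial_t+\alpha\partial_\alpha$ acts on such kernels by the scaling identity
\[
(\alpha\partial_\alpha+\beta\partial_\beta)\frac{1}{\alpha-\beta}=-\frac{1}{\alpha-\beta},
\]
applying $L_0$ distributes it onto one factor at a time, up to harmless zeroth-order terms. The kernel differences $\frac{1}{\zeta(\alpha)-\zeta(\beta)}-\frac1{\alpha-\beta}$ produce factors $L_0\zeta(\alpha)-L_0\zeta(\beta)$ divided by $\zeta(\alpha)-\zeta(\beta)$. The commutator identity $[L_0,\mathfrak H]f=[L_0\zeta,\mathfrak H]\frac{f_\alpha}{\zeta_\alpha}$ handles the $\mathfrak H$ appearing in commutators. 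Thus every term becomes bilinear, with one factor carrying $L_0$ and the other carrying only ordinary derivatives.

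For the bilinear terms we use the standard Coifman--Meyer/$S_1$-type bounds (Lemma \ref{app_S1}): we put the $L_0$-factor in $H^{s-1}$ and the other factor in $W^{s-2,\infty}$ or $W^{s-3,\infty}$.
\begin{itemize}
\item If the $L_0$ falls on $\zeta_\alpha-1$, $D_t\zeta$ or $D_t^2\zeta$, (B3) gives $M\epsilon t^{\delta_0}$.
\item The remaining factor decays. By (B4) it decays like $\epsilon t^{-1/2}$, or, when it is $D_t\zeta$ itself or a top derivative, only like $\epsilon t^{-1/6}\ln(2+t)$ by Lemma \ref{lem_D_t_zeta_L^infty}.
\end{itemize}
This yields the common bound $\epsilon^2t^{-1/6+\delta_0}\ln(2+t)$. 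Where $L_0$ lands on $\zeta-\bar\zeta$ or $\zeta-\alpha$ undifferentiated, we first use integration by parts in $\beta$, as in Lemma \ref{lem_theta_t_zeta_t}, so that only $L_0\zeta_\alpha$ and $L_0D_t\zeta$ appear. We also use $[L_0,\partial_\alpha]=-\partial_\alpha$ and $[L_0,D_t]=-\frac12 D_t+(L_0b-\frac12 tb_t-b)\partial_\alpha$-type identities. These require $L_0b$, which we bound via Lemma \ref{lem_b_exp} and Lemma \ref{lem_real_proj} by applying $L_0$ to \eqref{b_exp}. The $q$-part of $L_0 b$ is controlled by Proposition \ref{pro_L_0_q}.

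The vortex contributions are read off directly from Proposition \ref{pro_L_0_q}:
\begin{itemize}
\item $-2L_0(q+\bar q)$ is bounded by $\|L_0q\|_{L^2}+\|\partial_\alpha L_0q\|_{H^{s-1}}\lesssim \lambda(\epsilon+\lambda)tH^{-5/2}$, using $\epsilon t^{1/2}\lesssim H$, which gives the extra term in the first estimate.
\item The term $-2L_0D_t(q+\bar q)$ in $D_t\tilde\sigma$ is bounded by $\|D_tL_0q\|_{H^{s-1}}$ plus a commutator. The first option in Proposition \ref{pro_L_0_q} gives exactly $\lambda(\epsilon+\lambda)tH^{-3}+\lambda(\epsilon+\lambda)t^{1/2}H^{-5/2}$.
\item For $\tilde\theta_\alpha$ no $q$ appears, since \eqref{theta_alpha} is purely geometric, so only the $\epsilon^2$ term survives.
\end{itemize}

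The main obstacle is the $D_t\tilde\sigma$ identity \eqref{D_t_sigma}. It contains $L_0D_t[D_t\zeta,\mathfrak H]\frac{\partial_\alpha}{\zeta_\alpha}(\zeta-\bar\zeta)$, where after distributing $D_t$ and $L_0$ a factor like $L_0\partial_\alpha D_t\zeta$ or $L_0D_t^2\zeta$ at top order $s-1$ can sit against $\zeta-\bar\zeta$ with no spare derivative, alongside the double commutator $[D_t,[D_t\zeta,\mathfrak H]]$. I would handle it as in Lemma \ref{lem_theta_t_zeta_t}. First expand $D_t$ of the commutator using $[D_t,\mathfrak H]$ and the formula for $[D_t^2,\mathfrak H]$, which gives integrals with kernels $\big(\frac{D_t\zeta(\alpha)-D_t\zeta(\beta)}{\zeta(\alpha)-\zeta(\beta)}\big)^2$. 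Then apply $L_0$ via the scaling structure. In every term we keep the highest-derivative $L_0$-factor in $L^2$ and place a difference quotient of the lower factor in $L^\infty$, where the weakest decay is Lemma \ref{lem_D_t_zeta_L^infty}'s $t^{-1/6}\ln(2+t)$. This loss is exactly what sets the rate $t^{-(1/6-\delta_0)}$ in the statement.
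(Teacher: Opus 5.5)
Your proposal is correct and follows the paper's proof. Like the paper, you apply $L_0$ to \eqref{D_t_theta}, \eqref{theta_alpha} and \eqref{D_t_sigma} and use $[L_0,\mathfrak H]f=[L_0\zeta,\mathfrak H]\frac{f_\alpha}{\zeta_\alpha}$. You then bound each bilinear term by the $L_0$-factor in $H^{s-1}$ (from (B3)) times the other factor in $W^{s-1,\infty}$, whose top derivatives decay only like $\epsilon t^{-1/6}\ln(2+t)$ by Lemma \ref{lem_D_t_zeta_L^infty}, and you read off the vortex terms from Proposition \ref{pro_L_0_q}.

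One small slip, which does not affect the argument: the commutator is $[L_0,D_t]=-\frac12 D_t+(L_0b-\frac12 b)\partial_\alpha$. Passing from $L_0D_tq$ to $D_tL_0q$ therefore costs only $\|D_tq\|_{H^{s-1}}$ and $(L_0b-\frac12 b)q_\alpha$, both of which are easily absorbed.
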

\begin{proof}
    Apply $L_0$ on (\ref{D_t_theta}) and we have
$$
    L_0D_t\tilde{\theta}=2L_0D_t\zeta-[L_0,\mathfrak{H}+\bar{\mathfrak{H}}]D_t\zeta-(\mathfrak{H}+\bar{\mathfrak{H}})L_0D_t\zeta-L_0[D_t\zeta,\mathfrak{H}]\frac{\partial_\alpha}{\zeta_\alpha}(\zeta-\bar\zeta)-2L_0(q+\bar q),
    $$
where we obtain that
$$
    \begin{aligned}
    \norm{L_0D_t\tilde{\theta}-2L_0D_t\zeta}_{H^{s-1}}\lesssim&\norm{\zeta_\alpha-1}_{W^{s-1,\infty}}\norm{L_0D_t\zeta}_{H^{s-1}}+\norm{D_t\zeta}_{W^{s-1,\infty}}\norm{L_0\zeta_\alpha}_{H^{s-1}}\\
    &+\norm{L_0 q}_{H^{s-1}}\\
    \lesssim&\ \frac{\epsilon^2}{t^{\frac{1}{6}-\delta_0}}\ln (2+t)+\frac{\lambda(\epsilon+\lambda)t}{H^{\frac{5}{2}}}.
    \end{aligned}
    $$
Similarly we notice that
$$
    \begin{aligned}
    L_0\tilde{\theta}_\alpha=&\ 2L_0\zeta_\alpha+L_0(I-\bar{\mathfrak{H}})\left[(\zeta_\alpha-1)\left(\frac{1}{\bar{\zeta}_\alpha}-1\right)\right]-[L_0,\mathfrak{H}+\bar{\mathfrak{H}}](\zeta_\alpha-1)\\
    &-(\mathfrak{H}+\bar{\mathfrak{H}})L_0\zeta_\alpha-L_0[\zeta_\alpha,\mathfrak{H}]\frac{\partial_\alpha}{\zeta_\alpha}(\zeta-\alpha),
    \end{aligned}
    $$
which implies that
$$
    \norm{L_0\tilde{\theta}_\alpha-2L_0\zeta_\alpha}_{H^{s-1}}\lesssim\norm{L_0\zeta_\alpha}_{H^{s-1}}\norm{\zeta_\alpha-1}_{W^{s-1,\infty}}\lesssim\frac{\epsilon^2}{t^{\frac{1}{6}-\delta_0}}\ln (2+t).
    $$
Utilizing (\ref{D_t_sigma}), we also prove that
$$
    \norm{L_0D_t\tilde{\sigma}-2L_0D_t^2\zeta}_{H^{s-1}}\lesssim\frac{\epsilon^2}{t^{\frac{1}{6}-\delta_0}}\ln (2+t)+\frac{\lambda(\epsilon+\lambda)t}{H^3
    }+\frac{\lambda(\epsilon+\lambda)^2t}{H^{\frac{7}{2}}}+\frac{\lambda(\epsilon+\lambda) t^{\frac{1}{2}}}{H^{\frac{5}{2}}}.
    $$
\end{proof}

\begin{lemma}\label{lem_Omega}
    Under the bootstrap assumptions, we have
    \begin{equation}
        \norm{\Omega_0\tilde{\theta}_\alpha}_{H^{s-1}}+\norm{\Omega_0\tilde{\theta}_\alpha}_{W^{s-2,\infty}}\lesssim\epsilon t^{\delta_0},
    \end{equation}
    \begin{equation}\label{Omega_zeta_alpha_2}
        \norm{\Omega_0(\zeta_\alpha-1)}_{H^{s-1}}\lesssim\epsilon^2t^{\frac{1}{2}}+\epsilon t^{\delta_0}
    \end{equation}
    and
    \begin{equation}\label{Omega_zeta_alpha_infty}
        \norm{\Omega_0(\zeta_\alpha-1)}_{W^{s-2,\infty}}\lesssim\epsilon^2t^{\frac{1}{4}}\ln (2+t)+\epsilon t^{\delta_0}.
    \end{equation}
\end{lemma}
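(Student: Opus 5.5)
The plan rests on the identity recorded just before the statement,
\[
\Omega_0\partial_\alpha=L_0\partial_t-\tfrac12 t(\partial_t^2-i\partial_\alpha),
\]
which trades the non-vector-field $\Omega_0$ for the scaling field $L_0$ plus $t$ times the linear operator. Since (B3) is phrased with $D_t$, we rewrite $\partial_t=D_t-b\partial_\alpha$ and $\partial_t^2-i\partial_\alpha=(D_t^2-iA\partial_\alpha)+R$, where
\[
R=-b\partial_\alpha D_t-D_t(b\partial_\alpha)+b\partial_\alpha(b\partial_\alpha)+i(A-1)\partial_\alpha .
\]
Applied to $\tilde\theta$, this gives
\[
\Omega_0\tilde\theta_\alpha=L_0D_t\tilde\theta-L_0(b\tilde\theta_\alpha)-\tfrac t2\bigl(G_c+G_d\bigr)-\tfrac t2 R\tilde\theta .
\]

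\emph{Linear part.} By Lemma \ref{lem_L_0D_t_theta} together with (B3), the term $L_0D_t\tilde\theta$ is $O(\epsilon t^{\delta_0})$ in $H^{s-1}$. The vortex error there, $\lambda(\epsilon+\lambda)t/H^{5/2}$, is controlled using $H\gtrsim H_0+\lambda t$ and \eqref{lambda_H_0}. Indeed $t/H^{5/2}\lesssim \lambda^{-1}H^{-3/2}$, so the term is at most $(\epsilon+\lambda)H_0^{-3/2}\lesssim\epsilon$.

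\emph{Cubic and quadratic-decaying parts.}

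1. The term $tG_c$ is handled by the cubic structure. Using the commutator estimates as in Lemma \ref{lem_theta_t_zeta_t}, we get $\|G_c\|_{H^{s-1}}\lesssim \|D_t\zeta\|_{W^{s-2,\infty}}^2\epsilon\lesssim \epsilon^3 t^{-1/2}\cdot(\text{log})$. This uses Lemma \ref{lem_D_t_zeta_L^infty} for the undifferentiated factor and (B4) for $\partial_\alpha D_t\zeta$.
   - Bounding only crudely would give $t\|G_c\|\lesssim \epsilon^3 t^{1/2}$.
   - To reach $\epsilon t^{\delta_0}$, one must exploit that at least two factors carry the $t^{-1/2}$ decay of (B4). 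This is possible because the kernels in $G_c$ involve difference quotients of $D_t\zeta$, which are controlled by $\partial_\alpha D_t\zeta$, so $\|G_c\|_{H^{s-1}}\lesssim\epsilon^3t^{-1}$.
2. The term $tG_d$ is dominated by $t\|D_tq\|_{H^s}\lesssim \lambda(\epsilon+\lambda)t/H^{5/2}\lesssim\epsilon$, by Proposition \ref{pro_q}. The commutator terms are smaller still.
3. The terms $tR\tilde\theta$ and $L_0(b\tilde\theta_\alpha)$ require bounds on $b$, $A-1$ and $D_tb$. From Lemma \ref{lem_b_exp} and Lemma \ref{lem_real_proj}:
   - $b$ and $A-1$ are quadratic in $\zeta$, up to $q$ and $D_tq$ terms;
   - $\|b\|_{W^{k,\infty}}\lesssim\epsilon^2t^{-1/2}\log t+\lambda H^{-2}$ and $\|A-1\|_{W^{k,\infty}}\lesssim \epsilon^2 t^{-1}\log+\lambda(\epsilon+\lambda)H^{-3}$;
   - $t\,b\,\partial_\alpha D_t\tilde\theta$ is $O(\epsilon^3\log t)$ plus a vortex piece, which is again integrable against $t$.

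\emph{$L^\infty$ and $\zeta$ estimates.} The $W^{s-2,\infty}$ bound follows from the same decomposition, with Sobolev embedding applied to the $H^{s-1}$ pieces at one lower order. Alternatively one can use Lemma \ref{lem_Hf_infty} and Lemma \ref{lem_S_1} on the explicit singular integrals.

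For \eqref{Omega_zeta_alpha_2} we write $2(\zeta_\alpha-1)=\tilde\theta_\alpha-E$ with $E$ as in \eqref{theta_alpha}, and apply $\Omega_0=\alpha\partial_t+\tfrac12 ti$ to $E$.
- The $\tfrac12 tiE$ part gives $t\|E\|_{H^{s-1}}\lesssim \epsilon^2t^{1/2}$ by Lemma \ref{lem_theta_alpha}.
- The $\alpha\partial_t E$ part is rewritten through $L_0$ and $t\partial_t$. Each term in $E$ is bilinear in $\zeta_\alpha-1$, so $\alpha\partial_t$ falls on one factor, producing $\Omega_0(\zeta_\alpha-1)$ times a decaying factor (absorbed, with the $\epsilon^2t^{\delta_0}$ coming from $L_0$ via (B3)), and the rest is again $\epsilon^2 t^{1/2}$.

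The $L^\infty$ version \eqref{Omega_zeta_alpha_infty} uses \eqref{theta_alpha_infty}, giving $t\cdot\epsilon^2t^{-2/3}\log$. The resulting $t^{1/3}$ loss is weaker than the stated $t^{1/4}$, so sharpening it will need the finer estimate of Lemma \ref{lem_S_1} with $\|\zeta_\alpha-1\|_{L^\infty}^2\lesssim\epsilon^2t^{-1}$ and a $t^{-3/4}$ bound on the $H^1$-type pieces.

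The main obstacle is the $tG_c$ and $tR\tilde\theta$ terms: closing with only $t^{\delta_0}$ growth requires two genuinely decaying factors. It may also require the transition-of-derivatives trick, moving $\alpha$-weights onto the $L_0$ energies, when a derivative lands on the undifferentiated $D_t\zeta$, which only decays like $t^{-1/4}$.
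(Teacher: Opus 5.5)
\paragraph{The estimate for $\Omega_0\tilde\theta_\alpha$.} Your treatment follows the paper's argument. It uses the same identity $\Omega_0\partial_\alpha=L_0\partial_t-\tfrac12t(\partial_t^2-i\partial_\alpha)$ and the same splitting into $L_0D_t\tilde\theta$, $L_0(b\tilde\theta_\alpha)$, $\tfrac t2(G_c+G_d)$ and the $b$, $D_tb$, $A-1$ remainders. As in the paper, $t\|G_c\|_{L^2}\lesssim\epsilon^3\ln(2+t)$, and the retreat bound controls $t\,G_d$.

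One correction: the bound $\|A-1\|_{W^{k,\infty}}\lesssim\epsilon^2t^{-1}\log t$ that you quote is not available at this stage. The sharper coefficient bounds of Lemmas \ref{lem_b} and \ref{lem_A} are proved \emph{using} this lemma, through Lemma \ref{lem_transfer_d}. Directly from \eqref{b_exp}--\eqref{A_exp}, terms such as $D_t\zeta(\bar\zeta_\alpha-1)$ and $D_t\zeta\,\partial_\alpha D_t\bar\zeta$ give only $\epsilon^2t^{-3/4}\ln(2+t)$ in $L^\infty$ for $b$, $D_tb$ and $A-1$, because $\|D_t\zeta\|_{L^\infty}\lesssim\epsilon t^{-1/4}$. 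So, as the paper does, these coefficients must be placed in $L^2$, where they are $\epsilon^2t^{-1/2}+\dots$, against $\tilde\theta_\alpha$ and $\partial_\alpha D_t\tilde\theta$ in $L^\infty$. The opposite pairing gives $\epsilon^3t^{1/4}\ln(2+t)$. You also need a bound for $L_0b$, obtained from $(I-\mathfrak H)L_0b$.

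\paragraph{The gap: \eqref{Omega_zeta_alpha_2}--\eqref{Omega_zeta_alpha_infty}.} Writing $2(\zeta_\alpha-1)=\tilde\theta_\alpha-E$ forces you to estimate $\alpha\partial_tE$, and the step ``$\alpha\partial_t$ falls on one factor'' is not valid.
\begin{itemize}
\item Every term of $E$ is a singular integral $\int K(\alpha,\beta)F(\beta)\,d\beta$ with a $\zeta$-dependent kernel, and the weight $\alpha$ sits outside the integral.
\item Moving the weight inside leaves $\int(\alpha-\beta)K(\alpha,\beta)F_t(\beta)\,d\beta$. For $\mathfrak H$ and $\bar{\mathfrak H}$ this kernel is bounded but does not decay; its leading part is the constant $\pm\frac1{\pi i}$.
\item This produces $\alpha$-independent pieces such as $\pm\frac1{\pi i}\int F_t\,d\beta$. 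These are not in $L^2$ and could only cancel among the terms of $E$.
\item When $\partial_t$ falls on the kernel it produces $\alpha(\zeta_t(\alpha)-\zeta_t(\beta))$, which has to be re-expressed through differences of $\Omega_0(\zeta-\alpha)$.
\end{itemize}
None of this is handled. For the $L^\infty$ bound you also concede that the route gives $t^{1/3}$ rather than $t^{1/4}$.

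\paragraph{The missing idea.} Apply the same identity directly to $\zeta-\alpha$ and use \eqref{main_equation}:
\begin{align*}
\Omega_0(\zeta_\alpha-1)&=L_0\zeta_t-\tfrac t2(\partial_t^2-i\partial_\alpha)(\zeta-\alpha),\\
(\partial_t^2-i\partial_\alpha)(\zeta-\alpha)&=-\bigl(D_tb\,\partial_\alpha+2b\,\partial_{\alpha t}+b^2\partial_\alpha^2-i(A-1)\partial_\alpha\bigr)\zeta .
\end{align*}
Because $\zeta_\alpha=1+(\zeta_\alpha-1)$, the bare quadratic coefficients $t\,D_tb$ and $t(A-1)$ appear; for $\tilde\theta$, by contrast, the forcing is cubic. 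With the direct bounds above, $t\|(A-1,D_tb)\|_{L^2}\lesssim\epsilon^2t^{1/2}$ and $t\|(A-1,D_tb)\|_{L^\infty}\lesssim\epsilon^2t^{1/4}\ln(2+t)$. These are exactly the stated losses. The remaining pieces, $L_0\zeta_t=L_0D_t\zeta-L_0(b\zeta_\alpha)$ and the vortex terms, contribute $\epsilon t^{\delta_0}$. No analysis of $\Omega_0E$ is needed.
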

\begin{proof}
    We only consider $\norm{\Omega_0\tilde{\theta}_\alpha}_{L^2}$ and $\norm{\Omega_0\tilde{\theta}_\alpha}_{L^\infty}$ as the most representative case. Since
    $$
    \Omega_0\partial_\alpha=L_0\partial_t-\frac{1}{2}t(\partial_t^2-i\partial_\alpha),
    $$
    there holds
    \begin{equation}\label{Omega_theta_alpha}
    \Omega_0\tilde{\theta}_\alpha=L_0D_t\tilde{\theta}-L_0b\tilde{\theta}_\alpha-bL_0\tilde{\theta}_\alpha-\frac{1}{2}t(D_t^2-iA\partial_\alpha)\tilde{\theta}+\frac{1}{2}t(D_tb\partial_\alpha+2b\partial_{\alpha t}+b^2\partial_\alpha^2-i(A-1)\partial_\alpha)\tilde{\theta}.
    \end{equation}
    By \ref{b_exp}, Proposition \ref{pro_q} and Lemma \ref{lem_real_proj}, we can prove
$$\begin{aligned}
        \norm{b}_{L^2}\lesssim\norm{D_t\zeta}_{L^2}\norm{\zeta_\alpha-1}_{L^\infty}+\norm{q}_{L^2}
        \lesssim\frac{\epsilon^{2}}{t^{\frac{1}{2}}}+\frac{\lambda}{H^{\frac{3}{2}}}.
    \end{aligned}$$
With the aid of Lemma \ref{lem_Hf_infty} we deduce that
$$\begin{aligned}
        \norm{b}_{L^\infty}\lesssim&\norm{D_t\zeta\frac{\bar{\zeta}_\alpha-1}{\zeta_\alpha}}_{L^\infty}+\frac{1}{t^2}\norm{D_t\zeta\frac{\bar{\zeta}_\alpha-1}{\zeta_\alpha}}_{H^1}+\norm{D_t\zeta\frac{\bar{\zeta}_\alpha-1}{\zeta_\alpha}}_{L^\infty}\ln (2+t)+\norm{q}_{L^\infty}\\
        \lesssim&\ \frac{\epsilon^2}{t^{\frac{3}{4}}}\ln (2+t)+\frac{\lambda}{H^2}.
    \end{aligned}$$
    For $D_tb$ we consider
    $$
    (I-\mathfrak{H})D_tb=[D_t\zeta,\mathfrak{H}]\frac{b_\alpha}{\zeta_\alpha}+D_t\left[-(I-\mathfrak{H})D_t\zeta\frac{\bar\zeta_\alpha-1}{\zeta_\alpha}+2q\right].
    $$
    Lemma \ref{app_S1} and Lemma \ref{lem_real_proj} imply
    $$
    \begin{aligned}
    \norm{D_tb}_{L^2}\lesssim&\norm{\partial_\alpha D_t\zeta}_{L^\infty}\norm{b}_{L^2}+\norm{D_t^2\zeta}_{L^\infty}\norm{\zeta_\alpha-1}_{L^2}+\norm{D_t\zeta}_{L^2}\norm{D_t\zeta_\alpha}_{L^\infty}+\norm{D_tq}_{L^2}\\
    \lesssim&\ \frac{\epsilon^2}{t^{\frac{1}{2}}}+\frac{\lambda(\epsilon+\lambda)}{H^{\frac{5}{2}}}.
    \end{aligned}
    $$
    Moreover, from Lemma \ref{lem_Hf_infty} and Lemma \ref{lem_S_1} we have
    $$
    \begin{aligned}
    &\norm{D_tb}_{L^\infty}\\
    \lesssim\ &\norm{\partial_\alpha D_t\zeta}_{L^\infty}\norm{b}_{L^\infty}\ln (2+t)+\norm{D_t^2\zeta}_{L^\infty}\norm{\zeta_\alpha-1}_{L^\infty}\ln (2+t)+\norm{D_t\zeta}_{L^\infty}\norm{D_t\zeta_\alpha}_{L^\infty}\ln (2+t)\\
    &+\norm{D_tq}_{L^\infty}+\frac{1}{t^{\frac{1}{2}}}\norm{\partial_\alpha D_t\zeta}_{W^{1,\infty}}\norm{b}_{H^1}+\frac{1}{t^2}\norm{D_t^2\zeta}_{W^{1,\infty}}\norm{\zeta_\alpha-1}_{H^1}+\frac{1}{t^2}\norm{D_t\zeta}_{H^1}\norm{D_t\zeta_\alpha}_{W^{1,\infty}}\\
    \lesssim\ &\frac{\epsilon^2}{t^{\frac{3}{4}}}\ln (2+t)+\frac{\lambda(\epsilon+\lambda)}{H^3}.
    \end{aligned}
    $$
    Similarly, by checking $(I-\mathfrak{H})L_0b$:
    $$
    (I-\mathfrak{H})L_0b=[L_0\zeta,\mathfrak{H}]\frac{b_\alpha}{\zeta_\alpha}+L_0\left[-(I-\mathfrak{H})D_t\zeta\frac{\bar\zeta_\alpha-1}{\zeta_\alpha}+2q\right]
    $$
    we obtain
    $$
    \norm{L_0b}_{L^2}+\norm{L_0b}_{L^\infty}\lesssim\frac{\epsilon^2}{t^{\frac{1}{4}-\delta_0}}\ln (2+t)+\frac{\lambda(\epsilon+\lambda)t}{H^{\frac{5}{2}}}.
    $$
    From (\ref{A_exp}), we obtain
    $$
    \norm{A-1}_{L^2}\lesssim\frac{\epsilon^2}{t^{\frac{1}{2}}}+\frac{\lambda(\epsilon+\lambda)}{H^\frac{3}{2}}
    $$
    and
    $$
    \norm{A-1}_{L^\infty}\lesssim\frac{\epsilon^2}{t^{\frac{3}{4}}}\ln (2+t)+\frac{\lambda(\epsilon+\lambda)}{H^2}.
    $$
    To estimate $\norm{\Omega_0\tilde{\theta}_\alpha}_{L^2}$ and $\norm{\Omega_0\tilde{\theta}_\alpha}_{L^\infty}$, it remains to treat the term $(D_t^2-iA\partial_\alpha)\tilde{\theta}$. Recall that
\begin{equation}
    (D^2_t-iA\partial_\alpha)\tilde{\theta}=G_c+G_d,
\end{equation}
where
\begin{equation}
    G_c=-2[\bar{\mathfrak{F}},\mathfrak{H}\frac{1}{\zeta_{\alpha}}+
    \bar{\mathfrak{H}}\frac{1}{\bar{\zeta}_{\alpha}}]\bar{\mathfrak{F}}_{\alpha}+\frac{1}{\pi i}\int\left(\frac{D_{t}\zeta(\alpha,t)-D_{t}\zeta(\beta,t)}{\zeta(\alpha,t)-\zeta(\beta,t)}\right)^2(\zeta-\bar{\zeta})_{\beta}d\beta
\end{equation}
and
\begin{equation}
    G_d=-2[\bar{q},\mathfrak{H}]\frac{\bar{\mathfrak{F}}_{\alpha}}{\zeta_{\alpha}}-2[\bar{\mathfrak{F}},\mathfrak{H}]\frac{\bar{q}_{\alpha}}{\zeta_{\alpha}}-2[\bar{q},\mathfrak{H}]\frac{\bar{q}_{\alpha}}{\zeta_{\alpha}}-4D_tq.
\end{equation}
Adopting Lemma \ref{app_S1} leads to
$$
\norm{G_c}_{L^2}\lesssim\norm{\zeta_\alpha-1}_{L^\infty}\norm{\partial_\alpha\mathfrak{F}}_{L^\infty}\norm{\mathfrak{F}}_{L^2}+\norm{\zeta_\alpha-1}_{L^\infty}\norm{\partial_\alpha D_t\zeta}_{L^\infty}\norm{D_t\zeta}_{L^2}\lesssim\frac{\epsilon^3}{t}\ln (2+t)
$$
and
$$
\norm{G_d}_{L^2}\lesssim\norm{\mathfrak{F}}_{L^2}\norm{q_\alpha}_{L^\infty}+\norm{q}_{L^2}\norm{q_\alpha}_{L^\infty}+\norm{D_tq}_{L^2}\lesssim\frac{\lambda(\epsilon+\lambda)}{H^{\frac{5}{2}}}.
$$
Similarly there holds
$$
\norm{G_c}_{L^\infty}\lesssim\frac{\epsilon^3}{t^{\frac{5}{4}}}\ln (2+t)
$$
and
$$
\norm{G_d}_{L^\infty}\lesssim\frac{\lambda(\epsilon+\lambda)}{H^3}.
$$
Return to (\ref{Omega_theta_alpha}), we deduce that
$$
\begin{aligned}
\norm{\Omega_0\tilde{\theta}_\alpha}_{L^2}\lesssim&\norm{L_0D_t\tilde{\theta}}_{L^2}+\norm{L_0b}_{L^2}\norm{\tilde{\theta}_\alpha}_{L^\infty}+\norm{b}_{L^\infty}\norm{L_0\tilde{\theta}_\alpha}_{L^2}+t\left(\norm{G_c}_{L^2}+\norm{G_d}_{L^2}\right)\\
&+t\left(\norm{D_tb}_{L^2}\norm{\tilde{\theta}_\alpha}_{L^\infty}+\norm{b}_{L^2}\norm{\tilde{\theta}_{\alpha t}}_{L^\infty}+\norm{b}^2_{L^2}\norm{\tilde{\theta}_{\alpha\alpha}}_{L^\infty}+\norm{A-1}_{L^2}\norm{\tilde{\theta}_\alpha}_{L^\infty}\right)\\
\lesssim&\ \epsilon t^{\delta_0}.
\end{aligned}
$$
and
$$
\begin{aligned}
\norm{\Omega_0\tilde{\theta}_\alpha}_{L^\infty}\lesssim&\norm{L_0D_t\tilde{\theta}}_{H^1}+\norm{L_0b}_{L^\infty}\norm{\tilde{\theta}_\alpha}_{L^\infty}+\norm{b}_{L^\infty}\norm{L_0\tilde{\theta}_\alpha}_{H^1}+t\left(\norm{G_c}_{L^\infty}+\norm{G_d}_{L^\infty}\right)\\
&+t\left(\norm{D_tb}_{L^\infty}\norm{\tilde{\theta}_\alpha}_{L^\infty}+\norm{b}_{L^\infty}\norm{\tilde{\theta}_{\alpha t}}_{L^\infty}+\norm{b}^2_{L^\infty}\norm{\tilde{\theta}_{\alpha\alpha}}_{L^\infty}+\norm{A-1}_{L^\infty}\norm{\tilde{\theta}_\alpha}_{L^\infty}\right)\\
\lesssim&\ \epsilon t^{\delta_0}.
\end{aligned}
$$
For (\ref{Omega_zeta_alpha_2}) and \eqref{Omega_zeta_alpha_infty}, note that
$$
(\partial_t^2-i\partial_\alpha)(\zeta-\alpha)=-(D_tb\partial_\alpha+2b\partial_{\alpha t}+b^2\partial_\alpha^2-i(A-1)\partial_\alpha)\zeta
$$
and we follow the steps as above.
\end{proof}
\begin{rem}
    In fact, if we take Lemma \ref{lem_b} and \ref{lem_A} into consideration, there holds
    $$\norm{\Omega_0(\zeta_\alpha-1)}_{H^{s-1}}+\norm{\Omega_0(\zeta_\alpha-1)}_{W^{s-2,\infty}}\lesssim\epsilon t^{\delta_0}.$$
    It can be achieved by adopting Lemma \ref{lem_b} and \ref{lem_A} to refine Lemma \ref{lem_Omega}, and then refining  \ref{lem_b} and \ref{lem_A}. After several iterations of this step, the result follows. 
\end{rem}

The upcoming lemma serves as a key lemma in this paper. It provides a method to transfer derivatives among distinct functions, which will enable us to derive the necessary time-decay estimates. We introduce two essential equations which follow directly from the definition of the vector field:
\begin{equation}\label{transfer_partial_t}
    f=-\frac{2\alpha}{it}f_t+\frac{2}{it}\Omega_0f
\end{equation}
and
\begin{equation}\label{transfer_partial_alpha}
    f=\frac{4\alpha^2}{it^2}f_\alpha-\frac{4\alpha}{it^2}L_0f+\frac{2}{it}\Omega_0f.
\end{equation}
The proof of the subsequent lemma requires only direct calculation.
\begin{lemma}\label{lem_transfer_d}
    For any functions $f(\cdot,t),g(\cdot,t)$, there holds
    \begin{equation}\label{transfer_d_1}
    f\bar g_\alpha+f_\alpha\bar g=\frac{2}{it}(\Omega_0f\bar g_\alpha+L_0f\bar g_t-f_\alpha\overline{\Omega_0g}-f_t\overline{L_0g}),
    \end{equation}
    \begin{equation}\label{transfer_d_2}
    f g_\alpha-f_\alpha g=\frac{2}{it}(\Omega_0f g_\alpha+L_0f g_t-f_\alpha{\Omega_0g}-f_t{L_0g}),
    \end{equation}
    \begin{equation}\label{transfer_d_3}
    f\bar g_t+f_t\bar g=\frac{2}{it}(\Omega_0f\bar g_t-f_t\overline{\Omega_0g}),
    \end{equation}
    \begin{equation}\label{transfer_d_4}
    f g_t-f_t g=\frac{2}{it}(\Omega_0f g_t-f_t{\Omega_0g}).
    \end{equation}
    and
    \begin{equation}\label{transfer_d_var_1}
    \begin{aligned}
    &(f_t(\alpha,t)-f_t(\beta,t))\bar g(\beta,t)+(f(\alpha,t)-f(\beta,t))\bar g_t(\beta,t)\\
    =&\ \frac{2}{it}(\Omega_0f(\alpha,t)-\Omega_0f(\beta,t)-(\alpha-\beta)f_t(\alpha,t))\bar g_t(\beta,t)-\frac{2}{it}(f_t(\alpha,t)-f_t(\beta,t))\overline{\Omega_0g(\beta,t)},
    \end{aligned}
    \end{equation}
    \begin{equation}\label{transfer_d_var_2}
    \begin{aligned}
    &(f_t(\alpha,t)-f_t(\beta,t)) g(\beta,t)-(f(\alpha,t)-f(\beta,t))g_t(\beta,t)\\
    =&\ \frac{2}{it}(\Omega_0f(\alpha,t)-\Omega_0f(\beta,t)-(\alpha-\beta)f_t(\alpha,t))g_t(\beta,t)-\frac{2}{it}(f_t(\alpha,t)-f_t(\beta,t)){\Omega_0g(\beta,t)}.
    \end{aligned}
    \end{equation}
\end{lemma}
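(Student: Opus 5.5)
The plan is to verify each identity by expanding the right-hand side with the definitions $\Omega_0=\alpha\partial_t+\frac12 ti$ and $L_0=\frac12 t\partial_t+\alpha\partial_\alpha$, and checking that everything except a multiple $\frac{it}{2}$ of the left-hand side cancels. No analytic input is needed; the identities hold pointwise for any differentiable $f,g$. The only point requiring care is the conjugate terms. Since $\alpha$ and $t$ are real, conjugation only flips the sign of the imaginary part of $\Omega_0$:
\[
\overline{\Omega_0 g}=\alpha\bar g_t-\tfrac12 ti\,\bar g,\qquad \overline{L_0 g}=\tfrac12 t\bar g_t+\alpha\bar g_\alpha .
\]
This sign is exactly what turns the antisymmetric combinations in \eqref{transfer_d_2}, \eqref{transfer_d_4} into the symmetric ones in \eqref{transfer_d_1}, \eqref{transfer_d_3}.

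For \eqref{transfer_d_1}, I will multiply the bracket on the right by $1$ and expand:
\[
(\alpha f_t+\tfrac12 tif)\bar g_\alpha+(\tfrac12 tf_t+\alpha f_\alpha)\bar g_t-f_\alpha(\alpha\bar g_t-\tfrac12 ti\bar g)-f_t(\tfrac12 t\bar g_t+\alpha\bar g_\alpha).
\]
The terms $\alpha f_t\bar g_\alpha$, $\alpha f_\alpha\bar g_t$ and $\frac12 tf_t\bar g_t$ cancel in pairs. What remains is $\frac{ti}{2}(f\bar g_\alpha+f_\alpha\bar g)$, and multiplying by $\frac{2}{it}$ gives the claim.

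\eqref{transfer_d_2} is identical, except that $\Omega_0 g=\alpha g_t+\frac12 tig$ without conjugation. The surviving term is then $\frac{ti}{2}(fg_\alpha-f_\alpha g)$.

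For \eqref{transfer_d_3}, \eqref{transfer_d_4} no $L_0$ is needed. The products $\alpha f_t\bar g_t$ (respectively $\alpha f_tg_t$) cancel directly. What is left is $\frac{ti}{2}(f\bar g_t+f_t\bar g)$, respectively $\frac{ti}{2}(fg_t-f_tg)$.

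For the two-point versions \eqref{transfer_d_var_1}, \eqref{transfer_d_var_2}, the key step is to simplify the modified difference first:
\[
\Omega_0f(\alpha)-\Omega_0f(\beta)-(\alpha-\beta)f_t(\alpha)=\beta\bigl(f_t(\alpha)-f_t(\beta)\bigr)+\tfrac12 ti\bigl(f(\alpha)-f(\beta)\bigr).
\]
Subtracting the $\Omega_0$ factor at $\beta$ is designed precisely so that the weight in front of $f_t(\alpha)-f_t(\beta)$ becomes $\beta$, which matches the $\beta\bar g_t(\beta)$ coming from $\overline{\Omega_0 g(\beta)}$. Multiplying this by $\bar g_t(\beta)$ and subtracting $(f_t(\alpha)-f_t(\beta))(\beta\bar g_t(\beta)-\frac12 ti\bar g(\beta))$ cancels the $\beta$-weighted terms. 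The remainder is $\frac{ti}{2}$ times the left-hand side of \eqref{transfer_d_var_1}. With $g$ unconjugated, the sign of the $\bar g$-term flips, giving \eqref{transfer_d_var_2}.

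There is no real obstacle. The only step deserving attention is keeping track of the sign of $\frac12 ti$ under conjugation, and, in the two-point identities, recognizing that the correction $-(\alpha-\beta)f_t(\alpha)$ converts the $\alpha$-weight into the $\beta$-weight needed for cancellation.
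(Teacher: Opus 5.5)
Your direct expansion is the same ``direct calculation'' the paper relies on, and it is correct for \eqref{transfer_d_1}--\eqref{transfer_d_4} and \eqref{transfer_d_var_1}. The gap is your last sentence. For \eqref{transfer_d_var_2} you asserted the outcome without tracking the sign, and the computation does not give the printed identity. With $\Omega_0 g(\beta)=\beta g_t(\beta)+\frac12 ti\,g(\beta)$ unconjugated, your simplification of the modified difference yields
\[
\begin{aligned}
&\bigl[\beta\bigl(f_t(\alpha)-f_t(\beta)\bigr)+\tfrac12 ti\bigl(f(\alpha)-f(\beta)\bigr)\bigr]g_t(\beta)-\bigl(f_t(\alpha)-f_t(\beta)\bigr)\bigl(\beta g_t(\beta)+\tfrac12 ti\,g(\beta)\bigr)\\
&\qquad=\tfrac{ti}{2}\Bigl[\bigl(f(\alpha)-f(\beta)\bigr)g_t(\beta)-\bigl(f_t(\alpha)-f_t(\beta)\bigr)g(\beta)\Bigr].
\end{aligned}
\]
This is $-\frac{ti}{2}$ times the left-hand side of \eqref{transfer_d_var_2}, not $+\frac{ti}{2}$ times it. So the printed right-hand side equals minus the printed left-hand side, and \eqref{transfer_d_var_2} is false as stated. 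For a concrete check, take $f(\alpha,t)=\alpha$ and $g(\alpha,t)=t$: the left side is $\beta-\alpha$, while the right side is $\alpha-\beta$.

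What your argument actually proves is the two-point analogue of \eqref{transfer_d_4}. Its left-hand side is $(f(\alpha,t)-f(\beta,t))g_t(\beta,t)-(f_t(\alpha,t)-f_t(\beta,t))g(\beta,t)$; equivalently, it is the printed identity with the sign of the right-hand side reversed. You should state and verify that corrected version, and point out that the printed one has a sign typo. The error is harmless for the paper, because \eqref{transfer_d_var_2} is only used inside $L^2$ bounds: for $\mathfrak{G}_{11}$, $\mathfrak{G}_{12}$, and the term $g$ in Lemma \ref{lem_L_0G_0}. Still, a proof that ends with ``giving \eqref{transfer_d_var_2}'' is claiming something false, and the sign bookkeeping you singled out as the one point needing care is exactly what was skipped here.
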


The following lemma is a simple corollary of Lemma 5.3 from \cite{su2025new}:
\begin{lemma}\label{lem_int_alpha_away_t}
    Fix a positive number $\gamma<\frac{1}{3}$. Define
$$
    I(f_1,f_2,f_3)=\frac{1}{\pi i}\int\frac{(f_1(\alpha)-f_1(\beta))(f_2(\alpha)-f_2(\beta))}{(\alpha-\beta)^2}f_3(\beta)d\beta
    $$
then
$$
\begin{aligned}
    &\norm{I}_{L^2(|\alpha|\leq t^{1-\gamma})}\\
    &\qquad\lesssim\left(\frac{1}{t}(\norm{L_0\partial_\alpha f_1}_{H^1}+\norm{\Omega_0\partial_\alpha f_1}_{H^1})+\frac{1}{t^{2\gamma}}\norm{\partial_\alpha f_1}_{W^{1,\infty}}+\frac{1}{t^{1-\gamma}}\norm{f_1}_{L^\infty}\right)\norm{f_2}_{L^\infty}\norm{f_3}_{L^2}
    \end{aligned}
    $$
and
$$
    \norm{I}_{L^2(|\alpha|\geq t^{1+\gamma})}\lesssim\left(\frac{1}{t^{\frac{1}{2}+\frac{\gamma}{2}}}(\norm{L_0\partial_\alpha f_1}_{L^2}+\norm{\Omega_0\partial_\alpha f_1}_{L^2})+\frac{1}{t^{1+\gamma}}\norm{f_1}_{L^\infty}\right)\norm{f_2}_{L^\infty}\norm{f_3}_{L^2}.\footnote{We may replace $\norm{f_2}_{L^\infty}$ and $\norm{f_3}_{L^2}$ with $\norm{f_2}_{L^2}$ and $\norm{f_3}_{L^\infty}$ respectively in the above two inequalities.}
    $$
\end{lemma}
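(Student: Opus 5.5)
The plan is to split the $\beta$-integral into a near region, where the double difference can be expanded using the vector fields, and a far region, where the kernel is integrable and only $L^\infty$ control of $f_1$ is needed.

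\emph{Interior bound, $|\alpha|\le t^{1-\gamma}$.} Split $I=I_{\rm near}+I_{\rm far}$ according to $|\alpha-\beta|\le t^{1-\gamma}$ or $|\alpha-\beta|>t^{1-\gamma}$.
\begin{itemize}
\item On the far piece, bound $|f_1(\alpha)-f_1(\beta)|\le 2\|f_1\|_{L^\infty}$ and $|f_2(\alpha)-f_2(\beta)|\le 2\|f_2\|_{L^\infty}$. The kernel $(\alpha-\beta)^{-2}\mathbf 1_{|\alpha-\beta|>t^{1-\gamma}}$ has $L^1$ norm $\lesssim t^{-(1-\gamma)}$, so Young's inequality gives the term $t^{-(1-\gamma)}\|f_1\|_{L^\infty}\|f_2\|_{L^\infty}\|f_3\|_{L^2}$.
\item On the near piece, write $f_1(\alpha)-f_1(\beta)=\int_0^1\partial_\alpha f_1(\beta+\tau(\alpha-\beta))\,d\tau\,(\alpha-\beta)$. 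Since $|\alpha|,|\beta|\lesssim t^{1-\gamma}$, every point $x$ on the segment satisfies $|x|\lesssim t^{1-\gamma}$.
\item There, apply \eqref{transfer_partial_alpha} to $h=\partial_\alpha f_1$:
\[
h=\frac{4x^2}{it^2}h_\alpha-\frac{4x}{it^2}L_0h+\frac{2}{it}\Omega_0 h .
\]
The first term is bounded by $t^{-2\gamma}\|\partial_\alpha f_1\|_{W^{1,\infty}}$. The other two are $\lesssim t^{-1}\bigl(|L_0\partial_\alpha f_1|+|\Omega_0\partial_\alpha f_1|\bigr)$ at $x$, using $|x|/t\le 1$.
\item For the $t^{-2\gamma}$ term we have $|f_1(\alpha)-f_1(\beta)|\lesssim t^{-2\gamma}\|\partial_\alpha f_1\|_{W^{1,\infty}}|\alpha-\beta|$. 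The residual kernel is then $\frac{f_2(\alpha)-f_2(\beta)}{\alpha-\beta}$ times $f_3$, which is a bounded Calderón commutator type operator in $L^2$ with constant $\|f_2\|_{L^\infty}$. To avoid needing a derivative of $f_2$, I would instead bound it as a truncated Hilbert-type operator: the $\mathbb H$-type part is $L^2$ bounded, and the truncation error is controlled by the log-free version of Lemma~\ref{lem_S_1}'s splitting.
\item For the vector-field terms, use Sobolev embedding $H^1\subset L^\infty$ for $L_0\partial_\alpha f_1$ and $\Omega_0\partial_\alpha f_1$, then handle the residual kernel in the same way.
\end{itemize}
The delicate point is the singular kernel $\frac{f_2(\alpha)-f_2(\beta)}{\alpha-\beta}$ with $f_2$ only in $L^\infty$. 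Since the statement is a corollary of Lemma~5.3 of \cite{su2025new}, I would import that $L^2$-boundedness verbatim from there rather than reprove it.

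\emph{Exterior bound, $|\alpha|\ge t^{1+\gamma}$.}
\begin{itemize}
\item Split at $|\alpha-\beta|\ge |\alpha|/2$. The far part again gives, by Young's inequality, $|\alpha|^{-1}\|f_1\|_{L^\infty}\|f_2\|_{L^\infty}\|f_3\|_{L^2}\le t^{-1-\gamma}\|f_1\|_{L^\infty}\|f_2\|_{L^\infty}\|f_3\|_{L^2}$.
\item In the near part, every point $x$ on the segment satisfies $|x|\gtrsim|\alpha|\ge t^{1+\gamma}$. Here use \eqref{transfer_partial_t} together with $\Omega_0\partial_\alpha=L_0\partial_t-\tfrac t2(\partial_t^2-i\partial_\alpha)$ to get, schematically,
\[
\partial_\alpha f_1\sim \frac{t}{x}\,(\cdots)+\frac{1}{x}\bigl(L_0\partial_\alpha f_1+\Omega_0\partial_\alpha f_1\bigr).
\]
More simply, write $\alpha\,\partial_\alpha f_1=L_0 f_1-\tfrac t2\partial_t f_1$ and then gain via $\Omega_0$.
\item The weight $1/|x|$ is then kept in $L^2$: $\|x^{-1}\mathbf 1_{|x|\ge t^{1+\gamma}}\|_{L^2}\sim t^{-(1+\gamma)/2}$. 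This produces the factor $t^{-\frac12-\frac\gamma2}$ multiplying the $L^2$ norms of $L_0\partial_\alpha f_1$ and $\Omega_0\partial_\alpha f_1$, with $f_2$ in $L^\infty$ and $f_3$ in $L^2$ as before.
\end{itemize}

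\emph{Footnote variant.} Interchanging the roles of $f_2\in L^2$ and $f_3\in L^\infty$ follows by the same splitting. In Young's and Hölder's inequalities one pairs the $L^2$ norm with the difference factor of $f_2$ instead of with $f_3$.

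\emph{Main obstacle.} The hardest step is controlling the near-diagonal operator with only $L^\infty$ control of $f_2$. For that step I rely on \cite{su2025new}, Lemma~5.3.
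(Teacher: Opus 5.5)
\textbf{There is a genuine gap: the near-diagonal $L^2$ step, in both the interior and the exterior estimate.}

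Your pointwise ingredients are sound. The far pieces are handled correctly by Young's inequality. On $|x|\le 2t^{1-\gamma}$, \eqref{transfer_partial_alpha} correctly gives $|\partial_\alpha f_1(x)|\lesssim t^{-2\gamma}\norm{\partial_\alpha f_1}_{W^{1,\infty}}+t^{-1}\bigl(\norm{L_0\partial_\alpha f_1}_{H^1}+\norm{\Omega_0\partial_\alpha f_1}_{H^1}\bigr)$.

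The failure is in turning this into an $L^2$ bound on the near-diagonal piece. You write $f_1(\alpha)-f_1(\beta)=(\alpha-\beta)\,m(\alpha,\beta)$ and use only $|m|\le\kappa$, which leaves the kernel $m(\alpha,\beta)\frac{f_2(\alpha)-f_2(\beta)}{\alpha-\beta}$. The commutator $[f_2,\mathbb{H}]$ is bounded by $\norm{f_2}_{L^\infty}$, but only through cancellation. That cancellation does not survive multiplication by a non-separable factor known only through a sup bound. For example, take $m=\operatorname{sgn}\bigl((\alpha-\beta)(f_2(\alpha)-f_2(\beta))\bigr)$ and a real $\pm1$-valued $f_2$ oscillating at scale $N^{-1}$. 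The truncated kernel becomes $\frac{|f_2(\alpha)-f_2(\beta)|}{|\alpha-\beta|}$, whose $L^2$ operator norm grows like $\log(Nt^{1-\gamma})$.

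So no bound of the kind you describe exists to import, from \cite{su2025new} or elsewhere, as long as only $|m|\le\kappa$ is used. Lemma~\ref{lem_S_1}, an $L^\infty$ estimate carrying a logarithm, does not supply it either. Your exterior near part, treated ``as before'', has the same defect.

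Your exterior identity also does not produce the norms in the statement. Writing $\alpha\partial_\alpha f_1=L_0f_1-\frac t2\partial_tf_1$ and then using $\Omega_0$ expresses $\partial_\alpha f_1$ through $L_0f_1$, $\Omega_0f_1$ and $\frac{t^2}{x^2}f_1$, not through $L_0\partial_\alpha f_1$ and $\Omega_0\partial_\alpha f_1$. What works is the identity $\partial_x\bigl(e^{\frac{it^2}{4x}}g\bigr)=e^{\frac{it^2}{4x}}\bigl(\frac1xL_0g-\frac{t}{2x^2}\Omega_0g\bigr)$ with $g=\partial_\alpha f_1$, integrated from $x$ to $\pm\infty$. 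Cauchy--Schwarz then gives $|g(x)|\lesssim|x|^{-\frac12}\norm{L_0g}_{L^2}+t|x|^{-\frac32}\norm{\Omega_0g}_{L^2}\lesssim t^{-\frac12-\frac\gamma2}\bigl(\norm{L_0g}_{L^2}+\norm{\Omega_0g}_{L^2}\bigr)$ for $|x|\ge t^{1+\gamma}/4$. This is the bound from \cite{Wu2009} used in Lemma~\ref{lem_away_infty}.

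\textbf{The missing idea: put the cancellation on $f_1$ and split the $f_2$ difference.}

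The paper does exactly this for double differences in the proof of Corollary~\ref{cor_integral_three}; for the present lemma it only cites \cite{su2025new}. Define $C[A]g:=\frac{1}{\pi i}\int\frac{A(\alpha)-A(\beta)}{(\alpha-\beta)^2}g(\beta)\,d\beta$. Then $I=f_2\,C[f_1]f_3-C[f_1](f_2f_3)$, and $\norm{C[A]g}_{L^2}\lesssim\norm{A'}_{L^\infty}\norm{g}_{L^2}$ by Lemma~\ref{app_S1}(1) with $m=1$.

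To localize, replace $f_1$ by $A$ with $A'=\chi\,\partial_\alpha f_1$.
\begin{itemize}
\item In the interior case, take $\chi=1$ on $|x|\le2t^{1-\gamma}$ and $\chi=0$ for $|x|\ge4t^{1-\gamma}$. Then $A(\alpha)-A(\beta)=f_1(\alpha)-f_1(\beta)$ for $|\alpha|\le t^{1-\gamma}$, $|\beta|\le2t^{1-\gamma}$.
\item In the exterior case, take $\chi=1$ on $|x|\ge t^{1+\gamma}/2$ and $\chi=0$ for $|x|\le t^{1+\gamma}/4$. Then $A(\alpha)-A(\beta)=f_1(\alpha)-f_1(\beta)$ for $|\alpha|\ge t^{1+\gamma}$, $|\alpha-\beta|\le|\alpha|/2$.
\end{itemize}
On the complementary set one has $|\alpha-\beta|\gtrsim t^{1-\gamma}$ (respectively $\gtrsim t^{1+\gamma}$). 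Integration by parts against $\chi$ gives $|A(\alpha)-A(\beta)|\lesssim\norm{f_1}_{L^\infty}$. Hence both the $f_1$-integral and the $A$-integral there are controlled by Young's inequality, which produces the $\norm{f_1}_{L^\infty}$ terms.

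What remains is $f_2\,C[A]f_3-C[A](f_2f_3)$ on the whole line. It is bounded by $\norm{A'}_{L^\infty}\norm{f_2}_{L^\infty}\norm{f_3}_{L^2}$. Finally, $\norm{A'}_{L^\infty}\le\sup_{\operatorname{supp}\chi}|\partial_\alpha f_1|$, which your interior bound and the exterior pointwise bound above control by exactly the factors claimed in the lemma.
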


In the following series of lemmas, we consider integrals of the form when $\alpha$ is close to $t$:
$$
I=\frac{1}{\pi i}\int\frac{\Pi_{i\leq k}(f_i(\alpha,t)-f_i(\beta,t))}{(\alpha-\beta)^{k+1}}g(\beta,t)d\beta\quad and\quad I'=\frac{1}{\pi i}\int\frac{\Pi_{i\leq k}(f_i(\alpha,t)-f_i(\beta,t))}{(\alpha-\beta)^{k}}g(\beta,t)d\beta
$$
which will play a vital role in the energy estimates. We provide two versions of estimates for the reason that we lack the control of $\norm{\Omega_0D_t\zeta}_{L^2}$. Although some of the results may seem rather complex, one can treat $b$ as $0$ to see which terms dominate.
\begin{lemma}\label{lem_(I-H)fg_main}
    Suppose that $\mu\in(0,\frac{1}{2})$, denote $S(t)=\{\alpha\in\mathbb{R}|t^{1-\mu}\leq|\alpha|\leq t^{1+\mu}\}$ and $\widetilde{S}(t)=\{\alpha\in\mathbb{R}|\frac{1}{2}t^{1-\mu}\leq|\alpha|\leq 2t^{1+\mu}\}$. For $f(\cdot,t)\in W^{1,\infty}$ and $g(\cdot,t)\in L^2\cap L^\infty$, we consider the integral$$
        I(f,g)=\frac{1}{\pi i}\int\frac{f(\alpha)-f(\beta)}{(\alpha-\beta)^2}\bar{g}(\beta)d\beta
    $$
and
$$
        J(f,g)=\frac{1}{\pi i}\int\frac{f(\alpha)-f(\beta)}{(\alpha-\beta)^2}g(\beta)d\beta.
    $$
Assume $L_0f(\cdot,t),L_0g(\cdot,t),\Omega_0f(\cdot,t),\Omega_0g(\cdot,t)\in L^2(\widetilde{S}(t))$. For a general function $h$ we set
    
    $$
    W(h)=\norm{L_0h}_{L^2(\widetilde{S}(t))}+\norm{\Omega_0h}_{L^2(\widetilde{S}(t))},
    $$
    then
\begin{equation}
        \norm{I-f_\alpha\bar{g}}_{L^2(S(t))}+\norm{J+f_\alpha g}_{L^2(S(t))}\lesssim\frac{1}{t^{1-2\mu}}(\norm{f}_{L^\infty}+\norm{g}_{L^\infty})(\norm{g}_{L^2}+W(f)+W(g)).
    \end{equation}
\end{lemma}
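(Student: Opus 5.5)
The plan is to localize to $|\alpha-\beta|\le \frac12 t^{1-\mu}$ (the far region is easy) and, near the diagonal, to use the vector-field identities (\ref{transfer_partial_t})--(\ref{transfer_partial_alpha}) to convert the singular integral into its local part $f_\alpha\bar g$ (resp. $-f_\alpha g$) plus errors that carry a factor $t^{-1}$. Fix $\alpha\in S(t)$ and split the $\beta$-integral into the near region $|\alpha-\beta|\le \frac12 t^{1-\mu}$ and the far region. In the near region $\beta\in\widetilde S(t)$, so $L_0,\Omega_0$ of $f,g$ are controlled there by $W(f),W(g)$. In the far region, use $|f(\alpha)-f(\beta)|\le 2\norm{f}_{L^\infty}$ and Cauchy--Schwarz:
\[
\int_{|\alpha-\beta|>\frac12 t^{1-\mu}}\frac{|g(\beta)|}{(\alpha-\beta)^2}\,d\beta\lesssim t^{-\frac32(1-\mu)}\norm{g}_{L^2}.
\]
Integrating in $\alpha$ over $S(t)$, whose measure is at most $2t^{1+\mu}$, gives an $L^2(S(t))$ contribution of size $t^{-1+2\mu}\norm{f}_{L^\infty}\norm{g}_{L^2}$, which is exactly the claimed rate.

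In the near region I use the identities. Apply (\ref{transfer_partial_alpha}) to $g$, or to $\bar g$ (with $\Omega_0$ replaced by its conjugate action), to rewrite $\bar g(\beta)$ as
\[
\bar g(\beta)=-\frac{4\beta^2}{it^2}\,\bar g_\beta+\frac{4\beta}{it^2}\,\overline{L_0 g}-\frac{2}{it}\,\overline{\Omega_0 g}.
\]
The terms with $L_0 g$ and $\Omega_0 g$ carry factors $|\beta|/t^2\lesssim t^{\mu-1}$ and $t^{-1}$. Against the kernel $\frac{f(\alpha)-f(\beta)}{(\alpha-\beta)^2}$, which is bounded by $\norm{f_\alpha}_{L^\infty}/|\alpha-\beta|$, these pieces are estimated by a truncated Hilbert-type operator. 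To avoid putting $f_\alpha$ in $L^\infty$, I instead apply Lemma \ref{lem_transfer_d} symmetrically: (\ref{transfer_d_var_1}) for $I$ and (\ref{transfer_d_var_2}) for $J$. The point is that the combination $(f(\alpha)-f(\beta))\bar g_t(\beta)$ plus $(f_t(\alpha)-f_t(\beta))\bar g(\beta)$ equals $\frac{2}{it}$ times differences of $\Omega_0 f$ and $\Omega_0 g$, with one leftover term $(\alpha-\beta)f_t(\alpha)\bar g_t(\beta)$. Concretely:
\begin{itemize}
\item Express $f(\alpha)-f(\beta)$ through $\partial_\beta$ using (\ref{transfer_partial_alpha}) applied to the difference, so that it becomes $\frac{4\beta^2}{it^2}$ times a $\beta$-derivative, plus $L_0$ and $\Omega_0$ terms.
\item Integrate by parts in $\beta$ near the diagonal.
\item The boundary and local contributions produce exactly $f_\alpha(\alpha)\bar g(\alpha)$, the principal-value residue of $\frac{f(\alpha)-f(\beta)}{(\alpha-\beta)^2}$, which is $f_\alpha$ times a delta-type term. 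The sign flips for $J$ because of the conjugation convention, giving $-f_\alpha g$.
\end{itemize}
Every remaining term carries $t^{-1}$ or $|\alpha|/t^2\le t^{\mu-1}$, times a kernel of Hilbert type on the truncated region applied to $L_0 h$, $\Omega_0 h$, or $g$. Weighted by $\norm{f}_{L^\infty}+\norm{g}_{L^\infty}$, their $L^2(S(t))$ norm is therefore $\lesssim t^{-1+2\mu}\,(\norm{g}_{L^2}+W(f)+W(g))$. An extra $t^{\mu}$ loss comes from the smooth cutoff in $|\alpha-\beta|$, whose derivative is of size $t^{\mu-1}$, and from the $\beta^2$ weights, $\beta^2/t^2\le t^{2\mu}$ on $\widetilde S$. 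This is why the exponent is $1-2\mu$ rather than $1$.

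I expect the main obstacle to be the bookkeeping of the diagonal singularity after integration by parts. One must verify that the principal-value term yields exactly $f_\alpha\bar g$ (resp. $-f_\alpha g$), with no remainder that requires $\norm{f_\alpha}$ or $\norm{g_\alpha}$ in $L^2$. The plan is to use a smooth cutoff $\chi((\alpha-\beta)/t^{1-\mu})$ and the identity $\mathrm{p.v.}\!\int \chi\,\partial_\beta\big(\tfrac{1}{\alpha-\beta}\big)\,d\beta$-type cancellations. All derivatives should land either on the cutoff, giving a gain of $t^{\mu-1}$, or be traded via $L_0,\Omega_0$. The step to check carefully is that the $\alpha$-dependent weight $\beta^2-\alpha^2=(\beta-\alpha)(\beta+\alpha)$ cancels one power of the kernel singularity, leaving a bounded operator.
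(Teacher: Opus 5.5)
There is a genuine gap: the mechanism you propose for the main term is not the right one. Near the diagonal the kernel $\frac{f(\alpha)-f(\beta)}{(\alpha-\beta)^2}$ behaves like $\frac{f_\alpha(\alpha)}{\alpha-\beta}$. That is a principal-value singularity with no delta component, so there is no diagonal ``residue'' equal to $f_\alpha(\alpha)\bar g(\alpha)$.

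In fact $I(f,g)=J(f,\bar g)$ is literally the same bilinear operator applied to $\bar g$ instead of $g$. Any universal local term $c\,f_\alpha h$ produced by the kernel would carry the same $c$ for $h=\bar g$ and for $h=g$, whereas the lemma asserts $c=+1$ and $c=-1$. The sign is not a conjugation convention; it is decided by the frequency of $h$. By \eqref{transfer_partial_alpha}, on $\widetilde S(t)$ the functions $f$ and $g$ behave like $e^{-it^2/(4\alpha)}$ times slowly varying envelopes, i.e.\ they sit at frequency $\xi_0=t^2/(4\alpha^2)$, while $\bar g$ sits at $-\xi_0$. On exponentials one computes $J(e^{i\xi\cdot},e^{i\eta\cdot})=-i(|\xi+\eta|-|\eta|)e^{i(\xi+\eta)\alpha}$. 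This equals $+f_\alpha h$ for $\eta=-\xi_0$ and $-f_\alpha h$ for $\eta=\xi_0$. It is a nonlocal effect on the scale $|\alpha-\beta|\sim\xi_0^{-1}$, and your argument never quantifies it.

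The concrete steps also fail under the stated hypotheses. Substituting \eqref{transfer_partial_alpha} into $f(\alpha)-f(\beta)$ creates
$\frac{2}{it}\frac{\Omega_0f(\alpha)-\Omega_0f(\beta)}{(\alpha-\beta)^2}$ and $\frac{4}{it^2}\frac{\alpha L_0f(\alpha)-\beta L_0f(\beta)}{(\alpha-\beta)^2}$. These cannot be bounded without $\partial_\alpha\Omega_0f$ and $\partial_\alpha L_0f$, but only $L_0f,\Omega_0f\in L^2(\widetilde S(t))$ are assumed. Lemma~\ref{lem_(I-H)fg_main2} adds $\partial_\alpha\Omega_0f\in L^2$ precisely because it takes a route of this kind.

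The remaining piece $\frac{4}{it^2}(\alpha^2f_\alpha(\alpha)-\beta^2f_\beta(\beta))$ has the same size and frequency as $f(\alpha)-f(\beta)$, since $\beta^2/t^2\in[\frac14t^{-2\mu},4t^{2\mu}]$ is not small. Integrating by parts therefore just regenerates an integral of the original type. The identities of Lemma~\ref{lem_transfer_d} concern combinations containing $f_t$ and $g_t$, which do not occur in $I$ or $J$, so they do not help here.

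The missing idea, used in the paper, is to conjugate out the phase. Set $F=e^{it^2/(4\alpha)}f$ and $G=e^{it^2/(4\alpha)}g$. Then $\partial_\alpha G=e^{it^2/(4\alpha)}\bigl(\frac1\alpha L_0g-\frac{t}{2\alpha^2}\Omega_0g\bigr)$, with no derivatives on the vector fields, and similarly for $F$. Freezing $F\bar G$ at $\alpha$, the main term is the exact oscillatory integral
\[
\int\frac{e^{-\frac{it^2}{4\alpha}}-e^{-\frac{it^2}{4\beta}}}{(\alpha-\beta)^2}\,e^{\frac{it^2}{4\beta}}\,d\beta=-\frac{\pi t^2}{4\alpha^2},
\]
obtained by substituting $\eta=t^2/(4\beta)$ and using residues. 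This yields $\frac{it^2}{4\alpha^2}f\bar g=\bigl(f_\alpha-\frac1\alpha L_0f+\frac{t}{2\alpha^2}\Omega_0f\bigr)\bar g$. The terms containing $F(\alpha)-F(\beta)$ or $G(\beta)-G(\alpha)$ are then bounded by commutator (T(1)) estimates through $\norm{\partial_\alpha F}_{L^2(\widetilde S(t))}$ and $\norm{\partial_\alpha G}_{L^2(\widetilde S(t))}$. Your far-field bound is fine; Young's inequality even gives $t^{-(1-\mu)}$ there.
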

\begin{proof}
    We only consider $I$. Define $F=e^{\frac{it^2}{4\alpha}}f$ and $G=e^{\frac{it^2}{4\alpha}}g$. We split the integral into two parts with respect to t:
$$
    I(f,g)=\left(\frac{1}{\pi i}\int_{|\alpha-\beta|>t^\gamma}+\frac{1}{\pi i}\int_{|\alpha-\beta|\leq t^\gamma}\right)\frac{f(\alpha)-f(\beta)}{(\alpha-\beta)^2}\bar{g}(\beta)d\beta=I_1+I_2
    $$
where $\gamma=1-2\mu$. According to Young's inequality, the error term $I_1$ satisfies the following:
\begin{equation}\label{I_1}
    \norm{I_1}_{L^2}\lesssim\norm{\frac{1}{\alpha^2}}_{L^1(|\alpha|>t^\gamma)}\norm{f}_{L^{\infty}}\norm{g}_{L^2}\lesssim\frac{1}{t^\gamma}\norm{f}_{L^{\infty}}\norm{g}_{L^2}.
    \end{equation}
For $I_2$, direct calculation shows that
$$
    \begin{aligned}
    I_2=&\ \frac{1}{\pi i}\int_{|\alpha-\beta|\leq t^\gamma}\frac{e^{-\frac{it^2}{4\alpha}}F(\alpha)-e^{-\frac{it^2}{4\beta}}F(\beta)}{(\alpha-\beta)^2}e^{\frac{it^2}{4\beta}}\bar{G}(\beta)d\beta\\
    =&\ \frac{1}{\pi i}F\bar{G}\int\frac{e^{-\frac{it^2}{4\alpha}}-e^{-\frac{it^2}{4\beta}}}{(\alpha-\beta)^2}e^{\frac{it^2}{4\beta}}d\beta-\frac{1}{\pi i}F\bar{G}\int_{|\alpha-\beta|>t^\gamma}\frac{e^{-\frac{it^2}{4\alpha}}-e^{-\frac{it^2}{4\beta}}}{(\alpha-\beta)^2}e^{\frac{it^2}{4\beta}}d\beta\\
    &+\frac{1}{\pi i}\int_{|\alpha-\beta|\leq t^\gamma}\frac{e^{-\frac{it^2}{4\alpha}}F(\alpha)-e^{-\frac{it^2}{4\beta}}F(\beta)}{(\alpha-\beta)^2}e^{\frac{it^2}{4\beta}}[\bar{G}(\beta)-\bar{G}(\alpha)]d\beta\\
    &+\frac{1}{\pi i}\bar{G}(\alpha)\int_{|\alpha-\beta|\leq t^\gamma}\frac{F(\alpha)-F(\beta)}{(\alpha-\beta)^2}d\beta.\\
    =&\ I_m+I_{e_1}+I_{e_2}+I_{e_3}.
    \end{aligned}
    $$
Setting $\eta=\frac{t^2}{4\beta}$, $\xi=\frac{t^2}{4\alpha}$ and applying the residue theorem to the function $\phi(z)=\frac{e^{iz}-1}{z^2}$, we have
$$
    \int\frac{e^{-\frac{it^2}{4\alpha}}-e^{-\frac{it^2}{4\beta}}}{(\alpha-\beta)^2}e^{\frac{it^2}{4\beta}}d\beta=\frac{4\xi^2}{t^2}\int\frac{e^{-i(\xi-\eta)}-1}{(\xi-\eta)^2}d\eta=-\frac{4\xi^2}{t^2}\pi=-\frac{t^2}{4\alpha^2}\pi.
    $$
Meanwhile, 
$$
    \norm{\int_{|\alpha-\beta|>t^\gamma}\frac{e^{-\frac{it^2}{4\alpha}}-e^{-\frac{it^2}{4\beta}}}{(\alpha-\beta)^2}e^{\frac{it^2}{4\beta}}d\beta}_{L^\infty}\lesssim\frac{1}{t^\gamma},
    $$
which implies that
$$
    \norm{I_{e_1}}_{L^2}\lesssim\frac{1}{t^\gamma}\norm{f}_{L^\infty}\norm{g}_{L^2}.
    $$
Since $t^{1-\mu}\leq|\alpha|\leq t^{1+\mu}$, $|\beta-\alpha|\leq t^\gamma$ implies that $\beta\in \widetilde{S}$. Note that $\partial_\alpha G=e^\frac{it^2}{4\alpha}\left(\frac{1}{\alpha}L_0g-\frac{t}{2\alpha^2}\Omega_0g\right)$, we refer to T1 Theorem and deduce that
$$
    \begin{aligned}
    \norm{I_{e_2}}_{L^2(S(t))}
    \lesssim&\norm{f}_{L^\infty}\norm{\partial_\alpha G}_{L^2(\widetilde{S}(t))}\\
    \lesssim&\ \frac{1}{t^{1-2\mu}}\norm{f}_{L^\infty}(\norm{L_0g}_{L^2(\widetilde{S}(t))}+\norm{\Omega_0g}_{L^2(\widetilde{S}(t))}).
    \end{aligned}
    $$
For the same reason,
$$
    \norm{I_{e_3}}_{L^2(S(t))}\lesssim\frac{1}{t^{1-2\mu}}\norm{g}_{L^\infty}(\norm{L_0f}_{L^2(\widetilde{S}(t))}+\norm{\Omega_0f}_{L^2(\widetilde{S}(t))}).
    $$
We recall (\ref{transfer_partial_alpha}) and obtain that
$$
    I_m=\frac{it^2}{4\alpha^2}f\bar{g}=\left(\partial_\alpha f-\frac{1}{\alpha}L_0f+\frac{t}{2\alpha^2}\Omega_0f\right)\bar{g}.
    $$
Hence
$$
    \norm{I_m-f_\alpha \bar{g}}_{L^2(S(t))}\lesssim\frac{1}{t^{1-2\mu}}\norm{g}_{L^\infty}(\norm{L_0f}_{L^2(\widetilde{S}(t))}+\norm{\Omega_0f}_{L^2(\widetilde{S}(t))}),
    $$
which completes the proof.
\end{proof}
\begin{lemma}\label{lem_(I-H)fg_main2}
    Using the notation in Lemma \ref{lem_(I-H)fg_main} of $\mu,S(t),\tilde{S}(t),I(f,g),J(f,g)$ and the assumptions for $f,g$, if we assume $L_0D_tf(\cdot,t), L_0g(\cdot,t),\Omega_0D_tf(\cdot,t),\Omega_0g(\cdot,t),\partial_\alpha\Omega_0f(\cdot,t)\in L^2(\tilde{S}(t))$ instead, then for a general function $h$ we set
    $$
    W(h)=\norm{L_0h}_{L^2(\widetilde{S}(t))}+\norm{\Omega_0h}_{L^2(\widetilde{S}(t))},
    $$
    If in addition $D_tf\in L^2\cap L^\infty$, then
    \begin{equation}\label{I_f_tg}
    \begin{aligned}
    &\norm{I+\frac{t}{2\alpha}f_t\bar g}_{L^2(S(t))}+\norm{J-\frac{t}{2\alpha}f_tg}_{L^2(S(t))}\\
    \lesssim&\ \frac{1}{t}(\norm{D_tf}_{L^\infty}+\norm{g}_{L^\infty}+\norm{bf_\alpha}_{L^\infty})\left[\norm{D_tf}_{L^2}+t^{2\mu}\norm{g}_{L^2}+t^\mu (W(D_tf)+W(g))+\norm{\partial_\alpha\Omega_0f}_{L^2}\right]\\
    &+\frac{1}{t^{1-\mu}}\norm{f}_{L^\infty}\norm{g}_{L^2}+t^{\mu}\norm{b}_{H^1}\norm{f_\alpha}_{W^{1,\infty}}\norm{g}_{L^\infty}.
    \end{aligned}
    \end{equation}
\end{lemma}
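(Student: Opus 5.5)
Only $I$ is treated; $J$ is symmetric. The idea is to reduce to Lemma \ref{lem_(I-H)fg_main}, replacing the ingredient $W(f)$ (which needs $\Omega_0 f$, i.e.\ essentially $\Omega_0D_t\zeta$, which we do not control) by quantities involving $D_tf$. With $\gamma=1-2\mu$, split $I=I_1+I_2$ at $|\alpha-\beta|=t^\gamma$ as before. The far part satisfies $\|I_1\|_{L^2}\lesssim t^{-\gamma}\|f\|_{L^\infty}\|g\|_{L^2}$ by Young's inequality. This is slightly worse than the claimed $t^{-(1-\mu)}\|f\|_{L^\infty}\|g\|_{L^2}$. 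To fix this, cut the far part at $|\alpha-\beta|>t^{1-\mu}$ instead, and treat the intermediate annulus $t^\gamma<|\alpha-\beta|\le t^{1-\mu}$ together with the near part. In that region both $\alpha$ and $\beta$ are comparable to $t^{1\pm\mu}$, and the phase arguments below still apply.

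For the near part, use the identity \eqref{transfer_partial_t} on the difference $f(\alpha)-f(\beta)$ rather than \eqref{transfer_partial_alpha}:
\[
f=-\frac{2\alpha}{it}f_t+\frac{2}{it}\Omega_0 f .
\]
Alternatively, use the variant \eqref{transfer_d_var_1}, which produces the combination $\Omega_0f(\alpha)-\Omega_0f(\beta)-(\alpha-\beta)f_t(\alpha)$. Its difference quotient over $(\alpha-\beta)^2$ only involves $\partial_\alpha\Omega_0 f$, and the resulting singular integral is bounded in $L^2$ by $\|\partial_\alpha\Omega_0f\|_{L^2}$ times $\|g\|_{L^\infty}$ (T1 theorem / Hardy). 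This accounts for the term $\frac1t\|g\|_{L^\infty}\|\partial_\alpha\Omega_0f\|_{L^2}$.

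The remaining main part is of the form
\[
-\frac{2}{it}\cdot\frac{1}{\pi i}\int\frac{\alpha f_t(\alpha)-\beta f_t(\beta)}{(\alpha-\beta)^2}\,\bar g(\beta)\,d\beta .
\]
Write $f_t=D_tf-bf_\alpha$. The piece with $bf_\alpha$ is controlled crudely: after commuting, it gives $t^\mu\|b\|_{H^1}\|f_\alpha\|_{W^{1,\infty}}\|g\|_{L^\infty}$ and the terms $\frac1t\|bf_\alpha\|_{L^\infty}(\cdots)$. The piece with $D_tf$ is handled by rerunning the phase-conjugation argument of Lemma \ref{lem_(I-H)fg_main}, with $f$ replaced by $\alpha D_tf$ (or $D_tf$ with the $\alpha$ moved outside, up to a Hardy-type remainder $\frac{1}{t}\|D_tf\|_{L^\infty}\|g\|_{L^2}$-like error). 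That argument yields $\frac{it^2}{4\alpha^2}\,\alpha D_tf\,\bar g$ as the main term, plus errors of size $t^{-(1-2\mu)}$ times $W(D_tf)+W(g)$. Multiplying by $-\frac{2}{it}$ gives the main term $-\frac{t}{2\alpha}D_tf\,\bar g$. Replacing $D_tf$ by $f_t$ costs another $bf_\alpha$ error.

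The remaining task is to track powers of $t$. The prefactor $\frac1t$ combined with $|\alpha|\le t^{1+\mu}$ and the error weights $t^{2\mu}$, $t^\mu$ produces exactly the bracket in \eqref{I_f_tg}. Here $t^{2\mu}\|g\|_{L^2}$ comes from the residue-error integral over $|\alpha-\beta|>t^\gamma$. The factor $t^\mu W$ comes from $\partial_\alpha G$, using $|\alpha|\sim t^{1\pm\mu}$ together with the extra $|\alpha|/t\lesssim t^\mu$ weight from $\alpha D_tf$. The term $\|D_tf\|_{L^2}$ comes from commuting the weight $\alpha$ through the integral, since $(\alpha f(\alpha)-\beta f(\beta))=\alpha(f(\alpha)-f(\beta))+(\alpha-\beta)f(\beta)$ and the second part is a Hilbert transform.

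The main obstacle is this bookkeeping of the $\alpha$-weight. It must be placed so that no $\Omega_0 f$ or $L_0f$ appears undifferentiated; only $\partial_\alpha\Omega_0f$, $L_0D_tf$ and $\Omega_0D_tf$ are allowed. If it is placed wrongly, a factor $|\alpha|/t\sim t^{\mu}$ lands on an $L^2$ norm of $\Omega_0f$ and the estimate fails. I would first attempt the cleanest route, applying \eqref{transfer_d_var_1} with $\bar g$ untouched (since $\Omega_0g$ is not needed there), before resorting to the cruder decomposition.
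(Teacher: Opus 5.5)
Your skeleton matches the paper's proof. You cut the far part at scale $t^{1-\mu}$ and use the identity \eqref{transfer_partial_t}. You bound the $\Omega_0 f$-difference by $\frac1t\|\partial_\alpha\Omega_0 f\|_{L^2}\|g\|_{L^\infty}$, split $f_t=D_tf-bf_\alpha$, and phase-conjugate the $\alpha$-weighted function. Your crude treatment of the $bf_\alpha$ piece and the final swap of $D_tf\bar g$ for $f_t\bar g$ both close.

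The gap is the step you defer to bookkeeping. It is the only new point relative to Lemma~\ref{lem_(I-H)fg_main}, and the accounting you describe does not produce \eqref{I_f_tg}. Suppose you rerun Lemma~\ref{lem_(I-H)fg_main} with $f$ replaced by $\phi=\alpha D_tf$, and let $r$ be the near-region radius. The error term
\[
\frac{1}{\pi i}\int_{|\alpha-\beta|\le r}\frac{\phi(\alpha)-\phi(\beta)}{(\alpha-\beta)^2}\,e^{\frac{it^2}{4\beta}}\bigl(\bar G(\beta)-\bar G(\alpha)\bigr)\,d\beta,\qquad G=e^{\frac{it^2}{4\alpha}}g,
\]
has the weighted function as its $L^\infty$ factor and the unweighted $G$ as the differentiated one. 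You have $|\phi|\lesssim t^{1+\mu}\|D_tf\|_{L^\infty}$ and $\|\partial_\alpha G\|_{L^2(\widetilde S)}\lesssim t^{-(1-\mu)}\|L_0g\|_{L^2(\widetilde S)}+t^{-(1-2\mu)}\|\Omega_0g\|_{L^2(\widetilde S)}$, plus the prefactor $\frac1t$. This gives $t^{3\mu-1}\|D_tf\|_{L^\infty}W(g)$ instead of $t^{\mu-1}\|D_tf\|_{L^\infty}W(g)$. Pulling $\alpha$ outside and multiplying the conclusion of Lemma~\ref{lem_(I-H)fg_main} by $|\alpha|/t\le t^\mu$ loses the same factor $t^{2\mu}$ on both $W(g)$ and $W(D_tf)$. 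So the real danger is not an undifferentiated $\Omega_0 f$, as you anticipated. It is the weight sitting on the $L^\infty$ factor while the differentiated factor has no $\alpha$ to absorb the $\frac{1}{|\alpha|}$ and $\frac{t}{\alpha^2}$ that come from differentiating the phase.

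The paper closes this by placing the weight algebraically. Set $\hat F=e^{\frac{it^2}{4\alpha}}\alpha f_t$ and write $e^{-\frac{it^2}{4\alpha}}\hat F(\alpha)-e^{-\frac{it^2}{4\beta}}\hat F(\beta)=\hat F(\alpha)\bigl(e^{-\frac{it^2}{4\alpha}}-e^{-\frac{it^2}{4\beta}}\bigr)+e^{-\frac{it^2}{4\beta}}\bigl(\hat F(\alpha)-\hat F(\beta)\bigr)$.
\begin{itemize}
\item The second piece is a Calder\'on commutator bounded by $\|\partial_\alpha\hat F\|_{L^2(\widetilde S)}\|g\|_{L^\infty}$.
\item In the first piece, split $\alpha=(\alpha-\beta)+\beta$. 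The $(\alpha-\beta)$ part is a truncated Hilbert-type term bounded by $\|f_t\|_{L^\infty}\|g\|_{L^2}$.
\item The $\beta$ part moves the weight onto $g$: it is the phase integral against $\overline{\hat G}$ with $\hat G=e^{\frac{it^2}{4\alpha}}\alpha g$.
\item Subtracting $\overline{\hat G}(\alpha)$ yields the main term $\alpha f_t\bar g\cdot\bigl(-\frac{\pi t^2}{4\alpha^2}\bigr)$, which after the prefactor $\frac{2}{\pi t}$ is $-\frac{t}{2\alpha}f_t\bar g$. It also yields the residue truncation error, and a bilinear term whose $L^\infty$ factor is the pure phase (norm $1$), with $f_t(\alpha)$ outside.
\end{itemize}
Now every differentiated function carries its own $\alpha$, and
\[
\partial_\alpha\bigl(e^{\frac{it^2}{4\alpha}}\alpha h\bigr)=e^{\frac{it^2}{4\alpha}}\Bigl(h+L_0h-\frac{t}{2\alpha}\Omega_0h\Bigr)
\]
gives exactly $\|h\|_{L^2}+t^\mu W(h)$ for $h=g$ and $h=D_tf$. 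The $bf_\alpha$ part of $f_t$ adds $t^{1+\mu}\|b\|_{H^1}\|f_\alpha\|_{W^{1,\infty}}$. This is the bracket in \eqref{I_f_tg}.

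If you keep your decomposition instead, you need $|\alpha|\sim|\beta|$ on the near region, together with a dyadic localization of the commutator bounds, so that the weight cancels $\frac1{|\beta|}$ and $\frac{t}{\beta^2}$ pointwise. This requires cutting at $\frac12t^{1-\mu}$ rather than $t^{1-\mu}$. With the latter, $\beta$ can reach $0$ when $|\alpha|=t^{1-\mu}$, contrary to your claim that $\beta$ stays comparable to $t^{1\pm\mu}$.
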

\begin{proof}
    We retain the notation from Lemma \ref{lem_(I-H)fg_main} and set $\gamma={1-\mu}$. Now we use (\ref{transfer_partial_t}) to rewrite $I_2$:
    $$\begin{aligned}
    I_2=&-\frac{2}{\pi t}\int_{|\alpha-\beta|\leq t^{\gamma}}\frac{\Omega_0f(\alpha,t)-\Omega_0f(\beta,t)}{(\alpha-\beta)^2}\bar g(\beta,t)d\beta\\
    &+\frac{2}{\pi t}\int_{|\alpha-\beta|\leq t^{\gamma}}\frac{\alpha f_t(\alpha,t)-\beta f_t(\beta,t)}{(\alpha-\beta)^2}\bar g(\beta,t)d\beta.
    \end{aligned}$$
    For the first term,
    $$
    \left|\frac{2}{\pi t}\int_{|\alpha-\beta|\leq t^{\gamma}}\frac{\Omega_0f(\alpha,t)-\Omega_0f(\beta,t)}{(\alpha-\beta)^2}\bar g(\beta,t)d\beta\right|\lesssim\frac{1}{t}\norm{\partial_\alpha\Omega_0f}_{L^2}\norm{g}_{L^\infty}.
    $$
    Now set $\hat{F}=e^{\frac{it^2}{4\alpha}}\alpha f_t$ and $\hat{G}=e^{\frac{it^2}{4\alpha}}\alpha g$. Then
    $$\begin{aligned}
    \int_{|\alpha-\beta|\leq t^{\gamma}}\frac{\alpha f_t(\alpha,t)-\beta f_t(\beta,t)}{(\alpha-\beta)^2}\bar g(\beta,t)d\beta=&\int_{|\alpha-\beta|\leq t^{\gamma}}\frac{e^{-\frac{it^2}{4\alpha}}\hat{F}(\alpha,t)-e^{-\frac{it^2}{4\beta}}\hat{F}(\beta,t)}{(\alpha-\beta)^2}e^{\frac{it^2}{4\beta}}\bar G(\beta,t)d\beta\\
    =&\ \hat{F}\int_{|\alpha-\beta|\leq t^{\gamma}}\frac{e^{-\frac{it^2}{4\alpha}}-e^{-\frac{it^2}{4\beta}}}{(\alpha-\beta)^2}\bar g(\beta,t)d\beta\\
    &+\int_{|\alpha-\beta|\leq t^{\gamma}}\frac{\hat{F}(\alpha,t)-\hat{F}(\beta,t)}{(\alpha-\beta)^2}\bar G(\beta,t)d\beta\\
    =&\ e^{\frac{it^2}{4\alpha}}f_t\int_{|\alpha-\beta|\leq t^{\gamma}}\frac{e^{-\frac{it^2}{4\alpha}}-e^{-\frac{it^2}{4\beta}}}{\alpha-\beta}\bar g(\beta,t)d\beta\\
    &+e^{\frac{it^2}{4\alpha}}f_t\int_{|\alpha-\beta|\leq t^{\gamma}}\frac{e^{-\frac{it^2}{4\alpha}}-e^{-\frac{it^2}{4\beta}}}{(\alpha-\beta)^2}e^{\frac{it^2}{4\beta}}(\bar {\hat{G}}(\beta,t)-\bar {\hat{G}}(\alpha,t))d\beta\\
    &+\alpha f_t\bar g\int_{|\alpha-\beta|\leq t^{\gamma}}\frac{e^{-\frac{it^2}{4\alpha}}-e^{-\frac{it^2}{4\beta}}}{(\alpha-\beta)^2}e^{\frac{it^2}{4\beta}}d\beta\\
    &+\int_{|\alpha-\beta|\leq t^{\gamma}}\frac{\hat{F}(\alpha,t)-\hat{F}(\beta,t)}{(\alpha-\beta)^2}\bar G(\beta,t)d\beta.\\
    \end{aligned}
    $$
    A brief calculation shows that
    $$\begin{aligned}
    \partial_\alpha\hat{F}=&\ \partial_\alpha \left(e^{\frac{it^2}{4\alpha}}\alpha D_tf\right)-\partial_\alpha\left(e^{\frac{it^2}{4\alpha}}\alpha bf_\alpha\right)\\
    =&\  e^{\frac{it^2}{4\alpha}}f_t+e^{\frac{it^2}{4\alpha}}\left(-\frac{t}{2\alpha}\Omega_0D_tf+L_0D_tf\right)-\alpha\partial_\alpha\left(e^{\frac{it^2}{4\alpha}}bf_\alpha\right)
    \end{aligned}
    $$
    which indicates that
    $$
    \norm{\partial_\alpha\hat{F}}_{L^2(\tilde{S}(t))}\lesssim\norm{D_tf}_{L^2}+t^{\mu}(\norm{\Omega_0D_tf}_{L^2(\tilde{S}(t))}+\norm{L_0D_tf}_{L^2(\tilde{S}(t))})+t^{1+\mu}(\norm{b}_{H^1}\norm{f_\alpha}_{W^{1,\infty}}).
    $$
    Similarly,
    $$
    \norm{\partial_\alpha\hat{G}}_{L^2(\tilde{S}(t))}\lesssim\norm{g}_{L^2}+t^{\mu}(\norm{\Omega_0g}_{L^2(\tilde{S}(t))}+\norm{L_0g}_{L^2(\tilde{S}(t))}).
    $$
    Imitating the arguments in Lemma \ref{lem_(I-H)fg_main}, we derive that
    $$\begin{aligned}
    &\norm{I_2+\frac{t}{2\alpha}f_t\bar g}_{L^2(S(t))}\\
    \lesssim&\ \frac{1}{t}(\norm{D_tf}_{L^\infty}+\norm{g}_{L^\infty})\left[\norm{D_tf}_{L^2}+t^{2\mu}\norm{g}_{L^2}+t^\mu (W(D_tf)+W(g))+\norm{\partial_\alpha\Omega_0f}_{L^2}\right]\\
    &+\frac{1}{t}\norm{b}_{L^\infty}\norm{f_\alpha}_{L^\infty}[t^{2\mu}\norm{g}_{L^2}+t^\mu (W(D_tf)+W(g))]+t^{\mu}\norm{b}_{H^1}\norm{f_\alpha}_{W^{1,\infty}}\norm{g}_{L^\infty}.
    \end{aligned}
    $$
    Combining (\ref{I_1}), we prove (\ref{I_f_tg}).
\end{proof}
\begin{rem}\label{rem_(I-H)fg_main}
    We check the proof and reveal an essential fact that Lemma \ref{lem_(I-H)fg_main} and Lemma \ref{lem_(I-H)fg_main2} remain valid if $I$ is replaced by $I_2$ and $f,g\in L^2_{loc}\cap L^{\infty}_{loc}$ instead of satisfying the condition given in these lemma. The same idea holds for the following sequence of theorems.
\end{rem}
\begin{lemma}\label{integral_three}
    We adopt the notation in Lemma \ref{lem_(I-H)fg_main}. For $f_1\in W^{1,\infty}$ and $f_2,f_3\in L^2\cap L^\infty$, we consider the integral$$
    I(f_1,f_2,f_3)=\frac{1}{\pi i}\int\frac{f_1(\alpha)-f_1(\beta)}{(\alpha-\beta)^2}f_2(\beta)\bar{f_3}(\beta)d\beta
    $$
and
$$
    J(f_1,f_2,f_3)=\frac{1}{\pi i}\int\frac{f_1(\alpha)-f_1(\beta)}{(\alpha-\beta)^2}f_2(\beta)f_3(\beta)d\beta.
    $$
If $L_0f_i,\Omega_0f_i\in L^2(\widetilde{S}(t))$ for $i=1,2,3$, then
$$
\begin{aligned}
    \norm{I+\partial_\alpha f_1f_2\bar{f_3}}_{L^2(S(t))}&+\norm{J+\partial_\alpha f_1f_2f_3}_{L^2(S(t))}\\
    \lesssim&\ \frac{1}{t^{1-2\mu}}\left(\sum_{i=1}^3\norm{f_i}_{L^\infty}\right)^2\sum_{i=1}^3(\norm{f_i}_{L^2}+\norm{L_0f_i}_{L^2(\widetilde{S}(t))}+\norm{\Omega_0f_i}_{L^2(\widetilde{S}(t))}).
    \end{aligned}
    $$
    If $D_tf_1\in L^2\cap L^\infty$ and $L_0f_i,\Omega_0f_i\in L^2(\widetilde{S}(t))$ for $i=2,3$, then
    \begin{equation}\label{I_f_1_tf_2f_3}
    \begin{aligned}
    &\norm{I-\frac{t}{2\alpha}\partial_tf_1f_2\bar f_3}_{L^2(S(t))}+\norm{J-\frac{t}{2\alpha}\partial_tf_1f_2 f_3}_{L^2(S(t))}\\
    \lesssim&\ \frac{1}{t}(\norm{D_tf_1}_{L^\infty}+\sum_{j=2}^3\norm{f_j}_{L^\infty})^2\left[\norm{D_tf_1}_{L^2}+t^{2\mu}\sum_{j=2}^3\norm{f_j}_{L^2}+t^\mu \left(W(D_tf)+\sum_{j=2}^3W(f_j)\right)\right]\\
    &+\frac{1}{t}\norm{b}_{L^\infty}\norm{\partial_\alpha f_1}_{L^\infty}\sum_{j=2}^3\norm{f_j}_{L^\infty}\left(t^{2\mu}\sum_{j=2}^3\norm{f_j}_{L^2}+t^\mu\sum_{j=2}^3W(f_j)\right)+\frac{1}{t^{1-\mu}}\norm{f_1}_{L^\infty}\norm{f_2f_3}_{L^2}\\
    &+t^{\mu}\norm{b}_{H^1}\norm{\partial_\alpha f_1}_{W^{1,\infty}}\norm{f_2f_3}_{L^\infty}+\frac{1}{t}\norm{\partial_\alpha\Omega_0f_1}_{L^2}\norm{f_2f_3}_{L^{\infty}}.
    \end{aligned}
    \end{equation}
\end{lemma}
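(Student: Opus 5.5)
The plan is to reduce Lemma \ref{integral_three} to Lemmas \ref{lem_(I-H)fg_main} and \ref{lem_(I-H)fg_main2} by treating the product $f_2\bar f_3$ (resp. $f_2f_3$) as a single function $g$. For the first estimate, set $g:=f_2f_3$ in the case of $J$, and set $\bar g:=f_2\bar f_3$, i.e. $g=\bar f_2 f_3$, in the case of $I$. Then $I(f_1,f_2,f_3)=I(f_1,g)$ and $J(f_1,f_2,f_3)=J(f_1,g)$ in the notation of Lemma \ref{lem_(I-H)fg_main}. The only thing to check is that $g$ satisfies the hypotheses with the claimed norms. First, $\norm{g}_{L^\infty}\le\norm{f_2}_{L^\infty}\norm{f_3}_{L^\infty}$. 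Second, $\norm{g}_{L^2}\le\norm{f_2}_{L^\infty}\norm{f_3}_{L^2}$.

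The vector-field norms follow from the Leibniz rule. Since $L_0$ is a derivation, $L_0(\bar f_2f_3)=\overline{L_0f_2}f_3+\bar f_2L_0f_3$. The operator $\Omega_0=\alpha\partial_t+\frac12 ti$ is not a derivation, but $\Omega_0-\frac12ti$ is. Moreover, conjugation sends $\Omega_0$ to $\alpha\partial_t-\frac12 ti$. Hence
\[
\Omega_0(\bar f_2 f_3)=\overline{\Omega_0 f_2}\,f_3+\bar f_2\,\Omega_0 f_3 ,
\]
and the $\frac12 ti$ contributions cancel exactly. This cancellation is why the conjugate pairing works for $I$. For $J$ with $f_2f_3$ one instead gets $\Omega_0(f_2f_3)=\Omega_0f_2\,f_3+f_2\,\Omega_0f_3-\frac12 ti f_2f_3$.

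The extra term $t f_2f_3$ in the $J$ case is the point I expect to be the main obstacle. I would first try to absorb it through the phase structure. In the proof of Lemma \ref{lem_(I-H)fg_main} one works with $G=e^{\frac{it^2}{4\alpha}}g$, and for $J$ the relevant combination is $e^{\frac{it^2}{4\alpha}}f_2\cdot e^{\frac{it^2}{4\alpha}}f_3$ against $e^{-\frac{it^2}{4\beta}}$. I would redo that proof with $G$ replaced by $G_2G_3e^{-\frac{it^2}{4\beta}}$. The derivative $\partial_\alpha(G_2G_3)$ is controlled by $W(f_2),W(f_3)$ times $L^\infty$ norms, at cost $t^{-1+2\mu}$ on $S(t)$.

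Once this is done, Lemma \ref{lem_(I-H)fg_main} gives $\norm{I-\partial_\alpha f_1\,\bar g}_{L^2(S(t))}$ bounded by $t^{-(1-2\mu)}(\norm{f_1}_{L^\infty}+\norm{g}_{L^\infty})(\norm{g}_{L^2}+W(f_1)+W(g))$. Expanding $W(g)\lesssim\sum\norm{f_i}_{L^\infty}W(f_j)$ yields at most quadratic $L^\infty$ factors times one $L^2$-type factor, which is the claimed first bound. The sign is understood as in the statement, following the same residue computation with the conjugate phase.

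For the second estimate, (\ref{I_f_1_tf_2f_3}), I would apply Lemma \ref{lem_(I-H)fg_main2} with $f=f_1$ and the same $g$. That lemma yields $\mp\frac{t}{2\alpha}\partial_tf_1\,\bar g$ as the main term. Substituting the product bounds above term by term gives:
\begin{itemize}
\item $\norm{D_tf_1}_{L^2}$;
\item $t^{2\mu}\norm{g}_{L^2}\le t^{2\mu}\norm{f_2}_{L^\infty}\norm{f_3}_{L^2}$;
\item $t^\mu W(g)$, handled by Leibniz as above;
\item $\norm{\partial_\alpha\Omega_0f_1}_{L^2}\norm{g}_{L^\infty}$;
\item the $b$-terms, with $\norm{g}_{L^\infty}=\norm{f_2f_3}_{L^\infty}$;
\item $t^{-(1-\mu)}\norm{f_1}_{L^\infty}\norm{f_2f_3}_{L^2}$.
\end{itemize}
These reproduce exactly the structure of (\ref{I_f_1_tf_2f_3}), with the squared sum of $L^\infty$ norms coming from $\norm{g}_{L^\infty}$ and the prefactor.
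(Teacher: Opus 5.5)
There is a genuine gap, and it sits in the case you regarded as unproblematic. Your identity $\Omega_0(\bar f_2f_3)=\overline{\Omega_0 f_2}\,f_3+\bar f_2\,\Omega_0 f_3$ is false. Since $\overline{\Omega_0 f_2}=\alpha\partial_t\bar f_2-\tfrac12 ti\,\bar f_2$, the two $\pm\tfrac12 ti$ terms cancel \emph{inside} your right-hand side, which therefore equals $\alpha\partial_t(\bar f_2f_3)$. But $\Omega_0$ applied to the product keeps its own $\tfrac12 ti$:
\[
\Omega_0(\bar f_2f_3)=\overline{\Omega_0 f_2}\,f_3+\bar f_2\,\Omega_0 f_3+\tfrac12 ti\,\bar f_2f_3 .
\]
The $L_0$ Leibniz rule is fine; only $\Omega_0$ fails. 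So for $g=\bar f_2f_3$, exactly as for $g=f_2f_3$, you have $W(g)\gtrsim t\norm{g}_{L^2(\widetilde S(t))}$. Lemma~\ref{lem_(I-H)fg_main} then returns an error of size $t^{2\mu}\norm{f_1}_{L^\infty}\norm{g}_{L^2}$, and Lemma~\ref{lem_(I-H)fg_main2} returns $t^{\mu}\norm{g}_{L^2}$ times $L^\infty$ norms. Neither decays, so nothing is proved. The underlying reason is that $\Omega_0$, $L_0$ and the normalization $G=e^{\frac{it^2}{4\alpha}}g$ are adapted to functions carrying the phase $e^{-\frac{it^2}{4\alpha}}$. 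By contrast, $f_2\bar f_3$ carries no phase and $f_2f_3$ carries $e^{-\frac{it^2}{2\alpha}}$.

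The signs show this cannot be fixed by bookkeeping. If your reduction were valid for $I$, Lemma~\ref{lem_(I-H)fg_main} would give $I\approx+\partial_\alpha f_1\,f_2\bar f_3$. Lemma~\ref{lem_(I-H)fg_main2} would give $I\approx-\frac{t}{2\alpha}\partial_tf_1\,f_2\bar f_3$. Both are the opposite of what the statement (correctly) asserts. Your remark that ``the sign is understood as in the statement'' therefore hides a real contradiction, not a convention.

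What is needed, and what the paper does, is to normalize each factor separately, $F_i=e^{\frac{it^2}{4\alpha}}f_i$. One splits $|\alpha-\beta|\gtrless t^{\gamma}$ and, in the near region, freezes $F_1,F_2,F_3$ at $\alpha$. The freezing errors involve $F_j(\beta)-F_j(\alpha)$. They are controlled through $\partial_\alpha F_j=e^{\frac{it^2}{4\alpha}}\bigl(\frac1\alpha L_0f_j-\frac{t}{2\alpha^2}\Omega_0f_j\bigr)$, at cost $t^{-1+2\mu}W(f_j)$. The main term is then an explicit oscillatory integral with the \emph{true} phase of the weight. Write $\xi=\frac{t^2}{4\alpha}$, $\eta=\frac{t^2}{4\beta}$ and use $\int\frac{e^{-iax}-1}{x^2}\,dx=-\pi|a|$ (principal value). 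One finds
\[
\int\frac{e^{-i\xi}-e^{-i\eta}}{(\alpha-\beta)^2}\,d\beta=\frac{\pi t^2}{4\alpha^2}\,e^{-i\xi},\qquad \int\frac{e^{-i\xi}-e^{-i\eta}}{(\alpha-\beta)^2}\,e^{-2i\eta}\,d\beta=\frac{\pi t^2}{4\alpha^2}\,e^{-3i\xi}.
\]
The first applies to $I$, where $f_2\bar f_3=F_2\bar F_3$; the second applies to $J$, where $f_2f_3=e^{-2i\eta}F_2F_3$. They give main terms $-\frac{it^2}{4\alpha^2}f_1f_2\bar f_3$ and $-\frac{it^2}{4\alpha^2}f_1f_2f_3$. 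By \eqref{transfer_partial_alpha}, these equal $-\partial_\alpha f_1$ times the product, up to $L_0,\Omega_0$ errors. The weight $e^{i\eta}$ in the two-function $I(f,g)$ instead gives $-\frac{\pi t^2}{4\alpha^2}$, which explains the sign flip.

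Your idea for $J$ of working with $G_2G_3e^{-\frac{it^2}{4\beta}}$ points the right way. However, it requires recomputing this main integral rather than quoting Lemma~\ref{lem_(I-H)fg_main}, and $I$ needs the same per-factor treatment. The second estimate then follows by running the same argument through \eqref{transfer_partial_t}, as in Lemma~\ref{lem_(I-H)fg_main2}.
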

\begin{proof}
    Define $F_1=e^{\frac{it^2}{4\alpha}}f_1$, $F_2=e^{\frac{it^2}{4\alpha}}f_2$ and $F_3=e^{\frac{it^2}{4\alpha}}f_3$. As in Lemma \ref{lem_(I-H)fg_main}, we split the integral
$$
    I(f_1,f_2,f_3)=\left(\frac{1}{\pi i}\int_{|\alpha-\beta|>t^\gamma}+\frac{1}{\pi i}\int_{|\alpha-\beta|\leq t^\gamma}\right)\frac{f_1(\alpha)-f_1(\beta)}{(\alpha-\beta)^2}f_2(\beta)\bar{f_3}(\beta)d\beta=I_1+I_2,
    $$
where $\gamma=1-2\mu$. $I_2$ satisfies
    
    $$
    \begin{aligned}
    I_2=&\ \frac{1}{\pi i}\int_{|\alpha-\beta|\leq t^\gamma}\frac{e^{-\frac{it^2}{4\alpha}}F_1(\alpha)-e^{-\frac{it^2}{4\beta}}F_1(\beta)}{(\alpha-\beta)^2}F_2(\beta)\bar{F_3}(\beta)d\beta\\
    =&\ \frac{1}{\pi i}F_1F_2\bar{F_3}\int\frac{e^{-\frac{it^2}{4\alpha}}-e^{-\frac{it^2}{4\beta}}}{(\alpha-\beta)^2}d\beta-\frac{1}{\pi i}F_1F_2\bar{F_3}\int_{|\alpha-\beta|>t^\gamma}\frac{e^{-\frac{it^2}{4\alpha}}-e^{-\frac{it^2}{4\beta}}}{(\alpha-\beta)^2}d\beta\\
    &+\frac{1}{\pi i}\int_{|\alpha-\beta|\leq t^\gamma}\frac{e^{-\frac{it^2}{4\alpha}}F_1(\alpha)-e^{-\frac{it^2}{4\beta}}F_1(\beta)}{(\alpha-\beta)^2}F_2(\beta)[\bar{F_3}(\beta)-\bar{F_3}(\alpha)]d\beta\\
    &+\frac{1}{\pi i}\bar{F_3}(\alpha)\int_{|\alpha-\beta|\leq t^\gamma}\frac{e^{-\frac{it^2}{4\alpha}}F_1(\alpha)-e^{-\frac{it^2}{4\beta}}F_1(\beta)}{(\alpha-\beta)^2}[F_2(\beta)-F_2(\alpha)]d\beta\\
    &+\frac{1}{\pi i}F_2(\alpha)\bar{F_3}(\alpha)\int_{|\alpha-\beta|\leq t^\gamma}\frac{F_1(\alpha)-F_1(\beta)}{(\alpha-\beta)^2}e^{-\frac{it^2}{4\beta}}d\beta.\\
    =&\ I_m+I_{e_1}+I_{e_2}+I_{e_3}+I_{e_4}.
    \end{aligned}
    $$
Analogous to the proof of Lemma \ref{lem_(I-H)fg_main}, the following holds:
$$
    \norm{I_m+\partial_\alpha f_1 f_2\bar{f_3}}_{L^2(S(t))}\lesssim\frac{1}{t^{1-2\mu}}\norm{f_2}_{L^\infty}\norm{f_3}_{L^\infty}(\norm{L_0f_1}_{L^2(\widetilde{S}(t))}+\norm{\Omega_0f_1}_{L^2(\widetilde{S}(t))})
    $$
and
$$
    \sum_{i=1}^4\norm{I_{e_i}}_{L^2(S(t))}\lesssim\frac{1}{t^{1-2\mu}}\left(\sum_{i=1}^3\norm{f_i}_{L^\infty}\right)^2\sum_{i=1}^3(\norm{f_i}_{L^2}+\norm{L_0f_i}_{L^2(\widetilde{S}(t))}+\norm{\Omega_0f_i}_{L^2(\widetilde{S}(t))}).
    $$
$J$ can be handled by similar arguments. (\ref{I_f_1_tf_2f_3}) can be obtained by following the analogous arguments in Lemma \ref{lem_(I-H)fg_main2}.
\end{proof}

\begin{cor}\label{cor_integral_three}
    Suppose $f_1,f_2,f_3$ satisfy the condition in Lemma \ref{integral_three}. Consider the integral
$$
    I_1(f_1,f_2,f_3)=\frac{1}{\pi i}\int\frac{(f_1(\alpha)-f_1(\beta))(f_2(\alpha)-f_2(\beta))}{(\alpha-\beta)^2}\bar{f_3}(\beta)d\beta,
    $$
$$
    I_2(f_1,f_2,f_3)=\frac{1}{\pi i}\int\frac{(f_1(\alpha)-f_1(\beta))(\bar{f_2}(\alpha)-\bar{f_2}(\beta))}{(\alpha-\beta)^2}f_3(\beta)d\beta
    $$
    and
$$
    I_3(f_1,f_2,f_3)=\frac{1}{\pi i}\int\frac{(f_1(\alpha)-f_1(\beta))(f_2(\alpha)-f_2(\beta))}{(\alpha-\beta)^2}f_3(\beta)d\beta.
    $$
Then we have
\begin{equation}
\begin{aligned}
        \norm{I_1-2\partial_\alpha f_1f_2\bar{f_3}}_{L^2(S(t))}+&\norm{I_2}_{L^2(S(t))}+\norm{I_3}_{L^2(S(t))}\\
        \lesssim&\ \frac{1}{t^{1-2\mu}}\left(\sum_{i=1}^3\norm{f_i}_{L^\infty}\right)^2\sum_{i=1}^3(\norm{f_i}_{L^2}+\norm{L_0f_i}_{L^2(\widetilde{S}(t))}+\norm{\Omega_0f_i}_{L^2(\widetilde{S}(t))}).
    \end{aligned}
    \end{equation}
    Moreover, if $D_tf_1\in L^2\cap L^\infty$, then
    $$
    \begin{aligned}
        &\norm{I_1+\frac{t}{\alpha}\partial_t f_1f_2\bar{f_3}}_{L^2(S(t))}+\norm{I_2}_{L^2(S(t))}+\norm{I_3}_{L^2(S(t))}\\
        \lesssim&\ \frac{1}{t}(\norm{D_tf_1}_{L^\infty}+\sum_{j=2}^3\norm{f_j}_{L^\infty})^2\left[\norm{D_tf_1}_{L^2}+t^{2\mu}\sum_{j=2}^3\norm{f_j}_{L^2}+t^\mu \left(W(D_tf)+\sum_{j=2}^3W(f_j)\right)\right]\\
    &+\frac{1}{t}\norm{b}_{L^\infty}\norm{\partial_\alpha f_1}_{L^\infty}\sum_{j=2}^3\norm{f_j}_{L^\infty}\left(t^{2\mu}\sum_{j=2}^3\norm{f_j}_{L^2}+t^\mu\sum_{j=2}^3W(f_j)\right)+\frac{1}{t^{1-\mu}}\norm{f_1}_{L^\infty}\norm{f_2f_3}_{L^2}\\
    &+t^{\mu}\norm{b}_{H^1}\norm{\partial_\alpha f_1}_{W^{1,\infty}}\norm{f_2f_3}_{L^\infty}+\frac{1}{t}\norm{\partial_\alpha\Omega_0f_1}_{L^2}\norm{f_2f_3}_{L^{\infty}}.
    \end{aligned}
    $$
\end{cor}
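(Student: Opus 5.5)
The plan is to reduce Corollary~\ref{cor_integral_three} to Lemma~\ref{integral_three} by expanding the product of the two difference quotients. For $I_1$, write
\[
(f_1(\alpha)-f_1(\beta))(f_2(\alpha)-f_2(\beta))\bar f_3(\beta)
=(f_1(\alpha)-f_1(\beta))f_2(\alpha)\bar f_3(\beta)-(f_1(\alpha)-f_1(\beta))f_2(\beta)\bar f_3(\beta).
\]
The second piece is exactly $-I(f_1,f_2,f_3)$ from Lemma~\ref{integral_three}. The first piece is $f_2(\alpha)\,I(f_1,\bar f_3)$ in the notation of Lemma~\ref{lem_(I-H)fg_main}, taken with $g=f_3$. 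Lemma~\ref{lem_(I-H)fg_main} gives $I(f_1,f_3)\approx\partial_\alpha f_1\bar f_3$ on $S(t)$. Lemma~\ref{integral_three} gives $I(f_1,f_2,f_3)\approx-\partial_\alpha f_1f_2\bar f_3$. Summing the two contributions yields the main term $2\partial_\alpha f_1f_2\bar f_3$, and the error from the first piece picks up an extra factor $\|f_2\|_{L^\infty}$. This matches the quadratic $L^\infty$ prefactor in the claimed bound.

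For $I_3$, use the same splitting with $J$ in place of $I$. Lemma~\ref{lem_(I-H)fg_main} gives $f_2(\alpha)J(f_1,f_3)\approx -f_2\partial_\alpha f_1 f_3$. Lemma~\ref{integral_three} gives $-J(f_1,f_2,f_3)\approx+\partial_\alpha f_1f_2f_3$. The main terms cancel, so $I_3$ is pure error. For $I_2$, expand in the same way: the pieces are $\bar f_2(\alpha)J(f_1,f_3)$ and $-\int\frac{f_1(\alpha)-f_1(\beta)}{(\alpha-\beta)^2}\bar f_2(\beta)f_3(\beta)\,d\beta$. The latter is $I(f_1,f_3,f_2)$ from Lemma~\ref{integral_three}, with the conjugate placed on $f_2$. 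Its main term $-\partial_\alpha f_1\bar f_2f_3$ cancels against the main term $-\bar f_2\partial_\alpha f_1 f_3$ coming from $J(f_1,f_3)$. The relative sign must be checked here: it comes from the difference between $I$ and $J$ in Lemma~\ref{integral_three}, and I would verify it directly from the stationary-phase computation $I_m=\frac{it^2}{4\alpha^2}f\bar g$ versus its $J$ analogue. The phases $e^{\pm it^2/4\alpha}$ combine differently for $g$ and $\bar g$, which is the mechanism that kills the main terms of $I_2$ and $I_3$.

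For the second estimate, repeat the same decomposition using instead the versions \eqref{I_f_tg} and \eqref{I_f_1_tf_2f_3}. The main term of $f_2(\alpha)I(f_1,f_3)$ is $-\frac{t}{2\alpha}\partial_tf_1f_2\bar f_3$. The main term of $-I(f_1,f_2,f_3)$ is $-\frac{t}{2\alpha}\partial_tf_1f_2\bar f_3$. Together they give $-\frac t\alpha\partial_tf_1f_2\bar f_3$, and in $I_2$ and $I_3$ the main terms again cancel. The error terms are collected by multiplying the bounds of Lemma~\ref{lem_(I-H)fg_main2} by $\|f_2\|_{L^\infty}$. Each then fits into the stated right-hand side after noting $\|f_2f_3\|_{L^2}\le\|f_2\|_{L^\infty}\|f_3\|_{L^2}$ and similar bounds.

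The main obstacle I expect is the sign bookkeeping for the conjugated and unconjugated main terms, which has to confirm the cancellation in $I_2$ and $I_3$. A related point is that Lemma~\ref{lem_(I-H)fg_main} controls the error only on $S(t)$. Multiplying by $f_2(\alpha)$ is harmless there, since that factor is evaluated at the outer variable.
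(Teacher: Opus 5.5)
Your proposal follows the paper's proof. You split $f_2(\alpha)-f_2(\beta)$ into the $f_2(\alpha)$ and $f_2(\beta)$ pieces. You treat the first piece as $f_2(\alpha)$ times Lemma~\ref{lem_(I-H)fg_main} (resp.\ \eqref{I_f_tg}), and the second with Lemma~\ref{integral_three}.

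Your main-term bookkeeping is correct, and the sign worry you raise resolves as you hoped. $I_1$ collects $\partial_\alpha f_1f_2\bar f_3+\partial_\alpha f_1f_2\bar f_3$, while for $I_2$ and $I_3$ the two main terms cancel. With the substitution $\eta=t^2/4\beta$ of Lemma~\ref{lem_(I-H)fg_main}, the model phase integrals are $\int x^{-2}(2e^{ix}-e^{2ix}-1)\,dx=0$ for $I_2$ and $\int x^{-2}(e^{ix}-2e^{2ix}+e^{3ix})\,dx=0$ for $I_3$, against $-2\pi$ for $I_1$. The first estimate then follows as you say, since multiplying the error of Lemma~\ref{lem_(I-H)fg_main} by $\|f_2\|_{L^\infty}$ lands in the stated right-hand side.

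The one step that does not hold as written is the final absorption in the second estimate. The piece $f_2(\alpha)I(f_1,f_3)$ produces errors such as:
\begin{itemize}
\item $t^{-(1-\mu)}\|f_1\|_{L^\infty}\|f_2\|_{L^\infty}\|f_3\|_{L^2}$ from the far field;
\item $t^{\mu}\|b\|_{H^1}\|\partial_\alpha f_1\|_{W^{1,\infty}}\|f_2\|_{L^\infty}\|f_3\|_{L^\infty}$;
\item $t^{-1}\|\partial_\alpha\Omega_0 f_1\|_{L^2}\|f_2\|_{L^\infty}\|f_3\|_{L^\infty}$.
\end{itemize}
In the corresponding slots the stated right-hand side contains only $\|f_2f_3\|_{L^2}$ and $\|f_2f_3\|_{L^\infty}$. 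The inequality $\|f_2f_3\|_{L^2}\le\|f_2\|_{L^\infty}\|f_3\|_{L^2}$ you invoke runs in the wrong direction, so it cannot absorb these terms.

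What your splitting actually proves is the second estimate with $\|f_2f_3\|_{L^2}$ and $\|f_2f_3\|_{L^\infty}$ replaced by $\|f_2\|_{L^\infty}\|f_3\|_{L^2}$ and $\|f_2\|_{L^\infty}\|f_3\|_{L^\infty}$. The paper's one-line appeal to the two lemmas proves exactly the same weaker form. This does not affect the strategy, only the precise form of the bound, and you should state it in that form.
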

\begin{proof}
    The integral can be written as
$$
        I_1(f_1,f_2,f_3)=\frac{1}{\pi i}f_2(\alpha)\int\frac{f_1(\alpha)-f_1(\beta)}{(\alpha-\beta)^2}\bar{f_3}(\beta)d\beta-\frac{1}{\pi i}\int\frac{f_1(\alpha)-f_1(\beta)}{(\alpha-\beta)^2}f_2(\beta)\bar{f_3}(\beta)d\beta,
    $$
$$
        I_2(f_1,f_2,f_3)=\frac{1}{\pi i}\bar{f_2}(\alpha)\int\frac{f_1(\alpha)-f_1(\beta)}{(\alpha-\beta)^2}f_3(\beta)d\beta-\frac{1}{\pi i}\int\frac{f_1(\alpha)-f_1(\beta)}{(\alpha-\beta)^2}\bar{f_2}(\beta)f_3(\beta)d\beta
    $$
and
$$
        I_3(f_1,f_2,f_3)=\frac{1}{\pi i}f_2(\alpha)\int\frac{f_1(\alpha)-f_1(\beta)}{(\alpha-\beta)^2}f_3(\beta)d\beta-\frac{1}{\pi i}\int\frac{f_1(\alpha)-f_1(\beta)}{(\alpha-\beta)^2}f_2(\beta)f_3(\beta)d\beta.
    $$
Then we appeal to Lemma \ref{lem_(I-H)fg_main} and Lemma \ref{integral_three}.
\end{proof}
\begin{rem}
    The techniques employed in the preceding theorems can be extended to integrals of an analogous form. For instance, we consider the integral:
$$
    I(f_1,f_2,f_3)=\frac{1}{\pi i}\int\frac{(f_1(\alpha)-f_1(\beta))(f_2(\alpha)-f_2(\beta))}{(\alpha-\beta)^3}f_3(\beta)d\beta.
    $$
Similar calculation shows that
$$
    \norm{I+\partial_\alpha f_1\partial_\alpha f_2f_3}_{L^2}\lesssim\frac{1}{t^{1-2\mu}}\left(\sum_{i=1}^3\norm{f_i}_{L^\infty}\right)^2\sum_{i=1}^3(\norm{f_i}_{L^2}+\norm{L_0f_i}_{L^2(\widetilde{S}(t))}+\norm{\Omega_0f_i}_{L^2(\widetilde{S}(t))}).
    $$
\end{rem}
\begin{rem}\label{rem_S(t)}
    $S(t)$ can be replaced by $\hat{S}(t)=\{\alpha|C^{-1}t^{1-\mu}\leq|\alpha|\leq Ct^{1+\mu}\}$ for some constant $C>0$ in the above series of theorems, with $\tilde{S}(t)$ substituted by $\{\alpha|(2C)^{-1}t^{1-\mu}\leq|\alpha|\leq 2Ct^{1+\mu}\}$. The result won't be influenced if we split the integral carefully.
\end{rem}
\subsection{Bounds for $b$ and $A-1$}\label{subs_b}
Based on the bootstrap assumptions (B1)-(B6), we estimate some quantities with respect to $b$, $A-1$, $\frac{a_t}{a}\circ \kappa^{-1}$, etc. The expression can be found in Lemma \ref{lem_b_exp}.
\begin{lemma}\label{lem_b}
    We have
    \begin{equation}\label{b_L^2}
        \norm{b}_{L^2}\lesssim\frac{\epsilon^2}{t^{\frac{1}{2}}}+\frac{\lambda}{H^{\frac{3}{2}}},
    \end{equation}
    \begin{equation}\label{b_L^infty}
        \norm{b}_{L^\infty}\lesssim\frac{\epsilon^2}{t^{\frac{3}{4}}}\ln (2+t)+\frac{\lambda}{H^2},
    \end{equation}
    \begin{equation}
        \norm{\partial_\alpha b}_{H^{s-1}}\lesssim\frac{\epsilon^2}{t^{\frac{2}{3}-\delta_0}}\ln (2+t)+\frac{\lambda}{H^{\frac{3}{2}}},
    \end{equation}
    \begin{equation}\label{partial_alpha_b_infty}
        \norm{\partial_\alpha b}_{W^{s-3,\infty}}\lesssim\frac{\epsilon^2}{t^{\frac{3}{2}-\delta_0}}(\ln (2+t))^2+\frac{\lambda}{H^3},
    \end{equation}
    \begin{equation}
        \norm{\partial_\alpha b}_{\dot W^{s-2,\infty}}\lesssim\frac{\epsilon^2}{t^{\frac{7}{6}-\delta_0}}(\ln (2+t))^2+\frac{\lambda}{H^3},
    \end{equation}
\begin{equation}\label{D_t_b_L2}
        \norm{D_tb}_{H^{s}}\lesssim\frac{\epsilon^2}{t^{\frac{2}{3}-\delta_0}}\ln (2+t)+\frac{\lambda(\epsilon+\lambda)}{H^{\frac{5}{2}}},
    \end{equation}
\begin{equation}\label{partial_t_b_infty}
        \norm{D_t b}_{W^{s-2,\infty}}\lesssim\frac{\epsilon^2}{t^{\frac{5}{4}-\delta_0}}(\ln (2+t))^2+\frac{\lambda(\epsilon+\lambda)}{H^3}
    \end{equation}
and
\begin{equation}\label{D_t_b_W^s-1}
        \norm{D_t b}_{\dot W^{s-1,\infty}}\lesssim\frac{\epsilon^2}{t^{\frac{7}{6}-\delta_0}}(\ln (2+t))^2+\frac{\lambda(\epsilon+\lambda)}{H^3}.
    \end{equation}
\end{lemma}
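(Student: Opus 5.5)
The plan is to start from the identity \eqref{b_exp} of Lemma \ref{lem_b_exp},
\[
(I-\mathfrak H)b=-[D_t\zeta,\mathfrak H]\frac{\bar\zeta_\alpha-1}{\zeta_\alpha}+2q,
\]
and to apply Lemma \ref{lem_real_proj}, using that $b$ is real. This reduces every estimate in the statement to estimates of the right-hand side, $g:=-[D_t\zeta,\mathfrak H]\frac{\bar\zeta_\alpha-1}{\zeta_\alpha}+2q$, in the corresponding norms. The $q$ contributions come directly from Proposition \ref{pro_q}: $\lambda H^{-3/2}$ in $H^{s}$ and $\lambda H^{-2}$ in $W^{s-1,\infty}$. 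For the terms where the statement claims $\lambda H^{-3}$, I will use that $\partial_\alpha q$ carries an extra factor $(\zeta-z_j)^{-1}$, so that $\|\partial_\alpha q\|_{W^{s-2,\infty}}\lesssim \lambda H^{-3}$.

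For the commutator I will write it in the form $\frac{1}{\pi i}\int\frac{D_t\zeta(\alpha)-D_t\zeta(\beta)}{\zeta(\alpha)-\zeta(\beta)}(\bar\zeta_\beta-1)\,d\beta$ and estimate as follows.
\begin{itemize}
\item \textbf{$L^2$ and $H^{s}$ bounds.} Use the commutator estimates (Lemma \ref{app_S1}), putting the $L^\infty$ decay of (B4) on one factor and the energy of (B2) on the other. This gives $\epsilon^2t^{-1/2}$ for \eqref{b_L^2}. For $\partial_\alpha b$ in $H^{s-1}$, the top derivatives falling on $D_t\zeta$ or $\zeta_\alpha$ are paired with Lemma \ref{lem_D_t_zeta_L^infty}, which produces the weaker rate $t^{-2/3+\delta_0}$.
\item \textbf{$L^\infty$ bound \eqref{b_L^infty}.} Use Lemma \ref{lem_Hf_infty} and Lemma \ref{lem_S_1}, with $\|D_t\zeta\|_{L^\infty}\lesssim\epsilon t^{-1/4}$ and $\|\zeta_\alpha-1\|_{L^\infty}\lesssim\epsilon t^{-1/2}$. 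This gives $\epsilon^2t^{-3/4}\ln(2+t)$.
\item \textbf{Derivative bounds $\partial_\alpha b$ and $D_tb$.} Differentiate the identity and commute $\partial_\alpha$ and $D_t$ with $\mathfrak H$ using the commutator identities of Section 2. For $D_tb$ this gives
\[
(I-\mathfrak H)D_tb=[D_t\zeta,\mathfrak H]\frac{b_\alpha}{\zeta_\alpha}+D_t g,
\]
as in the proof of Lemma \ref{lem_Omega}. The term $D_tq$ is bounded by Proposition \ref{pro_q} ($\lambda(\epsilon+\lambda)H^{-5/2}$ in $H^s$ and $\lambda(\epsilon+\lambda)H^{-3}$ in $L^\infty$). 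The term $D_t$ of the commutator produces $[D_t^2\zeta,\mathfrak H]$, $[D_t\zeta,\mathfrak H]\partial_\alpha D_t\zeta$ and a double-difference term. All of these are quadratic, and each carries at least one $t^{-1/2}$-decaying factor from (B4).
\end{itemize}

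The main obstacle is the sharp pointwise rates $t^{-3/2+\delta_0}$ and $t^{-5/4+\delta_0}$ for $\partial_\alpha b$ and $D_tb$. These are better than a product of two $t^{-1/2}$ factors allows, so a naive Hölder bound is not enough. Here I plan to use the localization argument:
\begin{itemize}
\item \textbf{Near and far regions.} On the regions $|\alpha|\le t^{1-\mu}$ and $|\alpha|\ge t^{1+\mu}$, Lemma \ref{lem_int_alpha_away_t} trades decay for vector-field norms, controlled by (B3) and Lemma \ref{lem_Omega}, and gains the extra powers of $t$.
\item \textbf{Middle region $S(t)$.} On $S(t)$, Lemma \ref{lem_(I-H)fg_main} and Corollary \ref{cor_integral_three} identify the leading part of the commutator as a local product, namely $\partial_\alpha D_t\zeta\,(\bar\zeta_\alpha-1)$ up to $O(t^{-1+2\mu})$. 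Through the transition identities of Lemma \ref{lem_transfer_d}, this product is itself a time derivative plus $t^{-1}$ times vector-field terms. Its holomorphic/antiholomorphic structure, namely $(I-\mathfrak H)$ acting on $\bar\zeta_\alpha-1$, then makes the leading piece cancel after projection.
\end{itemize}
The $L^2$ bounds from these regions are converted to $L^\infty$ by Sobolev embedding in $H^1$ on each region, which costs the stated $\ln$ and $t^{\delta_0}$ losses. The $\dot W^{s-2,\infty}$ and $\dot W^{s-1,\infty}$ versions at top order use Lemma \ref{lem_D_t_zeta_L^infty} in place of (B4) for the highest derivatives, which explains the weaker exponent $7/6$ there.
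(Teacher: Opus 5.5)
\textbf{There is a genuine gap in the two Sobolev-norm bounds $\|\partial_\alpha b\|_{H^{s-1}}$ and $\|D_tb\|_{H^s}$.} Your reduction through \eqref{b_exp} and Lemma \ref{lem_real_proj} matches the paper, as do the $q$-contributions from Proposition \ref{pro_q} and your proofs of \eqref{b_L^2} and \eqref{b_L^infty}; the paper proves the latter two inside Lemma \ref{lem_Omega}.

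The rate $\epsilon^2t^{-2/3+\delta_0}$ is not weaker than what Hölder gives; it is stronger than anything your method can produce.
\begin{itemize}
\item In an $L^2$ bound of a bilinear expression, one factor must go in $L^2$, where (B2) gives $O(\epsilon)$ with no decay.
\item The other factor goes in $L^\infty$, where the best available rate is $\epsilon t^{-1/2}$ from (B4).
\item So commutator estimates (Lemma \ref{app_S1}) stop at $\epsilon^2t^{-1/2}$, and pairing with Lemma \ref{lem_D_t_zeta_L^infty} only makes this worse.
\end{itemize}
Your arithmetic $\tfrac12+\tfrac16=\tfrac23$ multiplies two $L^\infty$ norms, which controls no $L^2$ norm. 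The same objection applies to your claim that every quadratic term in $D_tb$ ``carries a $t^{-1/2}$ factor'': that yields only $\epsilon^2t^{-1/2}$.

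This loss is not cosmetic. In Lemma \ref{lem_[P,partial_alpha]} these norms are multiplied by $\|\partial_\alpha^{m}\tilde\theta\|_{L^\infty}\sim\epsilon t^{-1/2}$. With your rate the product is $\epsilon^3t^{-1}$, which is not integrable, so Theorem \ref{thm_d_t_E_s} would not close.

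\textbf{The missing idea is to use the transition of derivatives inside the $L^2$ estimates.} The paper splits $\partial_\alpha^j\bigl(D_t\zeta\,\frac{\bar\zeta_\alpha-1}{\zeta_\alpha}\bigr)$ into three pieces:
\begin{itemize}
\item a cubic part;
\item middle terms $\partial_\alpha[\partial_\alpha^lD_t\zeta\,\partial_\alpha^{j-l-1}(\bar\zeta_\alpha-1)]$ with $1\le l\le j-1$;
\item the extreme term $\partial_\alpha[D_t\zeta\,\partial_\alpha^{j-1}(\bar\zeta_\alpha-1)]$.
\end{itemize}

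\emph{Middle terms.} These are $\alpha$-derivatives of products, so \eqref{transfer_d_1} rewrites them as $t^{-1}$ times products containing one $L_0$ or $\Omega_0$ factor. Combining $\|\Omega_0(\zeta_\alpha-1)\|\lesssim\epsilon^2t^{1/2}+\epsilon t^{\delta_0}$ from Lemma \ref{lem_Omega} with the $t^{-1/6}$ and $t^{-1/4}$ rates of Lemma \ref{lem_D_t_zeta_L^infty} gives $t^{-1}\cdot t^{1/2}\cdot t^{-1/6}=t^{-2/3}$. That is where the exponent actually comes from.

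\emph{Extreme term.} Here \eqref{transfer_d_1} would need $L_0D_t\zeta\in H^s$ or $\Omega_0D_t\zeta$, and neither is bootstrapped. The paper instead writes $(I-\mathfrak H)$ of this term as commutators and integrates by parts. The leading part becomes $\partial_\alpha D_t\zeta\,\partial_\alpha^sD_t\bar\zeta-\frac{1}{\pi i}\int\frac{D_t\zeta(\alpha)-D_t\zeta(\beta)}{(\alpha-\beta)^2}\partial_\beta^sD_t\bar\zeta\,d\beta$. The paper then argues region by region:
\begin{itemize}
\item On $|\alpha|\le t^{5/6}$ and $|\alpha|\ge t^{7/6}$, each piece is bounded directly using \eqref{transfer_partial_alpha}.
\item On $t^{5/6}<|\alpha|<t^{7/6}$, it replaces $\partial_\beta^sD_t\bar\zeta$ by $\partial_\beta^{s-1}(\frac{it^2}{4\beta^2}D_t\bar\zeta)$. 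It then applies Lemma \ref{lem_(I-H)fg_main2} via Remark \ref{rem_(I-H)fg_main} to show the two pieces cancel to leading order.
\end{itemize}
So the localization-plus-cancellation argument you reserve for pointwise bounds is needed precisely here, at top order in $L^2$.

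\textbf{Pointwise bounds.} Conversely, the paper does not localize for these. It uses Lemma \ref{lem_transfer_d} together with Lemma \ref{lem_Hf_infty}, and the logarithms come from Lemma \ref{lem_Hf_infty}, not from Sobolev embedding. For $D_tb$ it integrates by parts in the representative term $\mathcal K$ and applies \eqref{transfer_d_var_1} and Lemma \ref{lem_S_1}. Your route through Lemma \ref{lem_(I-H)fg_main} has two problems:
\begin{itemize}
\item It needs $\Omega_0D_t\zeta\in L^2(\widetilde S(t))$, which the paper explicitly lacks.
\item Its error term $t^{-1+2\mu}(\|f\|_{L^\infty}+\|g\|_{L^\infty})(\cdots)$ with $f=D_t\zeta$ is only about $\epsilon^2t^{-5/4+2\mu+\delta_0}$, which falls short of $t^{-3/2+\delta_0}$.
\end{itemize}
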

\begin{proof}
    (\ref{b_L^2}) and (\ref{b_L^infty}) are proved in Lemma \ref{lem_Omega}.
In order to estimate $\norm{\partial_\alpha b}_{H^{s-1}}$, we derive the expression of $\partial_\alpha^j (I-\mathfrak{H})b$ for $1\leq j\leq s$:
\begin{equation}\label{mul_b_alpha}
    \begin{aligned}
        \partial_\alpha^j(I-\mathfrak{H})b=&\ \partial_\alpha^j\left\{-(I-\mathfrak{H})D_t\zeta\frac{\bar{\zeta}_\alpha-1}{\zeta_\alpha}-\frac{i\lambda}{\pi}\left[\frac{z_1-z_2}{(\zeta-z_1)(\zeta-z_2)}\right]\right\}\\
        =&-(I-\mathfrak{H})\partial_\alpha^j\left(D_t\zeta\frac{\bar{\zeta}_\alpha-1}{\zeta_\alpha}\right)+\sum_{i=0}^{j-1}\partial_\alpha^i[\partial_\alpha,\mathfrak{H}]\partial_\alpha^{j-i-1}\left(D_t\zeta\frac{\bar{\zeta}_\alpha-1}{\zeta_\alpha}\right)\\
        &-\frac{i\lambda}{\pi}\partial_\alpha^j\left[\frac{z_1-z_2}{(\zeta-z_1)(\zeta-z_2)}\right]\\
        =&-(I-\mathfrak{H})\partial_\alpha^j\left(D_t\zeta\frac{\bar{\zeta}_\alpha-1}{\zeta_\alpha}\right)+\sum_{i=0}^{j-1}\partial_\alpha^i[\zeta_\alpha,\mathfrak{H}]\frac{\partial_\alpha}{\zeta_\alpha}\partial_\alpha^{j-i-1}\left(D_t\zeta\frac{\bar{\zeta}_\alpha-1}{\zeta_\alpha}\right)\\
        &-\frac{i\lambda}{\pi}\partial_\alpha^j\left[\frac{z_1-z_2}{(\zeta-z_1)(\zeta-z_2)}\right].
    \end{aligned}
    \end{equation}
    Since
    $$
    \partial_\alpha^j(D_t\zeta\frac{\bar\zeta_\alpha-1}{\zeta_\alpha})=\partial_\alpha^j\left[D_t\zeta(\bar\zeta_\alpha-1)\left(\frac{1}{\zeta_\alpha}-1\right)\right]+\sum_{l=1}^{j-1}\partial_\alpha[\partial_\alpha^lD_t\zeta\partial_\alpha^{j-l-1}(\bar\zeta_\alpha-1)]+\partial_\alpha[D_t\zeta\partial_\alpha^{j-1}(\bar\zeta_\alpha-1)],
    $$
    from Lemma \ref{lem_transfer_d} and Lemma \ref{lem_Omega} we have
    $$
    \begin{aligned}
    &\norm{\sum_{l=1}^{j-1}\partial_\alpha[\partial_\alpha^lD_t\zeta\partial_\alpha^{j-l-1}(\bar\zeta_\alpha-1)]}_{L^2}\\
    \lesssim&\frac{1}{t}\left(\norm{L_0D_t\zeta}_{H^{s-1}}\norm{D_t\zeta}_{W^{s-1}}+\norm{L_0(\zeta_\alpha-1)}_{H^{s-1}}\norm{D_t^2\zeta}_{W^{s-1}}\right)\\
    &+\frac{1}{t}\left(\norm{\Omega_0\partial_\alpha D_t\zeta}_{H^{s-2}}\norm{\zeta_\alpha-1}_{W^{s-1}}+\norm{D_t\zeta}_{H^s}\norm{\Omega_0(\zeta_\alpha-1)}_{W^{s-2}}\right)\\
    \lesssim&\frac{\epsilon^2}{t^{\frac{2}{3}}}\ln (2+t).
    \end{aligned}
    $$
Thus by Lemma \ref{lem_real_proj},
$$\begin{aligned}
        \norm{\partial_\alpha b}_{H^{s-1}}\lesssim&\sum_{i=0}^{s-1}\norm{(I-\mathfrak{H})\partial_\alpha^{i+1} b}_{L^2}\\
        \lesssim&\ \frac{\epsilon^2}{t^{\frac{2}{3}-\delta_0}}\ln (2+t)+\norm{q}_{H^s}+\sum_{j=1}^s\norm{(I-\mathfrak{H})(\partial_\alpha(D_t\zeta\partial_\alpha^{j-1}(\bar\zeta_\alpha-1)))}_{L^2}\\
        \lesssim&\ \frac{\epsilon^2}{t^{\frac{2}{3}-\delta_0}}\ln (2+t)+\frac{\lambda}{H^{\frac{3}{2}}}+\sum_{j=1}^s\norm{(I-\mathfrak{H})(\partial_\alpha(D_t\zeta\partial_\alpha^{j-1}(\bar\zeta_\alpha-1)))}_{L^2}.
    \end{aligned}$$
    We claim that
    $$
    \sum_{j=1}^s\norm{(I-\mathfrak{H})(\partial_\alpha(D_t\zeta\partial_\alpha^{j-1}(\bar\zeta_\alpha-1)))}_{L^2}\lesssim\frac{\epsilon^2}{t^{\frac{2}{3}-\delta_0}}\ln (2+t).
    $$
    the proof of the inequality is contained in that of $\norm{\partial_\alpha^sD_tb}_{L^2}$ which we will investigate below. Using Lemma \ref{lem_transfer_d}, Lemma \ref{lem_Hf_infty} and Lemma \ref{lem_real_proj}, we show that
$$\norm{\partial_\alpha b}_{W^{s-3,\infty}}\lesssim\frac{\epsilon^2}{t^{\frac{3}{2}-\delta_0}}(\ln (2+t))^2+\norm{q}_{W^{s-2,\infty}}\lesssim\frac{\epsilon^2}{t^{\frac{3}{2}-\delta_0}}(\ln (2+t))^2+\frac{\lambda}{H^3}$$
and
$$
\norm{\partial_\alpha b}_{\dot W^{s-2,\infty}}\lesssim\frac{\epsilon^2}{t^{\frac{7}{6}-\delta_0}}(\ln (2+t))^2+\frac{\lambda}{H^3}.
$$
   In summary, (\ref{partial_alpha_b_infty}) is thus proven. (\ref{D_t_b_L2}) and (\ref{partial_t_b_infty}) can be obtained by following similar steps except for $\norm{\partial_\alpha^sD_t b}_{L^2}$. To handle this problem, we take $(I-\mathfrak{H})\partial_\alpha^sD_t b$ into consideration:
\begin{equation}\label{partial_alpha^sD_tb}
    \begin{aligned}
    (I-\mathfrak{H})\partial_\alpha^sD_t b=&\ [\partial_\alpha^sD_t,\mathfrak{H}]b+\partial_\alpha^sD_t\left\{-(I-\mathfrak{H})\left(D_t\zeta\frac{\bar{\zeta}_\alpha-1}{\zeta_\alpha}\right)-\frac{i\lambda}{\pi}\left[\frac{z_1-z_2}{(\zeta-z_1)(\zeta-z_2)}\right]\right\}\\
    =&\ \partial_\alpha^s[D_t\zeta,\mathfrak{H}]\frac{b_\alpha}{\zeta_\alpha}+[\partial_\alpha^s,\mathfrak{H}]D_tb\\
    &+\partial_\alpha^s[D_t\zeta,\mathfrak{H}]\frac{\partial_\alpha}{\zeta_\alpha}\left(D_t\zeta\frac{\bar{\zeta}_\alpha-1}{\zeta_\alpha}\right)+[\partial_\alpha^s,\mathfrak{H}]D_t\left(D_t\zeta\frac{\bar{\zeta}_\alpha-1}{\zeta_\alpha}\right)\\
    &-(I-\mathfrak{H})\partial_\alpha^sD_t\left(D_t\zeta\frac{\bar{\zeta}_\alpha-1}{\zeta_\alpha}\right)-\frac{i\lambda}{\pi}\partial_\alpha^sD_t\left[\frac{z_1-z_2}{(\zeta-z_1)(\zeta-z_2)}\right].
    \end{aligned}
    \end{equation}
For the estimate of all the commutators we can appeal to Lemma \ref{app_S1}. Lemma \ref{lem_transfer_d} cannot be directly applied to the rest since we can only control $\norm{D_t\zeta}_{H^s}$. To solve the problem, we note that
$$
    \begin{aligned}
    &\ (I-\mathfrak{H})\partial_\alpha^sD_t\left(D_t\zeta\frac{\bar{\zeta}_\alpha-1}{\zeta_\alpha}\right)\\
    =&\ (I-\mathfrak{H})\partial_\alpha^s\left(D_t^2\zeta\frac{\bar{\zeta}_\alpha-1}{\zeta_\alpha}+D_t\zeta D_t\frac{\bar{\zeta}_\alpha-1}{\zeta_\alpha}\right)\\=&\ (I-\mathfrak{H})\partial_\alpha^s\left(D_t^2\zeta\frac{\bar{\zeta}_\alpha-1}{\zeta_\alpha}\right)+\sum_{i=0}^{s-2}C_i(I-\mathfrak{H})\partial_\alpha\left(\partial_\alpha^{s-i-1}D_t\zeta\partial_\alpha^iD_t\frac{\bar{\zeta}_\alpha-1}{\zeta_\alpha}\right)\\
    &+(I-\mathfrak{H})\partial_\alpha\left(D_t\zeta\partial_\alpha^{s-1}D_t\frac{\bar{\zeta}_\alpha-1}{\zeta_\alpha}\right).
    \end{aligned}
    $$
Lemma \ref{lem_transfer_d} implies that\footnote{Since we lack the estimate of $\norm{\Omega_0D^2_t\zeta}_{L^2}$, one way to bypass the difficulty is that we can replace $D_t^2\zeta$ with $\zeta_\alpha-1$ via \eqref{main_equation} in the initial term, where the error term is easily handled. This strategy will be adopted several times if necessary.}
$$
    \norm{\partial_\alpha^s\left(D_t^2\zeta\frac{\bar{\zeta}_\alpha-1}{\zeta_\alpha}\right)}_{L^2}+\sum_{i=0}^{s-2}\norm{\partial_\alpha\left(\partial_\alpha^{s-i-1}D_t\zeta\partial_\alpha^iD_t\frac{\bar{\zeta}_\alpha-1}{\zeta_\alpha}\right)}_{L^2}\lesssim\frac{\epsilon^2}{t^{\frac{2}{3}-\delta_0}}\ln (2+t).
    $$
For the last term, we notice that $(I-\mathfrak{H})(\bar{\zeta}-\alpha)=0$ implies $(I-\mathfrak{H})\frac{(\bar{\zeta}_\alpha-1)}{\zeta_\alpha}=0$ and transform it into an integral:
\begin{equation}\label{D_tb_H^s}
    \begin{aligned}
    (I-\mathfrak{H})\partial_\alpha\left(D_t\zeta\partial_\alpha^{s-1}D_t\frac{\bar{\zeta}_\alpha-1}{\zeta_\alpha}\right)=&\ (I-\mathfrak{H})\left(\partial_\alpha D_t\zeta\partial_\alpha^{s-1}D_t\frac{\bar{\zeta}_\alpha-1}{\zeta_\alpha}\right)\\
    &+(I-\mathfrak{H})\left(D_t\zeta\partial_\alpha^{s}D_t\frac{\bar{\zeta}_\alpha-1}{\zeta_\alpha}\right)\\
    =&\ [\partial_\alpha D_t\zeta,\mathfrak{H}]\partial_\alpha^{s-1}D_t\frac{\bar{\zeta}_\alpha-1}{\zeta_\alpha}+[ D_t\zeta,\mathfrak{H}]\partial_\alpha^{s}D_t\frac{\bar{\zeta}_\alpha-1}{\zeta_\alpha}\\
    &+\partial_\alpha D_t\zeta[\partial_\alpha^{s-1}D_t,\mathfrak{H}]\frac{\bar{\zeta}_\alpha-1}{\zeta_\alpha}+D_t\zeta[\partial_\alpha^{s}D_t,\mathfrak{H}]\frac{\bar{\zeta}_\alpha-1}{\zeta_\alpha}\\
    =&\ \frac{1}{\pi i}\int\frac{\partial_\alpha D_t\zeta(\alpha)-\partial_\beta D_t\zeta(\beta)}{\zeta(\alpha)-\zeta(\beta)}\zeta_\beta\left(\partial_\beta^{s-1}D_t\frac{\bar{\zeta}_\beta-1}{\zeta_\beta}\right)(\beta)d\beta\\
    &+\frac{1}{\pi i}\int\frac{D_t\zeta(\alpha)-D_t\zeta(\beta)}{\zeta(\alpha)-\zeta(\beta)}\zeta_\beta\left(\partial_\beta^{s}D_t\frac{\bar{\zeta}_\beta-1}{\zeta_\beta}\right)(\beta)d\beta\\
    &+\partial_\alpha D_t\zeta[\partial_\alpha^{s-1}D_t,\mathfrak{H}]\frac{\bar{\zeta}_\alpha-1}{\zeta_\alpha}+D_t\zeta[\partial_\alpha^{s}D_t,\mathfrak{H}]\frac{\bar{\zeta}_\alpha-1}{\zeta_\alpha}.
    \end{aligned}
    \end{equation}
The last two are cubic and higher orders and can be handled via Lemma \ref{app_S1}.Denote the sum of two integrals by $K$. We proceed as below by using integration by parts:
$$
    \begin{aligned}
        &\int\frac{D_t\zeta(\alpha)-D_t\zeta(\beta)}{\zeta(\alpha)-\zeta(\beta)}\zeta_\beta\left(\partial_\beta^{s}D_t\frac{\bar{\zeta}_\beta-1}{\zeta_\beta}\right)(\beta)d\beta\\
        =&\int\frac{\partial_\beta D_t\zeta(\beta)}{\zeta(\alpha)-\zeta(\beta)}\zeta_\beta\left(\partial_\beta^{s-1}D_t\frac{\bar{\zeta}_\beta-1}{\zeta_\beta}\right)(\beta)d\beta\\
        &-\int\left(\frac{D_t\zeta(\alpha)-D_t\zeta(\beta)}{(\zeta(\alpha)-\zeta(\beta))^2}\zeta_{\beta\beta}(\beta)\right)\left(\partial_\beta^{s-1}D_t\frac{\bar{\zeta}_\beta-1}{\zeta_\beta}\right)(\beta)d\beta\\
        &-\int(D_t\zeta(\alpha)-D_t\zeta(\beta))\left(\frac{\zeta_\beta}{\zeta(\alpha)-\zeta(\beta)}\right)^2\left(\partial_\beta^{s-1}D_t\frac{\bar{\zeta}_\beta-1}{\zeta_\beta}\right)(\beta)d\beta.
    \end{aligned}
    $$
Therefore,
$$
    \begin{aligned}
    K=&\ \frac{1}{\pi i}\partial_\alpha D_t\zeta(\alpha)\int\frac{\zeta_\beta}{\zeta(\alpha)-\zeta(\beta)}\left(\partial_\beta^{s-1}D_t\frac{\bar{\zeta}_\beta-1}{\zeta_\beta}\right)(\beta)d\beta\\
    &-\frac{1}{\pi i}\int\left(\frac{D_t\zeta(\alpha)-D_t\zeta(\beta)}{(\zeta(\alpha)-\zeta(\beta))^2}\zeta_{\beta\beta}(\beta)\right)\left(\partial_\beta^{s-1}D_t\frac{\bar{\zeta}_\beta-1}{\zeta_\beta}\right)(\beta)d\beta\\
        &-\frac{1}{\pi i}\int(D_t\zeta(\alpha)-D_t\zeta(\beta))\left(\frac{\zeta_\beta}{\zeta(\alpha)-\zeta(\beta)}\right)^2\left(\partial_\beta^{s-1}D_t\frac{\bar{\zeta}_\beta-1}{\zeta_\beta}\right)(\beta)d\beta\\
    =&\ \partial_\alpha D_t\zeta[\mathfrak{H},\partial_\alpha^{s-1}D_t]\frac{\bar{\zeta}_\alpha-1}{\zeta_\alpha}+\partial_\alpha D_t\zeta\partial_\alpha^{s-1}D_t\frac{\bar{\zeta}_\alpha-1}{\zeta_\alpha}\\
    &-\frac{1}{\pi i}\int\frac{D_t\zeta(\alpha)-D_t\zeta(\beta)}{(\alpha-\beta)^2}\left(\partial_\beta^{s-1}D_t\frac{\bar{\zeta}_\beta-1}{\zeta_\beta}\right)(\beta)d\beta\\
    &-\frac{1}{\pi i}\int\left(\frac{\zeta_\beta^2(\beta)}{(\zeta(\alpha)-\zeta(\beta))^2}-\frac{1}{(\alpha-\beta)^2}\right)(D_t\zeta(\alpha)-D_t\zeta(\beta))\left(\partial_\beta^{s-1}D_t\frac{\bar{\zeta}_\beta-1}{\zeta_\beta}\right)(\beta)d\beta.\\
    &-\frac{1}{\pi i}\int\left(\frac{D_t\zeta(\alpha)-D_t\zeta(\beta)}{(\zeta(\alpha)-\zeta(\beta))^2}\zeta_{\beta\beta}(\beta)\right)\left(\partial_\beta^{s-1}D_t\frac{\bar{\zeta}_\beta-1}{\zeta_\beta}\right)(\beta)d\beta.
    \end{aligned}
    $$
All the terms on the right side are cubic and higher orders except for 
the second and the third. We denote the sum of all the cubic and higher order terms as $R$. Rewrite $K$ as
$$
\begin{aligned}
    K=&\ \partial_\alpha D_t\zeta\partial_\alpha^{s-1}D_t\frac{\bar{\zeta}_\alpha-1}{\zeta_\alpha}-\frac{1}{\pi i}\int\frac{D_t\zeta(\alpha)-D_t\zeta(\beta)}{(\alpha-\beta)^2}\left(\partial_\beta^{s-1}D_t\frac{\bar{\zeta}_\beta-1}{\zeta_\beta}\right)(\beta)d\beta+R\\
    =&\ \partial_\alpha D_t\zeta\partial_\alpha^{s-1}D_t\frac{\bar{\zeta}_\alpha-1}{\zeta_\alpha}-\frac{1}{\pi i}\int\frac{D_t\zeta(\alpha)-D_t\zeta(\beta)}{(\alpha-\beta)^2}\partial_\beta^{s}D_t\bar{\zeta} d\beta+\widetilde{R},
\end{aligned}
$$
where
$$
\begin{aligned}
    \widetilde{R}=&\ R-\frac{1}{\pi i}\int\frac{D_t\zeta(\alpha)-D_t\zeta(\beta)}{(\alpha-\beta)^2}\partial_\beta^{s-1}\left[D_t\left(\frac{1}{\zeta_\beta}\right)(\bar{\zeta}_\beta-1)\right]d\beta\\
    &-\sum_{i=0}^{s-2}\frac{1}{\pi i}\int\frac{D_t\zeta(\alpha)-D_t\zeta(\beta)}{(\alpha-\beta)^2}\partial_\beta^{s-i-1}\left(\frac{1}{\zeta_\beta}\right)\partial_\beta^{i}D_t\bar{\zeta}_\beta d\beta\\
    &-\frac{1}{\pi i}\int\frac{D_t\zeta(\alpha)-D_t\zeta(\beta)}{(\alpha-\beta)^2}\partial_\beta^{s-1}D_t\bar{\zeta}_\beta(\frac{1}{\zeta_\beta}-1) d\beta
\end{aligned}
$$
$\widetilde{R}$ also consists of cubic or higher order terms, which implies that
$$
\norm{\widetilde{R}}_{L^2}\lesssim\frac{\epsilon^3}{t}.
$$
If $|\alpha|\leq t^{\frac{5}{6}}$, we recall (\ref{transfer_partial_alpha}):
$$
\begin{aligned}
&\norm{\partial_\alpha D_t\zeta\partial_\alpha^{s-1}D_t\frac{\bar{\zeta}_\alpha-1}{\zeta_\alpha}}_{L^2(|\alpha|\leq t^\frac{5}{6})}\\
\lesssim&\left(\frac{1}{t^\frac{1}{3}}\norm{\partial_\alpha^2D_t\zeta}_{L^\infty}+\frac{1}{t}\norm{\Omega_0\partial_\alpha D_t\zeta}_{L^\infty}+\frac{1}{t^{\frac{7}{6}}}\norm{L_0\partial_\alpha D_t\zeta}_{H^1}\right)\norm{D_t\zeta}_{H^s}\\
\lesssim&\ \frac{\epsilon^2}{t^{\frac{5}{6}}}.
\end{aligned}
$$
Meanwhile,
$$
\begin{aligned}
\norm{\int\frac{D_t\zeta(\alpha)-D_t\zeta(\beta)}{(\alpha-\beta)^2}\partial_\beta^{s}D_t\bar{\zeta} d\beta}_{L^2(|\alpha|\leq t^\frac{5}{6})}\lesssim&\norm{\int_{|\alpha-\beta|<\frac{1}{2}t^\frac{5}{6}}\frac{D_t\zeta(\alpha)-D_t\zeta(\beta)}{(\alpha-\beta)^2}\partial_\beta^{s}D_t\bar{\zeta} d\beta}_{L^2(|\alpha|\leq t^\frac{5}{6})}\\
&+\norm{\int_{|\alpha-\beta|\geq \frac{1}{2}t^\frac{5}{6}}\frac{D_t\zeta(\alpha)-D_t\zeta(\beta)}{(\alpha-\beta)^2}\partial_\beta^{s}D_t\bar{\zeta} d\beta}_{L^2(|\alpha|\leq t^\frac{5}{6})}\\
\lesssim&\norm{\partial_\alpha D_t\zeta}_{L^{\infty}(|\alpha|\lesssim t^{\frac{5}{6}})}\norm{D_t\zeta}_{H^s}\\
&+\norm{\frac{1}{\alpha^2}}_{L^1(|\alpha|\geq \frac{1}{2}t^{\frac{5}{6}})}\norm{D_t\zeta}_{H^1}\norm{D_t\zeta}_{H^s}\\
\lesssim&\ \frac{\epsilon^2}{t^{\frac{5}{6}}}.
\end{aligned}
$$
The similar estimates hold for $|\alpha|\geq t^{\frac{7}{6}}$. For the rest region, we cannot appeal to Lemma \ref{lem_(I-H)fg_main} since the lack of control over $\norm{L_0D_t\zeta}_{H^s}$. To handle the problem, we use a cut-off set
\[
\mathcal{S}(t)=\{\alpha\in\mathbb{R}\ |\ t^\frac{5}{6}<|\alpha|<t^\frac{7}{6}\}
\]
and recall (\ref{transfer_partial_alpha}):
$$
\begin{aligned}
&\norm{\int_{|\alpha-\beta|\leq \frac{1}{2}t^\frac{5}{6}}\frac{D_t\zeta(\alpha)-D_t\zeta(\beta)}{(\alpha-\beta)^2}\partial_\beta^{s}D_t\bar{\zeta} d\beta-\int_{|\alpha-\beta|\leq \frac{1}{2}t^\frac{5}{6}}\frac{D_t\zeta(\alpha)-D_t\zeta(\beta)}{(\alpha-\beta)^2}\partial_\beta^{s-1}\left(\frac{it^2}{4\beta^2}D_t\bar{\zeta}\right) d\beta}_{L^2(\mathcal{S}(t))}\\
&\lesssim\ \frac{1}{t^\frac{5}{6}}\norm{\partial_\alpha D_t\zeta}_{L^{\infty}}\norm{L_0D_t\zeta}_{H^{s-1}}+\frac{1}{t^\frac{2}{3}}\norm{\partial_\alpha D_t\zeta}_{L^{\infty}}\norm{\partial_\alpha\Omega_0D_t\zeta}_{H^{s-2}}\lesssim\frac{\epsilon^2}{t^{\frac{7}{6}-\delta_0}}.
\end{aligned}
$$
Obviously,
$$
\norm{\int_{|\alpha-\beta|> \frac{1}{2}t^\frac{5}{6}}\frac{D_t\zeta(\alpha)-D_t\zeta(\beta)}{(\alpha-\beta)^2}\partial_\beta^{s}D_t\bar{\zeta} d\beta}_{L^2(\mathcal{S}(t))}\lesssim\norm{\frac{1}{\alpha^2}}_{L^1(|\alpha|\geq \frac{1}{2}t^{\frac{5}{6}})}\norm{D_t\zeta}_{H^1}\norm{D_t\zeta}_{H^s}\lesssim\frac{\epsilon^2}{t^{\frac{5}{6}}}.
$$
By Remark \ref{rem_(I-H)fg_main} and (\ref{transfer_partial_alpha}),
$$
\begin{aligned}
\norm{K}_{L^2}
\lesssim&\norm{\frac{1}{\pi i}\int_{|\alpha-\beta|\leq \frac{1}{2}t^\frac{5}{6}}\frac{D_t\zeta(\alpha)-D_t\zeta(\beta)}{(\alpha-\beta)^2}\partial_\beta^{s-1}\left(\frac{it^2}{4\beta^2}D_t\bar{\zeta}\right) d\beta+\frac{t}{2\alpha}\partial_t D_t\zeta\partial_\alpha^{s-1}\left(\frac{it^2}{4\alpha^2}D_t\bar{\zeta}\right)}_{L^2(\mathcal{S}(t))}\\
&+\norm{\frac{1}{\alpha}L_0D_t\zeta\partial_\alpha^{s-1}\left(\frac{it^2}{4\alpha^2}D_t\bar{\zeta}\right)}_{L^2(\mathcal{S}(t))}\\
&+\norm{\partial_\alpha D_t\zeta\partial_\alpha^{s-1}\left(\frac{it^2}{4\alpha^2}D_t\bar{\zeta}\right)-\partial_\alpha D_t\zeta\partial_\alpha^{s}D_t\bar{\zeta}}_{L^2(\mathcal{S}(t))}\\
&+\norm{\partial_\alpha D_t\zeta\partial_\alpha^{s}D_t\bar{\zeta}}_{L^2(|\alpha|\leq t^\frac{5}{6})}+\norm{\partial_\alpha D_t\zeta\partial_\alpha^{s}D_t\bar{\zeta}}_{L^2(|\alpha|\geq t^\frac{7}{6})}\\
&+\norm{\partial_\alpha D_t\zeta\partial_\alpha^{s-1}D_t\frac{\bar{\zeta}_\alpha-1}{\zeta_\alpha}-\partial_\alpha D_t\zeta\partial_\alpha^sD_t\bar{\zeta}}_{L^2}+\norm{\widetilde{R}}_{L^2}+\frac{\epsilon^2}{t^\frac{5}{6}}\\
\lesssim&\ \frac{\epsilon^2}{t^{\frac{2}{3}-\delta_0}}\ln (2+t).
\end{aligned}
$$
All the above complete the estimate of $\norm{D_tb}_{H^s}$. It remains to focus on $\norm{\partial_\alpha^{s-1}D_tb}_{L^\infty}$. Rewrite \eqref{partial_alpha^sD_tb} with $s$ replaced by $s-1$:
\begin{equation}\label{partial_alpha^s-1D_t_b}
(I-\mathfrak{H})\partial_\alpha^{s-1}D_t b=\ [\partial_\alpha^{s-1}D_t,\mathfrak{H}]b+\partial_\alpha^{s-1}D_t\left(-[D_t\zeta,\mathfrak{H}]\frac{\bar{\zeta}_\alpha-1}{\zeta_\alpha}\right)+2\partial_\alpha^{s-1}D_tq.
\end{equation}
To control the second term, We consider the most representative term 
$$
\mathcal{K}=D_t[D_t\zeta,\mathfrak{H}]\frac{\partial_\alpha^s\bar\zeta}{\zeta_\alpha}=\frac{1}{\pi i}D_t\int\frac{D_t\zeta(\alpha,t)-D_t\zeta(\beta,t)}{\zeta(\alpha,t)-\zeta(\beta,t)}\partial_\beta^s\bar\zeta d\beta.
$$
Integration by parts shows that
$$
\begin{aligned}
\mathcal{K}=&\ \frac{1}{\pi i}D_t\int\frac{\partial_\beta D_t\zeta(\beta,t)}{\zeta(\alpha,t)-\zeta(\beta,t)}\partial_\beta^{s-1}\bar\zeta d\beta-\frac{1}{\pi i}D_t\int\frac{D_t\zeta(\alpha,t)-D_t\zeta(\beta,t)}{(\zeta(\alpha,t)-\zeta(\beta,t))^2}\zeta_\beta\partial_\beta^{s-1}\bar\zeta d\beta\\
=&\ \frac{1}{\pi i}\partial_t\int\frac{\partial_\beta D_t\zeta(\beta,t)}{\zeta(\alpha,t)-\zeta(\beta,t)}\partial_\beta^{s-1}\bar\zeta d\beta-\frac{1}{\pi i}\partial_t\int\frac{D_t\zeta(\alpha,t)-D_t\zeta(\beta,t)}{(\zeta(\alpha,t)-\zeta(\beta,t))^2}\zeta_\beta\partial_\beta^{s-1}\bar\zeta d\beta+R_{\mathcal{K}}
\end{aligned}
$$
where $R_{\mathcal{K}}$ contains $b$ and behaves well. Denote the first and the second integrals by $\mathcal{K}_1$ and $\mathcal{K}_2$.
$$
\mathcal{K}_1=-\frac{1}{\pi i}\int\frac{\zeta_t(\alpha,t)-\zeta_t(\beta,t)}{(\zeta(\alpha,t)-\zeta(\beta,t))^2}\partial_\beta D_t\zeta\partial_\beta^{s-1}\bar\zeta d\beta+\frac{1}{\pi i}\int\frac{1}{\zeta(\alpha,t)-\zeta(\beta,t)}\partial_t(\partial_\beta D_t\zeta\partial_\beta^{s-1}\bar\zeta)d\beta.
$$
From Lemma \ref{lem_Hf_infty}, Lemma \ref{lem_S_1} and Lemma \ref{lem_transfer_d} we have
$$
\norm{\mathcal{K}_1}_{L^\infty}\lesssim\frac{\epsilon^2}{t^{\frac{5}{4}}}\ln (2+t).
$$
Consider $\mathcal{K}_2$,
$$
\begin{aligned}
\mathcal{K}_2=&-\frac{1}{\pi i}\int\partial_t\left(\frac{\zeta_\beta}{(\zeta(\alpha,t)-\zeta(\beta,t))^2}\right)(D_t\zeta(\alpha,t)-D_t\zeta(\beta,t))\partial_\beta^{s-1}\bar\zeta d\beta\\
&-\frac{1}{\pi i}\int\left(\frac{\zeta_\beta}{(\zeta(\alpha,t)-\zeta(\beta,t))^2}\right)\partial_t((D_t\zeta(\alpha,t)-D_t\zeta(\beta,t))\partial_\beta^{s-1}\bar\zeta) d\beta.
\end{aligned}
$$
By \eqref{transfer_d_var_1} and Lemma \ref{lem_S_1}, there holds
$$
\norm{\mathcal{K}_2}\lesssim\frac{\epsilon^2}{t^{\frac{5}{4}}}\ln (2+t).
$$
Combining the estimates of $\mathcal{K}_1$ and $\mathcal{K}_2$ implies that
$$
\norm{\mathcal{K}}\lesssim\frac{\epsilon^2}{t^{\frac{5}{4}}}\ln (2+t).
$$
Return to \eqref{partial_alpha^s-1D_t_b} and we obtain that
$$
\begin{aligned}
\norm{\partial_\alpha^{s-1}D_tb}_{L^\infty}\lesssim&\ \norm{D_tq}_{W^{s-1,\infty}}+\norm{\partial_\alpha D_t\zeta}_{W^{s-2,\infty}}\norm{b}_{W^{s-1,\infty}}\ln (2+t)+\frac{\epsilon^2}{t^{\frac{5}{4}}}\ln (2+t)\\
\lesssim&\ \frac{\epsilon^2}{t^{\frac{7}{6}-\delta_0}}(\ln (2+t))^2+\frac{\lambda(\epsilon+\lambda)}{H^3}.
\end{aligned}
$$
\end{proof}
\begin{lemma}\label{lem_A}
    Suppose the bootstrap assumptions hold, we have
    \begin{equation}
        \norm{A-1}_{H^s}\lesssim\frac{\epsilon^2}{t^{\frac{2}{3}-\delta_0}}\ln (2+t)+\frac{\lambda(\epsilon+\lambda)}{H^{\frac{3}{2}}},
    \end{equation}
    \begin{equation}
        \norm{A-1}_{W^{s-3,\infty}}\lesssim\frac{\epsilon^2}{t^{\frac{3}{2}-\delta_0}}(\ln (2+t))^2+\frac{\lambda(\epsilon+\lambda)}{H^2}
    \end{equation}
and
\begin{equation}
        \norm{A-1}_{W^{s-1,\infty}}\lesssim\frac{\epsilon^2}{t^{\frac{7}{6}-\delta_0}}(\ln (2+t))^2+\frac{\lambda(\epsilon+\lambda)}{H^2}.
    \end{equation}
\end{lemma}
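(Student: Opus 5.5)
The plan is to start from the formula \eqref{A_exp} in Lemma \ref{lem_b_exp}. Since $A-1$ is real-valued, Lemma \ref{lem_real_proj} lets us bound $\norm{A-1}_{H^k}$ and $\norm{A-1}_{W^{l,\infty}}$ by the corresponding norms of the right-hand side of \eqref{A_exp}. That right-hand side splits into three groups.
\begin{itemize}
\item The irrotational quadratic commutator $i[D_t\zeta,\mathfrak H]\frac{\partial_\alpha\mathfrak F}{\zeta_\alpha}$, with $\mathfrak F=D_t\bar\zeta-q$.
\item The term $i[D_t^2\zeta,\mathfrak H]\frac{\bar\zeta_\alpha-1}{\zeta_\alpha}$.
\item The vortex term $\frac{1}{2\pi}\sum_j\lambda_j\frac{D_t\zeta-\dot z_j}{(\zeta-z_j)^2}$, inside $(I-\mathfrak H)$.
\end{itemize}

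\textbf{Vortex term.} I would treat it exactly as $q$ was treated in Proposition \ref{pro_q}. Using the dipole structure $\lambda_1=-\lambda_2$, we write
\[
\frac{1}{(\zeta-z_1)^2}-\frac{1}{(\zeta-z_2)^2}=\frac{(z_1-z_2)(2\zeta-z_1-z_2)}{(\zeta-z_1)^2(\zeta-z_2)^2},
\]
and group $\dot z_1,\dot z_2$ through $\dot z_1-\dot z_2$.
\begin{itemize}
\item With Lemma \ref{lem_z_1} and the integral bound $\int|\zeta-z_j|^{-p}d\beta\lesssim H^{1-p}$, this gives $L^2$-based norms of size $\lambda(\epsilon+\lambda)H^{-3/2}$ and $L^\infty$-based norms of size $\lambda(\epsilon+\lambda)H^{-2}$.
\item Derivatives only add powers of $(\zeta-z_j)^{-1}$ and bounded factors of $\partial_\alpha^m\zeta$. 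Applying $(I-\mathfrak H)$ costs nothing in $H^s$ by Lemma \ref{boundednesshilbert}.
\item In $W^{l,\infty}$ it costs a factor $\ln(2+t)$ by Lemma \ref{lem_Hf_infty}, plus $t^{-2}\norm{\cdot}_{H^1}$. Alternatively one avoids this loss by noting the expression is the trace of something holomorphic outside up to $\bar{(\cdot)}$ pieces; otherwise the log is absorbed since $H\gtrsim H_0+\lambda t$.
\end{itemize}

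\textbf{Quadratic commutators.} These are the main obstacle, because a direct bound gives only $\epsilon^2 t^{-1/2}$ in $H^s$. The approach copies the proof of Lemma \ref{lem_b} for $\partial_\alpha b$ and $D_tb$.
\begin{enumerate}
\item Replace $\partial_\alpha\mathfrak F$ by $\partial_\alpha D_t\bar\zeta$; the $q_\alpha$ part is controlled by $\norm{D_t\zeta}_{L^\infty}\lambda H^{-3/2}$ and is absorbed in the vortex bound. Use the footnote trick to trade $D_t^2\zeta$ for $i A\zeta_\alpha-i$ via \eqref{main_equation} where $\Omega_0 D_t^2\zeta$ would be needed.
\item Apply $\partial_\alpha^j$ and commute through $\mathfrak H$ as in \eqref{mul_b_alpha}. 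The commutator pieces are cubic and handled by Lemma \ref{app_S1}.
\item The genuinely quadratic pieces $\partial_\alpha^l f\,\partial_\alpha^{j-l}g$ with both factors differentiated are handled by the transition identities of Lemma \ref{lem_transfer_d}. These gain $t^{-1}$ at the cost of $L_0$, $\Omega_0$ norms from (B3) and Lemma \ref{lem_Omega}, producing $\epsilon^2t^{-2/3+\delta_0}\ln(2+t)$.
\item The top-order piece, where all derivatives fall on one factor, is written as the integral $K$. It is split into $|\alpha|\le t^{5/6}$, $|\alpha|\ge t^{7/6}$ and the middle region $\mathcal S(t)$, exactly as in the estimate of $\norm{D_tb}_{H^s}$, using \eqref{transfer_partial_alpha} and Lemma \ref{lem_(I-H)fg_main2} with Remark \ref{rem_(I-H)fg_main}.
\end{enumerate}

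\textbf{$L^\infty$ bounds.} For $W^{s-3,\infty}$ the commutators are estimated pointwise with Lemma \ref{lem_S_1} and Lemma \ref{lem_Hf_infty}. We transfer derivatives using \eqref{transfer_d_var_1}--\eqref{transfer_d_var_2} to gain $t^{-1}$ against $\Omega_0$ and $L_0$ norms. This mirrors the bound for $\partial_\alpha b$ in \eqref{partial_alpha_b_infty} and yields $\epsilon^2t^{-3/2+\delta_0}(\ln(2+t))^2$. At order $s-1$ the top derivatives can only be put in $H^1$-type norms, so the gain is weaker, $t^{-7/6+\delta_0}$, as for $\partial_\alpha b$ in $\dot W^{s-2,\infty}$. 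The representative term is handled like $\mathcal K$ by integration by parts, moving one derivative off the top-order factor.
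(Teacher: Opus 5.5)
Your treatment of the quadratic commutators matches the paper's. It rewrites them as $i(I-\mathfrak H)\bigl(D_t^2\zeta\frac{\bar\zeta_\alpha-1}{\zeta_\alpha}+D_t\zeta\frac{\partial_\alpha D_t\bar\zeta}{\zeta_\alpha}\bigr)$ and imitates the $D_tb$ estimates of Lemma~\ref{lem_b}, exactly as you propose.

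The difference is in the vortex terms. Write $[D_t\zeta,\mathfrak H]\frac{\partial_\alpha\mathfrak F}{\zeta_\alpha}=(I-\mathfrak H)\bigl(D_t\zeta\frac{\partial_\alpha\mathfrak F}{\zeta_\alpha}\bigr)$ and insert $\frac{q_\alpha}{\zeta_\alpha}=\frac{\lambda i}{2\pi}\bigl[\frac{1}{(\zeta-z_1)^2}-\frac{1}{(\zeta-z_2)^2}\bigr]$. The paper observes that the $-q_\alpha$ part of $\partial_\alpha\mathfrak F$ then cancels exactly the $D_t\zeta$-part of the last term in \eqref{A_exp}. Each $(\zeta-z_j)^{-2}$ is the trace of a function holomorphic in $\Omega(t)^c$, so $(I-\mathfrak H)$ acts on what remains as multiplication by $2$. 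The whole vortex contribution is therefore the explicit function $-\frac{\lambda}{\pi}\bigl[\frac{\dot z_2}{(\zeta-z_2)^2}-\frac{\dot z_1}{(\zeta-z_1)^2}\bigr]$, bounded by Lemma~\ref{lem_z_1} alone with no Hilbert transform acting on it.

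Estimating the $q_\alpha$-commutator and the vortex term separately, as you do, also closes; your dipole grouping even gives better powers of $H$. However, drop the claim that the $\ln(2+t)$ from Lemma~\ref{lem_Hf_infty} is absorbed because $H\gtrsim H_0+\lambda t$.

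\begin{itemize}
\item For the $\dot z_j$-part, whose $L^\infty$ size is $\lambda(\epsilon+\lambda)H^{-3}$ with no decay in $t$, that argument needs $\ln(2+t)\lesssim H_0+\lambda t$ to reach $\lambda(\epsilon+\lambda)H^{-2}$.
\item This fails near $t\sim\lambda^{-1}$ whenever $H_0\ll\ln(1/\lambda)$.
\item That regime is permitted by \eqref{lambda_H_0}: fix $H_0$ and let $\epsilon\to0$, which forces $\lambda\to0$.
\end{itemize}

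The holomorphy alternative you mention is the correct fix for the $\dot z_j$-terms, and it is precisely the paper's observation. For the $D_t\zeta$-parts, the factor $\norm{D_t\zeta}_{L^\infty}\lesssim\epsilon t^{-1/4}$ absorbs the logarithm.
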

\begin{proof}
    We rewrite the expression of $A-1$ as below:
    
    $$
    \begin{aligned}
    (I-\mathfrak{H})(A-1)=&\ i(I-\mathfrak{H})\left(D_t^2\zeta\frac{\bar{\zeta}_\alpha-1}{\zeta_\alpha}+D_t\zeta\frac{\partial_\alpha D_t\bar{\zeta}}{\zeta_\alpha}\right)\\
    &-\frac{\lambda}{2\pi}(I-\mathfrak{H})D_t\zeta\left[\frac{1}{(\zeta-z_2)^2}-\frac{1}{(\zeta-z_1)^2}\right]\\
    &-\frac{\lambda}{2\pi}(I-\mathfrak{H})\left[\frac{D_t\zeta-\dot{z_1}}{(\zeta-z_1)^2}-\frac{D_t\zeta-\dot{z_2}}{(\zeta-z_2)^2}\right]\\
    =&\ i(I-\mathfrak{H})\left(D_t^2\zeta\frac{\bar{\zeta}_\alpha-1}{\zeta_\alpha}+D_t\zeta\frac{\partial_\alpha D_t\bar{\zeta}}{\zeta_\alpha}\right)\\
     &-\frac{\lambda}{\pi}\left[\frac{\dot{z_2}}{(\zeta-z_2)^2}-\frac{\dot{z_1}}{(\zeta-z_1)^2}\right].
    \end{aligned}  
    $$
The first term on the right hand side can be handled by imitating the proof of (\ref{D_t_b_L2}). Adopting Lemma \ref{lem_transfer_d}, Lemma \ref{lem_real_proj} and Lemma \ref{lem_z_1}, we prove that
$$\norm{A-1}_{H^{s}}\lesssim\frac{\epsilon^2}{t^{\frac{2}{3}-\delta_0}}\ln (2+t)+\frac{\lambda(\epsilon+\lambda)}{H^{\frac{3}{2}}}.$$
    Likewise,
$$\norm{A-1}_{W^{s-3,\infty}}\lesssim\frac{\epsilon^2}{t^{\frac{3}{2}-\delta_0}}(\ln (2+t))^2+\frac{\lambda(\epsilon+\lambda)}{H^2}$$
and
$$
\norm{A-1}_{\dot W^{s-2,\infty}}+\norm{A-1}_{\dot W^{s-1,\infty}}\lesssim\frac{\epsilon^2}{t^{\frac{7}{6}-\delta_0}}(\ln (2+t))^2+\frac{\lambda(\epsilon+\lambda)}{H^2}
$$
where we refer to Lemma \ref{lem_transfer_d} and mimic the proof of (\ref{D_t_b_W^s-1}).
\end{proof}

\begin{lemma}\label{lem_a_t/a_est}
    We have
\begin{equation}
        \norm{\frac{a_t}{a}\circ\kappa^{-1}A}_{H^s}\lesssim\frac{\epsilon^2}{t^{1-\delta_0}}\ln (2+t)+\frac{\lambda\epsilon}{H^{\frac{3}{2}}t^{\frac{1}{2}}}+\frac{\lambda}{H^{\frac{5}{2}}},
    \end{equation}
    \begin{equation}
        \norm{\frac{a_t}{a}\circ\kappa^{-1}A}_{W^{s-3,\infty}}\lesssim\frac{\epsilon^2}{t^{\frac{3}{2}-\delta_0}}(\ln (2+t))^2+\frac{\lambda\epsilon}{H^2t^{\frac{1}{2}}}+\frac{\lambda}{H^3}
    \end{equation}
and
    
    \begin{equation}
        \norm{\frac{a_t}{a}\circ\kappa^{-1}A}_{W^{s-1,\infty}}\lesssim\frac{\epsilon^2}{t^{\frac{7}{6}-\delta_0}}(\ln (2+t))^2+\frac{\lambda\epsilon}{H^2t^{\frac{1}{2}}}+\frac{\lambda}{H^3}.
    \end{equation}
\end{lemma}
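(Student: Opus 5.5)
The plan is to estimate $\frac{a_t}{a}\circ\kappa^{-1}A$ from the identity \eqref{a_t/a} in Lemma \ref{lem_b_exp}, combined with Lemma \ref{lem_real_proj}. Since $\frac{a_t}{a}\circ\kappa^{-1}A$ is real, the relation $(I-\mathfrak{H})\bigl(\frac{a_t}{a}\circ\kappa^{-1}A\,\bar\zeta_\alpha\bigr)=g$ gives $\|\frac{a_t}{a}\circ\kappa^{-1}A\|_{H^k}\lesssim\|g\|_{H^k}$ for $k\le s$, and $\|\cdot\|_{W^{l,\infty}}\lesssim\|g\|_{W^{l,\infty}}$ for $l\le s-2$. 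So everything reduces to bounding the right-hand side $g$ of \eqref{a_t/a}, which I split into the irrotational part (the two commutators plus the squared-difference-quotient integral) and the vortex sum.

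\textbf{Irrotational part.} This part is quadratic: $[D_t^2\zeta,\mathfrak{H}]\frac{\partial_\alpha D_t\bar\zeta}{\zeta_\alpha}$ and $[D_t\zeta,\mathfrak{H}]\frac{\partial_\alpha D_t^2\bar\zeta}{\zeta_\alpha}$. The remaining integral is cubic and is handled by Lemma \ref{app_S1}, giving $\epsilon^3t^{-1}\ln(2+t)$.

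For the quadratic commutators I proceed exactly as in the proof of \eqref{D_t_b_L2}, differentiating $s$ times.
\begin{itemize}
\item Lower-order distributions of derivatives are handled by Lemma \ref{lem_transfer_d} (transition of derivatives, with the $\frac{2}{it}$ gain) together with Lemma \ref{lem_Omega} and (B3).
\item Where $\Omega_0D_t^2\zeta$ would appear, I first replace $D_t^2\zeta$ by $iA\zeta_\alpha-i$ using \eqref{main_equation}.
\item The top-order term, with all derivatives on one factor, is rewritten by integration by parts into a kernel $\frac{f(\alpha)-f(\beta)}{(\alpha-\beta)^2}$. I then split $\alpha$ into $|\alpha|\le t^{5/6}$, the annulus $\mathcal S(t)$, and $|\alpha|\ge t^{7/6}$, and use \eqref{transfer_partial_alpha} together with Lemma \ref{lem_(I-H)fg_main2} and Remark \ref{rem_(I-H)fg_main} on the annulus.
\end{itemize}
Because both factors now carry a time derivative, each decaying like $t^{-1/2}$ in $L^\infty$, I expect $\epsilon^2 t^{-1+\delta_0}\ln(2+t)$ rather than $t^{-2/3}$. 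This is where the $L_0D_t^2\zeta$ energy and the $\Omega_0$ bounds enter. I expect this top-order annulus analysis to be the main obstacle, specifically the lack of control of $\Omega_0D_t\zeta$ in $H^s$. I would handle it as in Lemma \ref{lem_b}, by trading $\partial_\alpha^s$ for $\frac{it^2}{4\alpha^2}$ on one factor via \eqref{transfer_partial_alpha}, with cut-off errors of size $t^{-5/6}$.

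The $L^\infty$ bounds follow the same route using Lemma \ref{lem_Hf_infty}, Lemma \ref{lem_S_1} and \eqref{transfer_d_var_1}, treating the model term as for $\mathcal K$ in Lemma \ref{lem_b}. This should give $t^{-3/2+\delta_0}$ at order $s-3$ and $t^{-7/6+\delta_0}$ at top order.

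\textbf{Vortex sum.} Here I use the dipole structure $\lambda_1=-\lambda_2=\lambda$ to write the sum as
\[
\frac{\lambda}{\pi}\Bigl[\frac{2D_t^2\zeta+i}{(\zeta-z_1)^2}-\frac{2D_t^2\zeta+i}{(\zeta-z_2)^2}\Bigr]
-\frac{\lambda}{\pi}\Bigl[\frac{\ddot z_1}{(\zeta-z_1)^2}-\frac{\ddot z_2}{(\zeta-z_2)^2}\Bigr]+\text{cubic-type terms}.
\]
\begin{itemize}
\item The constant $i$ sits against the difference $(\zeta-z_1)^{-2}-(\zeta-z_2)^{-2}=O(d_{12}\,|\zeta-z_j|^{-3})$. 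By the integrals computed in Proposition \ref{pro_q}, this gives $\lambda H^{-5/2}$ in $H^s$ and $\lambda H^{-3}$ in $L^\infty$.
\item The factor $D_t^2\zeta$ sits against the same difference, contributing $\lambda\epsilon t^{-1/2}H^{-3}$ in $L^\infty$ and $\lambda\epsilon H^{-5/2}$ in $L^2$. I bound $\|D_t^2\zeta\|_{L^\infty}$ by (B4), and for the $L^2$ estimate I pair $L^\infty$ of $D_t^2\zeta$ with $L^2$ of the kernel, giving $\lambda\epsilon t^{-1/2}H^{-3/2}$.
\item The $\ddot z_j$ terms use \eqref{ddot_z_1}, \eqref{ddot_z_1-z_2} of Lemma \ref{lem_z_1}. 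Writing them as $\ddot z_1$ times the difference of kernels plus $(\ddot z_1-\ddot z_2)(\zeta-z_2)^{-2}$, they contribute $\frac{\epsilon+\lambda}{H^{3/2}}\cdot\lambda H^{-3/2}\lesssim\lambda H^{-5/2}$.
\item The $(D_t\zeta-\dot z_j)^2/(\zeta-z_j)^3$ terms are bounded by $\lambda(\epsilon+\lambda)^2H^{-5/2}$ using Lemma \ref{lem_z_1} and Lemma \ref{lem_D_t_zeta_L^infty}.
\end{itemize}
Collecting the two parts yields the three stated estimates.
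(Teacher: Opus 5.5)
Your route is the paper's. Its proof only rewrites \eqref{a_t/a}, applies Lemma \ref{lem_real_proj}, and defers to the arguments of Lemmas \ref{lem_b} and \ref{lem_A}. Your treatment of the vortex sum is fine; using the dipole difference you even improve the $\lambda\epsilon H^{-3/2}t^{-1/2}$ and $\lambda\epsilon H^{-2}t^{-1/2}$ terms. The cubic integral is also fine.

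\textbf{The gap: the mechanism for the $t^{-1}$ rate.} The problem is your justification of the rate $\epsilon^2t^{-1+\delta_0}\ln(2+t)$ for the quadratic part in $H^s$. Two factors each decaying like $t^{-1/2}$ in $L^\infty$ give $t^{-1}$ only in $L^\infty$. In an $L^2$-based norm one factor must be placed in $L^2$, where nothing decays. In fact each commutator in \eqref{a_t/a} is, by itself, in general only of size $\epsilon^2t^{-1/2}$ in $L^2$. For a wave packet $D_t\zeta\approx a\,e^{-it^2/(4\alpha)}$ one finds
\[
D_t^2\zeta\,\partial_\alpha D_t\bar\zeta\approx-\frac{t^3}{8\alpha^3}|D_t\zeta|^2,\qquad D_t\zeta\,\partial_\alpha D_t^2\bar\zeta\approx\frac{t^3}{8\alpha^3}|D_t\zeta|^2 .
\]
These are non-oscillatory products of size $\epsilon^2/t$ on $\{|\alpha|\sim t\}$, and they cancel only in the sum. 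The missing step is to pair the two commutators. Exactly,
\[
2i[D_t^2\zeta,\mathfrak{H}]\frac{\partial_\alpha D_t\bar\zeta}{\zeta_\alpha}+2i[D_t\zeta,\mathfrak{H}]\frac{\partial_\alpha D_t^2\bar\zeta}{\zeta_\alpha}=2iD_t\Bigl([D_t\zeta,\mathfrak{H}]\frac{\partial_\alpha D_t\bar\zeta}{\zeta_\alpha}\Bigr)+\frac{2}{\pi}\int\Bigl(\frac{D_t\zeta(\alpha,t)-D_t\zeta(\beta,t)}{\zeta(\alpha,t)-\zeta(\beta,t)}\Bigr)^2(D_t\bar\zeta)_\beta\,d\beta .
\]
Up to $b$-terms, the kernel of the left side has exactly the form $(f_t(\alpha)-f_t(\beta))\bar g(\beta)+(f(\alpha)-f(\beta))\bar g_t(\beta)$ of \eqref{transfer_d_var_1}, with $f=D_t\zeta$ and $g=\partial_\beta D_t\zeta$. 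That identity is what produces the factor $\frac{2}{it}$, at the price of slowly growing vector-field norms.

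\textbf{Where the pairing is needed.} You use this structure only implicitly, for the $L^\infty$ bounds via the model term $\mathcal K$, but it is needed in $H^s$ as well.
\begin{enumerate}
\item It is indispensable for the undifferentiated part of the norm, where no outer $\partial_\alpha$ is available for \eqref{transfer_d_1}.
\item It is needed at top order. The splitting of Lemma \ref{lem_b} at $|\alpha|=t^{5/6}$ bounds each local term $\partial_\alpha D_t^2\zeta\,\partial_\alpha^sD_t\bar\zeta$ and $\partial_\alpha D_t\zeta\,\partial_\alpha^sD_t^2\bar\zeta$ on $\{|\alpha|\le t^{5/6}\}$ only by $\epsilon^2t^{-5/6}$. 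Their sum, however, has the form $f_t\bar g+f\bar g_t$ with $f=\partial_\alpha D_t\zeta$, $g=\partial_\alpha^sD_t\zeta$, and gains $\frac1t$ through \eqref{transfer_d_3}.
\end{enumerate}
As written, with the terms treated one at a time and $t^{-5/6}$ cut-off errors, your plan does not reach the stated exponent. Rewrite the quadratic part as $2iD_t\bigl([D_t\zeta,\mathfrak{H}]\frac{\partial_\alpha D_t\bar\zeta}{\zeta_\alpha}\bigr)$ plus cubic terms before estimating.

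A minor point: Lemma \ref{lem_real_proj} gives $W^{l,\infty}$ control only for $l\le s-2$. For the $W^{s-1,\infty}$ bound, commute $\partial_\alpha^{s-1}$ through $\mathfrak{H}$ first, as in \eqref{partial_alpha^s-1D_t_b}.
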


\begin{proof}
    Rewrite \eqref{a_t/a} and we derive
$$
    \begin{aligned}
    (I-\mathfrak{H})\frac{a_t}{a}\circ\kappa^{-1}A\bar{\zeta}_\alpha=&\ 2i(I-\mathfrak{H})\left(D_t^2\zeta\frac{\partial_\alpha D_t\bar{\zeta}}{\zeta_\alpha}\right)+2i(I-\mathfrak{H})\left(D_t\zeta\frac{\partial_\alpha D_t^2\bar{\zeta}}{\zeta_\alpha}\right)\\
    &+2iD_t\zeta\frac{\partial_\alpha}{\zeta_\alpha}[\mathfrak{H},D_t]D_t\bar{\zeta}-\frac{1}{\pi}\int\left(\frac{D_t\zeta(\alpha,t)-D_t\zeta(\beta,t)}{\zeta(\alpha,t)-\zeta(\beta,t)}\right)^2(D_t\bar{\zeta})_\beta d\beta\\
    &+\frac{\lambda}{\pi}\left[\frac{\frac{\partial_\alpha}{\zeta_\alpha}(D_t\zeta)^2+\partial_t^2z_{1}-i}{(\zeta-z_1)^2}-\frac{\frac{\partial_\alpha}{\zeta_\alpha}(D_t\zeta)^2+\partial_t^2z_{2}-i}{(\zeta-z_2)^2}\right]\\
    &+\frac{2\lambda}{\pi}\left[\frac{\dot{z}_1^2-(D_t\zeta)^2}{(\zeta-z_1)^3}-\frac{\dot{z}_2^2-(D_t\zeta)^2}{(\zeta-z_2)^3}\right].
    \end{aligned}
    $$
The proof is essentially the same as that of the preceding two lemmas and we omit it.
\end{proof}
\section{Energy estimates }\label{sec_energy}
In this section, we derive the energy estimates using the equations for the quantities introduced earlier. We will study two kinds of energy: one that does not involve vector fields and one that pertains to vector fields. In what follows, we work under the bootstrap assumption. Referring to Section 6 of \cite{su2020long}, we have
\begin{equation}\label{theta}
    (D^2_t-iA\partial_\alpha)\tilde{\theta}=G^{\tilde{\theta}}_0=G_c+G_d,
\end{equation}
where
\begin{equation}\label{G_c}
    G_c=-2\left[\bar{\mathfrak{F}},\mathfrak{H}\frac{1}{\zeta_{\alpha}}+
    \bar{\mathfrak{H}}\frac{1}{\bar{\zeta}_{\alpha}}\right]\bar{\mathfrak{F}}_{\alpha}+\frac{1}{\pi i}\int\left(\frac{D_{t}\zeta(\alpha,t)-D_{t}\zeta(\beta,t)}{\zeta(\alpha,t)-\zeta(\beta,t)}\right)^2(\zeta-\bar{\zeta})_{\beta}d\beta
\end{equation}
and
\begin{equation}
    G_d=-2[\bar{q},\mathfrak{H}]\frac{\bar{\mathfrak{F}}_{\alpha}}{\zeta_{\alpha}}-2[\bar{\mathfrak{F}},\mathfrak{H}]\frac{\bar{q}_{\alpha}}{\zeta_{\alpha}}-2[\bar{q},\mathfrak{H}]\frac{\bar{q}_{\alpha}}{\zeta_{\alpha}}-4D_tq.
\end{equation}
Considering $\tilde{\sigma}=D_t\tilde{\theta}$, we have
\begin{equation}\label{sigma}
    (D^2_t-iA\partial_{\alpha})\tilde{\sigma}=G_0 ^{\tilde{\sigma}},
\end{equation}
with
\begin{equation}\label{G_0^sigma}
\begin{aligned}
G^{\tilde{\sigma}}_0=[D^2_t-iA\partial_\alpha,D_t]\tilde{\theta}+D_t(D^2_t-iA\partial_\alpha)\tilde{\theta}=i\frac{a_t}{a}\circ\kappa^{-1} A\tilde{\theta}_{\alpha}+D_t(G_c+G_d).
\end{aligned}
\end{equation}
\subsection{Energy estimates without vector fields}
We first consider the energy estimates without vector fields. For higher derivatives, we denote $\tilde{\theta}_k:=\partial_\alpha^k\tilde{\theta}$ and $\tilde{\sigma}_k:=\partial_\alpha^k\tilde{\sigma}$. Applying $\partial_{\alpha}$ on both sides of (\ref{theta}) and (\ref{sigma}) $k$ times simultaneously for $1\leq k\leq s$, we obtain that
\begin{equation}
\begin{aligned}
    (D^2_t-iA\partial_\alpha)\tilde{\theta}_k&=G^{\tilde{\theta}}_k,\\
    (D^2_t-iA\partial_\alpha)\tilde{\sigma}_k&=G^{\tilde{\sigma}}_k,
\end{aligned}
\end{equation}
where
\begin{equation}
\begin{aligned}
    G^{\tilde{\theta}}_k=&\ \partial_\alpha^k G_0^{\tilde{\theta}}+[D_t^2-iA\partial_\alpha,\partial_\alpha^k]\tilde{\theta},\\
    G^{\tilde{\sigma}}_k=&\ \partial_\alpha^k G_0^{\tilde{\sigma}}+[D_t^2-iA\partial_\alpha,\partial_\alpha^k]\tilde{\sigma}.
\end{aligned}
\end{equation}
We introduce the energies for $0\leq k\leq s$ that
\begin{equation}
\begin{aligned}
    E^{\tilde{\theta}}_k&=\int\frac{|D_t\tilde{\theta}_k|^2}{A}+i\tilde{\theta}_k\partial_{\beta}\overline{\tilde{\theta}_k}d{\beta},\\
    E^{\tilde{\sigma}}_k&=\int\frac{|D_t\tilde{\sigma}_k|^2}{A}+i\tilde{\sigma}_k\partial_{\beta}\overline{\tilde{\sigma}_k}d{\beta},
\end{aligned}
\end{equation}
here $\tilde{\theta}_0=\tilde{\theta}$ and $\tilde{\sigma}_0=\tilde{\sigma}$. Next we differentiate the energies above with respect to t:
\begin{equation}\label{d_t_E}
\begin{aligned}
    \frac{d}{dt}E^{\tilde{\theta}}_k&=\operatorname{Re}\left\{\int\frac{2D_t\overline{\tilde{\theta}_k}}{A}G^{\tilde{\theta}}_kd{\beta}\right\}-\int\frac{|D_t\tilde{\theta}_k|^2}{A}\left(\frac{a_t}{a}\circ\kappa^{-1}\right)d\beta,\\
    \frac{d}{dt}E^{\tilde{\sigma}}_k&=\operatorname{Re}\left\{\int\frac{2D_t\overline{\tilde{\sigma}_k}}{A}G^{\tilde{\sigma}}_kd{\beta}\right\}-\int\frac{|D_t\tilde{\sigma}_k|^2}{A}\left(\frac{a_t}{a}\circ\kappa^{-1}\right)d\beta.
\end{aligned}
\end{equation}
Define
\begin{equation}
    \mathfrak{E}_s=\sum_{k=0}^s(E_k^{\tilde{\theta}}+E_k^{\tilde{\sigma}}).
\end{equation}

It's necessary to establish the relation between the norms of $\zeta$ and $\mathfrak{E}_s$ to close the bootstrap argument. We illustrate it by the following lemma:
\begin{lemma}\label{lem_d_t_zeta_E}
    Suppose $\epsilon$ is small enough, then for $t\in[0,T]$ we have
$$
    \norm{D_t\zeta}_{H^{s+\frac{1}{2}}}+\norm{D_t^2\zeta}_{H^s}+\norm{\zeta_\alpha-1}_{H^s}\lesssim\sqrt{\mathfrak{E}_s}+\frac{\epsilon^3}{t^\frac{1}{6}}\ln (2+t)+\frac{\epsilon^2}{t^\frac{1}{6}}+\frac{\lambda}{H^\frac{3}{2}}+\frac{\epsilon\lambda}{H^{\frac{3}{2}}}\ln (2+t).
    $$
\end{lemma}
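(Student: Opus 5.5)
The plan is to compare the energy $\mathfrak{E}_s$ with the Sobolev norms of the good unknowns first, and then pass to the physical quantities through the comparison lemmas of Subsection~\ref{subs_good}.

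\textbf{Step 1: coercivity of each energy.} Since $\frac12\le A\le2$ by (B1), $\int |D_t\tilde\theta_k|^2/A$ controls $\norm{D_t\tilde\theta_k}_{L^2}^2$. Because $\tilde\theta$ is almost antiholomorphic, the term $i\int\tilde\theta_k\partial_\beta\overline{\tilde\theta_k}$ is close to $\norm{\tilde\theta_k}_{\dot H^{1/2}}^2$ up to sign convention. Concretely, $(I-\mathfrak H)\tilde\theta=2\tilde\theta$, and I replace $\mathfrak H$ by $\mathbb H$, with error
\[
(\mathfrak H-\mathbb H)\tilde\theta_k=O(\norm{\zeta_\alpha-1}_{W^{s-2,\infty}}\norm{\tilde\theta_k}_{L^2}).
\]
Then $i\int f\bar f_\beta=\norm{f}_{\dot H^{1/2}}^2$ for $f=\mathbb H f$, plus a cubic error. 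The same argument applies to $\tilde\sigma$, although $\tilde\sigma=D_t\tilde\theta$ is only holomorphic modulo $[D_t\zeta,\mathfrak H]$ and $q$ terms. The $q$ part contributes $O(\lambda H^{-3/2})$ in $H^{s}$ by Proposition~\ref{pro_q}.

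\textbf{Step 2: the resulting bounds.} Step 1 gives
\[
\norm{D_t\tilde\theta}_{H^s}+\norm{D_t\tilde\sigma}_{H^s}+\norm{\tilde\sigma}_{\dot H^{1/2}\cap\dot H^{s+1/2}}\lesssim\sqrt{\mathfrak E_s}+\text{errors}.
\]
The main error terms are quadratic, of the form $\epsilon\cdot\epsilon t^{-1/6}$, together with the vortex terms $\lambda H^{-3/2}$ and $\epsilon\lambda H^{-3/2}\ln(2+t)$. The logarithm arises from Lemma~\ref{lem_Hf_infty} when $L^\infty$ bounds of Hilbert transforms of $q$-terms are needed.

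\textbf{Step 3: back to the physical quantities.} By Lemma~\ref{lem_theta_t_zeta_t},
\[
\norm{D_t\zeta}_{H^{s+1/2}}\le\tfrac12\norm{D_t\tilde\theta}_{H^{s+1/2}}+O(\epsilon^2t^{-1/4}+\lambda H^{-3/2}),
\qquad
\norm{D_t^2\zeta}_{H^s}\le\tfrac12\norm{D_t\tilde\sigma}_{H^s}+O\bigl(\epsilon^2t^{-1/4}+\lambda(\epsilon+\lambda)H^{-5/2}\bigr).
\]
The top half-derivative of $D_t\tilde\theta=\tilde\sigma$ comes from the $\dot H^{s+1/2}$ part of $E^{\tilde\sigma}_s$. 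For $\zeta_\alpha-1$ I use Lemma~\ref{lem_theta_alpha} together with equation \eqref{main_equation}:
\[
i(\zeta_\alpha-1)=D_t^2\zeta-i(A-1)\zeta_\alpha .
\]
Lemma~\ref{lem_A} then gives $\norm{\zeta_\alpha-1}_{H^s}\le\norm{D_t^2\zeta}_{H^s}+O(\epsilon^3 t^{-2/3+\delta_0}\ln(2+t)+\lambda(\epsilon+\lambda)H^{-3/2})$. This avoids needing $\tilde\theta$ itself in $L^2$, which is the low-frequency quantity we do not control.

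\textbf{Main obstacle.} The hardest step is the low-frequency and $\dot H^{1/2}$ coercivity in Step 1. This is where the $\epsilon^2 t^{-1/6}$ loss enters, via $\norm{\partial_\alpha^s\zeta}_{L^\infty}\lesssim\epsilon t^{-1/6}\ln(2+t)$ from Lemma~\ref{lem_D_t_zeta_L^infty}. The plan is to estimate the commutator $[\mathfrak H-\mathbb H,\partial_\alpha^{1/2}]$ errors in the symmetric form of Lemma~\ref{app_S1}, placing the $L^\infty$ norm on the factor carrying the top derivative of $\zeta$.
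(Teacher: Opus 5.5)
Your $H^s$ bounds for $D_t\zeta$ and $D_t^2\zeta$ (kinetic parts of the energies plus Lemma \ref{lem_theta_t_zeta_t}) and for $\zeta_\alpha-1$ (via $i(\zeta_\alpha-1)=D_t^2\zeta-i(A-1)\zeta_\alpha$ and Lemma \ref{lem_A}) are exactly the paper's. The error from Lemma \ref{lem_A} is $\epsilon^2$, not $\epsilon^3$, but it is still $\lesssim\epsilon^2t^{-1/6}$. Your Step 1 also makes explicit the lower bounds on all potential terms, which the paper uses tacitly.

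You differ in the $\dot H^{1/2}$ coercivity for $\tilde\sigma_s$. The paper splits $\tilde\sigma_s=\phi+r$ using the curved projections $\phi=\frac{I-\mathfrak H}{2}\tilde\sigma_s$ and $r=\frac{I+\mathfrak H}{2}\tilde\sigma_s$. Since $\phi$ is exactly the trace of a function $\psi$ holomorphic in $\Omega(t)^c$, Green's formula gives $i\int\phi\,\partial_\beta\bar\phi=\int_{\Omega(t)^c}|\nabla\psi|^2$. A trace inequality turns this into $\norm{\phi}^2_{\dot H^{1/2}}$, and Young's inequality absorbs the cross terms. Only $\norm{\Lambda r}_{L^2}$ then has to be estimated.

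You use the flat projections $P_\pm$ instead. Plancherel gives the exact identity $i\int f\,\partial_\beta\bar f=\norm{P_+f}^2_{\dot H^{1/2}}-\norm{P_-f}^2_{\dot H^{1/2}}$. This has no cross terms and needs no Green formula or trace inequality. Because $P_\pm$ commute with $\partial_\alpha$, you can write $P_-\tilde\sigma_s=\partial_\alpha^sP_-\tilde\sigma$ and avoid the $[\mathfrak H,\partial_\alpha^s]$ commutators entirely. The price is the extra piece $(\mathbb H-\mathfrak H)\tilde\sigma$ in $\dot H^{s+1/2}$. Writing $(\mathfrak H-\mathbb H)f=\frac{1}{\pi i}\int\ln\frac{\zeta(\alpha)-\zeta(\beta)}{\alpha-\beta}f_\beta\,d\beta$ and arguing as in the proof of Lemma \ref{lem_theta_t_zeta_t} bounds it by $\norm{\zeta_\alpha-1}_{W^{s-2,\infty}}\norm{\tilde\sigma}_{H^{s+1/2}}+\norm{\zeta_\alpha-1}_{H^s}\norm{\tilde\sigma}_{W^{s-2,\infty}}$, which is acceptable. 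Conversely, the paper's route gives $i\int\tilde\theta\,\partial_\beta\bar{\tilde\theta}\ge0$ for free, since $(I+\mathfrak H)\tilde\theta=0$ exactly. Your route needs a separate $\dot H^{1/2}$ bound on $(\mathfrak H-\mathbb H)\tilde\theta$ for this.

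Several details need fixing for your version to close.

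\emph{The sign.} With $\mathbb H$ as defined (symbol $-\operatorname{sgn}\xi$), $i\int f\,\partial_\beta\bar f=+\norm{f}^2_{\dot H^{1/2}}$ holds when $\mathbb Hf=-f$, not $\mathbb Hf=f$. This is the case consistent with your own $(I-\mathfrak H)\tilde\theta=2\tilde\theta$.

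\emph{The norm of the perturbation error.} The $\mathfrak H-\mathbb H$ error must be controlled in $\dot H^{1/2}$, not in $L^2$ as you displayed. At $k=0$ you cannot use $\norm{\tilde\theta}_{L^2}$ at all.

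\emph{The holomorphy defect.} It is $(I+\mathfrak H)\tilde\sigma=-[D_t\zeta,\mathfrak H]\frac{\tilde\theta_\alpha}{\zeta_\alpha}$, with no $q$ term; $q$ enters only through $D_t\tilde\theta-2D_t\zeta$. Its top-order piece $\Lambda[\mathfrak H,\partial_\alpha^sD_t\zeta]\frac{\tilde\theta_\alpha}{\zeta_\alpha}$ involves $\norm{D_t\zeta}_{H^{s+1/2}}$, the quantity being estimated. Close it by pairing with $\norm{\tilde\theta_\alpha}_{L^\infty}\lesssim\epsilon t^{-1/2}$ and using the bootstrap bound, or by absorption.

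\emph{Placement of $L^\infty$ norms.} A half derivative cannot be placed on an $L^\infty$ factor. In terms where $s$ derivatives hit $\zeta$, keep $\zeta_\alpha-1$ in $H^s$ and put the other factor in $L^\infty$, as the paper does; the logarithm then comes from Lemma \ref{lem_Hf_infty}. Pairing $\norm{\partial_\alpha^s\zeta}_{L^\infty}\lesssim\epsilon t^{-1/6}\ln(2+t)$ with a bootstrap factor $\epsilon$ would give $\epsilon^2t^{-1/6}\ln(2+t)$, which the stated right-hand side does not dominate. In the paper, the $\epsilon^2t^{-1/6}$ is just a weakened form of the comparison error from Lemma \ref{lem_theta_t_zeta_t}.
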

\begin{proof}
    According to (\ref{theta_t_zeta_t}), (\ref{sigma_t_zeta_tt}) and the definition of $\mathfrak{E}_s$ we have
\begin{equation}\label{D_t_zeta_E_s}
    \norm{D_t\zeta}_{H^s}+\norm{D_t^2\zeta}_{H^s}\lesssim\sqrt{\mathfrak{E}_s}+\left(\frac{\epsilon^2}{t^\frac{1}{6}}+\frac{\lambda}{H^\frac{3}{2}}\right).
    \end{equation}
Since $\zeta_\alpha-1=-iD_t^2\zeta+(1-A)\zeta_\alpha$, we have
$$
    \norm{\zeta_\alpha-1}_{H^s}\lesssim\sqrt{\mathfrak{E}_s}+\left(\frac{\epsilon^2}{t^\frac{1}{6}}+\frac{\lambda}{H^\frac{3}{2}}\right).
    $$
    
It remains to check $\norm{\partial_\alpha^sD_t\zeta}_{\dot{H}^\frac{1}{2}}$. By Lemma \ref{lem_theta_t_zeta_t} we only need to consider $\norm{\tilde{\sigma}_s}_{\dot{H}^\frac{1}{2}}$. Denote
$$
    \phi_{\tilde{\sigma}}=\frac{I-\mathfrak{H}}{2}\tilde{\sigma}_s, \quad r_{\tilde{\sigma}}=\frac{I+\mathfrak{H}}{2}\tilde{\sigma}_s.
    $$
There exists a holomorphic function $\psi$ which is defined on $\Omega(t)^c$ and satisfies $\psi\circ \zeta(\alpha,t)=\phi_{\tilde{\sigma}}$. From Green's formula we have the following observation:
$$
    i\int\phi_{\tilde{\sigma}}\partial_\beta\overline{\phi_{\tilde{\sigma}}}d\beta=-i\int\partial_\beta\phi_{\tilde{\sigma}}\overline{\phi_{\tilde{\sigma}}}d\beta=\operatorname{Re}\left\{i\int\phi_{\tilde{\sigma}}\partial_\beta\overline{\phi_{\tilde{\sigma}}}d\beta\right\}=\int_{\partial\Omega(t)^c} \psi\cdot\frac{\partial\psi}{\partial\mathbf{n}}ds=\int_{\Omega(t)^c}|\nabla\psi|^2dxdy
    $$
Adopting the trace formula:
$$
    \norm{\phi_{\tilde{\sigma}}}_{\dot{H}^{\frac{1}{2}}}^2\lesssim i\int\phi_{\tilde{\sigma}}\partial_\beta\overline{\phi_{\tilde{\sigma}}}d\beta.
    $$
Since
$$
    \begin{aligned}
    i\int\tilde{\sigma}_k\partial_\beta\overline{\tilde{\sigma}_k}=&\ i\int\phi_{\tilde{\sigma}}\partial_\beta\overline{\phi_{\tilde{\sigma}}}d\beta+i\int r_{\tilde{\sigma}}\partial_\beta\overline{\phi_{\tilde{\sigma}}}d\beta+i\int\phi_{\tilde{\sigma}}\partial_\beta\overline{r_{\tilde{\sigma}}}d\beta+i\int r_{\tilde{\sigma}}\partial_\beta\overline{r_{\tilde{\sigma}}}d\beta\\
    =&\ i\int\phi_{\tilde{\sigma}}\partial_\beta\overline{\phi_{\tilde{\sigma}}}d\beta+R,
    \end{aligned}
    $$
we have
$$
    |R|\lesssim\norm{\Lambda r_{\tilde{\sigma}}}_{L^2}\norm{\Lambda \phi_{\tilde{\sigma}}}_{L^2}+\norm{\Lambda r_{\tilde{\sigma}}}_{L^2}^2
    $$
where $\Lambda=\sqrt{|\partial_\alpha|}$. Therefore the employment of Young's inequality explores that
$$
    \begin{aligned}
    \norm{\phi_{\tilde{\sigma}}}_{\dot{H}^{\frac{1}{2}}}^2\leq& C\left(i\int\tilde{\sigma}_k\partial_\beta\overline{\tilde{\sigma}_k}-R\right)\\
    \leq &C(\mathfrak{E}_s+\norm{\Lambda r_{\tilde{\sigma}}}_{L^2}\norm{\Lambda \phi_{\tilde{\sigma}}}_{L^2}+\norm{\Lambda r_{\tilde{\sigma}}}_{L^2}^2)\\
    \leq& \frac{1}{2} \norm{\phi_{\tilde{\sigma}}}_{\dot{H}^{\frac{1}{2}}}^2+C(\mathfrak{E}_s+\norm{\Lambda r_{\tilde{\sigma}}}_{L^2}^2)
    \end{aligned}
    $$
which implies
$$
    \norm{\phi_{\tilde{\sigma}}}_{\dot{H}^{\frac{1}{2}}}^2\lesssim\mathfrak{E}_s+\norm{\Lambda r_{\tilde{\sigma}}}_{L^2}^2.
    $$
Recall the definition of $r_{\tilde{\sigma}}$:
\begin{equation}\label{r_sigma}
    \begin{aligned}
    r_{\tilde{\sigma}}=\frac{I+\mathfrak{H}}{2}\partial_\alpha^sD_t\tilde\theta=\frac{1}{2}[\mathfrak{H},\partial_\alpha^s]D_t\tilde{\theta}+\frac{1}{2}\partial_\alpha^s[\mathfrak{H},D_t]\tilde{\theta}=\frac{1}{2}\sum_{j=0}^{s-1}\partial_\alpha^j[\mathfrak{H},\partial_\alpha]\partial_\alpha^{s-j-1}D_t\tilde{\theta}+\frac{1}{2}\partial_\alpha^s[\mathfrak{H},D_t]\tilde{\theta}.
    \end{aligned}
    \end{equation}
Utilizing Lemma \ref{app_S1}, Lemma \ref{lem_theta_t_zeta_t} and Lemma \ref{lem_Hf_infty}, we deduce that
$$
    \begin{aligned}
    \norm{\Lambda\partial_\alpha^j[\mathfrak{H},\partial_\alpha]\partial_\alpha^{s-j-1}D_t\tilde{\theta}}_{L^2}\lesssim&\norm{\zeta_\alpha-1}_{W^{s-2,\infty}}\norm{D_t\tilde{\theta}}_{H^{s+\frac{1}{2}}}\\
    &+\norm{\zeta_\alpha-1}_{H^s}\left(\frac{1}{t^2}\norm{D_t\tilde{\theta}}_{H^s}+\norm{\partial_\alpha D_t\tilde{\theta}}_{W^{s-3,\infty}}\ln (2+t)\right)\\
    \lesssim&\ \frac{\epsilon^2}{t^\frac{1}{2}}+\epsilon\left(\norm{D_t\tilde{\theta}-2D_t\zeta}_{W^{s-2,\infty}}+\norm{2\partial_\alpha D_t\zeta}_{W^{s-2,\infty}}\right)\ln (2+t)\\
    \lesssim&\ \frac{\epsilon^2}{t^\frac{1}{2}}\ln (2+t)+\epsilon\norm{D_t\tilde{\theta}-2D_t\zeta}_{H^{s-1}}\ln (2+t)\\
    \lesssim&\ \frac{\epsilon^2}{t^\frac{1}{2}}\ln (2+t)+\frac{\epsilon^3}{t^\frac{1}{6}}\ln (2+t)+\frac{\epsilon\lambda}{H^{\frac{3}{2}}}\ln (2+t).
    \end{aligned}
    $$
For the last term in (\ref{r_sigma}), we consider the most extreme cases:
$$
    \begin{aligned}
    r_1=&\ \Lambda[\mathfrak{H},D_t\zeta]\frac{1}{\zeta_\alpha}\partial_\alpha^{s+1}\tilde{\theta},\quad r_2=\Lambda[\mathfrak{H},\partial_\alpha^sD_t\zeta]\frac{\tilde{\theta}_\alpha}{\zeta_\alpha}.
    \end{aligned}
    $$
Due to Lemma \ref{app_S1} and Lemma \ref{lem_theta_alpha}:
$$
    \norm{r_1}_{L^2}\leq\norm{[\mathfrak{H},D_t\zeta]\frac{\partial_\alpha^{s+1}\tilde{\theta}}{\zeta_\alpha}}_{H^1}\lesssim\norm{\partial_\alpha D_t\zeta}_{W^{2,\infty}}\norm{\tilde{\theta}_\alpha}_{H^s}\lesssim\frac{\epsilon^2}{t^\frac{1}{2}}.
    $$
Moreover,
$$
    r_2=\Lambda\mathfrak{H}\left(\partial_\alpha^sD_t\zeta\frac{\tilde{\theta}_\alpha}{\zeta_\alpha}\right)+\Lambda\left(\partial_\alpha^sD_t\zeta\mathfrak{H}\frac{\tilde{\theta}_\alpha}{\zeta_\alpha}\right)=\Lambda\mathfrak{H}\left(\partial_\alpha^sD_t\zeta\frac{\tilde{\theta}_\alpha}{\zeta_\alpha}\right)+[\Lambda,\mathfrak{H}\frac{\tilde{\theta}_\alpha}{\zeta_\alpha}]\partial_\alpha^sD_t\zeta+\mathfrak{H}\frac{\tilde{\theta}_\alpha}{\zeta_\alpha}\Lambda\partial_\alpha^sD_t\zeta.
    $$
According to Lemma 2.14 in \cite{Yosihara}, the estimate we have just proved with respect to $\norm{D_t\zeta}_{H^s}$ and fractional Leibniz rule,
$$
    \begin{aligned}
    \norm{r_2}_{L^2}\lesssim&\norm{D_t\zeta}_{H^{s+\frac{1}{2}}}\norm{\tilde{\theta}_\alpha}_{L^\infty}+\norm{D_t\zeta}_{H^s}\norm{\tilde{\theta}_\alpha}_{W^{\frac{1}{2},\infty}}\\
    &+\norm{\mathfrak{H}\frac{\tilde{\theta}_\alpha}{\zeta_\alpha}}_{H^2}\norm{\partial_\alpha^sD_t\zeta}_{H^{-\frac{1}{2}}}+\norm{\mathfrak{H}\frac{\tilde{\theta}_\alpha}{\zeta_\alpha}}_{L^2}\norm{D_t\zeta}_{H^{s+\frac{1}{2}}}\\
    \lesssim&\ \frac{\epsilon^2}{t^\frac{1}{2}}+\epsilon\left(\sqrt{\mathfrak{E_s}}+\frac{\epsilon^2}{t^\frac{1}{6}}+\frac{\lambda}{H^\frac{3}{2}}\right).
    \end{aligned}
    $$
Summing up the above, we deduce that
$$
    \norm{\Lambda r_{\tilde{\sigma}}}_{L^2}\lesssim\epsilon\sqrt{\mathfrak{E_s}}+\frac{\epsilon^2}{t^\frac{1}{2}}\ln (2+t)+\frac{\epsilon^3}{t^\frac{1}{6}}\ln (2+t)+\frac{\epsilon\lambda}{H^{\frac{3}{2}}}\ln (2+t).
    $$
Consequently,
$$
    \begin{aligned}
    \norm{\tilde{\sigma}_s}_{\dot{H}^\frac{1}{2}}^2\lesssim&\norm{\phi_{\tilde{\sigma}}}_{\dot{H}^{\frac{1}{2}}}^2+\norm{r_{\tilde{\sigma}}}_{\dot{H}^{\frac{1}{2}}}^2\lesssim \mathfrak{E}_s+\left(\frac{\epsilon^2}{t^\frac{1}{2}}\ln (2+t)+\frac{\epsilon^3}{t^\frac{1}{6}}\ln (2+t)+\frac{\epsilon\lambda}{H^{\frac{3}{2}}}\ln (2+t)\right)^2.
    \end{aligned}
    $$

\end{proof}

In the following two lemmas we estimate the terms on the right‑hand side of \eqref{d_t_E} one by one.
\begin{lemma}\label{lem_[P,partial_alpha]}
    We have
    \begin{equation}
        \norm{[D_t^2-iA\partial_\alpha,\partial_\alpha^k]\tilde{\theta}}_{L^2}\lesssim\frac{\epsilon^3}{t^{1+\delta}}+\frac{\lambda\epsilon}{H^{\frac{3}{2}}t^{\frac{1}{2}}}+\left(\frac{\epsilon^2}{t^{1+\delta}}+\frac{\lambda}{H^3}\right)\sqrt{\mathfrak{E}_s}
    \end{equation}
and
\begin{equation}\label{commute_sigma}
        \norm{[D_t^2-iA\partial_\alpha,\partial_\alpha^k]\tilde{\sigma}}_{L^2}\lesssim\frac{\epsilon^3}{t^{1+\delta}}+\frac{\lambda\epsilon}{H^{\frac{3}{2}}t^\frac{1}{2}}+\left(\frac{\epsilon^2}{t^{1+\delta}}+\frac{\lambda}{H^3}\right)\sqrt{\mathfrak{E}_s}
    \end{equation}
for $1\leq k\leq s$.
\end{lemma}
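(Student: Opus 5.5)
The plan is to expand the commutator explicitly and bound each piece using the bounds for $b$ and $A-1$ from Lemma \ref{lem_b} and Lemma \ref{lem_A}. Writing $D_t=\partial_t+b\partial_\alpha$, we have
\[
[D_t,\partial_\alpha^k]f=-\sum_{j=1}^{k}\binom{k}{j}\partial_\alpha^j b\,\partial_\alpha^{k-j+1}f,
\]
and hence
\[
[D_t^2,\partial_\alpha^k]f=D_t[D_t,\partial_\alpha^k]f+[D_t,\partial_\alpha^k]D_tf,
\]
\[
[iA\partial_\alpha,\partial_\alpha^k]f=-i\sum_{j=1}^{k}\binom{k}{j}\partial_\alpha^jA\,\partial_\alpha^{k-j+1}f.
\]
Thus every term is a product of a coefficient factor ($\partial_\alpha^j b$, $\partial_\alpha^j D_tb$, or $\partial_\alpha^j(A-1)$ with $j\ge1$) and a factor built from $\tilde\theta$ or $\tilde\sigma$ with at most $k+1$ spatial derivatives (possibly one $D_t$). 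I will treat $f=\tilde\theta$ first; $\tilde\sigma=D_t\tilde\theta$ is analogous, with one extra $D_t$ carried by the energy variable.

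The estimate splits according to which factor is placed in $L^2$. When the coefficient carries few derivatives ($j\le s-2$), I put it in $L^\infty$ and the $\tilde\theta$ factor in $L^2$. The $\tilde\theta$ factors are controlled by $\sqrt{\mathfrak E_s}$ up to the errors in Lemma \ref{lem_theta_t_zeta_t}, Lemma \ref{lem_theta_alpha} and Lemma \ref{lem_d_t_zeta_E}; for example, $\|\partial_\alpha^{k-j+1}\tilde\theta\|_{L^2}\lesssim\sqrt{\mathfrak E_s}$ and $\|\partial_\alpha^{k-j+1}D_t\tilde\theta\|_{L^2}\lesssim\sqrt{\mathfrak E_s}$. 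The coefficient bounds \eqref{partial_alpha_b_infty} and \eqref{partial_t_b_infty}, together with the $W^{s-3,\infty}$ bound on $A-1$, give decay $\epsilon^2t^{-3/2+\delta_0}(\ln(2+t))^2+\lambda(\epsilon+\lambda)H^{-3}$. This is integrable and of the form $(\epsilon^2t^{-1-\delta}+\lambda H^{-3})\sqrt{\mathfrak E_s}$ once $\delta<\tfrac12-\delta_0$; the $\lambda H^{-2}$ term in $\|A-1\|_{L^\infty}$ appears only at $j=0$, which is absent here.

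When the coefficient carries many derivatives ($j\ge s-1$), the $\tilde\theta$ factor has at most two derivatives, so it goes in $L^\infty$ with the decay $\epsilon t^{-1/2}$ from (B4) and Lemma \ref{lem_theta_t_zeta_t}, while the coefficient goes in $L^2$ via Lemma \ref{lem_b} and Lemma \ref{lem_A}. This gives $\epsilon\cdot\epsilon^2t^{-2/3+\delta_0}\ln(2+t)\cdot t^{-1/2}\lesssim\epsilon^3t^{-1-\delta}$ for the irrotational part. For the vortex part, $\lambda H^{-3/2}$ times $\epsilon t^{-1/2}$ produces exactly the term $\lambda\epsilon H^{-3/2}t^{-1/2}$ in the statement.

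I expect the main obstacle to be the top-order terms, such as $\partial_\alpha^k D_tb\,\partial_\alpha\tilde\theta$ or $\partial_\alpha^k b\,\partial_\alpha D_t\tilde\theta$ when $k=s$ for $\tilde\sigma$. These need $\|D_tb\|_{H^s}$, which \eqref{D_t_b_L2} supplies, so they are borderline but fine. The real danger is $D_t$ landing on $\partial_\alpha^j b$ with $j=k$ in $D_t[D_t,\partial_\alpha^k]\tilde\sigma$, which would require $\partial_\alpha^sD_t^2 b$ beyond the available bounds. I would first avoid it by rewriting $D_t\partial_\alpha^k b=\partial_\alpha^k D_tb+[D_t,\partial_\alpha^k]b$, and then exploit the $b\partial_\alpha$ structure so that only $\partial_\alpha^{k}D_tb$ appears multiplied by $\partial_\alpha\tilde\sigma\in L^\infty$. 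The slowly decaying $\epsilon^2 t^{-2/3+\delta_0}$ factor is paired with the $t^{-1/2}$ decay, giving exponent $7/6-\delta_0>1$, which closes the estimate.
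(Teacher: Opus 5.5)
Your proposal is correct and follows the paper's proof essentially step for step. It uses the same expansion of $[D_t^2,\partial_\alpha^k]$ and $[iA\partial_\alpha,\partial_\alpha^k]$, the same choice of which factor goes in $L^2$ and which in $L^\infty$ according to how many derivatives fall on the coefficient, and the same rewriting $D_t\partial_\alpha^j b=\partial_\alpha^jD_tb+[D_t,\partial_\alpha^j]b$, so that only $\|D_tb\|_{H^s}$ or $\|D_tb\|_{W^{s-2,\infty}}$ is needed (no $D_t^2b$ ever actually appears, so the danger you anticipate does not arise). The one place you go beyond what is written is your claim that $\partial_\alpha^j(A-1)$, $j\ge1$, has vortex part of size $\lambda(\epsilon+\lambda)H^{-3}$: Lemma \ref{lem_A} as stated only gives $\lambda(\epsilon+\lambda)H^{-2}$ in $W^{s-3,\infty}$, so this needs the explicit form $-\frac{\lambda}{\pi}\bigl[\dot z_2(\zeta-z_2)^{-2}-\dot z_1(\zeta-z_1)^{-2}\bigr]$ of the vortex term, whereas the paper simply asserts that the $A$-terms are treated similarly.
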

\begin{proof}
    Straight calculation yields
$$\begin{aligned}
        [D^2_t,\partial_\alpha^k]=&\ D_t[b\partial_\alpha,\partial_\alpha^k]+[b\partial_\alpha,\partial_\alpha^k]D_t\\
        =&-D_t\sum_{j=1}^k\frac{k!}{j!(k-j)!}\partial^{j}_\alpha b\partial_\alpha^{k-j+1}-\sum_{j=1}^k\frac{k!}{j!(k-j)!}\partial^{j}_\alpha b\partial_\alpha^{k-j+1}D_t.
    \end{aligned}$$
Using Lemma \ref{lem_b} and Lemma \ref{lem_theta_alpha}, we claim that
$$\begin{aligned}
        \norm{D_t\partial_\alpha^jb\partial_\alpha^{k-j+1}\tilde{\theta}}_{L^2}\lesssim&\ (\norm{[D_t,\partial_\alpha^j]b}_{L^2}+\norm{D_tb}_{H^s})\norm{\partial_\alpha^{k-j+1}\tilde{\theta}}_{L^\infty}\\
        \lesssim&\ \frac{\epsilon^3}{t^{1+\delta}}+\frac{\lambda\epsilon(\epsilon+\lambda)}{H^{\frac{5}{2}}t^{\frac{1}{2}}},
    \end{aligned}$$
when $k-j+1\leq s-2$. Otherwise we get
$$\begin{aligned}
        \norm{D_t\partial_\alpha^jb\partial_\alpha^{k-j+1}\tilde{\theta}}_{L^2}\lesssim&\ (\norm{[D_t,\partial_\alpha^j]b}_{L^\infty}+\norm{D_tb}_{W^{s-2,\infty}})\norm{\partial_\alpha^{k-j+1}\tilde{\theta}}_{L^2}\\
        \lesssim&\ \frac{\epsilon^3}{t^{1+\delta}}+\frac{\lambda\epsilon(\epsilon+\lambda)}{H^3}.
    \end{aligned}$$
Similarly,
$$\begin{aligned}
        \norm{\partial_\alpha^jbD_t\partial_\alpha^{k-j+1}\tilde{\theta}}_{L^2}\leq&\norm{\partial_\alpha^jb}_{L^\infty}\norm{D_t\partial_\alpha^{k-j+1}\tilde{\theta}}_{L^2}\\
        \lesssim&\left(\frac{\epsilon^2}{t^{1+\delta}}+\frac{\lambda}{H^3}\right)\sqrt{\mathfrak{E}_s}
    \end{aligned}$$
when $j\leq s-2$ and for $j>s-2$:
$$\begin{aligned}
        \norm{\partial_\alpha^jbD_t\partial_\alpha^{k-j+1}\tilde{\theta}}_{L^2}\lesssim&\norm{\partial_\alpha^jb}_{L^2}\norm{D_t\partial_\alpha^{k-j+1}\tilde{\theta}}_{L^\infty}\\
        \lesssim&\ \frac{\epsilon^3}{t^{1+\delta}}+\frac{\lambda\epsilon}{H^{\frac{3}{2}}t^{\frac{1}{2}}}.
    \end{aligned}$$
The term $[A\partial_\alpha,\partial_\alpha^k]\tilde{\theta}$ can be treated similarly. For the same reason, (\ref{commute_sigma}) holds.
\end{proof}
\begin{lemma}\label{lem_G_d}
    We have
    \begin{equation}\label{G_d}
    \norm{G_d}_{H^s}\lesssim\frac{\lambda}{H^{\frac{5}{2}}}\sqrt{\mathfrak{E}_s}+\frac{\lambda^2}{H^{\frac{3}{2}}}+\frac{\lambda\epsilon}{H^{\frac{3}{2}}t^{\frac{1}{4}}}
    \end{equation}
and
\begin{equation}
    \norm{D_tG_d}_{H^s}\lesssim\frac{\lambda}{H^{\frac{5}{2}}}\sqrt{\mathfrak{E}_s}+\frac{\lambda^2}{H^{\frac{3}{2}}}+\frac{\lambda\epsilon}{H^{\frac{3}{2}}t^{\frac{1}{4}}}.
    \end{equation}
\end{lemma}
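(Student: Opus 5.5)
We estimate the four pieces of $G_d=-2[\bar q,\mathfrak{H}]\frac{\bar{\mathfrak F}_\alpha}{\zeta_\alpha}-2[\bar{\mathfrak F},\mathfrak{H}]\frac{\bar q_\alpha}{\zeta_\alpha}-2[\bar q,\mathfrak{H}]\frac{\bar q_\alpha}{\zeta_\alpha}-4D_tq$ separately in $H^s$. The basic tool is the integral representation
\[
[g,\mathfrak H]\frac{f_\alpha}{\zeta_\alpha}=\frac{1}{\pi i}\int\frac{g(\alpha)-g(\beta)}{\zeta(\alpha)-\zeta(\beta)}f_\beta\,d\beta,
\]
together with the commutator estimates of Lemma \ref{app_S1}. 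After distributing $\partial_\alpha^k$, $k\le s$ (using $[\partial_\alpha,\mathfrak H]=[\zeta_\alpha,\mathfrak H]\frac{\partial_\alpha}{\zeta_\alpha}$), each resulting term has the form of a Calder\'on commutator. In each one we place the $L^\infty$ norm on the difference quotient of the factor carrying the most derivatives that we can afford, and the $L^2$ norm on the other factor.

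\emph{Terms involving $q$.} For the first and third terms, the difference quotient of $\bar q$ gives $\norm{\partial_\alpha \bar q}_{W^{s,\infty}}\lesssim \lambda H^{-3}$. This follows from Proposition \ref{pro_q}: the $W^{s,\infty}$ bound is obtained exactly like the $W^{s-1,\infty}$ one, since every derivative gains a factor $1/H$. The remaining factor is placed in $L^2$. Since $\mathfrak F=D_t\bar\zeta-q$, we have
\[
\norm{\mathfrak F}_{H^{s}}\lesssim \norm{D_t\zeta}_{H^s}+\lambda H^{-3/2}\lesssim \sqrt{\mathfrak E_s}+\epsilon^2t^{-1/6}+\lambda H^{-3/2},
\]
where Lemma \ref{lem_d_t_zeta_E} (or (\ref{D_t_zeta_E_s})) converts $D_t\zeta$ into the energy. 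The top-order case is the one where all $s$ derivatives fall on $\bar{\mathfrak F}_\alpha$. There we integrate by parts in $\beta$ to move one derivative onto the kernel. This produces $\partial_\alpha^{s}\mathfrak F$ in $L^2$ times kernels bounded by $\norm{q_\alpha}_{W^{1,\infty}}$, with the chord-arc bound (B1) controlling $\zeta(\alpha)-\zeta(\beta)$. The result is $\lambda H^{-3}\sqrt{\mathfrak E_s}+\lambda^2H^{-9/2}+\dots$. Since $H\gtrsim H_0\gg1$, we have $\lambda H^{-3}\le\lambda H^{-5/2}$, so these terms are acceptable. For the second term we reverse the roles: $\mathfrak F$ enters only through its difference quotient, bounded in $L^\infty$ by $\norm{\partial_\alpha \mathfrak F}_{W^{s-1,\infty}}\lesssim \epsilon t^{-1/2}+\lambda H^{-2}$ (from (B4), Lemma \ref{lem_D_t_zeta_L^infty} and Proposition \ref{pro_q}), while $q_\alpha\in H^s$ gives $\lambda H^{-3/2}$. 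When derivatives land on $\bar{\mathfrak F}$ beyond the $W^{s-1,\infty}$ range, we instead put $\mathfrak F$ in $H^s$ and $q$ in $W^{s,\infty}$, as above. Altogether these contributions are bounded by $\frac{\lambda\epsilon}{H^{3/2}t^{1/4}}+\frac{\lambda^2}{H^{3/2}}+\frac{\lambda}{H^{5/2}}\sqrt{\mathfrak E_s}$; the weaker power $t^{-1/4}$ comes from $\norm{D_t\zeta}_{L^\infty}$ via Lemma \ref{lem_D_t_zeta_L^infty}.

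\emph{The term $-4D_tq$.} Proposition \ref{pro_q} gives $\norm{D_tq}_{H^s}\lesssim \lambda(\epsilon+\lambda)H^{-5/2}$. This is not quite of the stated form, so we return to (\ref{D_t_q}). The coefficient $\dot z_1-\dot z_2$ is bounded by $(\epsilon+\lambda)H^{-3/2}$ through (\ref{z_1-z_2_v}), which gives a contribution $\lambda(\epsilon+\lambda)H^{-3}$. The factors $D_t\zeta-\dot z_j$ over cubic denominators are handled by putting $D_t\zeta$ either in $L^\infty$ or in $H^s$ (the latter bounded by $\sqrt{\mathfrak E_s}$ plus errors), and bounding $\dot z_j$ by $\lambda+\epsilon t^{-1/4}$ from Lemma \ref{lem_z_1}. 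The cubic-denominator factors are bounded in $H^s$ by $H^{-5/2}$, with the $\lambda\epsilon$ piece reaching $H^{-3/2}$ via the weight $|z_1-z_2|\lesssim1$. This yields (\ref{G_d}).

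\emph{The estimate for $D_tG_d$.} We commute $D_t$ through the commutators using $[D_t,\mathfrak H]f=[D_t\zeta,\mathfrak H]\frac{f_\alpha}{\zeta_\alpha}$. Applying $D_t$ to the kernels produces $D_tq$, $D_t\mathfrak F=D_t^2\bar\zeta-D_tq$, and $D_t\zeta$ difference quotients. These are controlled by Proposition \ref{pro_q} (including $D_t^2q$) and by (B2), exactly as before. The main obstacle is $D_t^2 q$, which arises from $-4D_tq$. We handle it with the second bound in Proposition \ref{pro_q}, $\frac{\lambda(\epsilon+\lambda)^2}{H^3}+\frac{\lambda\epsilon}{H^{3/2}t^{1/2}}$, which fits within the claimed bound. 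A secondary difficulty is the top-order term $\partial_\alpha^sD_t^2\bar\zeta$ paired with $q$. Here we use (\ref{main_equation}) to replace $D_t^2\bar\zeta$ by $i A\bar\zeta_\alpha - i$, which is controlled by $\norm{\zeta_\alpha-1}_{H^s}\lesssim\sqrt{\mathfrak E_s}+\dots$, together with Lemma \ref{lem_A}.
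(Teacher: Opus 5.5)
Your decomposition and tools are the paper's: the three commutators plus $-4D_tq$, powers of $H$ gained from $q$, $\norm{D_t\zeta}_{H^s}$ and $\norm{D_t^2\zeta}_{H^s}$ converted into $\sqrt{\mathfrak{E}_s}$ via \eqref{D_t_zeta_E_s}, and the explicit formula \eqref{D_t_q}. However, two norms you use at top order are not available under (B1)--(B6), so one top-order term needs a different argument.

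\textbf{Unavailable norms.} The bound $\norm{\partial_\alpha\bar q}_{W^{s,\infty}}\lesssim\lambda H^{-3}$ fails. $\partial_\alpha^{s+1}q$ contains $\lambda\,\partial_\alpha^{s+1}\zeta$ times a rational factor of size $H^{-3}$, and $\partial_\alpha^{s+1}\zeta$ is controlled only in $L^2$ (Lemma \ref{lem_D_t_zeta_L^infty} stops at $\partial_\alpha^s\zeta$). Likewise $\norm{\partial_\alpha\mathfrak{F}}_{W^{s-1,\infty}}\lesssim\epsilon t^{-1/2}+\lambda H^{-2}$ is not available. (B4) gives $t^{-1/2}$ only up to $W^{s-3,\infty}$, $\partial_\alpha^{s-1}D_t\zeta$ decays only like $t^{-1/4}\ln(2+t)$, and $\partial_\alpha^sD_t\zeta$ is not in $L^\infty$ at all.

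\textbf{The term that needs a different argument.} This is $[\partial_\alpha^s\bar{\mathfrak{F}},\mathfrak{H}]\frac{\bar q_\alpha}{\zeta_\alpha}$, with all derivatives on the commutator function of the second term. Lemma \ref{app_S1} does not apply: part (1) needs $\partial_\alpha^{s+1}\mathfrak{F}$, and part (2) needs $q\in W^{s+1,\infty}$ or $\partial_\alpha\mathfrak{F}\in W^{s,\infty}$. The integration by parts you used for the first term would only produce $\partial_\alpha^{s+1}\mathfrak{F}$ here.

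You must open the commutator, $[f,\mathfrak{H}]g=f\,\mathfrak{H}g-\mathfrak{H}(fg)$, and bound it by $\norm{\partial_\alpha^s\mathfrak{F}}_{L^2}\bigl(\norm{\mathfrak{H}(\bar q_\alpha/\zeta_\alpha)}_{L^\infty}+\norm{q_\alpha}_{L^\infty}\bigr)$. Estimate $\mathfrak{H}(\bar q_\alpha/\zeta_\alpha)$ in $L^\infty$ by Lemma \ref{lem_Hf_infty}, or by $H^1\subset L^\infty$ together with $\norm{q_\alpha}_{H^1}\lesssim\lambda H^{-5/2}$. That is exactly the role of Lemma \ref{lem_Hf_infty} in the paper's proof.

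When all derivatives fall on $\bar q$ in the first and third terms, use the same device, or pair $\norm{q}_{H^{s+1}}$ from Proposition \ref{pro_q} with $\norm{\mathfrak{F}}_{L^\infty}$. Both give contributions within the stated bound. The paper also saves work by merging the second and third terms, $[\bar{\mathfrak{F}},\mathfrak{H}]+[\bar q,\mathfrak{H}]=[D_t\zeta,\mathfrak{H}]$.

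\textbf{The term $-4D_tq$.} Routing $\dot z_1-\dot z_2$ through \eqref{z_1-z_2_v} leaves $\lambda\epsilon H^{-3}$ with no time decay, which matches none of the three terms on the right-hand side. It is admissible only after invoking \eqref{lambda_H_0}: $\epsilon\le c_s\lambda H_0^{1/4}$ and $H\gtrsim H_0\ge 1$ give $\lambda\epsilon H^{-3}\lesssim\lambda^2H^{-3/2}$. The same remark is needed for the $\lambda\epsilon^2H^{-3}$ part of your $D_t^2q$ bound. It also shows the bound $\lambda(\epsilon+\lambda)H^{-5/2}$ of Proposition \ref{pro_q} was already acceptable.

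The paper instead uses $|\dot z_1-\dot z_2|\le|\dot z_1|+|\dot z_2|\lesssim\lambda+\epsilon t^{-1/4}$ against $\norm{((\zeta-z_1)(\zeta-z_2))^{-1}}_{H^s}\lesssim H^{-3/2}$. This yields exactly $\lambda^2H^{-3/2}+\lambda\epsilon H^{-3/2}t^{-1/4}$ for that term.

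Your handling of $D_t\zeta$ in $H^s$ via $\sqrt{\mathfrak{E}_s}$ in the cubic-denominator terms is correct, and more careful than the paper's one-line claim.

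Finally, the substitution via \eqref{main_equation} is unnecessary, since \eqref{D_t_zeta_E_s} already bounds $\norm{D_t^2\zeta}_{H^s}$ by $\sqrt{\mathfrak{E}_s}$ plus errors. Note also the correct identity is $D_t^2\bar\zeta=-iA\bar\zeta_\alpha+i$.
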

\begin{proof}
    Using Lemma \ref{lem_Hf_infty} and (\ref{D_t_zeta_E_s}), we obtain that
    $$
    \begin{aligned}
    \norm{[\bar{q},\mathfrak{H}]\frac{\bar{\mathfrak{F}}_\alpha}{\zeta_\alpha}}_{H^s}\lesssim&\ \norm{q_\alpha}_{W^{s-2,\infty}}\norm{D_t\zeta-\bar{q}}_{H^s}\\
    &+\norm{q}_{H^s}\left(\norm{\partial_\alpha D_t\zeta-\bar q_\alpha}_{W^{s-2,\infty}}\ln (2+t)+\frac{1}{t^2}\norm{\partial_\alpha D_t\zeta-\bar q_\alpha}_{H^{s-1}}\right)\\
    \lesssim&\ \frac{\lambda}{H^{\frac{5}{2}}}\left(\sqrt{\mathfrak{E}_s}+\frac{\epsilon^2}{t^{\frac{1}{6}}}+\frac{\lambda}{H^{\frac{3}{2}}}\right).
    \end{aligned}
    $$
Analogously,
$$
    \norm{[\bar{\mathfrak{F}},\mathfrak{H}]\frac{\bar{q}_\alpha}{\zeta_\alpha}+[\bar{q},\mathfrak{H}]\frac{\bar{q}_\alpha}{\zeta_\alpha}}_{H^s}=\norm{[D_t\zeta,\mathfrak{H}]\frac{\bar{q}_\alpha}{\zeta_\alpha}}_{H^s}\lesssim\frac{\lambda}{H^{\frac{5}{2}}}\left(\sqrt{\mathfrak{E}_s}+\frac{\epsilon^2}{t^{\frac{1}{6}}}+\frac{\lambda}{H^{\frac{3}{2}}}\right).
    $$
    Recall (\ref{D_t_q}):
    $$
    D_tq=-\frac{\lambda i}{2\pi}\frac{\dot{z_1}-\dot{z_2}}{(\zeta-z_1)(\zeta-z_2)}+\frac{\lambda i}{2\pi}(z_1-z_2)\left(\frac{D_t\zeta-\dot{z_1}}{(\zeta-z_1)^2(\zeta-z_2)}+\frac{D_t\zeta-\dot{z_2}}{(\zeta-z_1)(\zeta-z_2)^2}\right).
    $$
    From Lemma \ref{lem_z_1} we deduce that:
    $$
    \norm{D_tq}_{H^s}\lesssim\frac{\lambda^2}{H^{\frac{3}{2}}}+\frac{\lambda\epsilon}{H^{\frac{3}{2}}t^{\frac{1}{4}}}.
    $$
    Hence we prove (\ref{G_d}). The similar estimates hold for $\norm{D_tG_d}_{H^s}$.
\end{proof}

We return to $\mathfrak{E}_s$ and claim that $\mathfrak{E}_s$ is globally bounded. In fact, the main term of $\partial_\alpha^kG_c$ is $i\varphi D_t\tilde{\theta_k}$ where $\varphi$ is a real function. Therefore, \eqref{d_t_E} indicates that $\frac{d\mathfrak{E}_s}{dt}$ is integrable with respect to $t$. See the following theorem for more details. 
\begin{thm}\label{thm_d_t_E_s}
    There exists $\delta>0$ such that
    \begin{equation}\label{d_t_E_s}
        \left|\frac{d\mathfrak{E}_s}{dt}\right|\lesssim\left(\frac{\epsilon^3}{t^{1+\delta}}+\frac{\lambda\epsilon}{H^{\frac{3}{2}}t^{\frac{1}{4}}}+\frac{\lambda^2}{H^{\frac{3}{2}}}\right)\sqrt{\mathfrak{E}_s}+\left(\frac{\epsilon^2}{t^{1+\delta}}+\frac{\lambda\epsilon}{H^2t^{\frac{1}{2}}}+\frac{\lambda}{H^{\frac{5}{2}}}\right)\mathfrak{E_s}.
    \end{equation}
\end{thm}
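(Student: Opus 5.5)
The proof starts from the energy identity \eqref{d_t_E} for each $0\le k\le s$. The term $-\int |D_t\tilde\theta_k|^2 A^{-1}(\frac{a_t}{a}\circ\kappa^{-1})\,d\beta$ is bounded by $\|\frac{a_t}{a}\circ\kappa^{-1}\|_{L^\infty}\mathfrak{E}_s$, and the analogous term for $\tilde\sigma_k$ is bounded the same way. By Lemma \ref{lem_a_t/a_est} and (B1), $\|\frac{a_t}{a}\circ\kappa^{-1}\|_{L^\infty}\lesssim \epsilon^2t^{-3/2+\delta_0}(\ln(2+t))^2+\lambda\epsilon H^{-2}t^{-1/2}+\lambda H^{-3}$. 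This fits inside the coefficient of $\mathfrak{E}_s$ in \eqref{d_t_E_s}, since $H\gtrsim 1$ and $\delta$ may be taken smaller than $\frac12-\delta_0$. It remains to bound $\operatorname{Re}\int 2A^{-1}D_t\overline{\tilde\theta_k}\,G^{\tilde\theta}_k$ and the corresponding $\tilde\sigma$ term. We split $G^{\tilde\theta}_k=\partial_\alpha^kG_c+\partial_\alpha^kG_d+[D_t^2-iA\partial_\alpha,\partial_\alpha^k]\tilde\theta$, and $G^{\tilde\sigma}_k$ similarly, with the extra piece $\partial_\alpha^k\bigl(i\frac{a_t}{a}\circ\kappa^{-1}A\tilde\theta_\alpha\bigr)$ coming from \eqref{G_0^sigma}.

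The commutator pieces are handled by Lemma \ref{lem_[P,partial_alpha]} and Cauchy--Schwarz against $\|D_t\tilde\theta_k\|_{L^2}\lesssim\sqrt{\mathfrak{E}_s}$. The resulting terms $\lambda\epsilon H^{-3/2}t^{-1/2}\sqrt{\mathfrak{E}_s}$ and $\lambda H^{-3}\mathfrak{E}_s$ are dominated by the right side of \eqref{d_t_E_s}. The vortex pieces follow from Lemma \ref{lem_G_d}. That lemma gives $\|G_d\|_{H^s}+\|D_tG_d\|_{H^s}\lesssim \lambda H^{-5/2}\sqrt{\mathfrak{E}_s}+\lambda^2H^{-3/2}+\lambda\epsilon H^{-3/2}t^{-1/4}$, which produces exactly the $\lambda$-terms in \eqref{d_t_E_s}. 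For the $a_t/a$ term in $G^{\tilde\sigma}$, we put the top derivative in $H^s$ via Lemma \ref{lem_a_t/a_est} and bound $\tilde\theta_\alpha$ in $W^{s-2,\infty}$ via Lemma \ref{lem_theta_alpha} and (B4). When the derivatives are distributed the other way, we use the $W^{s-1,\infty}$ bound of $a_t/a$ with $\|\tilde\theta_\alpha\|_{H^s}$. This gives $(\epsilon^3t^{-1-\delta}+\lambda\epsilon H^{-5/2}t^{-1/2})\sqrt{\mathfrak{E}_s}$, which is acceptable.

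The main obstacle is $\partial_\alpha^kG_c$ and $\partial_\alpha^kD_tG_c$. We write $G_c$ in the flat-approximation form: the commutator $-2[\bar{\mathfrak F},\mathfrak H\zeta_\alpha^{-1}+\bar{\mathfrak H}\bar\zeta_\alpha^{-1}]\bar{\mathfrak F}_\alpha$ and the quadratic-kernel integral are both cubic, since $\mathfrak H+\bar{\mathfrak H}$ is small. The only term that fails the $t^{-1-\delta}$ bound is the one where all $k$ derivatives fall on $\bar{\mathfrak F}_\alpha$, equivalently on $D_t\tilde\theta$ after the comparison Lemma \ref{lem_theta_t_zeta_t}. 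Following the strategy of \cite{su2025new}, we isolate this top-order part as $i\varphi\,D_t\tilde\theta_k$ with $\varphi$ real, modulo terms bounded in $L^2$ by $\epsilon^3t^{-1-\delta}$. Since $\varphi$ is real, $\operatorname{Re}\int 2A^{-1}D_t\overline{\tilde\theta_k}\,i\varphi D_t\tilde\theta_k=0$, so this part drops out of the energy.

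The remainder consists of lower-order cubic terms. Most are controlled by Lemma \ref{app_S1} together with the decay in (B4) and Lemma \ref{lem_D_t_zeta_L^infty}, which gives $\epsilon^3t^{-1}\ln(2+t)$. To upgrade the logarithm to $t^{-1-\delta}$, we localize in $\alpha$. In the regions $|\alpha|\le t^{1-\mu}$ and $|\alpha|\ge t^{1+\mu}$, Lemma \ref{lem_int_alpha_away_t} gains a power of $t$ from the vector-field energy (B3) and Lemma \ref{lem_Omega}. In the region $t^{1-\mu}\le|\alpha|\le t^{1+\mu}$, we use Lemmas \ref{lem_(I-H)fg_main}, \ref{lem_(I-H)fg_main2}, \ref{integral_three} and Corollary \ref{cor_integral_three}, taking $\mu=\frac15$. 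These replace each singular integral by its local product, and the products cancel in the combination $G_c$ thanks to the null structure. The error is $t^{-1+2\mu}\epsilon\cdot\epsilon t^{\delta_0}\cdot\|\cdot\|_{L^\infty}\lesssim\epsilon^3t^{-3/2+2\mu+\delta_0}$.

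For $D_tG_c$, where $\Omega_0D_t\zeta$ is not controlled, we apply the transition identities of Lemma \ref{lem_transfer_d}. These move the derivative onto a good factor, or onto $D_t^2\zeta$, which we replace by $\zeta_\alpha-1$ through \eqref{main_equation}. We then use the $D_t$-version estimates of Lemma \ref{lem_(I-H)fg_main2}. This step, making the cancellation of the leading products precise for every derivative distribution, is where the argument is most delicate. Summing over $k$ then yields \eqref{d_t_E_s}.
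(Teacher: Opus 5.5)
Your outline matches the paper's. The $\frac{a_t}{a}$, commutator and $G_d$ contributions are handled exactly through Lemmas \ref{lem_a_t/a_est}, \ref{lem_[P,partial_alpha]} and \ref{lem_G_d}, and the endgame is the same: a main term $i\varphi D_t\tilde\theta_k$ with $\varphi$ real, which drops out of $\operatorname{Re}\int 2A^{-1}D_t\overline{\tilde\theta_k}\,\partial_\alpha^kG_c$.

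The gap is in the one step that carries the theorem. You never compute the non-integrable part of $\partial_\alpha^kG_c$, and the description you give of it is wrong. In the window $t^{1-\mu}\le|\alpha|\le t^{1+\mu}$, the wave-packet computation underlying Lemmas \ref{lem_(I-H)fg_main}--\ref{integral_three} shows that the leading local product of $-2[\bar{\mathfrak F},\mathfrak H\zeta_\alpha^{-1}+\bar{\mathfrak H}\bar\zeta_\alpha^{-1}]\bar{\mathfrak F}_\alpha$ vanishes. This holds for both the $\zeta-\alpha$ and the $\bar\zeta-\alpha$ component of its kernel, and for every distribution of the $k$ derivatives. So the term with all derivatives on $\bar{\mathfrak F}_\alpha$ is not where the obstruction sits.

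The obstruction comes from $\frac{1}{\pi i}\int\bigl(\frac{D_t\zeta(\alpha,t)-D_t\zeta(\beta,t)}{\zeta(\alpha,t)-\zeta(\beta,t)}\bigr)^2(\zeta-\bar\zeta)_\beta\,d\beta$, through the $\bar\zeta$ part of $(\zeta-\bar\zeta)_\beta$. It is produced by every distribution of the derivatives, including those that land on $(\zeta-\bar\zeta)_\beta$, which you do not discuss at all. These contributions are of size $\epsilon^3t^{-1}$ in $L^2$ and are resonant with $D_t\tilde\theta_k$. They do not cancel, contrary to your claim that the local products of the remainder cancel by null structure. Instead they add up to $C_k\frac{it}{2\alpha}|\zeta_\alpha-1|^2D_t\tilde\theta_k$, cf.\ \eqref{G_c_main}. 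Your remainder bound would therefore fail on exactly these terms, and the reality of $\varphi$ — the actual content of the theorem — is asserted rather than derived.

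The paper obtains the explicit form as follows. It integrates by parts in $G_c$ to reach a symmetric form. This separates the $\bar q$-dependent pieces hidden in $\bar{\mathfrak F}=D_t\zeta-\bar q$, which are bounded by $\lambda H^{-3}(\sqrt{\mathfrak E_s}+\cdots)$ and which your proposal does not account for. What remains is a trilinear integral $\mathfrak G$ with kernel $(\alpha-\beta)^{-3}$. In $\mathfrak G$, the holomorphic difference $(\zeta-\alpha)(\alpha)-(\zeta-\alpha)(\beta)$ and the antiholomorphic one $(\bar\zeta-\alpha)(\alpha)-(\bar\zeta-\alpha)(\beta)$ appear in separate terms. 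The holomorphic part $\mathfrak G_1$ is $O(\epsilon^3t^{-1-\delta})$. The antiholomorphic part $\mathfrak G_2$ has leading term $2\partial_\alpha^s\zeta|\partial_t\zeta_\alpha|^2$ on $t^{3/4}\le|\alpha|\le t^{5/4}$, which \eqref{transfer_partial_t} converts into $\frac{it}{2\alpha}D_t\tilde\theta_s|\zeta_\alpha-1|^2$.

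Note also that transition of derivatives is needed already for $G_c$, not only for $D_tG_c$. At $k=s$ the factor $\partial_\alpha^s\zeta_t$ has no $L_0$ or $\Omega_0$ control in $L^2$, so Lemmas \ref{lem_(I-H)fg_main}--\ref{integral_three} cannot be applied to it directly. The paper first moves $\partial_t$ onto a low-order factor via \eqref{transfer_d_var_2}, leaving $\partial_\alpha^s\zeta$, whose vector fields are controlled by (B3) and Lemma \ref{lem_Omega}.

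The same correction applies to your treatment of $D_tG_c$. Its leading part is again a real multiple of $iD_t\tilde\sigma_k$ modulo integrable errors, not a cancellation.
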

\begin{proof}
    From (\ref{d_t_E}), Lemma \ref{lem_a_t/a_est}, Lemma \ref{lem_[P,partial_alpha]} and Lemma \ref{lem_G_d}, we verify that
$$
\begin{aligned}
    &\left|\frac{d}{dt}\mathfrak{E}_s-\sum _{k=0}^s\operatorname{Re}\left\{\int\frac{2D_t\overline{\tilde{\theta}_k}}{A}\partial_\alpha^kG_c+\int\frac{2D_t\overline{\tilde{\sigma}_k}}{A}\partial_\alpha^kD_tG_c\right\}\right|\\
    \lesssim&\left(\frac{\epsilon^3}{t^{1+\delta}}+\frac{\lambda\epsilon}{H^{\frac{3}{2}}t^{\frac{1}{4}}}+\frac{\lambda^2}{H^{\frac{3}{2}}}\right)\sqrt{\mathfrak{E}_s}+\left(\frac{\epsilon^2}{t^{1+\delta}}+\frac{\lambda\epsilon}{H^2t^{\frac{1}{2}}}+\frac{\lambda}{H^{\frac{5}{2}}}\right)\mathfrak{E_s}.
    \end{aligned}
    $$
It remains to deal with the integral. We perform integration by parts on $G_c$:
$$
    \begin{aligned}
    G_c=&\ \frac{1}{\pi i}\int\frac{\partial_\beta(\bar{\mathfrak{F}}(\alpha,t)-\bar{\mathfrak{F}}(\beta,t))^2}{\zeta(\alpha,t)-\zeta(\beta,t)}-\frac{\partial_\beta(\bar{\mathfrak{F}}(\alpha,t)-\bar{\mathfrak{F}}(\beta,t))^2}{\bar{\zeta}(\alpha,t)-\bar{\zeta}(\beta,t)}d\beta\\
    &+\frac{1}{\pi i}\int\left(\frac{D_{t}\zeta(\alpha,t)-D_{t}\zeta(\beta,t)}{\zeta(\alpha,t)-\zeta(\beta,t)}\right)^2(\zeta-\bar{\zeta})_{\beta}d\beta\\
    =&-\frac{1}{\pi i}\int\left(\frac{\bar{\mathfrak{F}}(\alpha,t)-\bar{\mathfrak{F}}(\beta,t)}{\zeta(\alpha,t)-\zeta(\beta,t)}\right)^2\zeta_{\beta}-\left(\frac{\bar{\mathfrak{F}}(\alpha,t)-\bar{\mathfrak{F}}(\beta,t)}{\bar{\zeta}(\alpha,t)-\bar{\zeta}(\beta,t)}\right)^2\bar{\zeta}_{\beta}d\beta
    \\
    &+\frac{1}{\pi i}\int\left(\frac{D_{t}\zeta(\alpha,t)-D_{t}\zeta(\beta,t)}{\zeta(\alpha,t)-\zeta(\beta,t)}\right)^2(\zeta-\bar{\zeta})_{\beta}d\beta\\
    =&\ \frac{1}{\pi i}\int\left[\left(\frac{D_{t}\zeta(\alpha,t)-D_{t}\zeta(\beta,t)}{\zeta(\alpha,t)-\zeta(\beta,t)}\right)^2-\left(\frac{\bar{\mathfrak{F}}(\alpha,t)-\bar{\mathfrak{F}}(\beta,t)}{\zeta(\alpha,t)-\zeta(\beta,t)}\right)^2\right]\zeta_{\beta}d\beta\\
    &-\frac{1}{\pi i}\int\left[\left(\frac{D_{t}\zeta(\alpha,t)-D_{t}\zeta(\beta,t)}{\bar{\zeta}(\alpha,t)-\bar{\zeta}(\beta,t)}\right)^2-\left(\frac{\bar{\mathfrak{F}}(\alpha,t)-\bar{\mathfrak{F}}(\beta,t)}{\bar{\zeta}(\alpha,t)-\bar{\zeta}(\beta,t)}\right)^2\right]\bar{\zeta}_{\beta}d\beta\\
    &+\frac{1}{\pi i}\int\left[\left(\frac{D_{t}\zeta(\alpha,t)-D_{t}\zeta(\beta,t)}{\bar{\zeta}(\alpha,t)-\bar{\zeta}(\beta,t)}\right)^2-\left(\frac{D_t\zeta(\alpha,t)-D_t\zeta(\beta,t)}{\zeta(\alpha,t)-\zeta(\beta,t)}\right)^2\right]\bar{\zeta}_{\beta}d\beta.
    \end{aligned}
    $$
The first term satisfies
$$
    \begin{aligned}
    &\norm{\frac{1}{\pi i}\int\left[\left(\frac{D_{t}\zeta(\alpha,t)-D_{t}\zeta(\beta,t)}{\zeta(\alpha,t)-\zeta(\beta,t)}\right)^2-\left(\frac{\bar{\mathfrak{F}}(\alpha,t)-\bar{\mathfrak{F}}(\beta,t)}{\zeta(\alpha,t)-\zeta(\beta,t)}\right)^2\right]\zeta_{\beta}d\beta}_{H^s}\\
    =&\norm{\frac{1}{\pi i}\int\frac{(\bar{q}(\alpha,t)-\bar{q}(\beta,t))(D_t\zeta(\alpha,t)+\bar{\mathfrak{F}}(\alpha,t)-D_t\zeta(\beta,t)-\bar{\mathfrak{F}}(\beta,t))}{(\zeta(\alpha,t)-\zeta(\beta,t))^2}\zeta_\beta d\beta}_{H^s}\\
    \lesssim&\norm{D_t\zeta+\bar{\mathfrak{F}}}_{H^s}\norm{q_\alpha}_{W^{s-2,\infty}}\\
    \lesssim&\ \frac{\lambda}{H^{3}}\left(\sqrt{\mathfrak{E}_s}+\frac{\epsilon^2}{t^{\frac{1}{6}}}+\frac{\lambda}{H^{\frac{3}{2}}}\right).
    \end{aligned}
    $$
The same holds for the second term. We denote the last integral as $\mathfrak{G}$ and see that
$$
    \begin{aligned}
    \mathfrak{G}=&\ \frac{1}{\pi i}\int\frac{[(\zeta-\bar{\zeta})(\alpha,t)-(\zeta-\bar{\zeta})(\beta,t)](D_t\zeta(\alpha,t)-D_t\zeta(\beta,t))^2}{(\zeta(\alpha,t)-\zeta(\beta,t))^2(\bar{\zeta}(\alpha,t)-\bar{\zeta}(\beta,t))}\bar{\zeta}_\beta d\beta\\
    &+\frac{1}{\pi i}\int\frac{[(\zeta-\bar{\zeta})(\alpha,t)-(\zeta-\bar{\zeta})(\beta,t)](D_t\zeta(\alpha,t)-D_t\zeta(\beta,t))^2}{(\zeta(\alpha,t)-\zeta(\beta,t))(\bar{\zeta}(\alpha,t)-\bar{\zeta}(\beta,t))^2}\bar{\zeta}_\beta d\beta\\
    =&\ \frac{2}{\pi i}\int\frac{[(\zeta(\alpha,t)-\alpha)-(\zeta(\beta,t)-\beta)](\zeta_t(\alpha,t)-\zeta_t(\beta,t))^2}{(\alpha-\beta)^3}d\beta\\
    &-\frac{2}{\pi i}\int\frac{[(\bar{\zeta}(\alpha,t)-\alpha)-(\bar{\zeta}(\beta,t)-\beta)](\zeta_t(\alpha,t)-\zeta_t(\beta,t))^2}{(\alpha-\beta)^3}d\beta+R_\mathfrak{G},
    \end{aligned}
    $$
where $R_\mathfrak{G}$ is quartic and easily handled. To analyze $\partial_\alpha^k\mathfrak{G}$, we choose the most extreme scenario:
$$
    \mathfrak{G}_1=\frac{2}{\pi i}\int\frac{[(\zeta(\alpha,t)-\alpha)-(\zeta(\beta,t)-\beta)](\partial_\alpha^k \zeta_t(\alpha,t)-\partial_\beta^k \zeta_t(\beta,t))(\zeta_t(\alpha,t)-\zeta_t(\beta,t))}{(\alpha-\beta)^3}d\beta,
    $$
$$
    \mathfrak{G}_2=\frac{2}{\pi i}\int\frac{[(\bar\zeta(\alpha,t)-\alpha)-(\bar\zeta(\beta,t)-\beta)](\partial_\alpha^k \zeta_t(\alpha,t)-\partial_\beta^k \zeta_t(\beta,t))(\zeta_t(\alpha,t)-\zeta_t(\beta,t))}{(\alpha-\beta)^3}d\beta
    $$
and we aim to prove that
\begin{equation}\label{mathfrak_G_1}
    \norm{\mathfrak{G}_1}_{L^2}\lesssim\frac{\epsilon^3}{t^{1+\delta}},
    \end{equation}
\begin{equation}\label{mathfrak_G_2}
        \norm{\mathfrak{G}_2-1_{t^\frac{3}{4}\leq|\alpha|\leq t^\frac{5}{4}}\frac{it}{2\alpha}D_t\tilde{\theta}_k|\zeta_\alpha-1|^2}_{L^2}\lesssim\frac{\epsilon^3}{t^{1+\delta}}.
    \end{equation}
The strategy we will adopt can be used to treat other general situations. Apply integration by parts:
$$
    \begin{aligned}
    &\ \mathfrak{G}_1\\
    =&\ \frac{2}{\pi i}\int[(\zeta(\alpha,t)-\alpha)-(\zeta(\beta,t)-\beta)](\partial_\alpha^k \zeta_t(\alpha,t)-\partial_\beta^k \zeta_t(\beta,t))
    (\zeta_t(\alpha,t)-\zeta_t(\beta,t))\partial_\beta\left(\frac{1}{2(\alpha-\beta)^2}\right)d\beta\\
    =&\ \frac{1}{\pi i}\int\frac{(\partial_\alpha^k \zeta_t(\alpha,t)-\partial_\beta^k \zeta_t(\beta,t))(\zeta_t(\alpha,t)-\zeta_t(\beta,t))}{(\alpha-\beta)^2}(\zeta_\beta-1)d\beta\\
    &+\frac{1}{\pi i}\int\frac{[(\zeta(\alpha,t)-\alpha)-(\zeta(\beta,t)-\beta)](\zeta_t(\alpha,t)-\zeta_t(\beta,t))}{(\alpha-\beta)^2}\partial_\beta^{k+1}\zeta_td\beta\\
    &+\frac{1}{\pi i}\int\frac{[(\zeta(\alpha,t)-\alpha)-(\zeta(\beta,t)-\beta)](\partial_\alpha^k\zeta_t(\alpha,t)-\partial_\beta^k\zeta_t(\beta,t))}{(\alpha-\beta)^2}\partial_\beta\zeta_td\beta\\
    =&\ \mathfrak{G}_{11}+\mathfrak{G}_{12}+\mathfrak{G}_{13}.
    \end{aligned}
    $$
In the following operations, we want to transfer the derivative with respect to $t$ as needed. For instance, if $k=s$:
$$
    \begin{aligned}
    \mathfrak{G}_{11}=\ &\frac{1}{\pi i}\int\frac{\zeta_t(\alpha,t)-\zeta_t(\beta,t)}{(\alpha-\beta)^2}[\partial_t(\partial_\alpha^{s}\zeta(\alpha,t)-\partial_\beta^s\zeta(\beta,t))(\zeta_\beta-1)-(\partial_\alpha^{s}\zeta(\alpha,t)-\partial_\beta^s\zeta(\beta,t))\partial_t\zeta_\beta]\\
    &+\frac{1}{\pi i}\int\frac{(\partial_\alpha^s \zeta(\alpha,t)-\partial_\beta^s \zeta(\beta,t))(\zeta_t(\alpha,t)-\zeta_t(\beta,t))}{(\alpha-\beta)^2}\partial_t\zeta_\beta d\beta.
    \end{aligned}
    $$
From \eqref{transfer_d_var_2} we have
$$
    \norm{\mathfrak{G}_{11}-\frac{1}{\pi i}\int\frac{(\partial_\alpha^s \zeta(\alpha,t)-\partial_\beta^s \zeta(\beta,t))(\zeta_t(\alpha,t)-\zeta_t(\beta,t))}{(\alpha-\beta)^2}\partial_t\zeta_\beta d\beta}_{L^2}\lesssim\frac{\epsilon^3}{t^{\frac{3}{2}-\delta_0}}
    $$
which means, within the permissible range, we transfer the derivative of $\partial_\alpha^s\zeta_t$ with respect to $t$ onto $\zeta_\alpha-1$. Applying Corollary \ref{cor_integral_three} and Lemma \ref{lem_int_alpha_away_t} with $f_1=D_t\zeta,\  f_2=\partial_\alpha^s\zeta,\ f_3=\partial_\alpha D_t\zeta$ yields
\begin{equation}\label{mathfrak_G_11}
    \norm{\mathfrak{G}_{11}}_{L^2}\lesssim\frac{\epsilon^3}{t^{1+\delta}}.
    \end{equation}
Using the strategy mentioned above, we claim that
\begin{equation}\label{mathfrak_G_13}
    \norm{\mathfrak{G}_{13}}_{L^2}\lesssim\frac{\epsilon^3}{t^{1+\delta}}.
    \end{equation}
    
For $\mathfrak{G}_{12}$, more precise manipulations are required to be performed on the integral. Note that
$$
    \begin{aligned}
    \mathfrak{G}_{12}=&\ \frac{1}{\pi i}\int\frac{\zeta_t(\alpha,t)-\zeta_t(\beta,t)}{(\alpha-\beta)^2}[-\partial_t((\zeta(\alpha,t)-\alpha)-(\zeta(\beta,t)-\beta))\partial_\beta^{s+1}\zeta\\
    &+((\zeta(\alpha,t)-\alpha)-(\zeta(\beta,t)-\beta))\partial_\beta^{s+1}\zeta_t] d\beta\\
    &+\frac{1}{\pi i}\int\frac{(\zeta_t(\alpha,t)-\zeta_t(\beta,t))^2}{(\alpha-\beta)^2}\partial_\beta^{s+1}\zeta d\beta.
    \end{aligned}
    $$
For the first term we refer to \eqref{transfer_d_var_2}. For the second term, if $t^{\frac{4}{5}}\leq|\alpha|\leq t^{\frac{6}{5}}$:
$$
\begin{aligned}
&\frac{1}{\pi i}\int\frac{(\zeta_t(\alpha,t)-\zeta_t(\beta,t))^2}{(\alpha-\beta)^2}\partial_\beta^{s+1}\zeta d\beta\\
=\ &\frac{1}{\pi i}\int_{|\alpha-\beta|\geq\frac{1}{2}t^{\frac{4}{5}}}\frac{(\zeta_t(\alpha,t)-\zeta_t(\beta,t))^2}{(\alpha-\beta)^2}\partial_\beta^{s+1}\zeta d\beta\\
&+\frac{2}{\pi it}\int_{|\alpha-\beta|<\frac{1}{2}t^{\frac{4}{5}}}\frac{(\zeta_t(\alpha,t)-\zeta_t(\beta,t))(L_0(\zeta(\alpha,t)-\alpha)-L_0(\zeta(\beta,t)-\beta))}{(\alpha-\beta)^2}\partial_\beta^{s+1}\zeta d\beta\\
&-\frac{2}{\pi it}\int_{|\alpha-\beta|<\frac{1}{2}t^{\frac{4}{5}}}\frac{(\zeta_t(\alpha,t)-\zeta_t(\beta,t))(\alpha (\zeta_\alpha(\alpha,t)-1)-\beta (\zeta_\beta(\beta,t)-1))}{(\alpha-\beta)^2}\partial_\beta^{s+1}\zeta d\beta.
\end{aligned}
$$
Appealing to \eqref{transfer_d_var_2} and Remark \ref{rem_(I-H)fg_main} yields that
\begin{equation}\label{mathfrak_G_12}
    \norm{\mathfrak{G}_{12}}_{L^2}\lesssim\frac{\epsilon^3}{t^{1+\delta}}.
    \end{equation}
From \eqref{mathfrak_G_11}, \eqref{mathfrak_G_12} and \eqref{mathfrak_G_13}, we obtain (\ref{mathfrak_G_1}). Similarly,
$$
    \norm{\mathfrak{G}_2-1_{t^\frac{3}{4}\leq|\alpha|\leq t^\frac{5}{4}}2\partial_\alpha^s\zeta|\partial_t\zeta_\alpha|^2}_{L^2}\lesssim\frac{\epsilon^3}{t^{1+\delta}}.
    $$
Utilize (\ref{transfer_partial_t}) and Lemma \ref{lem_theta_t_zeta_t}:
$$
    \norm{\mathfrak{G}_2-1_{t^\frac{3}{4}\leq|\alpha|\leq t^\frac{5}{4}}\frac{it}{2\alpha}D_t\tilde{\theta}_s|\zeta_\alpha-1|^2}_{L^2}\lesssim\frac{\epsilon^3}{t^{1+\delta}}.
    $$
For $k< s$ the proof is easier and we omit it. Till now we have proved that
    
    \begin{equation}\label{G_c_main}
        \norm{\partial_\alpha^kG_c-C_k1_{t^\frac{3}{4}\leq|\alpha|\leq t^\frac{5}{4}}\frac{it}{2\alpha}D_t\tilde{\theta}_k|\zeta_\alpha-1|^2}_{L^2}\lesssim\frac{\epsilon^3}{t^{1+\delta}}
    \end{equation}
where $C_k$ is a positive number with respect to k. Recall Lemma \ref{lem_int_alpha_away_t}:
$$
    \left|\operatorname{Re}\int\frac{2D_t\overline{\tilde{\theta}_k}}{A}\partial_\alpha^kG_c\right|=\left|\operatorname{Re}\int\frac{2D_t\overline{\tilde{\theta}_k}}{A}\left(\partial_\alpha^kG_c-C_k1_{t^\frac{3}{4}\leq|\alpha|\leq t^\frac{5}{4}}\frac{it}{2\alpha}D_t\tilde{\theta}_k|\zeta_\alpha-1|^2\right)\right|\lesssim\frac{\epsilon^3}{t^{1+\delta}}\sqrt{\mathfrak{E_s}}.
    $$
    Analogously, there holds
$$
    \left|\operatorname{Re}\int\frac{2D_t\overline{\tilde{\sigma}_k}}{A}\partial_\alpha^kD_tG_c\right|\lesssim\frac{\epsilon^3}{t^{1+\delta}}\sqrt{\mathfrak{E_s}}.
    $$
All the above imply (\ref{d_t_E_s}).
\end{proof}
Now we are ready to prove the global boundedness of the energy.
\begin{cor}\label{cor_E}
    For $t\in [0,T]$, we claim that
$$
        \sqrt{\mathfrak{E}_s}\leq e^{C\epsilon^2+C\lambda^{\frac{1}{5}}\epsilon+Cc_s^3}(\sqrt{\mathfrak{E}_s(0)}+C\epsilon^3+C\lambda^{\frac{1}{7}}\epsilon+Cc_s\epsilon).
    $$
\end{cor}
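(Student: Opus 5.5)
We will integrate the differential inequality \eqref{d_t_E_s} of Theorem \ref{thm_d_t_E_s} by a Gronwall-type argument. Writing $y(t)=\sqrt{\mathfrak E_s(t)}$ (or $\sqrt{\mathfrak E_s+\eta}$ with $\eta\to0$ to avoid division by zero), \eqref{d_t_E_s} gives
\[
y'(t)\lesssim a(t)+b(t)\,y(t),\qquad a(t)=\frac{\epsilon^3}{t^{1+\delta}}+\frac{\lambda\epsilon}{H^{3/2}t^{1/4}}+\frac{\lambda^2}{H^{3/2}},\quad b(t)=\frac{\epsilon^2}{t^{1+\delta}}+\frac{\lambda\epsilon}{H^2t^{1/2}}+\frac{\lambda}{H^{5/2}}.
\]
Hence $y(t)\le e^{C\int_0^t b}\bigl(y(0)+C\int_0^t a\bigr)$. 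On $[0,1]$ we will use the local theory so that the singular factor $t^{-1-\delta}$ only needs to be integrated over $[1,t]$. The claim then reduces to the following bounds, which use only (B6), i.e. $H(\tau)\ge c_0(H_0+\lambda\tau)$, together with the parameter constraint \eqref{lambda_H_0}:
\[
\int_0^\infty b\lesssim \epsilon^2+\lambda^{1/5}\epsilon+c_s^3,\qquad \int_0^\infty a\lesssim \epsilon^3+\lambda^{1/7}\epsilon+c_s\epsilon.
\]

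The terms without $\epsilon$-factors are straightforward. We have
\[
\int_0^\infty \lambda (H_0+\lambda\tau)^{-5/2}\,d\tau\lesssim H_0^{-3/2},
\]
and by $c_sH_0^{1/2}\ge 1$ this is at most $c_s^3$. Similarly,
\[
\int \lambda^2 (H_0+\lambda\tau)^{-3/2}\,d\tau\lesssim \lambda H_0^{-1/2},
\]
and $\lambda/H_0^{1/2}\le c_s\epsilon$ by the left inequality of \eqref{lambda_H_0}. The pure wave terms give $\epsilon^2$ and $\epsilon^3$.

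The mixed terms are the main obstacle, since the time weights $t^{-1/4}$ and $t^{-1/2}$ are not integrable by themselves. For $\int \lambda\epsilon H^{-3/2}t^{-1/4}$, we will split at $\tau_*=H_0/\lambda$. On $[0,\tau_*]$ we have $H\gtrsim H_0$, which yields $\lambda\epsilon H_0^{-3/2}\tau_*^{3/4}=\epsilon\lambda^{1/4}H_0^{-3/4}$. On $[\tau_*,\infty)$ we have $H\gtrsim\lambda\tau$, which yields
\[
\lambda\epsilon\lambda^{-3/2}\int_{\tau_*}^\infty\tau^{-7/4}\,d\tau\sim\epsilon\lambda^{-1/2}\tau_*^{-3/4}=\epsilon\lambda^{1/4}H_0^{-3/4}.
\]
Since $H_0\gtrsim1$, this term is bounded by $\lambda^{1/4}\epsilon\le\lambda^{1/7}\epsilon$ for $\lambda\le1$. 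The same splitting for $\lambda\epsilon H^{-2}t^{-1/2}$ gives $\epsilon\lambda^{1/2}H_0^{-3/2}\lesssim\lambda^{1/5}\epsilon$. The exponents $1/5$ and $1/7$ in the statement leave room for cruder interpolations, for instance using $\min(H_0,\lambda\tau)$ and Hölder in place of the exact split. We will therefore not try to optimize them.

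Collecting these estimates, and absorbing the implicit constants into $C$, we obtain the stated bound. The bound is independent of $M$, so it can later be fed into the bootstrap closure.
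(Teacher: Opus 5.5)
Your proposal is correct and is essentially the paper's argument. Both apply Gronwall with an integrating factor to \eqref{d_t_E_s}, then bound the coefficient integral $\int\Theta$ and the forcing integral using (B6) and \eqref{lambda_H_0}, and both handle the $\lambda H^{-5/2}$ and $\lambda^2H^{-3/2}$ terms the same way.

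The only real difference is in the mixed terms. The paper uses $H_0\ge1$ together with the generic bound $\int_0^\infty\lambda(1+\lambda t)^{-p}(1+t)^{-q}\,dt\le\frac{p+q}{p+q-1}\lambda^{q/(p+q)}$, which is where the exponents $1/5$ and $1/7$ come from; it also avoids the $[0,1]$ issue simply by writing all time weights as $(1+t)$. Your split at $H_0/\lambda$ keeps the $H_0$ gain and gives the sharper bounds $\epsilon\lambda^{1/2}H_0^{-3/2}$ and $\epsilon\lambda^{1/4}H_0^{-3/4}$.
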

\begin{proof}
    According to Theorem \ref{thm_d_t_E_s}, we deduce that
$$
\begin{aligned}
    \frac{d}{dt}\sqrt{\mathfrak{E}_s}\leq& C\left(\frac{\epsilon^3}{(1+t)^{1+\delta}}+\frac{\lambda\epsilon}{H^{\frac{3}{2}}(1+t)^{\frac{1}{4}}}+\frac{\lambda^2}{H^{\frac{3}{2}}}\right)+C\left(\frac{\epsilon^2}{(1+t)^{1+\delta}}+\frac{\lambda\epsilon}{H^{2}(1+t)^{\frac{1}{2}}}+\frac{\lambda}{H^{\frac{5}{2}}}\right)\sqrt{\mathfrak{E_s}}\\
    \leq&C\left(\frac{\epsilon^3}{(1+t)^{1+\delta}}+\frac{\lambda\epsilon}{(H_0+\lambda t)^{\frac{3}{2}}(1+t)^{\frac{1}{4}}}+\frac{\lambda^2}{(H_0+\lambda t)^{\frac{3}{2}}}\right)\\
    &+C\left(\frac{\epsilon^2}{(1+t)^{1+\delta}}+\frac{\lambda\epsilon}{(H_0+\lambda t)^2(1+t)^{\frac{1}{2}}}+\frac{\lambda}{(H_0+\lambda t)^{\frac{5}{2}}}\right)\sqrt{\mathfrak{E_s}}.
    \end{aligned}
    $$
Denote
$$
\Theta(t)=C\frac{\epsilon^2}{(1+t)^{1+\delta}}+C\frac{\lambda\epsilon}{(H_0+\lambda t)^2(1+t)^{\frac{1}{2}}}+C\frac{\lambda}{(H_0+\lambda t)^{\frac{5}{2}}},
$$
then we have
$$
    \frac{d}{dt}(e^{\int_0^t-\Theta(\tau)d\tau}\sqrt{\mathfrak{E}_s})\leq Ce^{\int_0^t-\Theta(\tau)d\tau}\left(\frac{\epsilon^3}{(1+t)^{1+\delta}}+\frac{\lambda\epsilon}{(H_0+\lambda t)^{\frac{3}{2}}(1+t)^{\frac{1}{4}}}+\frac{\lambda^2}{(H_0+\lambda t)^{\frac{3}{2}}}\right)
    $$
which implies that
$$
    \sqrt{\mathfrak{E}_s}\leq e^{\int_0^t\Theta(\tau)d\tau}\left[\sqrt{\mathfrak{E}_s(0)}+C\int_0^t\left(\frac{\epsilon^3}{(1+\tau)^{1+\delta}}+\frac{\lambda\epsilon}{(H_0+\lambda \tau)^{\frac{3}{2}}(1+\tau)^{\frac{1}{4}}}+\frac{\lambda^2}{(H_0+\lambda \tau)^{\frac{3}{2}}}\right)d\tau\right].
    $$
    Next we consider the integral
    $$
    \int_0^\infty\frac{\lambda}{(1+\lambda t)^p(1+t)^q}dt
    $$
    for $p+q>1$ and $p,q>0$. Obviously,
    $$
    \begin{aligned}
    \int_0^\infty\frac{\lambda}{(1+\lambda t)^p(1+t)^q}dt=&\int_{t\leq\tilde{\lambda}}\frac{\lambda}{(1+\lambda t)^p(1+t)^q}dt+\int_{t>\tilde{\lambda}}\frac{\lambda}{(1+\lambda t)^p(1+t)^q}dt\\
    \leq&\ \lambda\tilde{\lambda}+\lambda^{1-p}\int_{t>\tilde{\lambda}}\frac{1}{t^{p+q}}dt\\
    =&\ \lambda\tilde{\lambda}+\frac{1}{p+q-1}\lambda^{1-p}\tilde{\lambda}^{1-p-q}.
    \end{aligned}
    $$
    Let $\lambda^{-p}\tilde{\lambda}^{-p-q}=1$, i.e., $\tilde{\lambda}=\lambda^{\frac{-p}{p+q}}$. Then there holds
    $$
    \int_0^\infty\frac{\lambda}{(1+\lambda t)^p(1+t)^q}dt\leq\frac{p+q}{p+q-1}\lambda^{\frac{q}{p+q}}.
    $$
    From the inequality we see that
    $$
    \int_0^t\Theta(\tau)d\tau\leq C\epsilon^2+C\lambda^{\frac{1}{5}}\epsilon+C\frac{1}{H_0^{\frac{3}{2}}}.
    $$
    Meanwhile,
    $$
    \int_0^t\left(\frac{\epsilon^3}{(1+\tau)^{1+\delta}}+\frac{\lambda\epsilon}{(H_0+\lambda \tau)^{\frac{3}{2}}(1+\tau)^{\frac{1}{4}}}+\frac{\lambda^2}{(H_0+\lambda \tau)^{\frac{3}{2}}}\right)d\tau\leq\frac{\epsilon^3}{\delta}+\frac{7}{3}\lambda^{\frac{1}{7}}\epsilon+\frac{2\lambda}{H_0^{\frac{1}{2}}}.
    $$
    Since $H_0^{-\frac{1}{2}}\leq c_s$ and $\lambda H_0^{-\frac{1}{2}}\leq c_s\epsilon$, we deduce that
$$
\sqrt{\mathfrak{E}_s}\leq e^{C\epsilon^2+C\lambda^{\frac{1}{5}}\epsilon+Cc_s^3}(\sqrt{\mathfrak{E}_s(0)}+C\epsilon^3+C\lambda^{\frac{1}{7}}\epsilon+Cc_s\epsilon).
$$
\end{proof}
\subsection{Energy estimates for vector fields} In this subsection, we aim to close the bootstrap assumptions about $\norm{L_0\zeta_\alpha}_{H^{s-1}}$ and $\norm{L_0D_t\zeta}_{H^{s-1}}$. To begin with, we derive some essential equations with respect to vector fields. Applying $L_0$ to both sides of (\ref{theta}) and (\ref{sigma}) yields
$$
    (D^2_t-iA\partial_\alpha)L_0\tilde{\theta}=[D^2_t-iA\partial_\alpha,L_0]\tilde{\theta}+L_0G^{\tilde{\theta}}_0
$$
and
$$
    (D^2_t-iA\partial_\alpha)L_0\tilde{\sigma}=[D^2_t-iA\partial_\alpha,L_0]\tilde{\sigma}+L_0G^{\tilde{\sigma}}_0.
$$
Regarding higher-order derivatives, we deduce that
$$
\begin{aligned}
    (D^2_t-iA\partial_\alpha)\partial_\alpha^kL_0\tilde{\theta}&=[D^2_t-iA\partial_\alpha,\partial_\alpha^k]L_0\tilde{\theta}+\partial_\alpha^k[D^2_t-iA\partial_\alpha,L_0]\tilde{\theta}+\partial_\alpha^kL_0G^{\tilde{\theta}}_0=:G_k^{L_0\tilde{\theta}},\\
    (D^2_t-iA\partial_\alpha)\partial_\alpha^kL_0\tilde{\sigma}&=[D^2_t-iA\partial_\alpha,\partial_\alpha^k]L_0\tilde{\sigma}+\partial_\alpha^k[D^2_t-iA\partial_\alpha,L_0]\tilde{\sigma}+\partial_\alpha^kL_0G^{\tilde{\sigma}}_0=:G_k^{L_0\tilde{\sigma}}
\end{aligned}
$$
for $0\leq k\leq s-1$. Define the energy
$$
\begin{aligned}
    E^{L_0\tilde{\theta}}_k=&\int\frac{|D_t\partial_\alpha^k L_0\tilde{\theta}|^2}{A}+i\partial_\alpha^k L_0\tilde{\theta}\partial_{\beta}\overline{\partial_\alpha^k L_0\tilde{\theta}}d{\beta},\\
    E^{L_0\tilde{\sigma}}_k=&\int\frac{|D_t\partial_\alpha^k L_0\tilde{\sigma}|^2}{A}+i\partial_\alpha^k L_0\tilde{\sigma}\partial_{\beta}\overline{\partial_\alpha^k L_0\tilde{\sigma}}d{\beta}
\end{aligned}
$$
and the total energy
$$
\mathfrak{E}_s^{L_0}=\sum_{k=0}^{s-1}(E_k^{L_0\tilde{\theta}}+E_k^{L_0\tilde{\sigma}}).
$$
In analogy with (\ref{d_t_E}), there holds
\begin{equation}\label{d_tE^L_0}
\begin{aligned}
\frac{d}{dt}E^{L_0\tilde{\theta}}_k=&\ \operatorname{Re}\left\{\int\frac{2D_t\partial_\alpha^kL_0\bar{\tilde{\theta}}}{A}G^{L_0\tilde{\theta}}_kd{\beta}\right\}-\int\frac{|D_t\partial_\alpha^kL_0{\tilde{\theta}}|^2}{A}\left(\frac{a_t}{a}\circ\kappa^{-1}\right)d\beta,\\
\frac{d}{dt}E^{L_0\tilde{\sigma}}_k=&\ \operatorname{Re}\left\{\int\frac{2D_t\partial_\alpha^kL_0\bar{\tilde{\sigma}}}{A}G^{L_0\tilde{\sigma}}_kd{\beta}\right\}-\int\frac{|D_t\partial_\alpha^kL_0{\tilde{\sigma}}|^2}{A}\left(\frac{a_t}{a}\circ\kappa^{-1}\right)d\beta.
\end{aligned}
\end{equation}

The following lemma reveals the relation between the vector fields and the energy:
\begin{lemma}\label{lem_L_0D_t_zeta_E}
    For $t\in[0,T]$, we have
$$
    \norm{L_0D_t{\zeta}}_{H^{s-1}}+\norm{L_0D_t^2\zeta}_{H^{s-1}}+\norm{L_0\zeta_\alpha}_{H^{s-1}}\lesssim\sqrt{\mathfrak{E}_s^{L_0}}+\frac{\epsilon^2}{t^{\frac{1}{6}-\delta_0}}\ln (2+t)+\frac{(\epsilon+\lambda)^2(1+t)}{H^{\frac{5}{2}}}+\frac{\epsilon (1+t)^{\frac{1}{2}}}{H^{\frac{3}{2}}}.
    $$
\end{lemma}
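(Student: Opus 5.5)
The argument mirrors the proof of Lemma \ref{lem_d_t_zeta_E}, with every quantity replaced by its $L_0$-image; the remainders are then estimated by Lemma \ref{lem_L_0D_t_theta} and Proposition \ref{pro_L_0_q}.

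\textbf{Step 1: control of $D_tL_0\tilde\theta$ and $D_tL_0\tilde\sigma$.} By definition of $E_k^{L_0\tilde\theta}$, $E_k^{L_0\tilde\sigma}$, the first term of each energy gives
\[
\|D_t\partial_\alpha^kL_0\tilde\theta\|_{L^2}+\|D_t\partial_\alpha^kL_0\tilde\sigma\|_{L^2}\lesssim\sqrt{\mathfrak E_s^{L_0}},\qquad 0\le k\le s-1 .
\]
We then commute $D_t$ and $\partial_\alpha^k$ through $L_0$ using
\[
[L_0,D_t]=-\tfrac12D_t+(L_0b-b)\partial_\alpha,\qquad [L_0,\partial_\alpha]=-\partial_\alpha .
\]
This yields $\|L_0D_t\tilde\theta\|_{H^{s-1}}$ and $\|L_0D_t\tilde\sigma\|_{H^{s-1}}$ in terms of $\sqrt{\mathfrak E_s^{L_0}}$, plus the following errors.
\begin{itemize}
\item Terms $\|D_t\tilde\theta\|_{H^{s-1}}$, $\|D_t\tilde\sigma\|_{H^{s-1}}$, which are $\lesssim\epsilon$ by (B2) and Lemma \ref{lem_theta_t_zeta_t}. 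These are lower order: they are absorbed into the $L_0$ quantities up to harmless $O(\epsilon)$ contributions, consistent with the bootstrap form.
\item Terms $(L_0b-b)\partial_\alpha(\cdot)$, bounded via the estimate for $\|L_0b\|$ in the proof of Lemma \ref{lem_Omega} combined with Lemma \ref{lem_b}. These produce the $\frac{\epsilon^2}{t^{1/6-\delta_0}}\ln(2+t)$ and $\frac{\lambda(\epsilon+\lambda)t}{H^{5/2}}$ contributions.
\end{itemize}
Lemma \ref{lem_L_0D_t_theta} then converts $L_0D_t\tilde\theta$ into $2L_0D_t\zeta$ and $L_0D_t\tilde\sigma$ into $2L_0D_t^2\zeta$. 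The errors are $\frac{\epsilon^2}{t^{1/6-\delta_0}}\ln(2+t)+\frac{\lambda(\epsilon+\lambda)t}{H^{5/2}}+\frac{\lambda(\epsilon+\lambda)t}{H^3}+\frac{\lambda(\epsilon+\lambda)t^{1/2}}{H^{5/2}}$, each dominated by the right-hand side of the claim.

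\textbf{Step 2: control of $L_0\zeta_\alpha$.} Apply $L_0$ to $\zeta_\alpha-1=-iD_t^2\zeta+(1-A)\zeta_\alpha$, which gives
\[
\|L_0\zeta_\alpha\|_{H^{s-1}}\lesssim\|L_0D_t^2\zeta\|_{H^{s-1}}+\|L_0(A-1)\|_{H^{s-1}}+\|A-1\|_{W^{s-1,\infty}}\|L_0\zeta_\alpha\|_{H^{s-1}}.
\]
The last term is absorbed into the left side because $A-1$ is small. For $L_0(A-1)$ we apply $L_0$ to the expression for $(I-\mathfrak H)(A-1)$ in the proof of Lemma \ref{lem_A} and commute $L_0$ with $\mathfrak H$ via $[L_0,\mathfrak H]f=[L_0\zeta,\mathfrak H]\frac{f_\alpha}{\zeta_\alpha}$.
\begin{itemize}
\item The irrotational quadratic part is handled exactly as in Lemma \ref{lem_L_0D_t_theta}, giving $\frac{\epsilon^2}{t^{1/6-\delta_0}}\ln(2+t)$.
\item The vortex part is $\frac{\lambda}{\pi}L_0\big[\frac{\dot z_2}{(\zeta-z_2)^2}-\frac{\dot z_1}{(\zeta-z_1)^2}\big]$. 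Here $L_0$ acting on $\dot z_j$ produces $\tfrac t2\ddot z_j$, and acting on $(\zeta-z_j)^{-2}$ produces $L_0\zeta-\tfrac t2\dot z_j$. We estimate these with Lemma \ref{lem_z_1} and the bound $\|L_0\zeta/(\zeta-z_j)\|_{L^\infty}\lesssim1$ from the proof of Proposition \ref{pro_L_0_q}. The $\ddot z_j$ piece with bound $\epsilon t^{-1/2}$ contributes $\lambda\,t\,\epsilon t^{-1/2}H^{-3/2}\lesssim\frac{\epsilon(1+t)^{1/2}}{H^{3/2}}$; the remaining pieces contribute $\frac{(\epsilon+\lambda)^2(1+t)}{H^{5/2}}$. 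Both are of the form of the stated error terms.
\end{itemize}
Finally, Lemma \ref{lem_real_proj} removes the projection $(I-\mathfrak H)$, since $A-1$ is real.

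\textbf{Main obstacle.} The delicate point is the top derivative $\partial_\alpha^{s-1}L_0(A-1)$ and the companion term $\partial_\alpha^{s-1}L_0b$. For these, $L_0$ falls on $\partial_\alpha^{s}D_t\zeta$, which is exactly the lack of $\Omega_0D_t\zeta$ control. I would handle this as in the proof of (\ref{D_t_b_L2}):
\begin{itemize}
\item integrate by parts inside the commutator;
\item split $\alpha$ into the regions $|\alpha|\le t^{5/6}$, $t^{5/6}<|\alpha|<t^{7/6}$ and $|\alpha|\ge t^{7/6}$;
\item use (\ref{transfer_partial_alpha}) together with Remark \ref{rem_(I-H)fg_main}.
\end{itemize}
This should cost only the factor $t^{\delta_0}$ and yield the $\frac{\epsilon^2}{t^{1/6-\delta_0}}\ln(2+t)$ term.
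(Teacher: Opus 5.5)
Your outline is the paper's proof. You get $L_0D_t\zeta$ and $L_0D_t^2\zeta$ from Lemma \ref{lem_L_0D_t_theta}, and $L_0\zeta_\alpha$ by applying $L_0$ to $D_t^2\zeta-iA\zeta_\alpha=-i$, recovering the real function $L_0A$ from the $(I-\mathfrak H)(A-1)$ formula and absorbing $\|A-1\|\,\|L_0\zeta_\alpha\|$.

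Where you add detail, one point is a genuine problem. The commutator is $[L_0,D_t]=-\tfrac12D_t+(L_0b-\tfrac12 b)\partial_\alpha$, not $(L_0b-b)\partial_\alpha$. Together with $[\partial_\alpha^k,L_0]=k\partial_\alpha^k$, it produces terms such as $\tfrac12\partial_\alpha^kD_t\tilde\theta$, of size $\epsilon$ with no decay. None of the stated right-hand-side terms dominates them, so Step 1 proves the lemma only with an extra term comparable to $\sqrt{\mathfrak E_s}$. The paper's one-line appeal to Lemma \ref{lem_L_0D_t_theta} silently drops the same term, so this is really a defect of the statement. The weaker form does suffice for the bootstrap, but only if you bound the extra term through Lemma \ref{lem_d_t_zeta_E} and Corollary \ref{cor_E} by $\tilde C\epsilon$, with an $M$-independent constant. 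Bounding it by (B2), as you propose, gives a multiple of $M\epsilon$, which is not enough to improve (B3) to $\frac M2\epsilon(1+t)^{\delta_0}$.

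Your treatment of the top-order part of $L_0A$ also needs correcting. The obstruction there is a one-derivative loss, namely $\partial_\alpha^sL_0D_t\bar\zeta$ against the available $\|L_0D_t\zeta\|_{H^{s-1}}$, not the missing $\Omega_0D_t\zeta$ bound. The region splitting from the proof of \eqref{D_t_b_L2} is not available here, because its $L_0$-version would require $L_0^2D_t\zeta$ and $\Omega_0L_0D_t\zeta$, which nothing controls. It is also unnecessary. As in the paper, write $(I-\mathfrak H)(D_t\zeta\,\partial_\alpha^sL_0D_t\bar\zeta)$ as $[D_t\zeta,\mathfrak H]\partial_\alpha^sL_0D_t\bar\zeta$ plus commutators, and move one derivative onto the kernel. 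This gives $\epsilon t^{-1/2}\|L_0D_t\zeta\|_{H^{s-1}}$, which Step 1 controls, so your first bullet alone does the job.

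In the vortex term, the bound $\|L_0\zeta/(\zeta-z_j)\|_{L^\infty}\lesssim1$ applied term by term gives only $\lambda^2H^{-3/2}$ for $\lambda\dot z_jL_0\zeta/(\zeta-z_j)^3$. This is because the $\alpha\zeta_\alpha$ part of $L_0\zeta$ really is of size $|\zeta-z_j|$. To reach $(\epsilon+\lambda)^2(1+t)H^{-5/2}$ you must pair $j=1$ with $j=2$, using \eqref{z_1-z_2_v} and $(\zeta-z_1)^{-3}-(\zeta-z_2)^{-3}=O(|\zeta-z|^{-4})$ for $z\in[z_1,z_2]$. Alternatively, absorb $\lambda^2H^{-3/2}$ into the stated terms via \eqref{lambda_H_0}. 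If $H\le t$ it is below $\lambda^2tH^{-5/2}$. If $H>t$, then $\lambda\le c_sH_0^{1/2}\epsilon$ makes it $\lesssim\epsilon^2t^{-1/2}$.
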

\begin{proof}
    By Referring to Lemma \ref{lem_L_0D_t_theta}, the result follows:
    $$
    \norm{L_0D_t{\zeta}}_{H^{s-1}}+\norm{L_0D_t^2\zeta}_{H^{s-1}}\lesssim\sqrt{\mathfrak{E}_s^{L_0}}+\frac{\epsilon^2}{t^{\frac{1}{6}-\delta_0}}\ln (2+t)+\frac{\lambda(\epsilon+\lambda) t}{H^{\frac{5}{2}}}.
    $$
    For $L_0\zeta_\alpha$, we recall that
    \begin{equation}\label{L_0D_t^2_zeta}
    L_0D_t^2\zeta=iL_0A\zeta_\alpha+iAL_0\zeta_\alpha.
    \end{equation}
    We need to investigate $L_0A$:
    $$
    \begin{aligned}
    (I-\mathfrak{H})L_0A=&\ [L_0,\mathfrak{H}](A-1)+L_0(I-\mathfrak{H})(A-1)\\
    =&\ [L_0,\mathfrak{H}](A-1)+iL_0(I-\mathfrak{H})\left(D_t^2\zeta\frac{\bar{\zeta}_\alpha-1}{\zeta_\alpha}+D_t\zeta\frac{\partial_\alpha D_t\bar{\zeta}}{\zeta_\alpha}\right)\\
    &-\frac{\lambda}{\pi}L_0\left[\frac{\dot{z_2}}{(\zeta-z_2)^2}-\frac{\dot{z_1}}{(\zeta-z_1)^2}\right],
    \end{aligned}
    $$
    where
    $$
    L_0\left[\frac{\dot{z_2}}{(\zeta-z_2)^2}-\frac{\dot{z_1}}{(\zeta-z_1)^2}\right]=\frac{t\ddot{z_2}}{2(\zeta-z_2)^2}-\frac{t\ddot{z_1}}{2(\zeta-z_1)^2}-\frac{\dot{z_2}(2L_0\zeta-t\dot{z_2})}{(\zeta-z_2)^3}+\frac{\dot{z_1}(2L_0\zeta-t\dot{z_1})}{(\zeta-z_1)^3}.
    $$
    Therefore,
    $$
    \norm{L_0\left[\frac{\dot{z_2}}{(\zeta-z_2)^2}-\frac{\dot{z_1}}{(\zeta-z_1)^2}\right]}_{H^{s-1}}\lesssim\frac{(\epsilon+\lambda)^2t}{H^3}+\frac{\epsilon t^{\frac{1}{2}}}{H^{\frac{3}{2}}}.
    $$
    Also,
    $$
    \begin{aligned}
    \norm{[L_0,\mathfrak{H}](A-1)}_{H^{s-1}}\lesssim&\norm{A-1}_{H^{s-1}}+\norm{L_0\zeta_\alpha}_{H^{s-1}}\norm{A-1}_{W^{s-1,\infty}}\\
    \lesssim&\ \frac{\epsilon^2}{t^{\frac{2}{3}-\delta_0}}\ln (2+t)+\frac{\lambda(\epsilon+\lambda)}{H^{\frac{3}{2}}}+\epsilon^2\norm{L_0\zeta_\alpha}_{H^{s-1}}
    \end{aligned}
    $$
    and
    $$
    \begin{aligned}
    \norm{L_0(I-\mathfrak{H})\left(D_t^2\zeta\frac{\bar{\zeta}_\alpha-1}{\zeta_\alpha}\right)}_{H^{s-1}}\lesssim&\norm{L_0D_t^2\zeta}_{H^{s-1}}\norm{\zeta_\alpha-1}_{W^{s-1,\infty}}+\norm{D_t^2\zeta}_{W^{s-1,\infty}}\norm{L_0\zeta_\alpha}_{H^{s-1}}\\
    \lesssim&\ \frac{\epsilon\sqrt{\mathfrak{E}_s^{L_0}}}{t^\frac{1}{6}}\ln (2+t)+\frac{\epsilon^2}{t^{\frac{1}{6}-\delta_0}}\ln (2+t)+\frac{\lambda\epsilon(\epsilon+\lambda) t^{\frac{5}{6}}}{H^{\frac{5}{2}}}\ln (2+t).
    \end{aligned}
    $$
    To handle the rest part of $(I-\mathfrak{H})L_0A$, we consider the most extreme scenario where $L_0$ and all the derivatives are performed on $\partial_\alpha D_t\bar\zeta$ in $(I-\mathfrak{H})\left(D_t\zeta\frac{\partial_\alpha D_t\bar\zeta}{\zeta_\alpha}\right)$. In other words, we now proceed to examine the function
    $$
    (I-\mathfrak{H})(D_t\zeta\partial_\alpha^sL_0D_t\bar\zeta)=[D_t\zeta,\mathfrak{H}]\partial_\alpha^sL_0D_t\bar\zeta+D_t\zeta[\partial_\alpha^s,\mathfrak{H}]L_0D_t\bar\zeta+D_t\zeta\partial_\alpha^s[L_0,\mathfrak{H}]D_t\bar\zeta.
    $$
    By the utilization of integration by part, we deduce that
    $$
    \norm{(I-\mathfrak{H})(D_t\zeta\partial_\alpha^sL_0D_t\bar\zeta)}_{L^2}\lesssim\frac{\epsilon}{t^\frac{1}{2}}\left(\sqrt{\mathfrak{E}_s^{L_0}}+\frac{\epsilon^2}{t^{\frac{1}{6}-\delta_0}}\ln (2+t)+\frac{\lambda(\epsilon+\lambda) t}{H^{\frac{5}{2}}}\right)
    $$
    which, combined with the estimates above, has proved that
    $$
    \norm{L_0A}_{H^{s-1}}\lesssim\frac{\epsilon^2}{t^{\frac{1}{6}-\delta_0}}\ln (2+t)+\frac{\epsilon}{t^\frac{1}{6}}\ln (2+t)\sqrt{\mathfrak{E}_s^{L_0}}+\epsilon^2\norm{L_0\zeta_\alpha}_{H^{s-1}}+\frac{(\epsilon+\lambda)^2t}{H^{\frac{5}{2}}}+\frac{\epsilon t^{\frac{1}{2}}}{H^{\frac{3}{2}}}.
    $$
    From (\ref{L_0D_t^2_zeta}), we obtain
    $$
    \begin{aligned}
    \norm{L_0\zeta_\alpha}_{H^{s-1}}\leq&\ \norm{A-1}_{H^{s-1}}\norm{L_0\zeta_\alpha}_{H^{s-1}}+\norm{L_0D_t^2\zeta}_{H^{s-1}}\\
    &+\norm{L_0A}_{H^{s-1}}\norm{\zeta_\alpha-1}_{H^{s-1}}+\norm{L_0A}_{H^{s-1}}\\
    \lesssim&\ \frac{\epsilon^2}{t^{\frac{1}{6}-\delta_0}}\ln (2+t)+\sqrt{\mathfrak{E}_s^{L_0}}+\epsilon^2\norm{L_0\zeta_\alpha}_{H^{s-1}}+\frac{(\epsilon+\lambda)^2t}{H^{\frac{5}{2}}}+\frac{\epsilon t^{\frac{1}{2}}}{H^{\frac{3}{2}}}.
    \end{aligned}
    $$
    Since $\epsilon$ is small enough, the lemma follows.
\end{proof}
It is necessary to establish several preliminary facts.
\begin{lemma}\label{lem_[P,d_alpha]L_0}
    For $0\leq k\leq s-1$, there holds
$$
    \norm{[D^2_t-iA\partial_\alpha,\partial_\alpha^k]L_0\tilde{\theta}}_{L^2}+\norm{[D^2_t-iA\partial_\alpha,\partial_\alpha^k]L_0\tilde{\sigma}}_{L^2}\lesssim\frac{\epsilon^3}{t^{1+\delta}}+\frac{\lambda\epsilon(\epsilon+\lambda) t^{\delta_0}}{H^2}+\frac{\lambda\epsilon t^{\delta_0}}{H^3}.
    $$
\end{lemma}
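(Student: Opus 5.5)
The approach is the one used for Lemma~\ref{lem_[P,partial_alpha]}, with $\tilde\theta,\tilde\sigma$ replaced by $L_0\tilde\theta,L_0\tilde\sigma$ and the derivative count lowered by one. We expand
\[
[D_t^2,\partial_\alpha^k]=-\sum_{j=1}^k\binom{k}{j}\Bigl(D_t\,\partial_\alpha^j b\,\partial_\alpha^{k-j+1}+\partial_\alpha^j b\,\partial_\alpha^{k-j+1}D_t\Bigr),\qquad
[iA\partial_\alpha,\partial_\alpha^k]=-i\sum_{j=1}^k\binom{k}{j}\partial_\alpha^jA\,\partial_\alpha^{k-j+1}.
\]
This gives three families of terms:
\[
\partial_\alpha^j b\,\partial_\alpha^{k-j+1}D_tL_0\tilde\theta,\qquad D_t\partial_\alpha^jb\;\partial_\alpha^{k-j+1}L_0\tilde\theta,\qquad \partial_\alpha^jA\;\partial_\alpha^{k-j+1}L_0\tilde\theta.
\]
There are further terms of the form $\partial_\alpha^j b\,[D_t,\partial_\alpha^{k-j+1}]L_0\tilde\theta$, which are quadratic in $b$ and therefore of lower order. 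Each term is the product of a coefficient ($\partial^j_\alpha b$, $D_t\partial_\alpha^jb$, or $\partial_\alpha^j(A-1)$, with $j\ge1$) and a vector-field quantity carrying at most $s$ spatial derivatives in total. Since $k\le s-1$, at most $s$ derivatives ever fall on $L_0\tilde\theta$. Such quantities are controlled by (B3) together with Lemma~\ref{lem_L_0D_t_theta}: $\|\partial_\alpha L_0\tilde\theta\|_{H^{s-1}}$, $\|D_tL_0\tilde\theta\|_{H^{s-1}}$, and the $\tilde\sigma$ analogues are all bounded by $\epsilon t^{\delta_0}$ plus vortex errors. One point needs care: $D_tL_0$ must be converted to $L_0D_t$ through $[D_t,L_0]=-\tfrac12 D_t+(\ldots)b\partial_\alpha$, and the resulting terms $L_0b$ and $b$ are estimated by the proof of Lemma~\ref{lem_Omega}.

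When $j\le s-2$, we put the coefficient in $L^\infty$, using \eqref{partial_alpha_b_infty}, \eqref{partial_t_b_infty} and Lemma~\ref{lem_A}. The $\epsilon^2$ parts decay like $t^{-\frac{7}{6}+\delta_0}(\ln(2+t))^2$, which gives $\epsilon^3t^{-1-\delta}$ after multiplying by $\epsilon t^{\delta_0}$, provided $2\delta_0<\frac16$. The vortex parts are $\lambda H^{-3}$ for $b$, $\lambda(\epsilon+\lambda)H^{-3}$ for $D_tb$, and $\lambda(\epsilon+\lambda)H^{-2}$ for $A-1$. Multiplied by $\epsilon t^{\delta_0}$, these produce exactly $\lambda\epsilon t^{\delta_0}H^{-3}$ and $\lambda\epsilon(\epsilon+\lambda)t^{\delta_0}H^{-2}$. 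When $j\ge s-1$, the coefficient is placed in $L^2$, using \eqref{D_t_b_L2} and the $H^s$ bounds for $\partial_\alpha b$ and $A-1$. The vector-field factor then carries at most one derivative and must be placed in $L^\infty$. For this we use the Sobolev bound $\|\partial_\alpha L_0\tilde\theta\|_{L^\infty}\lesssim\|L_0\tilde\theta_\alpha\|_{H^1}$, which is fine since $s\ge10$.

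The main obstacle is this high-$j$ case. The $L^2$ vortex contributions to the coefficients decay only like $\lambda H^{-3/2}$ or $\lambda(\epsilon+\lambda)H^{-5/2}$, which is weaker than the claimed $H^{-2}$ and $H^{-3}$. In that case we plan to avoid the $L^2$ coefficient norms for the vortex part. Lemma~\ref{lem_b_exp} splits $b$ and $A-1$ into an irrotational part and an explicit vortex part, for instance $2q$ and $\frac{\lambda}{\pi}\bigl[\dot z_2(\zeta-z_2)^{-2}-\dot z_1(\zeta-z_1)^{-2}\bigr]$. These explicit vortex parts have $W^{s,\infty}$ bounds of the form $\lambda H^{-2}$ or better, by the argument of Proposition~\ref{pro_q} with $s+1$ derivatives. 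Using that $L^\infty$ bound for the vortex piece, together with the $L^2$ bound of the vector-field factor, recovers the stated powers of $H$. The irrotational $\epsilon^2$ parts are handled as in Lemma~\ref{lem_[P,partial_alpha]}. The $\tilde\sigma$ case is identical. Its only extra input is $\|D_tL_0\tilde\sigma\|_{H^{s-1}}$, which is controlled through $L_0D_t\tilde\sigma$ by Lemma~\ref{lem_L_0D_t_theta}.
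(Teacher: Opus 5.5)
Your overall scheme is the paper's: expand the commutator as in Lemma~\ref{lem_[P,partial_alpha]}, put coefficients in $L^\infty$ and the vector-field factor in $L^2$. However, the switch you make at $j=s-1$ breaks the estimate. There you place $\partial_\alpha^{s-1}b$, $D_t\partial_\alpha^{s-1}b$, $\partial_\alpha^{s-1}(A-1)$ in $L^2$ and $\partial_\alpha L_0\tilde\theta$, $\partial_\alpha D_tL_0\tilde\theta$ in $L^\infty$. Under the bootstrap the vector-field factors have no pointwise decay: Sobolev gives only $\norm{\partial_\alpha D_tL_0\tilde\theta}_{L^\infty}\lesssim\epsilon t^{\delta_0}$. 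Meanwhile the $L^2$ bounds for $\partial_\alpha b$, $D_tb$, $A-1$ in Lemmas~\ref{lem_b} and~\ref{lem_A} decay only like $\epsilon^2t^{-\frac23+\delta_0}\ln(2+t)$. The product, $\epsilon^3t^{-\frac23+2\delta_0}\ln(2+t)$, is not $\lesssim\epsilon^3t^{-1-\delta}$. So the obstruction is not only the vortex part: the irrotational part fails too. Saying it is ``handled as in Lemma~\ref{lem_[P,partial_alpha]}'' does not work, because there the high-$j$ case was rescued by the dispersive bound $\norm{D_t\partial_\alpha^{k-j+1}\tilde\theta}_{L^\infty}\lesssim\epsilon t^{-\frac12}$ from (B4), which has no analogue for $L_0\tilde\theta$.

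The missing observation is that $k\le s-1$ forces $j\le s-1$, and at that order Lemmas~\ref{lem_b} and~\ref{lem_A} already give pointwise bounds:
$\norm{\partial_\alpha b}_{\dot W^{s-2,\infty}}\lesssim\epsilon^2t^{-\frac76+\delta_0}(\ln(2+t))^2+\lambda H^{-3}$,
$\norm{D_tb}_{\dot W^{s-1,\infty}}\lesssim\epsilon^2t^{-\frac76+\delta_0}(\ln(2+t))^2+\lambda(\epsilon+\lambda)H^{-3}$, and
$\norm{A-1}_{W^{s-1,\infty}}\lesssim\epsilon^2t^{-\frac76+\delta_0}(\ln(2+t))^2+\lambda(\epsilon+\lambda)H^{-2}$.
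The paper therefore keeps every coefficient in $L^\infty$ for all $1\le j\le k$. It puts $\partial_\alpha L_0\tilde\theta$ and $\partial_\alpha D_tL_0\tilde\theta$ in $H^{s-2}$, since only $k-j+1\le s-1$ derivatives ever land on them, after trading $D_tL_0$ for $L_0D_t$ and $\partial_\alpha L_0$ for $L_0\partial_\alpha$. This gives $\epsilon^3t^{-\frac76+2\delta_0}(\ln(2+t))^2+\lambda\epsilon t^{\delta_0}H^{-3}+\lambda\epsilon(\epsilon+\lambda)t^{\delta_0}H^{-2}$ uniformly in $j$. It also makes your detour through explicit vortex pieces of $b$ and $A-1$ unnecessary. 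That detour would in any case require inverting $I-\mathfrak{H}$ at order $s-1$ in $L^\infty$, since Lemma~\ref{lem_b_exp} describes $(I-\mathfrak{H})b$ rather than $b$ itself.
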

\begin{proof}
    Imitating the proof of Lemma \ref{lem_[P,partial_alpha]}, we claim that
    $$\begin{aligned}
        \norm{[D^2_t-iA\partial_\alpha,\partial_\alpha^k]L_0\tilde{\theta}}_{L^2}\lesssim&\ (\norm{b_\alpha}_{W^{s-2,\infty}}+\norm{D_tb}_{W^{s-1,\infty}})\left(\norm{\partial_\alpha L_0\tilde{\theta}}_{H^{s-2}}+\norm{\partial_\alpha D_tL_0\tilde{\theta}}_{H^{s-2}}\right)\\
        &+\norm{A-1}_{W^{s-1,\infty}}\norm{\partial_\alpha L_0\tilde{\theta}}_{H^{s-2}}\\
        \lesssim&\ (\norm{b_\alpha}_{W^{s-2,\infty}}+\norm{D_tb}_{W^{s-1,\infty}})\left(\norm{L_0\tilde{\theta}_\alpha}_{H^{s-2}}+\norm{\partial_\alpha L_0D_t\tilde{\theta}}_{H^{s-2}}\right)\\
        &+(\norm{b_\alpha}_{W^{s-2,\infty}}+\norm{D_tb}_{W^{s-1,\infty}})\left(\norm{[L_0,\partial_\alpha]\tilde{\theta}}_{H^{s-2}}+\norm{\partial_\alpha [L_0,D_t]\tilde{\theta}}_{H^{s-2}}\right)\\
        &+\norm{A-1}_{W^{s-1,\infty}}\norm{L_0\tilde{\theta}_\alpha}_{H^{s-2}}+\norm{A-1}_{W^{s-1,\infty}}\norm{[L_0,\partial_\alpha]\tilde{\theta}}_{H^{s-2}}\\
        \lesssim&\ \frac{\epsilon^3}{t^{1+\delta}}+\frac{\lambda\epsilon(\epsilon+\lambda) t^{\delta_0}}{H^2}+\frac{\lambda\epsilon t^{\delta_0}}{H^3}.
        \end{aligned}
    $$
Likewise,
$$
    \norm{[D^2_t-iA\partial_\alpha,\partial_\alpha^k]L_0\tilde{\sigma}}_{L^2}\lesssim\frac{\epsilon^3}{t^{1+\delta}}+\frac{\lambda\epsilon(\epsilon+\lambda) t^{\delta_0}}{H^2}+\frac{\lambda\epsilon t^{\delta_0}}{H^3}.
    $$
\end{proof}
\begin{lemma}\label{lem_[P,L_0]}
    For $0\leq k\leq s-1$, we have
    $$
    \norm{\partial_\alpha^k[D^2_t-iA\partial_\alpha,L_0]\tilde{\theta}}_{L^2}+\norm{\partial_\alpha^k[D^2_t-iA\partial_\alpha,L_0]\tilde{\sigma}}_{L^2}\lesssim\left(\frac{\epsilon^2}{t}+\frac{\lambda}{H^2}\right)\sqrt{\mathfrak{E}_s^{L_0}}+\frac{\epsilon^3}{t}+\frac{\lambda\epsilon}{H^2}.
    $$
\end{lemma}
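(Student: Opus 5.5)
We will compute the commutator $[D_t^2-iA\partial_\alpha,L_0]$ explicitly and then bound each resulting term using the coefficient estimates of Section~\ref{subs_b}.

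\textbf{Step 1: the commutator formula.} Since $[\partial_t,L_0]=\tfrac12\partial_t$ and $[\partial_\alpha,L_0]=\partial_\alpha$, we get $[D_t,L_0]=\tfrac12 D_t+(b-L_0b)\partial_\alpha$. Expanding $D_t^2=D_tD_t$ then gives
\[
[D_t^2,L_0]=D_t^2+D_t\big((b-L_0b)\partial_\alpha\big)+(b-L_0b)\partial_\alpha D_t .
\]
For the other part we have $[iA\partial_\alpha,L_0]=iA\partial_\alpha-i(L_0A)\partial_\alpha$. Hence
\[
[D_t^2-iA\partial_\alpha,L_0]\tilde\theta=(D_t^2-iA\partial_\alpha)\tilde\theta+\mathcal{R}(\tilde\theta),
\]
where $\mathcal{R}$ collects the terms $D_t(b-L_0b)\,\tilde\theta_\alpha$, $(b-L_0b)(\partial_\alpha D_t+D_t\partial_\alpha)\tilde\theta$ and $i(L_0A)\tilde\theta_\alpha$. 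The leading piece equals $G_c+G_d$ by \eqref{theta}. Its $H^{s-1}$ norm is bounded by $\epsilon^3 t^{-1}\ln(2+t)$, using the $G_c$ bound from Lemma~\ref{lem_Omega} upgraded to $H^{s-1}$, plus the $G_d$ contribution $\lambda(\epsilon+\lambda)H^{-5/2}$ from Lemma~\ref{lem_G_d}. Both fit the target form $\epsilon^3/t+\lambda\epsilon/H^2$ after using $\lambda\lesssim\epsilon$-type relations from \eqref{lambda_H_0}. For $\tilde\sigma$ the same structure holds, with $G_0^{\tilde\sigma}$ from \eqref{G_0^sigma}. Here Lemma~\ref{lem_a_t/a_est} handles $i\frac{a_t}{a}\circ\kappa^{-1}A\tilde\theta_\alpha$, and Lemma~\ref{lem_G_d} handles $D_tG_d$.

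\textbf{Step 2: the terms with $b$ and $A$ without vector fields.} Consider the terms in $\mathcal R$ containing $b$ or $D_tb$ alone. We apply $\partial_\alpha^k$ and distribute derivatives by Leibniz. The rule is: put $L^\infty$ on the factor with fewer derivatives and $L^2$ on the other. We then invoke \eqref{b_L^infty}, \eqref{partial_alpha_b_infty}, \eqref{partial_t_b_infty} and \eqref{D_t_b_L2}, together with Lemma~\ref{lem_theta_alpha}, Lemma~\ref{lem_theta_t_zeta_t} and (B2), (B4). This gives contributions $\lesssim(\epsilon^2t^{-1}+\lambda H^{-2})\,\epsilon$.

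\textbf{Step 3: the terms with $L_0b$, $D_tL_0b$ and $L_0A$.} We split $\partial_\alpha^k$ again. When derivatives fall mostly on $\tilde\theta$, we place $L_0b$, $L_0A$ in $L^\infty$ and use the bound on $\|L_0b\|_{L^\infty}$ from the proof of Lemma~\ref{lem_Omega}. When many derivatives fall on $L_0b$ or $L_0A$, we place them in $L^2$. For these we need $H^{s-1}$ bounds on $\partial_\alpha L_0b$, $D_tL_0b$ and $L_0A$. We obtain them from the identities
\[
(I-\mathfrak H)L_0b=[L_0\zeta,\mathfrak H]\frac{b_\alpha}{\zeta_\alpha}+L_0\Big[-(I-\mathfrak H)D_t\zeta\frac{\bar\zeta_\alpha-1}{\zeta_\alpha}+2q\Big]
\]
and the analogous identity for $L_0A$ from the proof of Lemma~\ref{lem_L_0D_t_zeta_E}, combined with Lemma~\ref{lem_real_proj} and Proposition~\ref{pro_L_0_q}. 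The quadratic part is bilinear in a vector-field quantity and a decaying quantity. We estimate it by $\epsilon t^{-1/2}(\sqrt{\mathfrak E_s^{L_0}}+\text{error})$, with one factor controlled by $\sqrt{\mathfrak E_s^{L_0}}$ via Lemma~\ref{lem_L_0D_t_zeta_E} and Lemma~\ref{lem_L_0D_t_theta}. Multiplying by the $L^\infty$ decay $\epsilon t^{-1/2}$ of $\tilde\theta_\alpha$ or $D_t\tilde\theta_\alpha$ then yields $\epsilon^2t^{-1}\sqrt{\mathfrak E_s^{L_0}}$. The vortex pieces $L_0q$ and $L_0\dot z_j/(\zeta-z_j)^2$ give $\lambda(\epsilon+\lambda)t H^{-5/2}$. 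Multiplied by $\epsilon t^{-1/2}$, and using $H\gtrsim (H_0\lambda t)^{1/2}$ to absorb the growing factor, this becomes $\lesssim\lambda\epsilon H^{-2}$.

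\textbf{Main obstacle.} We expect the hardest step to be the top-order term $(b-L_0b)\partial_\alpha D_t\tilde\sigma$, and similarly $L_0A\,\partial_\alpha\tilde\sigma$ when all $k=s-1$ derivatives fall on $\tilde\sigma$. This costs $s$ derivatives of $D_t\tilde\sigma$, which exceeds $\mathfrak E_s$. Our plan is to avoid the loss by rewriting the commutator as $L_0b\,\partial_\alpha D_t\tilde\sigma_{k}$ inside the energy identity \eqref{d_tE^L_0}. Alternatively, we use \eqref{main_equation} to trade $D_t^2\zeta$ for $iA\zeta_\alpha$, so that $\partial_\alpha^{s}D_t\tilde\sigma$ is replaced by $\partial_\alpha^{s}\tilde\sigma\sim\partial_\alpha^{s}D_t\tilde\theta$, which is controlled in $H^{1/2}$-improved form by Lemma~\ref{lem_d_t_zeta_E}. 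Only $\|L_0b\|_{L^\infty}$ then needs to decay like $\epsilon^2t^{-1}+\lambda H^{-2}$. If that $L^\infty$ bound is insufficient, we fall back on the transition-of-derivatives identities in Lemma~\ref{lem_transfer_d}, applied to the quadratic part of $L_0b$, to gain the missing $t^{-1/2}$.
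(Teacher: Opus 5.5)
Your Steps~1 and~2 match the paper's decomposition, up to a harmless slip: $[D_t,L_0]=\tfrac12D_t+(\tfrac12 b-L_0b)\partial_\alpha$. Step~3, however, has a genuine gap at top order, at both ends of the Leibniz expansion of $\partial_\alpha^{s-1}\bigl(D_t(L_0b\,\tilde\theta_\alpha)+L_0b\,\partial_\alpha D_t\tilde\theta-iL_0A\,\tilde\theta_\alpha\bigr)$.

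\emph{Derivatives on $\tilde\theta$.} You get, e.g., $L_0b\,\partial_\alpha^{s}D_t\tilde\theta$. Here $\partial_\alpha^{s}D_t\tilde\theta$ is only $O(\epsilon)$ in $L^2$. The proof of Lemma~\ref{lem_Omega} gives only $\norm{L_0b}_{L^\infty}\lesssim\epsilon^2t^{-1/4+\delta_0}\ln(2+t)+\lambda(\epsilon+\lambda)tH^{-5/2}$. You are therefore short by roughly $t^{-3/4}$, not $t^{-1/2}$.

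\emph{Derivatives on $L_0b$ or $L_0A$.} Your claimed bilinear bound $\epsilon t^{-1/2}(\sqrt{\mathfrak E_s^{L_0}}+\dots)$ for $\norm{D_tL_0b}_{H^{s-1}}$ and $\norm{L_0A}_{H^{s-1}}$ is false. Applying $\partial_\alpha^{s-1}D_tL_0$ to \eqref{b_exp} produces two problematic terms:
\begin{itemize}
\item $(I-\mathfrak H)(\partial_\alpha^{s-1}D_t^2\zeta\,L_0\bar\zeta_\alpha)$: the vector-field factor has no $L^\infty$ decay, and $\partial_\alpha^{s-1}D_t^2\zeta$ decays in $L^\infty$ only like $\epsilon t^{-1/6}\ln(2+t)$ (Lemma~\ref{lem_D_t_zeta_L^infty});
\item $(I-\mathfrak H)(D_t\zeta\,\partial_\alpha^{s}L_0D_t\bar\zeta)$: this involves $\partial_\alpha^sL_0D_t\zeta$, which $\mathfrak E_s^{L_0}$ does not control.
\end{itemize}
This is why the paper itself only has \eqref{D_t_L_0_b}, with $\epsilon t^{-1/6}\ln(2+t)$ in front of $\sqrt{\mathfrak E_s^{L_0}}$. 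Likewise its $L_0A$ bound (proof of Lemma~\ref{lem_L_0D_t_zeta_E}) contains the non-decaying term $\epsilon^2\norm{L_0\zeta_\alpha}_{H^{s-1}}$. Multiplying by $\norm{\tilde\theta_\alpha}_{L^\infty}\lesssim\epsilon t^{-1/2}$ gives at best $\epsilon^2t^{-2/3}\ln(2+t)\sqrt{\mathfrak E_s^{L_0}}$. This is not time-integrable, so the Gronwall step of Corollary~\ref{cor_E^L_0} breaks down.

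Your ``main obstacle'' is therefore misplaced. $\partial_\alpha^sD_t\tilde\sigma$ is controlled by $\mathfrak E_s$, which contains $E^{\tilde\sigma}_s$ (equivalently, $D_t\tilde\sigma\approx 2D_t^2\zeta\in H^s$ by (B2)). No derivative is lost there; the difficulty is decay. Note also that $D_t^2\tilde\theta\approx iA\tilde\theta_\alpha$ turns $\partial_\alpha^sD_t\tilde\sigma$ into $\approx i\partial_\alpha^{s+1}\tilde\theta$, not $\partial_\alpha^s\tilde\sigma$.

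The missing idea is to estimate these top-order terms as trilinear expressions together with $\tilde\theta_\alpha$ (or $D_t\tilde\theta$), rather than as an $L^2$ bilinear bound times an $L^\infty$ decay rate. The paper writes $b=\tfrac12\bigl[(I-\mathfrak H)b+\overline{(I-\mathfrak H)b}+(\mathfrak H+\bar{\mathfrak H})b\bigr]$, using that $b$ is real. This splits $\partial_\alpha^{s-1}D_tL_0b\,\tilde\theta_\alpha$ into $K_1+K_2+K_3$.
\begin{itemize}
\item In $K_1,K_2$ it inserts \eqref{b_exp} and treats the extreme terms $f_1,f_2,f_3$, e.g.\ $(I-\mathfrak H)(\partial_\alpha^{s-1}D_t^2\zeta\,L_0\bar\zeta_\alpha)(\zeta_\alpha-1)$. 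It rewrites $I-\mathfrak H$ through commutators and $\mathfrak H+\bar{\mathfrak H}$, then applies Lemma~\ref{lem_transfer_d}, which gains the full $t^{-1}$. In $f_2$ the commutator $[D_t\zeta,\mathfrak H]\partial_\alpha^sL_0D_t\bar\zeta$ absorbs the extra derivative, and the factor $\zeta_\alpha-1$ supplies the second $t^{-1/2}$.
\item In $K_3$ it uses that $\mathfrak H+\bar{\mathfrak H}$ has kernel $\frac{1}{\pi i}\partial_\beta\ln\frac{\bar\zeta(\alpha)-\bar\zeta(\beta)}{\zeta(\alpha)-\zeta(\beta)}$, which vanishes for the flat interface and is smoothing. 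So $K_3$ is bounded by terms like $\norm{\zeta_\alpha-1}_{W^{s-2,\infty}}\norm{D_tL_0b}_{H^{s-1}}\norm{\tilde\theta_\alpha}_{L^\infty}$, with one extra $t^{-1/2}$, and the weak bound \eqref{D_t_L_0_b} then suffices.
\end{itemize}
The remaining top-order terms ($L_0b\,\partial_\alpha^sD_t\tilde\theta$, $\partial_\alpha^{s-1}L_0A\,\tilde\theta_\alpha$, and the $\tilde\sigma$ versions) are handled by the same splitting of $b$ plus transfer of derivatives. Your fallback sentence points in this direction, but it is exactly the part of the proof that still has to be carried out.

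A smaller point: $\lambda\epsilon(\epsilon+\lambda)t^{1/2}H^{-5/2}\lesssim\lambda\epsilon H^{-2}$ does not follow from $H\gtrsim(H_0\lambda t)^{1/2}$. The ratio $(\epsilon+\lambda)t^{1/2}H^{-1/2}$ is not uniformly bounded under \eqref{lambda_H_0}. The paper carries the same term, and only its time-integrability is used in Corollary~\ref{cor_E^L_0}.
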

\begin{proof}
    A straightforward calculation indicates that
\begin{equation}\label{[P,L_0]}
    \begin{aligned}
    \partial_\alpha^k[D^2_t-iA\partial_\alpha,L_0]\tilde{\theta}=&\ \partial_\alpha^k(D_t[D_t,L_0]+[D_t,L_0]D_t-i[A\partial_\alpha,L_0])\tilde{\theta}\\
    =&\ \partial_\alpha^k(D_t(\frac{1}{2}D_t+\frac{1}{2}b\partial_\alpha-L_0b\partial_\alpha)+(\frac{1}{2}D_t+\frac{1}{2}b\partial_\alpha-L_0b\partial_\alpha)D_t)\tilde{\theta}\\
    &-\partial_\alpha^k(iA\partial_\alpha-iL_0A\partial_\alpha)\tilde{\theta}\\
    =&\ \partial_\alpha^kG_0^{\tilde{\theta}}+\frac{1}{2}\partial_\alpha^k(D_t(b\tilde{\theta}_\alpha)+b\partial_\alpha D_t\tilde{\theta})\\
    &-\partial_\alpha^k(D_t(L_0b\tilde{\theta}_\alpha)+L_0b\partial_\alpha D_t\tilde{\theta}-iL_0A\tilde{\theta}_\alpha).
    \end{aligned}
    \end{equation}
Recall Lemma \ref{lem_b}:
$$
    \norm{\partial_\alpha^k(D_t(b\tilde{\theta}_\alpha)+b\partial_\alpha D_t\tilde{\theta})}_{L^2}\lesssim\frac{\epsilon^3}{t}+\frac{\lambda\epsilon}{H^2}.
    $$
From (\ref{G_c_main}) we deduce that
$$
    \norm{\partial_\alpha^kG_c-C_k1_{t^\frac{3}{4}\leq|\alpha|\leq t^\frac{5}{4}}D_t\tilde{\theta}_{k+1}|\zeta_\alpha-1|^2}_{L^2}\lesssim\frac{\epsilon^3}{t^{1+\delta}}.
    $$
Hence
$$
    \norm{\partial_\alpha^kG_c}_{L^2}\lesssim\frac{\epsilon^3}{t}.
    $$
Next we investigate $\partial_\alpha^{s-1}D_t(L_0b\tilde{\theta}_\alpha)$:
$$
    \begin{aligned}
    \partial_\alpha^{s-1} D_t(L_0b\tilde{\theta}_\alpha)=&\ \partial_\alpha^{s-1}D_tL_0b\tilde{\theta}_\alpha+\partial_\alpha^{s-1}L_0bD_t\tilde{\theta}_\alpha+D_tL_0b\partial_\alpha^s\tilde{\theta}+L_0b\partial_\alpha^{s-1}D_t\tilde{\theta}_\alpha\\
    &+\sum_{j=1}^{s-3}\binom{s-1}{j}\partial_\alpha^jD_tL_0b\partial_\alpha^{s-j-1}\tilde{\theta}_\alpha\\
    &+\sum_{j=2}^{s-2}\binom{s-1}{j}\partial_\alpha^jL_0b\partial_\alpha^{s-j-1}D_t\tilde{\theta}_\alpha\\
    &+(s-1)\partial_\alpha L_0b\partial_\alpha^{s-2}D_t\tilde{\theta}_\alpha+(s-1)\partial_\alpha^{s-2}D_tL_0b\partial_\alpha^2\tilde{\theta}.
    \end{aligned}
    $$ 
To estimate $L_0b$, we refer to Lemma \ref{lem_real_proj} and Lemma \ref{lem_L_0D_t_zeta_E}:
\begin{equation}\label{L_0_b}
    \begin{aligned}
    \norm{L_0b}_{H^{s-2}}\lesssim\ &\norm{(I-\mathfrak{H})L_0b}_{H^{s-2}}\\
    \leq\ &\norm{[L_0,\mathfrak{H}]b}_{H^{s-2}}+\norm{L_0(I-\mathfrak{H})b}_{H^{s-2}}\\
    \lesssim\ &\norm{\partial_\alpha L_0\zeta}_{H^{s-2}}\norm{b}_{W^{s-2,\infty}}+\norm{b}_{H^{s-2}}+\norm{L_0D_t\zeta}_{H^{s-2}}\norm{\zeta_\alpha-1}_{W^{s-2,\infty}}\\
    &+\norm{L_0\zeta_\alpha}_{H^{s-2}}\norm{\partial_\alpha D_t\zeta}_{W^{s-3,\infty}}+\norm{L_0q}_{H^{s-2}}\\
    \lesssim\ &\left(\frac{\epsilon}{t^\frac{1}{2}}+\frac{\lambda}{H^2}\right)\sqrt{\mathfrak{E}_s^{L_0}}+\frac{\epsilon^2}{t^\frac{1}{2}}+\frac{\lambda}{H^{\frac{3}{2}}}.
    \end{aligned}
\end{equation}
Similarly,
$$
\norm{D_tL_0b}_{H^{s-3}}\lesssim\left(\frac{\epsilon}{t^\frac{1}{2}}+\frac{\lambda}{H^2}\right)\sqrt{\mathfrak{E}_s^{L_0}}+\frac{\epsilon^2}{t^\frac{1}{2}}+\frac{\lambda}{H^{\frac{3}{2}}}.
$$
For $\partial_\alpha^{s-1}D_tL_0b\tilde{\theta}_\alpha$, more delicate manipulations are needed:
$$\begin{aligned}
\partial_\alpha^{s-1}D_tL_0b\tilde{\theta}_\alpha=&\ \frac{1}{2}\partial_\alpha^{s-1}D_tL_0(I-\mathfrak{H})b\tilde{\theta}_\alpha+\frac{1}{2}\partial_\alpha^{s-1}D_tL_0\overline{(I-\mathfrak{H})b}\tilde{\theta}_\alpha+\frac{1}{2}\partial_\alpha^{s-1}D_tL_0(\mathfrak{H}+\bar{\mathfrak{H}})b\tilde{\theta}_\alpha\\
=&\ K_1+K_2+K_3.
\end{aligned}$$
For $K_1$, there holds
$$
K_1=\frac{1}{2}\partial_\alpha^{s-1}D_tL_0\left\{-(I-\mathfrak{H})D_t\zeta\frac{\bar{\zeta}_\alpha-1}{\zeta_\alpha}+2q\right\}\tilde{\theta}_\alpha.
$$
To handle $K_1$, we consider three functions which represent the most extreme scenarios: 
$$
\begin{array}{c}
f_1=(I-\mathfrak{H})(\partial_\alpha^{s-1}D_t^2\zeta L_0\bar\zeta_\alpha)(\zeta_\alpha-1),\\
f_2=(I-\mathfrak{H})(D_t\zeta\partial_\alpha^sL_0D_t\bar\zeta)(\zeta_\alpha-1),\\
f_3=(I-\mathfrak{H})(L_0D_t\zeta\partial_\alpha^sD_t\bar\zeta)(\zeta_\alpha-1).
\end{array}
$$ The technique developed here is capable of handling other cases as well. Firstly,
$$
\begin{aligned}
f_1=&\ [L_0\bar\zeta_\alpha,\mathfrak{H}]\partial_\alpha^{s-1}D_t^2\zeta(\zeta_\alpha-1)+L_0\bar\zeta_\alpha[\partial_\alpha^{s-1}D_t^2\zeta(\zeta_\alpha-1)]-L_0\bar\zeta_\alpha(\mathfrak{H}\partial_\alpha^{s-1}D_t^2\zeta)(\zeta_\alpha-1)\\
=&\ [L_0\bar\zeta_\alpha,\mathfrak{H}]\partial_\alpha^{s-1}D_t^2\zeta(\zeta_\alpha-1)+L_0\bar\zeta_\alpha[\partial_\alpha^{s-1}D_t^2\zeta(\zeta_\alpha-1)]-L_0\bar\zeta_\alpha(\mathfrak{H}+\bar{\mathfrak{H}})\partial_\alpha^{s-1}D_t^2\zeta(\zeta_\alpha-1)\\
&+L_0\bar\zeta_\alpha(\bar{\mathfrak{H}}\partial_\alpha^{s-1}D_t^2\zeta)(\zeta_\alpha-1)\\
=&\ [L_0\bar\zeta_\alpha,\mathfrak{H}]\partial_\alpha^{s-1}D_t^2\zeta(\zeta_\alpha-1)+2L_0\bar\zeta_\alpha[\partial_\alpha^{s-1}D_t^2\zeta(\zeta_\alpha-1)]-L_0\bar\zeta_\alpha(\mathfrak{H}+\bar{\mathfrak{H}})\partial_\alpha^{s-1}D_t^2\zeta(\zeta_\alpha-1)\\
&+L_0\bar\zeta_\alpha[\bar{\mathfrak{H}},\partial_\alpha^{s-1}D_t]D_t\zeta(\zeta_\alpha-1).
\end{aligned}
$$
With the application of Lemma \ref{app_S1} and Lemma \ref{lem_transfer_d}, we establish that
$$
\norm{f_1}_{L^2}\lesssim\frac{\epsilon^2}{t}\sqrt{\mathfrak{E}_s^{L_0}}+\frac{\epsilon^3}{t^{1+\delta}}.
$$
For $f_2$,
$$
f_2=([D_t\zeta,\mathfrak{H}]\partial_\alpha^sL_0D_t\bar\zeta)(\zeta_\alpha-1)+D_t\zeta([\partial_\alpha^sL_0,\mathfrak{H}]D_t\bar\zeta)(\zeta_\alpha-1).
$$
Then
$$
\norm{f_2}_{L^2}\lesssim\norm{L_0D_t\zeta}_{H^{s-1}}\norm{\partial_\alpha D_t\zeta}_{W^{2,\infty}}\norm{\zeta_\alpha-1}_{L^\infty}\lesssim\frac{\epsilon^2}{t}\sqrt{\mathfrak{E}_s^{L_0}}+\frac{\epsilon^3}{t^{1+\delta}}.
$$
Imitating the estimate of $f_1$, we get that
$$
\norm{f_3}_{L^2}\lesssim\frac{\epsilon^2}{t}\sqrt{\mathfrak{E}_s^{L_0}}+\frac{\epsilon^3}{t^{1+\delta}}.
$$
After treating some commutators which possess better time-decay and recalling Proposition \ref{pro_L_0_q}, we claim that
$$
\norm{K_1}_{L^2}\lesssim\left(\frac{\epsilon^2}{t}+\frac{\lambda}{H^2}\right)\sqrt{\mathfrak{E}_s^{L_0}}+\frac{\epsilon^3}{t^{1+\delta}}+\frac{\lambda\epsilon(\epsilon+\lambda)t^{\frac{1}{2}}}{H^{\frac{5}{2}}}.
$$
Similarly,
$$
\norm{K_2}_{L^2}\lesssim\left(\frac{\epsilon^2}{t}+\frac{\lambda}{H^2}\right)\sqrt{\mathfrak{E}_s^{L_0}}+\frac{\epsilon^3}{t^{1+\delta}}+\frac{\lambda\epsilon(\epsilon+\lambda)t^{\frac{1}{2}}}{H^{\frac{5}{2}}}.
$$
Reviewing the estimate of $K_1$, we can follow similar steps to estimate $\norm{D_tL_0b}_{H^{s-1}}$, which means that
\begin{equation}\label{D_t_L_0_b}
\norm{D_tL_0b}_{H^{s-1}}\lesssim\frac{\epsilon}{t^\frac{1}{6}}\ln (2+t)\sqrt{\mathfrak{E}_s^{L_0}}+\frac{\lambda(\epsilon+\lambda)t}{H^3
    }+\frac{\lambda(\epsilon+\lambda) t^{\frac{1}{2}}}{H^{\frac{5}{2}}}.
\end{equation}
For $K_3$, we lack control of $\norm{\partial_\alpha^sL_0D_t\zeta}_{L^2}$. Notice that
$$
\begin{aligned}
K_3=&\ \frac{1}{2}[\partial_\alpha^{s-1},D_tL_0](\mathfrak{H}+\bar{\mathfrak{H}})b\tilde{\theta}_\alpha+\frac{1}{2}D_tL_0\partial_\alpha^{s-1}\left(\frac{1}{\pi i}\int\partial_\beta\left(\ln \frac{\bar\zeta(\alpha,t)-\bar\zeta(\beta,t)}{\zeta(\alpha,t)-\zeta(\beta,t)}\right)bd\beta\right)\tilde{\theta}_\alpha\\
=&\ \frac{1}{2}[\partial_\alpha^{s-1},D_tL_0](\mathfrak{H}+\bar{\mathfrak{H}})b\tilde{\theta}_\alpha+\frac{1}{2}D_tL_0\partial_\alpha^{s-2}\left(\frac{1}{\pi i}\int\partial_\alpha\left(\ln\frac{\zeta(\alpha,t)-\zeta(\beta,t)}{\bar\zeta(\alpha,t)-\bar\zeta(\beta,t)}\right)b_\beta d\beta\right)\tilde{\theta}_\alpha\\
=&\ \frac{1}{2}[\partial_\alpha^{s-1},D_tL_0](\mathfrak{H}+\bar{\mathfrak{H}})b\tilde{\theta}_\alpha+\frac{1}{2}D_tL_0\partial_\alpha^{s-2}[\zeta_\alpha\mathfrak{H}(\zeta_\alpha b_\alpha)+\bar\zeta_\alpha\bar{\mathfrak{H}}(\bar\zeta_\alpha b_\alpha)]\tilde{\theta}_\alpha.
\end{aligned}
$$
Thus, thanks to (\ref{L_0_b}) and (\ref{D_t_L_0_b}),
$$
\begin{aligned}
\norm{K_3}_{L^2}\lesssim&\ (\norm{D_t\zeta}_{W^{s-2,\infty}}\norm{L_0b}_{H^{s-1}}+\norm{L_0D_t\zeta}_{H^{s-1}}\norm{b}_{W^{s-2,\infty}}\\&+\norm{L_0\zeta_\alpha}_{H^{s-1}}\norm{D_tb}_{W^{s-3,\infty}}+\norm{\zeta_\alpha-1}_{W^{s-2,\infty}}\norm{D_tL_0b}_{H^{s-1}})\norm{\tilde{\theta}_\alpha}_{L^\infty}\\
\lesssim&\left(\frac{\epsilon^3}{t^{\frac{7}{6}}}\ln (2+t)+\frac{\lambda\epsilon}{H^2t^{\frac{1}{2}}}\right)\sqrt{\mathfrak{E}_s^{L_0}}+\frac{\epsilon^4}{t^{\frac{3}{2}}}+\frac{\lambda\epsilon^2}{H^{\frac{3}{2}}t}+\frac{\lambda(\epsilon+\lambda)\epsilon^2}{H^3}+\frac{\lambda(\epsilon+\lambda)\epsilon^2}{H^{\frac{5}{2}}t^\frac{1}{2}}.
\end{aligned}
$$
In summary, the above implies that
$$
\norm{\partial_\alpha^{s-1}D_tL_0b\tilde{\theta}_\alpha}_{L^2}\lesssim\left(\frac{\epsilon^2}{t}+\frac{\lambda}{H^2}\right)\sqrt{\mathfrak{E}_s^{L_0}}+\frac{\epsilon^3}{t^{1+\delta}}+\frac{\lambda\epsilon(\epsilon+\lambda)t^{\frac{1}{2}}}{H^{\frac{5}{2}}}.
$$
For the other terms, the argument proceeds similarly to the previous one. More precisely, we split b into $\frac{1}{2}(I-\mathfrak{H})b$ and $\frac{1}{2}(I+\mathfrak{H})b$, then transfer the derivatives via Lemma \ref{lem_transfer_d} if necessary to gain enough time-decay. Till now we have deduced that
$$
\norm{\partial_\alpha^{s-1}D_t(L_0b\tilde{\theta}_\alpha)}_{L^2}\lesssim\left(\frac{\epsilon^2}{t}+\frac{\lambda}{H^2}\right)\sqrt{\mathfrak{E}_s^{L_0}}+\frac{\epsilon^3}{t^{1+\delta}}+\frac{\lambda\epsilon(\epsilon+\lambda)t^{\frac{1}{2}}}{H^{\frac{5}{2}}}.
$$

The remaining terms in (\ref{[P,L_0]}) can be treated by an analogous argument. The situation where $\tilde{\theta}$ is replaced by $\tilde{\sigma}$ is handled in the same way. Therefore the proof is completed.
\end{proof}

To close the bootstrap assumptions when $H_0\sim 1$, we need to provide a more delicate estimate of $D_tL_0q$ and $D_t^2L_0q$. The direct application of Lemma \ref{pro_L_0_q} requires stronger assumptions, e.g., $H_0\geq\epsilon^{-2}$.

\begin{lemma}
    For $t\in[0,T]$, there holds
    $$
    \norm{D_tL_0q}_{H^{s-1}}+\norm{D_t^2L_0q}_{H^{s-1}}\lesssim\frac{\lambda}{H^2}\sqrt{\mathfrak{E}^{L_0}_s}+\frac{\lambda(\epsilon+\lambda)^2(1+t)}{H^{3}}+\frac{\lambda(\epsilon+\lambda)}{H^2}+\frac{\lambda\epsilon(\epsilon+\lambda)(1+t)^{\frac{1}{2}+\delta_0}}{H^2}.
    $$
\end{lemma}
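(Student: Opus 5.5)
Starting from the explicit formula \eqref{D_t_L_0_q} for $D_tL_0q$, together with the analogous formula obtained by applying one more $D_t$, I would locate exactly where Proposition \ref{pro_L_0_q} loses. The loss comes from treating the vortex accelerations $\ddot z_1-\ddot z_2$ and $\dddot z_1-\dddot z_2$, as well as the regular velocity $\bar{\mathfrak U}(z_j,t)$ inside $\dot z_j$, using only the crude bounds of Lemma \ref{lem_z_1}. These bounds rely on $\|\mathfrak F\|_{L^2}$ and $\|\mathfrak U_t\|_{L^2}$ and therefore cost a factor of $t$ with no compensating decay.

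The plan has three steps.

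\emph{Step 1: vortex geometry.} Write
\[
\dot z_j=-\frac{\lambda i}{2\pi(\bar z_1-\bar z_2)}+\bar{\mathfrak U}(z_j,t)
\]
and regroup every term of $D_tL_0q$ and $D_t^2L_0q$. The purely geometric part involves only $z_1-z_2$, $\zeta-z_j$ and $L_0\zeta-\tfrac t2\dot z_j$. It is estimated exactly as in Proposition \ref{pro_q}, using the $L^\infty$ bound on $\frac{L_0\zeta}{\zeta-z_j}$ proved there, and yields the contributions $\frac{\lambda(\epsilon+\lambda)}{H^2}$ and $\frac{\lambda(\epsilon+\lambda)^2(1+t)}{H^3}$. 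The latter arises from products of two factors of $\dot z_1-\dot z_2$ (each $\lesssim(\epsilon+\lambda)H^{-3/2}$) times $t$.

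\emph{Step 2: holomorphic extension of $L_0\mathfrak F$.} The $t$-weighted terms $t(\ddot z_1-\ddot z_2)$ and $t\,\mathfrak U_t(z_j)$ are the key ones. Following idea 3 of the introduction, note that $\frac t2\partial_t+\alpha\partial_\alpha$ applied to $\mathfrak F=\mathfrak U(\zeta,t)$ gives
\[
L_0\mathfrak F=\bigl(\tfrac t2\mathfrak U_t+z\mathfrak U_z\bigr)\circ\zeta+\mathfrak U_z(\zeta)\,(L_0\zeta-\zeta).
\]
Hence $\frac t2\mathfrak U_t+z\mathfrak U_z$ is holomorphic in $\Omega(t)$, with boundary trace $L_0\mathfrak F-\mathfrak U_z(\zeta)(L_0\zeta-\zeta)$. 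Since $\mathfrak U_z$ is itself holomorphic, its trace is $\partial_\alpha\mathfrak F/\zeta_\alpha$. The Cauchy estimate away from the boundary then gives
\[
t\,|\partial_z^k\mathfrak U_t(z)|\lesssim H^{-k-\frac12}\Bigl(\|L_0\mathfrak F\|_{L^2}+\bigl\|\tfrac{\partial_\alpha\mathfrak F}{\zeta_\alpha}(L_0\zeta-\zeta)\bigr\|_{L^2}\Bigr)+|z|\,|\partial_z^{k+1}\mathfrak U|
\]
for $z$ on $[z_1,z_2]$. Here $\|L_0\mathfrak F\|_{L^2}\le\|L_0D_t\zeta\|+\|L_0q\|$. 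The first piece is controlled by $\sqrt{\mathfrak E^{L_0}_s}$ via Lemma \ref{lem_L_0D_t_zeta_E}, or by $\epsilon t^{\delta_0}$ under (B3). Multiplying by the dipole prefactor $\lambda/H^{2}$ should produce the terms $\frac{\lambda}{H^2}\sqrt{\mathfrak E^{L_0}_s}$ and $\frac{\lambda\epsilon(\epsilon+\lambda)(1+t)^{1/2+\delta_0}}{H^2}$.

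The factor $|z|$ needs care, since $|z_j|\lesssim H$ by the argument in Proposition \ref{pro_L_0_q}. This loses one power of $H$, but that is absorbed because we use the difference $\mathfrak U_t(z_1)-\mathfrak U_t(z_2)$, which gains $H^{-1}$. The term $\zeta\,\partial_\alpha\mathfrak F$ is handled by the same localization $|\alpha-\alpha_0|\lesssim H$ as before.

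\emph{Step 3: second derivative.} For $D_t^2L_0q$ the same scheme applies, with $\dddot z_1-\dddot z_2$ and $t\,\mathfrak U_{tt}$. Here $\mathfrak U_{tt}$ is accessed through $D_tL_0\mathfrak F$, whose trace involves $L_0D_t^2\zeta$ and $D_tL_0q$. The latter appears on the left-hand side with a small coefficient $\lambda H^{-2}$ and is absorbed.

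The main obstacle is Step 2: making rigorous that the combination $\frac t2\mathfrak U_t+z\mathfrak U_z$ has an $L^2$ boundary trace. The issue is that the weight $\alpha$ is unbounded, so $z\mathfrak U_z$ need not be $L^2$ on $\Sigma$ on its own. I would handle this by subtracting the value at a base point $z_0=\zeta(\alpha_0,t)$, i.e.\ working with $(z-z_0)\mathfrak U_z$, and by checking decay of $\mathfrak U_z$ at infinity so that the Cauchy representation remains valid.
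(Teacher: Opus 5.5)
You have the paper's key idea. Your $W=\frac t2\mathfrak U_t+z\mathfrak U_z$ is exactly the paper's $\tilde L\mathfrak F$, with boundary trace
\[
\tilde L\mathfrak F=L_0\mathfrak F+\frac{\partial_\alpha\mathfrak F}{\zeta_\alpha}\,(I-L_0)(\zeta-\alpha).
\]
The paper also writes $\mathfrak F=\frac{I+\mathfrak H}{2}D_t\bar\zeta$ to remove $q$ from the trace, but keeping $L_0q$ as you do is harmless. The gap is the second term of this trace.

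You never bound $\bigl\|\frac{\partial_\alpha\mathfrak F}{\zeta_\alpha}(L_0\zeta-\zeta)\bigr\|_{L^2}$, and the devices you name cannot do it.
\begin{itemize}
\item The localization $|\alpha-\alpha_0|\lesssim H$ in Proposition \ref{pro_L_0_q} relied on the decaying factor $(\zeta-z_1)^{-1}$, and no such factor is present here.
\item Subtracting a base point $z_0$ changes nothing: $(\zeta-z_0)\partial_\alpha\mathfrak F$ is no more localized than $\zeta\,\partial_\alpha\mathfrak F$.
\item Splitting $L_0\zeta-\zeta$ into its two pieces throws away the only cancellation. For instance, $\|\zeta\,\partial_\alpha\mathfrak F\|_{L^2}$ requires $\|\alpha\partial_\alpha D_t\zeta\|_{L^2}=\|L_0D_t\zeta-\frac t2\partial_tD_t\zeta\|_{L^2}$, which is of size $\epsilon t$. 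That would leave a term $\lambda\epsilon t H^{-2}$, which is not in the statement.
\end{itemize}
What must be estimated is $(I-L_0)(\zeta-\alpha)$ in $L^2$ as a whole, and this is a primitive-level, low-frequency quantity. The bootstrap norms control only its derivative, $\partial_\alpha(I-L_0)(\zeta-\alpha)=-L_0(\zeta_\alpha-1)$. This is the low-frequency difficulty stressed in the introduction, and your plan does not address it. As a result, your claim that the prefactor ``should produce'' the term $\frac{\lambda\epsilon(\epsilon+\lambda)(1+t)^{1/2+\delta_0}}{H^2}$ has no derivation behind it.

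The paper supplies the missing estimate through an evolution equation:
\[
\partial_t(I-L_0)(\zeta-\alpha)=-\tfrac12\zeta_t+(I-L_0)(D_t\zeta-b\zeta_\alpha).
\]
Together with (B2), (B3) and \eqref{L_0_b}, this gives $\frac{d}{dt}\|(I-L_0)(\zeta-\alpha)\|_{L^2}\lesssim(\epsilon+\lambda)t^{\delta_0}$, hence $\|(I-L_0)(\zeta-\alpha)\|_{L^2}\lesssim(\epsilon+\lambda)t^{1+\delta_0}$. Placing $\partial_\alpha D_t\zeta$ in $L^\infty$, with decay $\epsilon t^{-1/2}$ from (B4), yields
\[
\|\tilde L\mathfrak F\|_{L^2}\lesssim\sqrt{\mathfrak E^{L_0}_s}+\epsilon(\epsilon+\lambda)t^{\frac12+\delta_0}.
\]
The Cauchy factor $H^{-1/2}$ times the dipole factor $\lambda H^{-3/2}$ then produces exactly the last term of the lemma. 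With this ingredient added, the rest of your plan goes through.

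One minor point: the difference $\mathfrak U_t(z_1)-\mathfrak U_t(z_2)$ is not needed. It also would not cover the individual $t\ddot z_j$ that enter through $D_t(L_0\zeta-\frac12 t\dot z_j)$ in \eqref{D_t_L_0_q}. The paper bounds $|t\ddot z_j|$ individually, and the loss $|z_j||\mathfrak U_z(z_j)|\lesssim H\cdot(\epsilon+\lambda)H^{-3/2}$ already fits into $\frac{\lambda(\epsilon+\lambda)}{H^2}$ after multiplying by $\lambda H^{-3/2}$.
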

\begin{proof}
We only prove the situation where $s=1$ since other cases can be treated similarly. Recall that
    $$
    \begin{aligned}
    D_tL_0q=&-\frac{\lambda i}{4\pi}\frac{\dot{z_1}-\dot{z_2}+t(\ddot{z_1}-\ddot{z_2})}{(\zeta-z_1)(\zeta-z_2)}+\frac{\lambda it}{4\pi}(\dot{z_1}-\dot{z_2})\left(\frac{D_t\zeta-\dot{z_1}}{(\zeta-z_1)^2(\zeta-z_2)}+\frac{D_t\zeta-\dot{z_2}}{(\zeta-z_1)(\zeta-z_2)^2}\right)\\
    &+\frac{\lambda i}{2\pi}(\dot{z_1}-\dot{z_2})\left(\frac{L_0\zeta-\frac{1}{2}t\dot{z_1}}{(\zeta-z_1)^2(\zeta-z_2)}+\frac{L_0\zeta-\frac{1}{2}t\dot{z_2}}{(\zeta-z_1)(\zeta-z_2)^2}\right)\\
    &+\frac{\lambda i}{2\pi}(z_1-z_2)D_t\left(\frac{L_0\zeta-\frac{1}{2}t\dot{z_1}}{(\zeta-z_1)^2(\zeta-z_2)}+\frac{L_0\zeta-\frac{1}{2}t\dot{z_2}}{(\zeta-z_1)(\zeta-z_2)^2}\right).
    \end{aligned}
    $$
    We need to treat the terms $t\ddot{z_1}$ and $t\ddot{z_2}$ with caution. Observe the equation
    $$
    \begin{aligned}
    t\ddot{z_1}=&\ t\left(\frac{\lambda i(\bar{\dot{z_2}}-\bar{\dot{z_1}})}{2\pi(\bar{z_1}-\bar{z_2})^2}+\overline{\mathfrak{U}_z}(z_1,t)\bar{\dot{z_1}}+\bar{\mathfrak{U}}_t(z_1,t)\right)\\
    =&\ t\left(\frac{\lambda i(\bar{\dot{z_2}}-\bar{\dot{z_1}})}{2\pi(\bar{z_1}-\bar{z_2})^2}+\overline{\mathfrak{U}_z}(z_1,t)\bar{\dot{z_1}}\right)+\overline{(t{\mathfrak{U}}_t(z_1,t)+2z_1\mathfrak{U}_z(z_1,t))}-2\bar{z_1}\overline{\mathfrak{U}_z}(z_1,t).
    \end{aligned}
    $$
    To handle the second term on the right side, we restrict the function on $\Sigma(t)$ and denote
    $$
    \tilde{L}\mathfrak{F}=\left.\left(\frac{1}{2}t{\mathfrak{U}}_t(z,t)+z\mathfrak{U}_z(z,t)\right)\right|_{z\in\Sigma(t)}=\frac{1}{2}t\left(\partial_t-\frac{\zeta_t}{\zeta_\alpha}\partial_\alpha\right)\mathfrak{F}+\frac{\zeta}{\zeta_\alpha}\mathfrak{F}_\alpha.
    $$
    Since $\mathfrak{U}$ is holomorphic, $\tilde{L}\mathfrak{F}$ is the boundary value of a holomorphic function defined on $\Omega(t)$. From the definition of $\tilde{L}$ we verify that $[\tilde{L},\mathfrak{H}]=0$. Then we manage to estimate the value $\norm{\tilde{L}\mathfrak{F}}_{L^2}$. Direct calculation implies that
    $$
    \tilde{L}\mathfrak{F}=\tilde{L}\frac{I+\mathfrak{H}}{2}D_t\bar \zeta=\frac{I+\mathfrak{H}}{2}(\tilde{L}-L_0+L_0)D_t\bar\zeta=\frac{I+\mathfrak{H}}{2}L_0D_t\bar\zeta+\frac{I+\mathfrak{H}}{2}\left[\frac{(-L_0+I)(\zeta-\alpha)}{\zeta_\alpha}\partial_\alpha D_t\bar \zeta\right].
    $$
    Note that
    $$
    \partial_t(I-L_0)(\zeta-\alpha)=-\frac{1}{2}\zeta_t+(I-L_0)(D_t\zeta-b\zeta_\alpha)
    $$
    which implies
    $$
    \begin{aligned}
    &\partial_t\norm{(I-L_0)(\zeta-\alpha)}_{L^2}^2\\
    \lesssim\ &\norm{(I-L_0)(\zeta-\alpha)}_{L^2}(\norm{D_t\zeta}_{L^2}+\norm{L_0D_t\zeta}_{L^2}+\norm{(I-L_0)b}_{L^2}+\norm{b}_{L^\infty}\norm{L_0(\zeta_\alpha-1)}_{L^2}).
    \end{aligned}
    $$
    Consider (\ref{L_0_b}) and we obtain
    $$
    \partial_t\norm{(I-L_0)(\zeta-\alpha)}_{L^2}^2\lesssim(\epsilon+\lambda) t^{\delta_0}\norm{(I-L_0)(\zeta-\alpha)}_{L^2}.
    $$
    The integration with respect to $t$ suggests that
    $$
    \norm{(I-L_0)(\zeta-\alpha)}_{L^2}\lesssim(\epsilon+\lambda) t^{1+\delta_0}.
    $$
    Therefore,
    $$
    \begin{aligned}
    \norm{\tilde{L}\mathfrak{F}}_{L^2}\lesssim&\norm{L_0D_t\zeta}_{L^2}+\norm{(I-L_0)(\zeta-\alpha)}_{L^2}\norm{\partial_\alpha D_t\bar \zeta}_{L^\infty})\\
    \lesssim&\ \sqrt{\mathfrak{E}^{L_0}_s}+\epsilon(\epsilon+\lambda)t^{\frac{1}{2}+\delta_0}.
    \end{aligned}
    $$
    Cauchy's integral formula shows that
    $$
    |t{\mathfrak{U}}_t(z_1,t)+2z_1\mathfrak{U}_z(z_1,t)|\lesssim\frac{\norm{\tilde{L}\mathfrak{F}}_{L^2}}{H^{\frac{1}{2}}}\lesssim \frac{1}{H^{\frac{1}{2}}}( \sqrt{\mathfrak{E}^{L_0}_s}+\epsilon(\epsilon+\lambda)t^{\frac{1}{2}+\delta_0}).
    $$
    and
    $$
    \begin{aligned}
    |t\ddot{z_1}|\lesssim &\ \lambda t|\dot{z_1}-\dot{z_2}|+|\dot{z_1}||\mathfrak{U}_z(z_1,t)|+|t{\mathfrak{U}}_t(z_1,t)+2z_1\mathfrak{U}_z(z_1,t)|+|z_1||\mathfrak{U}_z(z_1,t)|\\
    \lesssim&\ [H_0+(\epsilon+\lambda)t]\frac{\epsilon+\lambda}{H^{\frac{3}{2}}}+\frac{1}{H^{\frac{1}{2}}}( \sqrt{\mathfrak{E}^{L_0}_s}+\epsilon(\epsilon+\lambda)t^{\frac{1}{2}+\delta_0}).
    \end{aligned}
    $$
    (For $|\mathfrak{U}_z(z_1,t)|$ we refer to ({\ref{sup_U_z}})) With respect to $|t\ddot{z_2}|$ we can obtain the same estimate. Return to (\ref{D_t_L_0_q}):
    $$
    \begin{aligned}
    \norm{D_tL_0q}_{L^2}\lesssim&\ \frac{\lambda}{H^{\frac{3}{2}}}|\dot{z_1}-\dot{z_2}|+\frac{\lambda}{H^{\frac{3}{2}}}(|t\ddot{z_1}|+|t\ddot{z_2}|)\\
    &+\frac{\lambda t}{H^{\frac{5}{2}}}|\dot{z_1}-\dot{z_2}|(\norm{D_t\zeta}_{L^\infty}+|\dot{z_1}|+|\dot{z_2}|)+\frac{\lambda}{H^{\frac{5}{2}}}\norm{D_tL_0\zeta}_{L^\infty}\\
    \lesssim&\ \frac{\lambda}{H^2}\sqrt{\mathfrak{E}^{L_0}_s}+\frac{\lambda(\epsilon+\lambda)^2t}{H^{3}}+\frac{\lambda(\epsilon+\lambda)}{H^2}+\frac{\lambda\epsilon(\epsilon+\lambda)t^{\frac{1}{2}+\delta_0}}{H^2}.
    \end{aligned}
    $$
    An analogous argument works for $\norm{D_t^2L_0q}_{L^2}$.
\end{proof}

\begin{lemma}\label{lem_L_0G_d}
    For $t\in[0,T]$, we have
    $$
    \norm{L_0G_d}_{H^{s-1}}+\norm{L_0D_tG_d}_{H^{s-1}}\lesssim\frac{\lambda}{H^2}\sqrt{\mathfrak{E}_s^{L_0}}+\frac{\lambda(\epsilon+\lambda)^2(1+t)}{H^{3}}+\frac{\lambda(\epsilon+\lambda)}{H^2}+\frac{\lambda\epsilon(\epsilon+\lambda)(1+t)^{\frac{1}{2}+\delta_0}}{H^2}.
    $$
\end{lemma}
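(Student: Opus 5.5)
Apply $L_0$ (and $L_0D_t$) to the four terms of
\[
G_d=-2[\bar q,\mathfrak H]\frac{\bar{\mathfrak F}_\alpha}{\zeta_\alpha}-2[\bar{\mathfrak F},\mathfrak H]\frac{\bar q_\alpha}{\zeta_\alpha}-2[\bar q,\mathfrak H]\frac{\bar q_\alpha}{\zeta_\alpha}-4D_tq
\]
and estimate each piece. The term $-4L_0D_tq$ is the dangerous one. We handle it with the preceding lemma after commuting: $L_0D_t=D_tL_0+[L_0,D_t]$, where $[L_0,D_t]=-\tfrac12 D_t+(\tfrac12 b-L_0b)\partial_\alpha$. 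The commutator pieces are then bounded by Proposition \ref{pro_q} together with (\ref{L_0_b}). Specifically, $\|D_tq\|_{H^s}\lesssim \lambda(\epsilon+\lambda)H^{-5/2}\le\lambda(\epsilon+\lambda)H^{-2}$, while $\|b\|_{W^{s-1,\infty}}\|q_\alpha\|_{H^{s-1}}$ and $\|L_0b\|_{H^{s-2}\cap L^\infty}\|q_\alpha\|_{W^{s-1,\infty}}$ give $\frac{\lambda}{H^2}\sqrt{\mathfrak E_s^{L_0}}$ plus admissible terms. The quantity $D_tL_0q$ is directly covered by the preceding lemma. For $L_0D_tG_d$ we similarly reduce $L_0D_t^2q$ to $D_t^2L_0q$ plus commutators $[L_0,D_t^2]q$. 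These involve $D_tL_0b$, handled via (\ref{D_t_L_0_b}), and $D_t q$, $D_t^2q$, handled via Proposition \ref{pro_q}. The main term is again supplied by the preceding lemma.

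For the three commutator terms we use $[L_0,\mathfrak H]f=[L_0\zeta,\mathfrak H]\frac{f_\alpha}{\zeta_\alpha}$ and distribute $L_0$ by Leibniz. Each resulting term has the form
\[
\frac{1}{\pi i}\int\frac{(g_1(\alpha)-g_1(\beta))(g_2(\alpha)-g_2(\beta))}{(\zeta(\alpha)-\zeta(\beta))^2}\,\partial_\beta g_3\,d\beta
\]
or a single-difference commutator, with one $g_i\in\{q,\bar q,L_0q\}$. We estimate these by Lemma \ref{app_S1}, always placing the $q$-factor in $W^{k,\infty}$ or $H^k$ so as to gain the powers of $H^{-1}$ from Proposition \ref{pro_q} and Proposition \ref{pro_L_0_q}. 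When $L_0$ falls on $\mathfrak F=D_t\bar\zeta-q$, we use $\|L_0D_t\zeta\|_{H^{s-1}}\lesssim\sqrt{\mathfrak E_s^{L_0}}+\dots$ from Lemma \ref{lem_L_0D_t_zeta_E}, paired with $\|q_\alpha\|_{W^{s-1,\infty}}\lesssim\lambda H^{-2}$; this produces the $\frac{\lambda}{H^2}\sqrt{\mathfrak E_s^{L_0}}$ term. When $L_0$ falls on $q$, we must avoid the crude $\lambda(\epsilon+\lambda)tH^{-5/2}$ bound for $L_0q$ in $L^2$. Instead we write $L_0q$ through the explicit formula in the proof of Proposition \ref{pro_L_0_q}. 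The $t(\dot z_1-\dot z_2)$ part is $O(\lambda(\epsilon+\lambda)tH^{-3/2}\cdot H^{-2})$ in $L^\infty$. The $L_0\zeta-\tfrac t2\dot z_j$ part is bounded using $\|\frac{L_0\zeta}{\zeta-z_j}\|_{L^\infty}\lesssim1$ and $|t\dot z_j|\lesssim \lambda t+\epsilon t^{3/4}\lesssim H$ up to constants. Thus $\|L_0q\|_{W^{s-1,\infty}}\lesssim \lambda H^{-2}$ plus admissible errors, and it multiplies $\|\mathfrak F\|_{H^s}\lesssim\epsilon+\lambda$, giving $\lambda(\epsilon+\lambda)H^{-2}$. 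When $L_0$ falls on $\zeta$ inside the kernel, we get $L_0\zeta_\alpha$ in $H^{s-1}$ times $q$-factors in $W^{s,\infty}$, which is again admissible.

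For $L_0D_t$ applied to the commutators, we additionally commute $D_t$ through $\mathfrak H$ using $[D_t,\mathfrak H]f=[D_t\zeta,\mathfrak H]\frac{f_\alpha}{\zeta_\alpha}$. We then repeat the above, with $D_tq$ and $D_tL_0q$ replacing $q$ and $L_0q$. The bounds for these come from Proposition \ref{pro_q} and the preceding lemma, and the bound for $D_tL_0\zeta$ comes from $L_0D_t\zeta$ plus a commutator.

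The main obstacle is the top-order placement. When $L_0$ and all $s-1$ derivatives land on $\bar{\mathfrak F}_\alpha$, we meet $\partial_\alpha^{s}L_0D_t\bar\zeta$, which is not controlled. We plan to integrate by parts in $\beta$ inside the commutator, as in the proof of Lemma \ref{lem_b}, moving one derivative onto the kernel factor $\bar q(\alpha)-\bar q(\beta)$. This costs only a further factor of $\lambda H^{-2}$ and uses the smoothness of $q$. The second issue is ensuring no term retains a bare $t$ multiplying only $\lambda H^{-5/2}$. Whenever such a term appears, we convert it using the $\tilde L\mathfrak F$ trick of the preceding lemma.
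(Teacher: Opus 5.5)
Your proposal is correct and follows the same route as the paper: expand $L_0G_d$ and $L_0D_tG_d$ into the three commutator pieces plus $L_0D_tq$ (resp.\ $L_0D_t^2q$), bound the commutators with Lemma~\ref{app_S1} using Propositions~\ref{pro_q}, \ref{pro_L_0_q} and the $L_0$-energy, and reduce the $q$-terms to the preceding lemma. The only differences are small: the paper places $\partial_\alpha L_0q$ in $H^{s-2}$ against $\mathfrak F$ in $W^{s-1,\infty}$ rather than using your $L^\infty$ bound on $L_0q$, and it leaves the commutation $L_0D_t\to D_tL_0$ implicit; in your version note the harmless slips $[L_0,D_t]=-\tfrac12 D_t+(L_0b-\tfrac12 b)\partial_\alpha$, and that the top-order term $L_0b\,q_\alpha$ in $H^{s-1}$ needs $L_0b\in H^{s-1}$, not only $H^{s-2}$.
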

\begin{proof}
    Recall that
    $$
    G_d=-2[\bar{q},\mathfrak{H}]\frac{\bar{\mathfrak{F}}_{\alpha}}{\zeta_{\alpha}}-2[\bar{\mathfrak{F}},\mathfrak{H}]\frac{\bar{q}_{\alpha}}{\zeta_{\alpha}}-2[\bar{q},\mathfrak{H}]\frac{\bar{q}_{\alpha}}{\zeta_{\alpha}}-4D_tq.
    $$
    and
    $$
    \mathfrak{F}=D_t\bar\zeta-q,
    $$
    we derive that
    $$
    \begin{aligned}
    \norm{L_0G_d}_{H^{s-1}}\lesssim&\norm{L_0q_\alpha}_{H^{s-2}}\norm{\mathfrak{F}}_{W^{s-1,\infty}}+\norm{q}_{W^{s-1,\infty}}\norm{L_0\mathfrak{F}_\alpha}_{H^{s-2}}\\
    &+\norm{L_0q_\alpha}_{H^{s-2}}\norm{q}_{W^{s-1,\infty}}+\norm{L_0D_tq}_{H^{s-1}}\\
    \lesssim&\ \frac{\lambda}{H^2}\sqrt{\mathfrak{E}_s^{L_0}}+\frac{\lambda(\epsilon+\lambda)^2t}{H^{3}}+\frac{\lambda(\epsilon+\lambda)}{H^2}+\frac{\lambda\epsilon(\epsilon+\lambda)t^{\frac{1}{2}+\delta_0}}{H^2}.
    \end{aligned}
    $$
    The same holds for $\norm{L_0D_tG_d}_{H^{s-1}}$.
\end{proof}

\begin{lemma}\label{lem_L_0G_0}
    For $t\in [0,T]$, we have
    $$
    \norm{L_0G_0^{\tilde{\theta}}}_{H^{s-1}}+\norm{L_0G_0^{\tilde{\sigma}}}_{H^{s-1}}\lesssim\left(\frac{\epsilon^2}{t}+\frac{\lambda}{H^2}\right)\sqrt{\mathfrak{E}_s^{L_0}}+\frac{\epsilon^3}{t^{1+\delta}}+\frac{\lambda(\epsilon+\lambda)^2t}{H^{3}}+\frac{\lambda(\epsilon+\lambda)}{H^2}+\frac{\lambda\epsilon(\epsilon+\lambda)t^{\frac{1}{2}+\delta_0}}{H^2}.
    $$
\end{lemma}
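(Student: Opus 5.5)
We split $G_0^{\tilde\theta}=G_c+G_d$ and, via \eqref{G_0^sigma}, $G_0^{\tilde\sigma}=i\frac{a_t}{a}\circ\kappa^{-1}A\tilde\theta_\alpha+D_tG_c+D_tG_d$. The vortex parts $L_0G_d$ and $L_0D_tG_d$ are already controlled by Lemma \ref{lem_L_0G_d}, which produces exactly the $\lambda$-dependent terms on the right-hand side. It therefore remains to bound $L_0G_c$, $L_0D_tG_c$ and $L_0\big(\frac{a_t}{a}\circ\kappa^{-1}A\tilde\theta_\alpha\big)$ in $H^{s-1}$ by $\big(\frac{\epsilon^2}{t}+\frac{\lambda}{H^2}\big)\sqrt{\mathfrak E_s^{L_0}}+\frac{\epsilon^3}{t^{1+\delta}}$ plus the vortex errors.

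For the $\frac{a_t}{a}$ term we use the Leibniz rule for $L_0$. When $L_0$ falls on $\tilde\theta_\alpha$, we use $\|\frac{a_t}{a}\circ\kappa^{-1}A\|_{W^{s-1,\infty}}$ from Lemma \ref{lem_a_t/a_est} together with $\|L_0\tilde\theta_\alpha\|_{H^{s-1}}\lesssim\sqrt{\mathfrak E_s^{L_0}}+\dots$ from Lemma \ref{lem_L_0D_t_theta} and Lemma \ref{lem_L_0D_t_zeta_E}. When $L_0$ falls on $\frac{a_t}{a}$, we apply $L_0$ to \eqref{a_t/a}, exactly as was done for $L_0A$ in Lemma \ref{lem_L_0D_t_zeta_E}, using $[L_0,\mathfrak H]=[L_0\zeta,\mathfrak H]\frac{\partial_\alpha}{\zeta_\alpha}$ and Lemma \ref{lem_real_proj}. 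The resulting quadratic terms are bounded by $\epsilon t^{-1/2}\sqrt{\mathfrak E_s^{L_0}}+\epsilon^2t^{-1/2+\delta_0}$, and then multiplied by $\|\tilde\theta_\alpha\|_{L^\infty}\lesssim\epsilon t^{-1/2}$. The vortex pieces involve $L_0$ of $\lambda/(\zeta-z_j)^2$ and contain $t\ddot z_j$; these we treat with the holomorphic $\tilde L\mathfrak F$ trick from the preceding lemma, which gives $\lambda H^{-2}(\sqrt{\mathfrak E_s^{L_0}}+\dots)$.

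The main work is $L_0G_c$. First we commute $L_0$ through the singular integrals in \eqref{G_c}: $L_0$ acting on kernels of the form $\frac{f(\alpha)-f(\beta)}{\zeta(\alpha)-\zeta(\beta)}$ produces the same trilinear structure with one factor replaced by $L_0 f$, $L_0\zeta$, or $f$ (since $L_0$ applied to $\alpha-\beta$ gives $\alpha-\beta$). Thus $L_0G_c$ is a sum of cubic integrals of the type treated in Theorem \ref{thm_d_t_E_s}, with one factor carrying $L_0$, and these factors are measured in $L^2$ by $\sqrt{\mathfrak E_s^{L_0}}$. Every contribution in which the two remaining factors both sit in $L^\infty$ gives $\epsilon^2t^{-1}\sqrt{\mathfrak E_s^{L_0}}$, using (B4) and Lemma \ref{lem_D_t_zeta_L^infty}.

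The obstacle is the top-order term where all $s-1$ derivatives and $L_0$ fall on a single factor, e.g. $\partial_\alpha^{s-1}L_0D_t\zeta$ combined with a $(\zeta-\bar\zeta)_\beta$ factor. Its naive bound is $\epsilon^2t^{-1}$ with a logarithmic loss, and we lack $\|\Omega_0D_t\zeta\|$. For this term we plan to repeat the $\mathfrak G_1,\mathfrak G_2$ analysis from Theorem \ref{thm_d_t_E_s}. We integrate by parts in $\beta$ and move time derivatives with \eqref{transfer_d_var_1}–\eqref{transfer_d_var_2}. Away from $|\alpha|\sim t$ we use Lemma \ref{lem_int_alpha_away_t}; on $S(t)$ we use Corollary \ref{cor_integral_three} and Lemma \ref{lem_(I-H)fg_main2}. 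This shows that the only non-decaying part is of the form $1_{S(t)}\frac{it}{2\alpha}D_t\partial_\alpha^kL_0\tilde\theta\,|\zeta_\alpha-1|^2$, which has size $\epsilon^2t^{-1}\sqrt{\mathfrak E_s^{L_0}}$; the remainder is $\epsilon^3t^{-1-\delta}$. Finally, $L_0D_tG_c=D_tL_0G_c+[L_0,D_t]G_c$. Here $[L_0,D_t]=-\frac12D_t+(\frac12b-L_0b)\partial_\alpha$, and the terms involving $b$ and $L_0b$ are controlled by \eqref{L_0_b} and Lemma \ref{lem_b}. The term $D_tL_0G_c$ is handled as above with one more $D_t$, and the resulting $D_t^2\zeta$ factors are replaced by $iA\zeta_\alpha-i$ via \eqref{main_equation} whenever an $\Omega_0$ norm would otherwise be needed.
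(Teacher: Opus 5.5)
There are two genuine gaps, both where the paper's argument is most delicate.

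\textbf{The critical term in $L_0G_c$ is not handled.} The configuration that limits the decay is the one where $L_0$ falls on one factor and the $s-1$ spatial derivatives fall on a different one. This is the paper's term $g$, with $h=L_0\zeta-L_0\bar\zeta$, top factor $\partial_\alpha^{s-1}\zeta_t$ and low factor $\partial_\beta\zeta_t$. Your claim that contributions with the two non-$L_0$ factors in $L^\infty$ give $\epsilon^2t^{-1}\sqrt{\mathfrak E_s^{L_0}}$ fails there. Lemma \ref{lem_D_t_zeta_L^infty} only gives $\epsilon t^{-1/4}\ln(2+t)$ for $\partial_\alpha^{s-1}D_t\zeta$ and $\epsilon t^{-1/6}\ln(2+t)$ for $\partial_\alpha^s\zeta$, so you get at best $\epsilon^2t^{-3/4}\ln(2+t)\sqrt{\mathfrak E_s^{L_0}}$. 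Putting the top factor in $L^2$ is worse, since $L_0\zeta_\alpha$ has no $L^\infty$ decay.

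The paper instead moves the time derivative from $\partial_\beta^{s-1}\zeta_t$ onto $\partial_\beta\zeta_t$ using \eqref{transfer_d_var_2}. The factor $h$ stays in $L^2$ throughout, and the price is $t^{-1}$ times $\Omega_0$-norms of the non-$L_0$ factors, such as $\norm{\Omega_0\zeta_\alpha}_{H^{s-2}}$ and $\norm{\Omega_0\partial_\alpha D_t\zeta}_{L^2}$, which are controlled.

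The term you single out already has its $L_0$-factor in $L^2$ against two low-order factors. Moreover, your $\mathfrak G$-type treatment of it cannot be run as stated: on $S(t)$, Corollary \ref{cor_integral_three} and Lemma \ref{lem_(I-H)fg_main2} need vector-field norms of every factor, here of $\partial_\alpha^{s-1}L_0D_t\zeta$. These are second-order vector fields, which the bootstrap does not provide.

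Also, since $\bar{\mathfrak F}=D_t\zeta-\bar q$, $L_0G_c$ contains $L_0\bar q$, which is not bounded by $\sqrt{\mathfrak E_s^{L_0}}$. Those pieces must be split off (the paper's $G_{c1}$) and bounded with Propositions \ref{pro_q} and \ref{pro_L_0_q}.

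\textbf{The $\frac{a_t}{a}$ term cannot be done by a bilinear bound followed by an $L^\infty$ multiplier.} Multiplying your bound $\epsilon t^{-1/2}\sqrt{\mathfrak E_s^{L_0}}+\epsilon^2t^{-1/2+\delta_0}$ for $L_0(\frac{a_t}{a}\circ\kappa^{-1}A)$ by $\norm{\tilde\theta_\alpha}_{L^\infty}\lesssim\epsilon t^{-1/2}$ gives $\epsilon^3t^{-1+\delta_0}$, not $\epsilon^3t^{-1-\delta}$. Worse, in the $H^{s-1}$ norm the Leibniz term $L_0(\frac{a_t}{a}\circ\kappa^{-1}A)\,\partial_\alpha^s\tilde\theta$ cannot use $\norm{\tilde\theta_\alpha}_{L^\infty}$ at all. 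Indeed $\partial_\alpha^s\tilde\theta$ has no decay in $L^2$ and only $t^{-1/6}\ln(2+t)$ in $L^\infty$. That leaves something like $\epsilon^3t^{-2/3+\delta_0}\ln(2+t)$, which is not integrable and would break Corollary \ref{cor_E^L_0}.

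The $L_0A$ computation of Lemma \ref{lem_L_0D_t_zeta_E} is not the right model, since it only yields $t^{-1/6+\delta_0}$-type bounds. The paper instead estimates $L_0(\frac{a_t}{a}\circ\kappa^{-1}A)\tilde\theta_\alpha$ as a cubic expression, by the method used for $\partial_\alpha^{s-1}D_tL_0b\,\tilde\theta_\alpha$ in Lemma \ref{lem_[P,L_0]}:
\begin{itemize}
\item split into $(I-\mathfrak H)$, $(I-\bar{\mathfrak H})$ and $(\mathfrak H+\bar{\mathfrak H})$ parts;
\item insert \eqref{a_t/a} in the first two parts, and bound the extreme terms (the analogues of $f_1,f_2,f_3$, which retain the factor $\zeta_\alpha-1$) with the transition identities of Lemma \ref{lem_transfer_d}, which is what supplies the missing $t^{-1/2}$;
\item rewrite the $(\mathfrak H+\bar{\mathfrak H})$ part through the logarithmic kernel, to avoid needing $\norm{\partial_\alpha^sL_0D_t\zeta}_{L^2}$.
\end{itemize}

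Incidentally, $[L_0,D_t]=-\frac12D_t+(L_0b-\frac12b)\partial_\alpha$; your $b$-terms have the wrong sign, which is harmless.
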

\begin{proof}
    Rewrite $G_c$ as:
    $$
    G_c=-2[D_t\zeta-\bar q,\mathfrak{H}\frac{1}{\zeta_\alpha}+\bar{\mathfrak{H}}\frac{1}{\bar\zeta_\alpha}](D_t\zeta-\bar q)_{\alpha}+\frac{1}{\pi i}\int\left(\frac{D_t\zeta(\alpha,t)-D_t\zeta(\beta,t)}{\zeta(\alpha,t)-\zeta(\beta,t)}\right)^2(\zeta-\bar\zeta)_{\beta}d\beta.
    $$
    Denote
    $$
    G_{c1}=2[D_t\zeta,\mathfrak{H}\frac{1}{\zeta_\alpha}+\bar{\mathfrak{H}}\frac{1}{\bar\zeta_\alpha}]\bar q_\alpha+2[\bar q,\mathfrak{H}\frac{1}{\zeta_\alpha}+\bar{\mathfrak{H}}\frac{1}{\bar\zeta_\alpha}]\partial_\alpha D_t\zeta-2[\bar q,\mathfrak{H}\frac{1}{\zeta_\alpha}+\bar{\mathfrak{H}}\frac{1}{\bar\zeta_\alpha}]\bar q_\alpha
    $$
    and
    $$
    G_{c2}=-2[D_t\zeta,\mathfrak{H}\frac{1}{\zeta_\alpha}+\bar{\mathfrak{H}}\frac{1}{\bar\zeta_\alpha}]\partial_\alpha D_t\zeta+\frac{1}{\pi i}\int\left(\frac{D_t\zeta(\alpha,t)-D_t\zeta(\beta,t)}{\zeta(\alpha,t)-\zeta(\beta,t)}\right)^2(\zeta-\bar\zeta)_{\beta}d\beta
    $$
    so that
    $$
    G_c=G_{c1}+G_{c2}.
    $$
    There holds
    \begin{equation}\label{L_0G_c1}
    \begin{aligned}
    \norm{L_0G_{c1}}_{H^{s-1}}\lesssim&\norm{L_0D_t\zeta}_{H^{s-1}}\norm{\zeta_\alpha-1}_{W^{s-2,\infty}}\norm{q}_{W^{s-1,\infty}}+\norm{D_t\zeta}_{W^{s-1,\infty}}\norm{L_0\zeta_\alpha}_{H^{s-1}}\norm{q}_{W^{s-1,\infty}}\\
    &+\norm{D_t\zeta}_{W^{s-1,\infty}}\norm{\zeta_\alpha-1}_{W^{s-2,\infty}}\norm{L_0q}_{H^{s-1}}+\norm{L_0q}_{H^{s-1}}\norm{\zeta_\alpha-1}_{W^{s-2,\infty}}\norm{q}_{W^{s-1,\infty}}\\
    &+\norm{q}_{W^{s-1,\infty}}\norm{L_0\zeta_\alpha}_{H^{s-1}}\norm{q}_{W^{s-1,\infty}}\\
    \lesssim&\ \frac{\lambda\epsilon^2}{H^2t^{\frac{1}{4}-\delta_0}}\ln (2+t)+\frac{\lambda\epsilon^2(\epsilon+\lambda)t^{\frac{1}{4}}}{H^{\frac{5}{2}}}\ln (2+t)+\frac{\lambda^2\epsilon(\epsilon+\lambda)t^{\frac{1}{2}}}{H^{\frac{9}{2}}}+\frac{\lambda^2\epsilon t^{\delta_0}}{H^4}.
    \end{aligned}
    \end{equation}
    For $L_0G_{c2}$, things are getting a bit complicated. Since the time-decay of $\norm{\partial_\alpha^{s-1}D_t\zeta}_{L^\infty}$ and $\norm{\partial_\alpha^s\zeta}_{L^\infty}$ fails to achieve the rate $t^{-\frac{1}{2}}$, a more delicate analysis is needed. For instance, we examine the function
    $$
    g=\int\frac{(\partial_\alpha^{s-1}\zeta_t(\alpha,t)-\partial_\beta^{s-1}\zeta_t(\beta,t))(L_0\zeta(\alpha,t)-L_0\bar{\zeta}(\alpha,t)-L_0\zeta(\beta,t)+L_0\bar{\zeta}(\beta,t))}{|\zeta(\alpha,t)-\zeta(\beta,t)|^2}\partial_\beta \zeta_t d\beta
    $$
    which represents the situation where all the spatial derivatives act on $D_t\zeta$ of the first term in $G_{c2}$. To prove the lemma, we aim to transfer a t-derivative from $\partial_\beta^{s-1}\zeta_t$ to $\partial_\beta \zeta_t$. More precisely, denote $L_0\zeta-L_0\bar\zeta$ by $h$:
    $$
    \begin{aligned}
    g=&\int\frac{h(\alpha,t)-h(\beta,t)}{|\zeta(\alpha,t)-\zeta(\beta,t)|^2}[(\partial_\alpha^{s-1}\zeta_t(\alpha,t)-\partial_\beta^{s-1}\zeta_t(\beta,t))\partial_\beta\zeta_{t}- (\partial_\alpha^{s-1}\zeta(\alpha,t)-\partial_\beta^{s-1}\zeta(\beta,t))\partial_\beta\zeta_{tt}]d\beta\\
    &+\int\frac{(\partial_\alpha^{s-1}\zeta(\alpha,t)-\partial_\beta^{s-1}\zeta(\beta,t))(h(\alpha,t)-h(\beta,t))}{|\zeta(\alpha,t)-\zeta(\beta,t)|^2}\partial_\beta\zeta_{tt} d\beta.\\
    \end{aligned}
    $$
    \eqref{transfer_d_var_2} suggests that
    $$
    \begin{aligned}
    \norm{g}_{L^2}\lesssim&\ \frac{1}{t}(\norm{\Omega_0\zeta_\alpha}_{H^{s-2}}\norm{L_0\zeta_\alpha}_{H^1}\norm{\partial_\alpha D_t^2\zeta}_{L^\infty}+\norm{D_t\zeta}_{W^{s-1,\infty}}\norm{L_0\zeta_\alpha}_{L^2}\norm{\partial_\alpha D_t^2\zeta}_{L^\infty})\\
    &+\frac{1}{t}\norm{D_t\zeta}_{W^{s-1,\infty}}\norm{L_0\zeta_\alpha}_{H^1}\norm{\Omega_0\partial_\alpha D_t\zeta}_{L^2}+\norm{\zeta_\alpha-1}_{W^{s-2,\infty}}\norm{L_0\zeta_\alpha}_{L^2}\norm{\partial_\alpha D_t^2\zeta}_{L^\infty}\\
    \lesssim&\ \frac{\epsilon^2}{t}\sqrt{\mathfrak{E}_s^{L_0}}+\frac{\epsilon^3}{t^{1+\delta}}.
    \end{aligned}
    $$
    The other cases can be handled as well. In a word, there exists
    \begin{equation}\label{L_0G_c2}
    \norm{L_0G_{c2}}_{H^{s-1}}\lesssim\frac{\epsilon^2}{t}\sqrt{\mathfrak{E}_s^{L_0}}+\frac{\epsilon^3}{t^{1+\delta}}.
    \end{equation}
    Utilizing Lemma \ref{lem_L_0G_d}, (\ref{L_0G_c1}) and (\ref{L_0G_c2}), we get
    $$
    \begin{aligned}
    \norm{L_0G_0^{\tilde{\theta}}}_{H^{s-1}}\lesssim&\left(\frac{\epsilon^2}{t}+\frac{\lambda}{H^2}\right)\sqrt{\mathfrak{E}_s^{L_0}}+\frac{\epsilon^3}{t^{1+\delta}}+\frac{\lambda(\epsilon+\lambda)^2t}{H^{3}}+\frac{\lambda(\epsilon+\lambda)}{H^2}+\frac{\lambda\epsilon(\epsilon+\lambda)t^{\frac{1}{2}+\delta_0}}{H^2}.
    \end{aligned}
    $$
    It remains to estimate $\norm{L_0G_0^{\tilde{\sigma}}}_{H^{s-1}}$. Imitating the preceding argument leads to
    $$
    \norm{L_0D_tG_0^{\tilde{\theta}}}_{H^{s-1}}\lesssim\left(\frac{\epsilon^2}{t}+\frac{\lambda}{H^2}\right)\sqrt{\mathfrak{E}_s^{L_0}}+\frac{\epsilon^3}{t^{1+\delta}}+\frac{\lambda(\epsilon+\lambda)^2t}{H^{3}}+\frac{\lambda(\epsilon+\lambda)}{H^2}+\frac{\lambda\epsilon(\epsilon+\lambda)t^{\frac{1}{2}+\delta_0}}{H^2}.
    $$
    Next we examine the term $L_0(\frac{a_t}{a}\circ\kappa^{-1}A\tilde{\theta}_\alpha)$ and denote it by $\mathfrak{A}$:
    $$
    \mathfrak{A}=L_0(\frac{a_t}{a}\circ\kappa^{-1}A)\tilde{\theta}_\alpha+\frac{a_t}{a}\circ\kappa^{-1}AL_0\tilde{\theta}_\alpha.
    $$
    Referring to Lemma \ref{lem_a_t/a_est}, we see that
    $$
    \begin{aligned}
    \norm{\frac{a_t}{a}\circ\kappa^{-1}AL_0\tilde{\theta}_\alpha}_{H^{s-1}}\lesssim&\norm{\frac{a_t}{a}\circ\kappa^{-1}A}_{W^{s-1,\infty}}\norm{L_0\tilde{\theta}_\alpha}_{H^{s-1}}\\
    \lesssim&\left(\frac{\epsilon^2}{t^{\frac{7}{6}-\delta_0}}(\ln (2+t))^2+\frac{\lambda\epsilon}{H^2t^{\frac{1}{2}}}+\frac{\lambda}{H^3}\right)(\sqrt{\mathfrak{E}_s^{L_0}}+\frac{\epsilon^2}{t^{\frac{1}{6}-\delta_0}}\ln (2+t)).
    \end{aligned}
    $$
    Meanwhile, we can make use of the method which plays a pivotal role in the estimate of $\partial_\alpha^{s-1}D_tL_0b\tilde{\theta}_\alpha$ to control $\norm{L_0(\frac{a_t}{a}\circ\kappa^{-1}A)\tilde{\theta}_\alpha}_{H^{s-1}}$. Hence we deduce that
    $$
    \norm{L_0(\frac{a_t}{a}\circ\kappa^{-1}A)\tilde{\theta}_\alpha}_{H^{s-1}}\lesssim\frac{\epsilon^2}{t}\sqrt{\mathfrak{E}_s^{L_0}}+\frac{\epsilon^3}{t^{1+\delta}}+\frac{\lambda\epsilon}{H^2t^{\frac{1}{6}}}\ln (2+t).
    $$
    Since
    $$
    L_0G_0^{\tilde{\sigma}}=\mathfrak{A}+L_0D_tG_0^{\tilde{\theta}},
    $$
    we conclude that
    $$
    \norm{L_0G_0^{\tilde{\sigma}}}_{H^{s-1}}\lesssim\left(\frac{\epsilon^2}{t}+\frac{\lambda}{H^2}\right)\sqrt{\mathfrak{E}_s^{L_0}}+\frac{\epsilon^3}{t^{1+\delta}}+\frac{\lambda(\epsilon+\lambda)^2t}{H^{3}}+\frac{\lambda(\epsilon+\lambda)}{H^2}+\frac{\lambda\epsilon(\epsilon+\lambda)t^{\frac{1}{2}+\delta_0}}{H^2}.
    $$
\end{proof}

With all the estimates in place, we are ready to analyze the behavior of $\mathfrak{E}_s^{L_0}$.
\begin{thm}\label{thm_d_t_E^L_0}
    For $t\in[0,T]$, we claim that
    \begin{equation}\label{d_t_E^L_0}
    \begin{aligned}
    &\left|\frac{d}{dt}{\mathfrak{E}_s^{L_0}}\right|\\
    &\leq C\left[\left(\frac{\epsilon^2}{t}+\frac{\lambda}{H^2}\right)\sqrt{\mathfrak{E}_s^{L_0}}+\frac{\epsilon^3}{t}+\frac{\lambda(\epsilon+\lambda)^2(1+t)}{H^{3}}+\frac{\lambda(\epsilon+\lambda)}{H^2}+\frac{\lambda\epsilon(\epsilon+\lambda)(1+t)^{\frac{1}{2}+\delta_0}}{H^2}\right]\sqrt{\mathfrak{E}_s^{L_0}}.
    \end{aligned}
    \end{equation}
\end{thm}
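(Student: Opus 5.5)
The estimate follows the same scheme as Theorem \ref{thm_d_t_E_s}, now applied to the vector-field energies. We start from \eqref{d_tE^L_0}, which expresses $\frac{d}{dt}E_k^{L_0\tilde\theta}$ and $\frac{d}{dt}E_k^{L_0\tilde\sigma}$ through two kinds of terms. The first is the pairing $\operatorname{Re}\int 2A^{-1}D_t\partial_\alpha^kL_0\bar{\tilde\theta}\,G_k^{L_0\tilde\theta}$ (and its $\tilde\sigma$ analogue). The second is the damping term involving $\frac{a_t}{a}\circ\kappa^{-1}$.

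We treat the two kinds of terms as follows.

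\textbf{Damping term.} Since $A\ge\frac12$, Lemma \ref{lem_a_t/a_est} gives the bound
\[
\Bigl(\frac{\epsilon^2}{t^{1+\delta}}+\frac{\lambda\epsilon}{H^2t^{1/2}}+\frac{\lambda}{H^3}\Bigr)\mathfrak{E}_s^{L_0}.
\]
This is dominated by the right-hand side of \eqref{d_t_E^L_0}, because $\frac{\lambda\epsilon}{H^2t^{1/2}}\le\frac{\lambda}{H^2}$ and $\frac{\lambda}{H^3}\le\frac{\lambda}{H^2}$.

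\textbf{Pairing term.} By Cauchy--Schwarz, $\|D_t\partial_\alpha^kL_0\tilde\theta\|_{L^2}\lesssim\sqrt{\mathfrak{E}_s^{L_0}}$, so it suffices to bound $\|G_k^{L_0\tilde\theta}\|_{L^2}$ and $\|G_k^{L_0\tilde\sigma}\|_{L^2}$ by the bracket in \eqref{d_t_E^L_0}. Each source splits into three pieces.
\begin{itemize}
\item $[D_t^2-iA\partial_\alpha,\partial_\alpha^k]L_0\tilde\theta$: handled by Lemma \ref{lem_[P,d_alpha]L_0}. Its contributions $\frac{\lambda\epsilon(\epsilon+\lambda)t^{\delta_0}}{H^2}$ and $\frac{\lambda\epsilon t^{\delta_0}}{H^3}$ are absorbed into $\frac{\lambda\epsilon(\epsilon+\lambda)(1+t)^{1/2+\delta_0}}{H^2}$ and $\frac{\lambda(\epsilon+\lambda)}{H^2}$. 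For the second, we use $H\gtrsim H_0+\lambda t$, which gives $t^{\delta_0}/H\lesssim 1$ up to harmless factors; otherwise we fall back on $\lambda\epsilon t^{\delta_0}/H^{3}\le \lambda\epsilon(1+t)^{1/2+\delta_0}/H^2$.
\item $\partial_\alpha^k[D_t^2-iA\partial_\alpha,L_0]\tilde\theta$: handled by Lemma \ref{lem_[P,L_0]}.
\item $\partial_\alpha^kL_0G_0$: handled by Lemma \ref{lem_L_0G_0}, which already incorporates the vortex terms of Lemma \ref{lem_L_0G_d}.
\end{itemize}
Summing the three bounds and using $t^{-1-\delta}\le t^{-1}$ reproduces exactly the bracket in \eqref{d_t_E^L_0}.

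\textbf{Main obstacle.} The delicate point is that the $\epsilon^3/t$ and $\frac{\epsilon^2}{t}\sqrt{\mathfrak{E}_s^{L_0}}$ terms must not lose a further power of $t$. In Theorem \ref{thm_d_t_E_s} this required extracting the resonant piece $C_k1_{t^{3/4}\le|\alpha|\le t^{5/4}}\frac{it}{2\alpha}D_t\tilde\theta_k|\zeta_\alpha-1|^2$. Here, however, the claimed bound is only $\epsilon^3/t$, not integrable, and this is consistent with the $t^{\delta_0}$ growth allowed in (B3). So the crude estimate $\|\partial_\alpha^kG_c\|_{L^2}\lesssim\epsilon^3/t$ obtained from \eqref{G_c_main} is enough, and no cancellation argument is needed.

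We should also check that the lemmas cover the top order $k=s-1$ for both $\tilde\theta$ and $\tilde\sigma$. For $\tilde\sigma$, the extra term $L_0\bigl(\frac{a_t}{a}\circ\kappa^{-1}A\tilde\theta_\alpha\bigr)$ is contained in Lemma \ref{lem_L_0G_0}. Summing over $0\le k\le s-1$ then gives \eqref{d_t_E^L_0}.
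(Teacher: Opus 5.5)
Your proposal matches the paper's proof. Starting from \eqref{d_tE^L_0}, the damping term is bounded by $\|\frac{a_t}{a}\circ\kappa^{-1}\|_{L^\infty}\mathfrak{E}_s^{L_0}$ using Lemma~\ref{lem_a_t/a_est}. The pairing is bounded by $\|D_t\partial_\alpha^kL_0\tilde\theta\|_{L^2}\|G_k^{L_0\tilde\theta}\|_{L^2}$ (and its $\tilde\sigma$ analogue), with the three pieces of the source controlled by Lemmas~\ref{lem_[P,d_alpha]L_0}, \ref{lem_[P,L_0]} and~\ref{lem_L_0G_0}.

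One step needs repair: your absorption of the term $\lambda\epsilon t^{\delta_0}/H^3$ coming from Lemma~\ref{lem_[P,d_alpha]L_0}.
\begin{itemize}
\item Your fallback $\lambda\epsilon t^{\delta_0}/H^3\le\lambda\epsilon(1+t)^{1/2+\delta_0}/H^2$ does not land in the bracket. The corresponding bracket term carries an extra small factor $\epsilon+\lambda$.
\item The claim $t^{\delta_0}/H\lesssim1$ is not uniform in $\lambda$. The supremum over $t$ of $t^{\delta_0}/(H_0+\lambda t)$ is of order $\lambda^{-\delta_0}H_0^{\delta_0-1}$.
\end{itemize}
In fact, for fixed $H_0$ and $\epsilon,\lambda\to0$ within the regime \eqref{lambda_H_0}, this term beats every term of the bracket near $t\sim H_0/\lambda$.

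The clean fix is to go back into the proof of Lemma~\ref{lem_[P,d_alpha]L_0}. There, do not replace $\|L_0\tilde\theta_\alpha\|_{H^{s-2}}+\|\partial_\alpha L_0D_t\tilde\theta\|_{H^{s-2}}$ by the bootstrap bound $\epsilon t^{\delta_0}$. Instead bound it by $\sqrt{\mathfrak{E}_s^{L_0}}$ plus lower-order errors, using $D_tL_0\tilde\sigma=iAL_0\tilde\theta_\alpha+iL_0A\,\tilde\theta_\alpha+L_0G_0^{\tilde\theta}+[D_t,L_0]\tilde\sigma$. The $\lambda/H^3$ part of $\|b_\alpha\|_{W^{s-2,\infty}}$ then contributes $\frac{\lambda}{H^3}\sqrt{\mathfrak{E}_s^{L_0}}\le\frac{\lambda}{H^2}\sqrt{\mathfrak{E}_s^{L_0}}$, which is in the bracket. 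The errors, multiplied by $\lambda/H^3$, are dominated by the remaining bracket terms. The paper's one-line proof passes over this point as well.
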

\begin{proof}
    From (\ref{d_tE^L_0}) we have
    $$
    \left|\frac{d}{dt}{\mathfrak{E}_s^{L_0}}\right|\lesssim\sum_{k=0}^{s-1}\left(\norm{D_t\partial_\alpha^kL_0\tilde{\theta}}_{L^2}\norm{G_k^{L_0\tilde{\theta}}}_{L^2}+\norm{D_t\partial_\alpha^kL_0\tilde{\theta}}_{L^2}^2\norm{\frac{a_t}{a}\circ\kappa^{-1}}_{L^\infty}\right)
    $$
    Appealing to Lemma \ref{lem_[P,d_alpha]L_0}, Lemma \ref{lem_[P,L_0]} and Lemma \ref{lem_L_0G_0} yields (\ref{d_t_E^L_0}).
\end{proof}

\begin{cor}\label{cor_E^L_0}
    For $t\in[0,T]$, there holds
    $$
    \sqrt{\mathfrak{E}_s^{L_0}}\leq e^{Cc_s^2}(1+t)^{C\epsilon^2}\left(\sqrt{\mathfrak{E}_s^{L_0}(0)}+C\epsilon^3\ln (1+t)+Cc_s\epsilon\right).
    $$ 
\end{cor}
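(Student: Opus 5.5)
The plan is to integrate the differential inequality of Theorem \ref{thm_d_t_E^L_0} by a Gronwall argument, just as in Corollary \ref{cor_E}. Write $Y(t)=\sqrt{\mathfrak{E}_s^{L_0}(t)}$. Dividing \eqref{d_t_E^L_0} by $2Y$ (or working with $\sqrt{Y^2+\eta}$ and letting $\eta\to0$) gives
\[
\frac{d}{dt}Y\le C\Bigl(\frac{\epsilon^2}{1+t}+\frac{\lambda}{H^2}\Bigr)Y+C\Bigl(\frac{\epsilon^3}{1+t}+\frac{\lambda(\epsilon+\lambda)^2(1+t)}{H^{3}}+\frac{\lambda(\epsilon+\lambda)}{H^2}+\frac{\lambda\epsilon(\epsilon+\lambda)(1+t)^{\frac12+\delta_0}}{H^2}\Bigr).
\]
On $[0,1]$ we replace $1/t$ by $1/(1+t)$ using local well-posedness. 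The linear coefficient is $\Theta(t)=C\epsilon^2(1+t)^{-1}+C\lambda H^{-2}$. Using (B6), $H\ge c_0(H_0+\lambda t)$, we get
\[
\int_0^t\Theta\le C\epsilon^2\ln(1+t)+C\int_0^\infty\frac{\lambda\,d\tau}{(H_0+\lambda\tau)^2}\le C\epsilon^2\ln(1+t)+CH_0^{-1}.
\]
Since $c_sH_0^{1/2}\ge1$ by \eqref{lambda_H_0}, $H_0^{-1}\le c_s^2$. This produces the prefactor $e^{Cc_s^2}(1+t)^{C\epsilon^2}$.

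Next we bound the forcing integrals one at a time, using $H\gtrsim H_0+\lambda t$.
\begin{itemize}
\item The term $\epsilon^3/(1+t)$ integrates to $\epsilon^3\ln(1+t)$.
\item For $\lambda(\epsilon+\lambda)\int(H_0+\lambda\tau)^{-2}\,d\tau\lesssim(\epsilon+\lambda)/H_0$, we use $\lambda\le c_s\epsilon H_0^{1/2}$ and $H_0^{-1/2}\le c_s$. This gives $\lambda/H_0\le c_s\epsilon H_0^{-1/2}\le c_s^2\epsilon$ and $\epsilon/H_0\le c_s^2\epsilon$.
\item For $\lambda(\epsilon+\lambda)^2\int(1+\tau)(H_0+\lambda\tau)^{-3}\,d\tau$, split at $\tau=H_0/\lambda$. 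The integral is bounded by $C\lambda^{-1}H_0^{-1}$, so the term is $\lesssim(\epsilon+\lambda)^2/H_0\lesssim c_s\epsilon$. This needs $\epsilon+\lambda\lesssim H_0^{1/2}$, which holds by \eqref{lambda_H_0} for $\lambda,\epsilon$ small.
\end{itemize}

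The main obstacle is the last forcing term, $\lambda\epsilon(\epsilon+\lambda)\int(1+\tau)^{\frac12+\delta_0}(H_0+\lambda\tau)^{-2}\,d\tau$. We apply the splitting bound $\int_0^\infty\lambda(1+\lambda\tau)^{-p}(1+\tau)^{-q}\,d\tau\lesssim\lambda^{q/(p+q)}$ from Corollary \ref{cor_E}, with the roles adjusted. Here $p=2$ and $q=-\frac12-\delta_0$, so $p+q>1$. Splitting at $\tau=H_0/\lambda$, the integral is $\lesssim H_0^{-1/2+\delta_0}\lambda^{-3/2-\delta_0}$. The whole term is then $\lesssim\epsilon(\epsilon+\lambda)\lambda^{-1/2-\delta_0}H_0^{-1/2+\delta_0}$.

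To make this $\le c_s\epsilon$ we use the lower constraint $\epsilon\le c_s\lambda H_0^{1/4}$, i.e.\ $\epsilon\lambda^{-1}\lesssim c_sH_0^{1/4}$. Then
\[
\epsilon\,\lambda^{-\frac12-\delta_0}\le(\epsilon/\lambda)\,\lambda^{\frac12-\delta_0}\le c_sH_0^{\frac14}.
\]
Combined with $H_0^{-1/2+\delta_0}$, this leaves $c_sH_0^{-1/4+\delta_0}\lesssim c_s$. The $\lambda\cdot\lambda^{-1/2-\delta_0}$ piece is even smaller. This is the delicate balance step: if the exponents do not close, the fallback is to use the sharper bound for $D_tL_0q$ via the holomorphic $\tilde L\mathfrak F$ estimate.

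Collecting these bounds, we have $\int_0^t(\text{forcing})\le C\epsilon^3\ln(1+t)+Cc_s\epsilon$. Multiplying by the Gronwall factor gives the stated bound for $\sqrt{\mathfrak{E}_s^{L_0}}$.
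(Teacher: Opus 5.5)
Your overall route is the paper's. You integrate \eqref{d_t_E^L_0} with an integrating factor, bound $\int_0^t\Theta\le C\epsilon^2\ln(1+t)+CH_0^{-1}\le C\epsilon^2\ln(1+t)+Cc_s^2$, and integrate each forcing term using (B6) and \eqref{lambda_H_0}. Your treatment of the Gronwall factor, of the $\epsilon^3/(1+t)$ and $\lambda(\epsilon+\lambda)/H^2$ terms, and of the last term $\lambda\epsilon(\epsilon+\lambda)(1+t)^{\frac12+\delta_0}/H^2$ is fine. For the last term, the splitting bound from Corollary \ref{cor_E} is only stated for $q>0$, but your direct computation $\lesssim H_0^{-1/2+\delta_0}\lambda^{-3/2-\delta_0}$ is correct, and closing it via $\epsilon/\lambda\le c_sH_0^{1/4}$ works. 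The hedged ``fallback'' is unnecessary, since the sharper $D_tL_0q$ bound is already built into \eqref{d_t_E^L_0}.

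However, your third bullet is miscomputed. In fact
\[
\int_0^\infty\frac{1+\tau}{(H_0+\lambda\tau)^3}\,d\tau=\frac{1}{2\lambda H_0^2}+\frac{1}{2\lambda^2H_0},
\]
so the integral has size $\lambda^{-2}H_0^{-1}$, not $\lambda^{-1}H_0^{-1}$; your claimed bound is false once $\lambda\ll1$. The contribution of this forcing term is therefore $(\epsilon+\lambda)^2/(\lambda H_0)$, exactly as in the paper, and not $(\epsilon+\lambda)^2/H_0$.

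The piece $\epsilon^2/(\lambda H_0)$ cannot be controlled by the upper constraint $\lambda\le c_s\epsilon H_0^{1/2}$ plus smallness, because $\lambda$ may be much smaller than $\epsilon$; as $\lambda\to0$ with $\epsilon,H_0$ fixed it blows up. You must use the lower constraint $\epsilon\le c_s\lambda H_0^{1/4}$. This gives $\epsilon^2/(\lambda H_0)\le c_s\epsilon H_0^{-3/4}$, while $2\epsilon/H_0+\lambda/H_0\lesssim c_s^2\epsilon$ as in your second bullet. With this correction the term is $\lesssim c_s\epsilon$, and the rest of your argument goes through unchanged to give the stated bound.
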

\begin{proof}
    Theorem \ref{thm_d_t_E^L_0} implies that
    $$
    \frac{d}{dt}\sqrt{\mathfrak{E}_s^{L_0}}\leq\frac{C}{2}\left[\left(\frac{\epsilon^2}{1+t}+\frac{\lambda}{H^2}\right)\sqrt{\mathfrak{E}_s^{L_0}}+\frac{\epsilon^3}{1+t}+\frac{\lambda(\epsilon+\lambda)^2t}{H^{3}}+\frac{\lambda(\epsilon+\lambda)}{H^2}+\frac{\lambda\epsilon(\epsilon+\lambda)t^{\frac{1}{2}+\delta_0}}{H^2}\right].
    $$
    Denote
    $$
    \Theta_{L_0}=\frac{C}{2}\left(\frac{\epsilon^2}{1+t}+\frac{\lambda}{H^2}\right).
    $$
    Multiply both sides of the equation by the integrating factor $e^{-\int_0^t\Theta_{L_0}(\tau)d\tau}$:
    $$
    \begin{aligned}
    \frac{d}{dt}&\left[e^{-\int_0^t\Theta_{L_0}(\tau)d\tau}\sqrt{\mathfrak{E}_s^{L_0}}\right]\\
    &\leq \frac{C}{2}e^{-\int_0^t\Theta_{L_0}(\tau)d\tau}\left(\frac{\epsilon^3}{1+t}+\frac{\lambda(\epsilon+\lambda)^2t}{H^{3}}+\frac{\lambda(\epsilon+\lambda)}{H^2}+\frac{\lambda\epsilon(\epsilon+\lambda)t^{\frac{1}{2}+\delta_0}}{H^2}\right).
    \end{aligned}
    $$
    Integrating both sides yields that
    $$
    \begin{aligned}
    &\sqrt{\mathfrak{E}_s^{L_0}}\\
    &\leq e^{\int_0^t\Theta_{L_0}(\tau)d\tau}\left[\sqrt{\mathfrak{E}_s^{L_0}(0)}
    +\frac{C}{2}\int_0^t\left(\frac{\epsilon^3}{1+\tau}+\frac{\lambda(\epsilon+\lambda)^2\tau}{H^3}+\frac{\lambda(\epsilon+\lambda)}{H^{2}}+\frac{\lambda\epsilon(\epsilon+\lambda)\tau^{\frac{1}{2}+\delta_0}}{H^2}\right)d\tau\right].
    \end{aligned}
    $$
    Appealing to \eqref{boot_H}, we obtain
    $$
    \int_0^t\Theta_{L_0}(\tau)d\tau\leq\frac{C}{2}\epsilon^2\ln (1+t)+C\frac{1}{H_0}.
    $$
    Next we check the integral 
    $$
    \int_0^t\frac{\lambda(\epsilon+\lambda)^2\tau}{H^3}d\tau\lesssim\int_0^\infty\frac{\lambda(\epsilon+\lambda)^2\tau}{(H_0+\lambda \tau)^3}d\tau\lesssim\frac{(\epsilon+\lambda)^2}{\lambda H_0}\lesssim\frac{\epsilon}{H_0^{\frac{1}{2}}}.
    $$
    Similar calculus yields that
    $$
    \sqrt{\mathfrak{E}_s^{L_0}}\leq(1+t)^{C\epsilon^2}e^{\frac{C}{H_0}}\left[\sqrt{\mathfrak{E}_s^{L_0}(0)}+C\epsilon^3\ln (1+t)+C\left(\frac{\epsilon}{H_0^{\frac{1}{2}}}+\frac{\epsilon^{\frac{3}{2}-\delta_0}}{H_0^{\frac{1}{4}-\frac{3}{2}\delta_0}}\right)\right].
    $$
    Due to the assumptions we made about $H_0$ and the smallness of $\epsilon$, there holds
    $$
    \sqrt{\mathfrak{E}_s^{L_0}}\leq e^{Cc_s^2}(1+t)^{C\epsilon^2}\left(\sqrt{\mathfrak{E}_s^{L_0}(0)}+C\epsilon^3\ln (1+t)+Cc_s\epsilon\right).
    $$
    
\end{proof}
\section{Decay estimates}\label{sec_decay}
We devote this section to the decay estimates of $\norm{(\partial_\alpha D_t\zeta,\zeta_\alpha-1,D_t^2\zeta)}_{W^{s-3,\infty}\times W^{s-2,\infty}\times W^{s-2,\infty}}$. Recall the notation $S(t)=\{\alpha\in\mathbb{R}: t^{1-\mu}\leq|\alpha|\leq t^{1+\mu}\}, \mu\in(0,\frac{1}{2})$ used in the preceding section. In Lemma \ref{lem_away_infty}, we handle the situation where $\alpha$ is away from $t$, i.e., $\alpha\in S^c(t)$. For the rest of the section, we introduce a functional $\tilde{E}$ which helps us study the behavior of unknowns in $S(t)$. Note that the wave packet $u$
 we introduced, to some extent, suppresses the oscillations of the unknowns.
\begin{lemma}\label{lem_away_infty}
    Assume the bootstrap assumptions. There holds
    \begin{equation}\label{S(t)^c_L^infty}
    \norm{(\partial_\alpha D_t\zeta,\zeta_\alpha-1,D_t^2\zeta)}_{W^{s-3,\infty}(S(t)^c)\times W^{s-2,\infty}(S(t)^c)\times W^{s-2,\infty}(S(t)^c)}\lesssim\frac{\epsilon}{t^{1-\delta_0}}+\frac{\epsilon}{t^{\frac{1}{4}+{2\mu}}}\ln (2+t)+\frac{\epsilon}{t^{\frac{1}{2}+\frac{1}{2}\mu-\delta_0}}
    \end{equation}
    for $t\in[0,T]$.
\end{lemma}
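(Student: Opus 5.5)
The plan is to use the vector fields away from the light cone $|\alpha|\sim t$. There the identities \eqref{transfer_partial_t} and \eqref{transfer_partial_alpha} turn an ordinary function into $t^{-1}$ (or $\alpha t^{-2}$) times vector-field quantities, and those quantities are controlled by (B3) and Lemma \ref{lem_Omega}. We split $S(t)^c$ into the inner region $\{|\alpha|<t^{1-\mu}\}$ and the outer region $\{|\alpha|>t^{1+\mu}\}$, and treat the three unknowns $\partial_\alpha D_t\zeta$, $\zeta_\alpha-1$, $D_t^2\zeta$ with $l\le s-3$ (respectively $s-2$) derivatives. The main identity is the scaling relation between $\Omega_0$, $L_0$ and $\partial_\alpha$:
\[
\frac{t}{2}\,\partial_t f=L_0 f-\alpha f_\alpha,\qquad
\Omega_0 f_\alpha=L_0f_t-\tfrac{t}{2}(\partial_t^2-i\partial_\alpha)f .
\]
Solving these gives, schematically, $(\tfrac{t^2}{4}-i\alpha^2)\, f_\alpha\approx$ combinations of $tL_0f_t$, $\alpha\Omega_0 f_\alpha$ and $t^2(\partial_t^2-i\partial_\alpha)f$. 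The symbol $|t^2/4-i\alpha^2|$ is bounded below by $\max(t^2,\alpha^2)$, and off the cone this gives a gain.

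In the inner region $|\alpha|\le t^{1-\mu}$ we apply \eqref{transfer_partial_t} to $f=\partial_\alpha^l D_t\zeta$ (and its relatives), which gives
\[
f=-\frac{2\alpha}{it}f_t+\frac{2}{it}\Omega_0 f .
\]
The factor $\alpha/t\le t^{-\mu}$ multiplies $f_t\sim \partial_\alpha^l D_t^2\zeta$. We iterate once more, replacing $D_t^2\zeta$ by $i\zeta_\alpha$ plus $(A-1)$ and $b$ errors using \eqref{main_equation}, and then use \eqref{transfer_partial_alpha} on the resulting $\partial_\alpha$ term. This produces a factor $\alpha^2/t^2\le t^{-2\mu}$ times a quantity bounded in $L^\infty$ via Lemma \ref{lem_D_t_zeta_L^infty} by $\epsilon t^{-1/4}\ln(2+t)$, which accounts for the term $\epsilon t^{-1/4-2\mu}\ln(2+t)$. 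The remaining pieces are $t^{-1}$ times $L_0$- and $\Omega_0$-quantities, estimated in $L^\infty$ by Sobolev embedding ($H^1\subset L^\infty$) from (B3), Lemma \ref{lem_L_0D_t_theta} and Lemma \ref{lem_Omega}; they are bounded by $\epsilon t^{\delta_0}$ and so give $\epsilon t^{-1+\delta_0}$. The $b$, $A-1$ and vortex errors are quadratic, or carry $\lambda H^{-2}$ factors, by Lemmas \ref{lem_b}, \ref{lem_A} and Proposition \ref{pro_q}, and can be absorbed.

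In the outer region $|\alpha|\ge t^{1+\mu}$ we use \eqref{transfer_partial_alpha} instead, together with a weighted Sobolev inequality. On a dyadic block $|\alpha|\sim R\ge t^{1+\mu}$ we write $f_\alpha$ as $\alpha^{-1}L_0 f$ plus $t\alpha^{-1}f_t$, then use $|f|^2\lesssim \|f\|_{L^2}\|f_\alpha\|_{L^2}$ on the block, or more precisely the one-dimensional Sobolev bound with weights. This gives
\[
\|f\|_{L^\infty(|\alpha|\sim R)}\lesssim \|f\|_{L^2}^{1/2}\big(R^{-1}\|L_0f\|_{L^2}+tR^{-1}\|f_t\|_{L^2}\big)^{1/2}.
\]
The $f_t$ contribution is not small enough by itself, so we transfer it again: $tf_t/\alpha$ is rewritten through $\Omega_0$ via $\Omega_0 f=\alpha f_t+\tfrac{t}{2}if$, giving $tR^{-2}\|\Omega_0 f\|$ plus $t^2R^{-2}\|f\|$. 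With $R\ge t^{1+\mu}$ and norms of size $\epsilon t^{\delta_0}$, the geometric mean yields $\epsilon t^{-1/2-\mu/2+\delta_0}$, which is the last term of \eqref{S(t)^c_L^infty}.

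The main obstacle is top order. For $\zeta_\alpha-1$ and $D_t^2\zeta$ in $W^{s-2,\infty}$ we need $L_0$ and $\Omega_0$ bounds on $s-1$ derivatives after Sobolev embedding, which sits exactly at the edge of $\mathcal E_{s,L}$ in (B3). We also have no direct $\Omega_0 D_t\zeta$ control, only $\Omega_0\zeta_\alpha$, $\Omega_0\tilde\theta_\alpha$ and $L_0D_t\zeta$ (this is the point in Subsection \ref{subsec:difficulties}). At this step we would always route $\Omega_0$ onto $\partial_\alpha$-differentiated quantities through the identity $\Omega_0\partial_\alpha=L_0\partial_t-\tfrac t2(\partial_t^2-i\partial_\alpha)$ as in the proof of Lemma \ref{lem_Omega}. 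We would use Lemma \ref{lem_theta_alpha} and Lemma \ref{lem_theta_t_zeta_t} to pass between $\zeta$ and $\tilde\theta$. Where one derivative too many appears, we would lose it against the $\ln$ or $t^{\delta_0}$ slack rather than the decay exponent.
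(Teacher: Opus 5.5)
Your inner-region argument works. The detour through \eqref{transfer_partial_t} and the equation is unnecessary, though. The paper applies \eqref{transfer_partial_alpha} directly to $f=\partial_\alpha^kD_t\zeta$: the factor $4\alpha^2/t^2\le 4t^{-2\mu}$ then multiplies $\partial_\alpha^{k+1}D_t\zeta$, which Lemma \ref{lem_D_t_zeta_L^infty} bounds, and the $L_0$ and $\Omega_0$ terms carry a factor $t^{-1}$.

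The gap is in the outer region $|\alpha|\ge t^{1+\mu}$. Solving the two vector-field identities for $f_\alpha$ gives exactly
\[
f_\alpha=\frac{1}{\alpha}L_0f-\frac{t}{2\alpha^2}\Omega_0f+\frac{it^2}{4\alpha^2}f .
\]
You estimate the last term in absolute value as $t^2R^{-2}\|f\|_{L^2}$. With $R\sim t^{1+\mu}$ and $\|f\|_{L^2}\sim\epsilon$, its contribution to the geometric mean $\|f\|_{L^2}^{1/2}\|f_\alpha\|_{L^2}^{1/2}$ is $\epsilon t^{-\mu}$. That is larger than every term on the right of \eqref{S(t)^c_L^infty}, and for $\mu<1/2$ it is not even $t^{-1/2}$ decay, so your claimed $\epsilon t^{-1/2-\mu/2+\delta_0}$ does not follow. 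The quantity $t^2/(4\alpha^2)$ is the local frequency of the wave at position $\alpha$, so $\|f_\alpha\|_{L^2}$ has no reason to be small. Any Gagliardo--Nirenberg bound through $\|f_\alpha\|_{L^2}$ therefore loses here.

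The missing idea is that this term is a pure phase and must be removed, not estimated. Set $F=e^{\frac{it^2}{4\alpha}}f$; then
\[
\partial_\alpha F=e^{\frac{it^2}{4\alpha}}\Big(\frac1\alpha L_0f-\frac{t}{2\alpha^2}\Omega_0f\Big),
\]
which is the identity already used in the proof of Lemma \ref{lem_(I-H)fg_main}. Integrate from $\pm\infty$ and apply Cauchy--Schwarz to get Wu's bound
\[
|f(\alpha)|\lesssim|\alpha|^{-1/2}\|L_0f\|_{L^2}+t|\alpha|^{-3/2}\|\Omega_0f\|_{L^2} .
\]
For $|\alpha|\ge t^{1+\mu}$ this gives $\epsilon t^{-1/2-\mu/2+\delta_0}$. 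The paper does exactly this with $f=\partial_\alpha^kD_t\tilde\theta$, whose $L_0$ and $\Omega_0$ norms are available, and returns to $\zeta$ via Lemma \ref{lem_theta_t_zeta_t}.

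If you prefer to stay within your $L^2$-interpolation framework, differentiate $|f|^2$ rather than $f$. In $\partial_\alpha|f|^2=2\operatorname{Re}(\bar f f_\alpha)$ the contribution $\frac{it^2}{4\alpha^2}|f|^2$ is purely imaginary and drops out. What remains is $|f(\alpha)|^2\le 2\|f\|_{L^2}\big(|\alpha|^{-1}\|L_0f\|_{L^2}+t\alpha^{-2}\|\Omega_0f\|_{L^2}\big)$, which also suffices.
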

\begin{proof}
    We prove the inequality about $\partial_\alpha D_t\zeta$ in detail. The remainder of the lemma can be treated by a similar argument. For $1\leq k\leq s-2$, employing Lemma \ref{lem_D_t_zeta_L^infty} and (\ref{transfer_partial_alpha}) yields that
    $$
    \begin{aligned}
    \norm{\partial_\alpha^kD_t\zeta}_{L^\infty(|\alpha|<t^{1-\mu})}\lesssim&\ \frac{1}{t}(\norm{L_0\partial_\alpha^kD_t\zeta}_{L^\infty}+\norm{\Omega_0\partial_\alpha^kD_t\zeta}_{L^\infty})+\frac{1}{t^{2\mu}}\norm{\partial_\alpha^{k+1}D_t\zeta}_{L^\infty}\\
    \lesssim&\ \frac{\epsilon}{t^{1-\delta_0}}+\frac{\epsilon}{t^{\frac{1}{4}+{2\mu}}}\ln (2+t).
    \end{aligned}
    $$
    For $|\alpha|>t^{1+\mu}$, we consider $\partial_\alpha^kD_t\tilde{\theta}$ instead and adopt (3.6) of \cite{Wu2009}:
    $$
    |\partial_\alpha^kD_t\tilde{\theta}|\leq\frac{2}{|\alpha|^{\frac{1}{2}}}\norm{L_0\partial_\alpha^kD_t\tilde{\theta}}_{L^2}+\frac{t}{|\alpha|^{\frac{3}{2}}}\norm{\Omega_0\partial_\alpha^kD_t\tilde{\theta}}_{L^2}\lesssim\frac{\epsilon}{t^{\frac{1}{2}+\frac{1}{2}\mu-\delta_0}}.
    $$
    Then Lemma \ref{lem_theta_t_zeta_t} reveals (\ref{S(t)^c_L^infty}).
\end{proof}

Now we focus on the situation where $|\alpha|$ is near $t$. More precisely, construct the function $u(\alpha,t,\nu)=\chi\left(\frac{\alpha-\nu t}{\sqrt{t}}\right)e^{-\frac{it^2}{4\alpha}}$ where $\chi(x)\in C^\infty_c(\mathbb{R})$ is a real smooth bump function satisfying $\int \chi(x)dx=1$, $\operatorname{supp}(\chi)\subset[-2,2]
$, and $\nu\in\mathbb{R}\setminus \{0\}$. Simple calculation indicates that
\begin{equation}\label{u_t}
\partial_tu=-\frac{it}{2\alpha}e^{-\frac{it^2}{4\alpha}}\chi\left(\frac{\alpha-\nu t}{\sqrt{t}}\right)-\left(\frac{\alpha-\nu t}{2t^{\frac{3}{2}}}+\frac{\nu}{t^{\frac{1}{2}}}\right)e^{-\frac{it^2}{4\alpha}}\chi'\left(\frac{\alpha-\nu t}{\sqrt{t}}\right),
\end{equation}
\begin{equation}
\partial_\alpha u=\frac{it^2}{4\alpha^2}e^{-\frac{it^2}{4\alpha}}\chi\left(\frac{\alpha-\nu t}{\sqrt{t}}\right)+\frac{1}{\sqrt{t}}e^{-\frac{it^2}{4\alpha}}\chi'\left(\frac{\alpha-\nu t}{\sqrt{t}}\right)
\end{equation}
and
\begin{equation}
\Omega_0u=-\alpha\left(\frac{\alpha-\nu t}{2t^{\frac{3}{2}}}+\frac{\nu}{t^{\frac{1}{2}}}\right)e^{-\frac{it^2}{4\alpha}}\chi'\left(\frac{\alpha-\nu t}{\sqrt{t}}\right).
\end{equation}
Then we can calculate the $L^2$ norm:
$$
\begin{aligned}
\norm{\partial_tu}_{L^2}\leq&\left(\int\frac{t^2}{4\alpha^2}\chi^2\left(\frac{\alpha-\nu t}{\sqrt{t}}\right)d\alpha\right)^{\frac{1}{2}}+\left(\int\left(\frac{\alpha-\nu t}{2t^{\frac{3}{2}}}+\frac{\nu}{t^{\frac{1}{2}}}\right)^2\left(\chi'\right)^2\left(\frac{\alpha-\nu t}{\sqrt{t}}\right)d\alpha\right)^{\frac{1}{2}}\\
\leq&\left(\int\frac{t^{\frac{3}{2}}}{4\alpha^2}\chi^2(\alpha-\nu\sqrt{t})d\alpha\right)^{\frac{1}{2}}+\left(\int\left(\frac{\alpha}{2t}+\frac{\nu}{t^{\frac{1}{2}}}\right)^2t^{\frac{1}{2}}\left(\chi'\right)^2(\alpha)d\alpha\right)^{\frac{1}{2}}.
\end{aligned}
$$
Since the support of $\chi$ is compact, we can restrict the integration region to $[\nu\sqrt{t}-2,\nu\sqrt{t}+2]$ in the first integral and $[-2,2]$ in the second integral, which means that
\begin{equation}\label{u_t_L^2}
\norm{\partial_tu}_{L^2}\lesssim\frac{t^{\frac{1}{4}}}{|\nu|}\norm{\chi}_{L^2}+\left(\frac{1}{t^{\frac{3}{4}}}+\frac{|\nu|}{t^{\frac{1}{4}}}\right)\norm{\chi'}_{L^2}.
\end{equation}
Similarly we have
\begin{equation}\label{u_t_L^1}
    \norm{\partial_tu}_{L^1}\lesssim\frac{t^{\frac{1}{2}}}{|\nu|}\norm{\chi}_{L^1}+\left(\frac{1}{t^{\frac{1}{2}}}+|\nu|\right)\norm{\chi'}_{L^1},
\end{equation}
\begin{equation}\label{u_alpha_L^2}
\norm{\partial_\alpha u}_{L^2}\lesssim\frac{t^{\frac{1}{4}}}{\nu^2}\norm{\chi}_{L^2}+\frac{1}{t^{\frac{1}{4}}}\norm{\chi'}_{L^2},
\end{equation}
\begin{equation}\label{Omega_0u_L^2}
\norm{\Omega_0u}_{L^2}\lesssim(1+|\nu|\sqrt{t})(t^{-\frac{1}{4}}+|\nu| t^{\frac{1}{4}})\norm{\chi'}_{L^2}
\end{equation}
and
$$
\norm{\Omega_0u}_{L^1}\lesssim\nu^2t\norm{\chi'}_{L^1}.
$$
In the remainder of this section we demonstrate that $D_t^2\tilde{\theta}\leq \tilde{C}\epsilon t^{-\frac{1}{2}}$. The strategies can be applied to the higher-order derivatives of $D_t^2\tilde{\theta},\ \partial_\alpha D_t\tilde{\theta}$ and $\tilde{\theta}_\alpha$. Define the functional
\begin{equation}\label{tilde_E}
\tilde{E}(\nu,t)=\int D_t^2\tilde{\theta}\bar u-D_t\tilde{\theta}D_t\bar u.
\end{equation}
Subsequently we will explore the relation between $\tilde{E}$ and $D_t^2\tilde{\theta}$:
\begin{lemma}\label{lem_tilde_E}
Suppose $0<\mu<\frac{1}{4} $ and $|\nu|\in[t^{-\mu},t^{\mu}]$. For $t\in[1,T]$ we have
    \begin{equation}\label{tilde_E_theta_alpha}
    \left|\tilde{E}-2ie^{\frac{it}{4\nu}}\sqrt{t}\tilde{\theta}_\alpha(\nu t,t)\right|\lesssim\frac{\epsilon}{t^{\frac{1}{2}-2\mu}}+\frac{\epsilon}{t^{\frac{1}{4}-\mu-\delta_0}}.
    \end{equation}
\end{lemma}
\begin{proof}
    We rewrite the expression of $\tilde{E}$ as below:
    \begin{equation}\label{tilde_E_exp}
    \begin{aligned}
    \tilde{E}=&\int2D_t^2\tilde{\theta}\bar u-D_t(D_t\tilde{\theta}\bar u)\\
    =&\int2D_t^2\tilde{\theta}\bar u-b\partial_\alpha(D_t\tilde{\theta}\bar u)d\alpha-\partial_t\int \left(D_t\tilde{\theta}(\alpha,t)-D_t\tilde{\theta}(\nu t, t)\right)\bar u(\alpha,t)d\alpha-\partial_t\left(D_t\tilde{\theta}(\nu t,t)\int\bar u\right).
    \end{aligned}
    \end{equation}
    For the third term we have
    $$
    \partial_t\left(D_t\tilde{\theta}(\nu t,t)\int\bar u\right)=\left(\nu\partial_\alpha D_t\tilde{\theta}(\nu t,t)+\partial_t D_t\tilde{\theta}(\nu t,t)\right)\int\bar u+D_t\tilde{\theta}(\nu t,t)\int \bar u_t.
    $$
    By integration by parts,
    $$
    \int\bar u=-\int\chi\left(\frac{\alpha-\nu t}{\sqrt{t}}\right)\frac{4\alpha^2}{it^2}\partial_\alpha\left(e^{\frac{it^2}{4\alpha}}\right)d\alpha=\int\frac{8\alpha}{it^2}\bar u+\frac{4\alpha^2}{it^\frac{5}{2}}\chi'\left(\frac{\alpha-\nu t}{\sqrt{t}}\right)e^{\frac{it^2}{4\alpha}}d\alpha
    $$
    which implies that
    $$
    \left|\int\bar u\right|\lesssim t^{2\mu}.
    $$
    In fact, repeating the process several times shows that for all $k>0$
    \begin{equation}\label{int_u}
    \left|\int\bar u\right|\lesssim t^{-k}.
    \end{equation}
    The same holds for $\int \bar u_t$. Therefore $\partial_t\left(D_t\tilde{\theta}(\nu t,t)\int\bar u\right)$ is negligible. For the second term of $\tilde{E}$:
    $$
    \begin{aligned}
    &\ -\partial_t\int \left(D_t\tilde{\theta}(\alpha,t)-D_t\tilde{\theta}(\nu t, t)\right)\bar u(\alpha,t)d\alpha\\
    =&\int \left(\partial_t D_t\tilde{\theta}(\nu t,t)-\partial_t D_t\tilde{\theta}(\alpha, t)\right)\bar u(\alpha,t)+ \left(D_t\tilde{\theta}(\nu t,t)-D_t\tilde{\theta}(\alpha, t)\right)\bar u_t(\alpha,t)d\alpha\\
    &+\nu\partial_\alpha D_t\tilde{\theta}(\nu t,t)\int\bar u\\
    =&\ \frac{2}{it}\int\frac{\Omega_0D_t\tilde{\theta}(\nu t,t)-\Omega_0 D_t\tilde{\theta}(\alpha,t)}{\nu t-\alpha}(\nu t-\alpha)\bar u_t-(\partial_t D_t\tilde{\theta}(\nu t,t)-\partial_t D_t\tilde{\theta}(\alpha,t))\overline{\Omega_0u(\alpha,t)}d\alpha\\
    &\ +\frac{2}{it}\partial_t D_t\tilde{\theta}(\nu t,t)\int(\alpha-\nu t)\bar u_td\alpha+\nu\partial_\alpha D_t\tilde{\theta}(\nu t,t)\int\bar u.
    \end{aligned}
    $$
    The second equation follows from \eqref{transfer_d_var_1}. By adopting \eqref{int_u} and Hardy's inequality,
    \begin{equation}\label{tilde_E_D_t_theta_u_1}
    \begin{aligned}
    &\left|-\partial_t\int \left(D_t\tilde{\theta}(\alpha,t)-D_t\tilde{\theta}(\nu t, t)\right)\bar u(\alpha,t)d\alpha\right|\\
    \lesssim\ &\frac{1}{t}\left(\norm{\frac{\Omega_0D_t\tilde{\theta}(\nu t,t)-\Omega_0 D_t\tilde{\theta}(\alpha,t)}{\nu t-\alpha}}_{L^2}\norm{(\nu t-\alpha)\bar u_t}_{L^2}+\norm{\partial_t D_t\tilde{\theta}}_{L^\infty}\left(\norm{\Omega_0u}_{L^1}+\norm{(\alpha-\nu t)\bar u_t}_{L^1}\right)\right)\\
    \lesssim\ &\frac{1}{t}\left(\norm{\partial_\alpha\Omega_0 D_t\tilde{\theta}}_{L^2}\norm{(\nu t-\alpha)\bar u_t}_{L^2}+\norm{\partial_t D_t\tilde{\theta}}_{L^\infty}\left(\norm{\Omega_0u}_{L^1}+\norm{(\alpha-\nu t)\bar u_t}_{L^1}\right)\right)\\
    \lesssim\ &\frac{\epsilon}{t^{\frac{1}{4}-\mu-\delta_0}}+\frac{\epsilon}{t^{\frac{1}{2}-2\mu}}.
    \end{aligned}
    \end{equation}
    Obviously,
    $$
    \norm{u}_{L^2}=\left(\int\chi^2(\alpha)\sqrt{t}d\alpha\right)^{\frac{1}{2}}\lesssim t^\frac{1}{4}.
    $$
    By utilizing (\ref{b_L^2}) and (\ref{u_alpha_L^2}) we note that
    \begin{equation}\label{tilde_E_D_t_theta_u_2}
    \left|\int b\partial_\alpha(D_t\tilde{\theta}\bar u)\right|\lesssim\norm{b}_{L^2}(\norm{D_t\tilde{\theta}}_{L^{\infty}}\norm{u_\alpha}_{L^2}+\norm{\partial_\alpha D_t\tilde{\theta}}_{L^{\infty}}\norm{u}_{L^2})\lesssim\frac{\epsilon^3}{t^{\frac{1}{2}-2\mu}}+\frac{\lambda\epsilon t^{2\mu}}{H^{\frac{3}{2}}}\lesssim\frac{\epsilon}{t^{\frac{1}{2}-2\mu}}.
    \end{equation}
    From \eqref{tilde_E_exp}, \eqref{tilde_E_D_t_theta_u_1} and \eqref{tilde_E_D_t_theta_u_2} we have proved that
    \begin{equation}\label{tilde_E_D_t_theta_u}
        \left|\tilde{E}-\int2D_t^2\tilde{\theta}\bar u\right|\lesssim\frac{\epsilon}{t^{\frac{1}{2}-2\mu}}+\frac{\epsilon}{t^{\frac{1}{4}-\mu-\delta_0}}.
    \end{equation}
    For the remaining term $\int 2D_t^2\tilde{\theta}\bar u$, we proceed to compute its leading term:
    \begin{equation}\label{D_t^2_theta_u}
    \begin{aligned}
    \int 2D_t^2\tilde{\theta}\bar u=&\int2i\tilde{\theta}_\alpha\bar u+2i(A-1)\tilde{\theta}_\alpha\bar u+2(G_c+G_d)\bar u\\
    =&\ 2ie^{\frac{it}{4\nu}}\sqrt{t}\tilde{\theta}_\alpha(\nu t,t)+\int2i(e^{\frac{it^2}{4\alpha}}\tilde{\theta}_\alpha(\alpha,t)-e^{\frac{it}{4\nu}}\tilde{\theta}_\alpha(\nu t,t))\chi\left(\frac{\alpha-\nu t}{\sqrt{t}}\right)d\alpha\\
    &+\int2i(A-1)\tilde{\theta}_\alpha\bar u+2(G_c+G_d)\bar u.
    \end{aligned}
    \end{equation}
    With H\"older's inequality and Hardy's inequality we deduce that
    $$\begin{aligned}
    &\left|\int2i(e^{\frac{it^2}{4\alpha}}\tilde{\theta}_\alpha(\alpha,t)-e^{\frac{it}{4\nu}}\tilde{\theta}_\alpha(\nu t,t))\chi\left(\frac{\alpha-\nu t}{\sqrt{t}}\right)d\alpha\right|\\
    \lesssim&\norm{\frac{e^{\frac{it^2}{4\alpha}}\tilde{\theta}_\alpha(\alpha,t)-e^{\frac{it}{4\nu}}\tilde{\theta}_\alpha(\nu t,t)}{\alpha-\nu t}}_{L^2(|\alpha-\nu t |\lesssim \sqrt{t})}\norm{(\alpha-\nu t)\chi\left(\frac{\alpha-\nu t}{\sqrt{t}}\right)}_{L^2}\\
    \lesssim&\ t^{\frac{3}{4}}\norm{\partial_\alpha(e^{\frac{it^2}{4\alpha}}\tilde{\theta}_\alpha)}_{L^2(|\alpha-\nu t|\lesssim\sqrt{t})}\\
    =&\ t^{\frac{3}{4}}\norm{\partial_\alpha\left[e^{\frac{it^2}{4\alpha}}\left(\frac{4\alpha^2}{it^2}\tilde{\theta}_{\alpha\alpha}-\frac{4\alpha}{it^2}L_0\tilde{\theta}_\alpha+\frac{2}{it}\Omega_0\tilde{\theta}_\alpha\right)\right]}_{L^2(|\alpha-\nu t|\lesssim\sqrt{t})}\\
    \lesssim&\ t^{\frac{3}{4}}\norm{-\frac{2}{t}\Omega_0\tilde{\theta}_{\alpha\alpha}+\frac{4\alpha}{t^2}L_0\tilde{\theta}_{\alpha\alpha}}_{L^2(|\alpha-\nu t|\lesssim\sqrt{t})}\\
    &+t^{\frac{3}{4}}\norm{\frac{8\alpha}{t^2}\tilde{\theta}_{\alpha\alpha}-\frac{4}{t^2}L_0\tilde{\theta}_{\alpha}-\frac{4\alpha}{t^2}\partial_\alpha L_0\tilde{\theta}_\alpha+\frac{2}{t}\partial_\alpha\Omega_0\tilde{\theta}_\alpha}_{L^2(|\alpha-\nu t|\lesssim\sqrt{t})}\\
    \lesssim&\ \frac{\epsilon}{t^{\frac{1}{4}-\mu-\delta_0}}.
    \end{aligned}
    $$
    Obviously the last integral in (\ref{D_t^2_theta_u}) decays faster. Combining \eqref{tilde_E_D_t_theta_u} and (\ref{D_t^2_theta_u}) proves (\ref{tilde_E_theta_alpha}).
\end{proof}

Now we investigate $\frac{d}{dt}\tilde{E}$. Since Lemma \ref{lem_away_infty} handles the case where 
$|\alpha|$ is far from $t$, we will focus our attention on the case where 
$|\alpha|$
 is close to 
$t$. From now on, we set $t^{-\mu}\leq|\nu|\leq t^{\mu}$. The following sequence of inequalities hold uniformly for $\nu$. From (\ref{tilde_E}) we have
$$
\tilde{E}(\nu,t)=\int(\theta_{tt}\bar u\circ\kappa-\theta_t\partial_t(\bar u\circ\kappa))\kappa_\alpha d\alpha.
$$
Thus
$$\begin{aligned}
\frac{d}{dt}\left(\tilde{E}(\nu,t)\right)=&\int(\theta_{tt}\bar u\circ\kappa-\theta_t\partial_t(\bar u\circ\kappa))\partial_t\kappa_\alpha d\alpha+\int(\theta_{ttt}\bar u\circ\kappa-\theta_t\partial_t^2(\bar u\circ\kappa))\kappa_\alpha d\alpha\\
=&\int(\theta_{tt}\bar u\circ\kappa-\theta_t\partial_t(\bar u\circ\kappa))\partial_t\kappa_\alpha d\alpha\\
&+\int[(\theta_{ttt}-ia\theta_{t\alpha})\bar u\circ\kappa-\theta_t(\partial_t^2(\bar u\circ\kappa)+ia\partial_\alpha(\bar u\circ\kappa)]\kappa_\alpha d\alpha\\
&+\int-i\partial_\alpha(a\kappa_\alpha)\theta_t\bar u\circ\kappa d\alpha\\
=&\int(D_t^2\tilde{\theta}\bar u-D_t\tilde{\theta}D_t\bar u)b_\alpha d\alpha\\
&+\int[(D_t^3\tilde{\theta}-iA\partial_\alpha D_t\tilde{\theta})\bar u-D_t\tilde{\theta}(D_t^2\bar u+iA\bar u_\alpha)]d\alpha\\
&+\int-iA_\alpha D_t\tilde{\theta}\bar ud\alpha.
\end{aligned}
$$
The utilization of Lemma \ref{lem_b} and Lemma \ref{lem_A} reveals that
\begin{equation}\label{d_t_tilde_E_one}
\begin{aligned}
\left|\int D_t^2\tilde{\theta}\bar ub_\alpha d\alpha\right|
\lesssim\norm{D_t^2\tilde{\theta}}_{L^\infty}\norm{u}_{L^1}\norm{b_\alpha}_{L^\infty}\lesssim\frac{\epsilon^3}{t^{\frac{5}{4}-\mu-\delta_0}}(\ln (2+t))^2+\frac{\lambda\epsilon}{H^3}
\end{aligned}
\end{equation}
and
\begin{equation}\label{d_t_tilde_E_two}
\left|\int-iA_\alpha D_t\tilde{\theta}\bar ud\alpha\right|\lesssim\norm{A_\alpha}_{L^\infty}\norm{D_t\tilde{\theta}}_{L^2}\norm{u}_{L^2}\lesssim\frac{\epsilon^3}{t^{\frac{5}{4}-\delta_0}}(\ln (2+t))^2+\frac{\lambda\epsilon(\epsilon+\lambda) (1+t)^{\frac{1}{4}}}{H^2}.
\end{equation}
For the term $\int D_t\tilde{\theta}D_t\bar ub_\alpha d\alpha$, we decompose it into
$$
\int D_t\tilde{\theta}D_t\bar ub_\alpha d\alpha=\int D_t\tilde{\theta}D_t\bar u\partial_\alpha\left(\frac{1}{2}(I-\mathfrak{H})b+\frac{1}{2}(I-\bar{\mathfrak{H}})b+\frac{1}{2}(\mathfrak{H}+\bar{\mathfrak{H}})b\right)d\alpha.
$$
Then we consider the term $\int D_t\tilde{\theta}D_t\bar u\partial_\alpha(I-\mathfrak{H})b d\alpha$ and recall \eqref{mul_b_alpha}:
$$
\begin{aligned}
&\ \left|\int D_t\tilde{\theta}D_t\bar u\partial_\alpha(I-\mathfrak{H})b d\alpha\right|\\
\leq&\ \norm{D_t\tilde{\theta}}_{L^2}\norm{D_tu}_{L^2}\norm{\partial_\alpha(I-\mathfrak{H})b-2q_\alpha}_{L^\infty}+\norm{D_t\tilde{\theta}}_{L^\infty}\norm{D_tu}_{L^\infty}\norm{2q_\alpha}_{L^1}\\
\lesssim&\frac{\epsilon^3}{t^{\frac{5}{4}-\mu-\delta_0}}\left(\ln (2+t)\right)^2+\frac{\lambda\epsilon}{H^2t^{\frac{1}{4}-\mu}}.
\end{aligned}
$$
Therefore,
\begin{equation}\label{d_t_tilde_E_three}
\left|\int D_t\tilde{\theta}D_t\bar ub_\alpha d\alpha\right|\lesssim\frac{\epsilon^3}{t^{\frac{5}{4}-\mu-\delta_0}}\left(\ln (2+t)\right)^2+\frac{\lambda\epsilon}{H^2t^{\frac{1}{4}-\mu}}.
\end{equation}
Set
$$
\tilde{I}_1=\int(D_t^3\tilde{\theta}-iA\partial_\alpha D_t\tilde{\theta})\bar u d\alpha=\int G_0^{\tilde{\sigma}}\bar u
$$
and
$$
\tilde{I}_2=\int D_t\tilde{\theta}(D_t^2\bar u+iA\bar u_\alpha)d\alpha.
$$
The following two lemmas clarify the asymptotic behavior of integrals above and help us complete the estimate of $\frac{d}{dt}\tilde{E}$:
\begin{lemma}\label{lem_tilde_I_1}
    For $t\in [1,T]$ and  $t^{-\mu}\leq|\nu|\leq t^{\mu}$, we have
    \begin{equation}\label{I_1_tilde_E}
    \left|\tilde{I}_1-\frac{i|D_t^2\zeta(\nu t,t)|^2}{4\nu}\tilde{E}\right|\lesssim\frac{\epsilon^3}{t^{\frac{3}{2}-2\mu}}+\frac{\epsilon^3}{t^{\frac{5}{4}-\mu-\delta_0}}+\frac{\epsilon^3}{t^{\frac{7}{6}}}+\frac{\lambda(\epsilon+\lambda)}{H^{\frac{3}{2}}t^{\frac{1}{4}}}.
    \end{equation}
\end{lemma}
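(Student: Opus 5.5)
We start from the identity $\tilde I_1=\int G_0^{\tilde\sigma}\bar u$ with $G_0^{\tilde\sigma}=i\frac{a_t}{a}\circ\kappa^{-1}A\tilde\theta_\alpha+D_t(G_c+G_d)$ from \eqref{G_0^sigma}, and split $\tilde I_1$ into three pieces: the vortex piece $\int D_tG_d\,\bar u$, the Taylor-coefficient piece $\int i\frac{a_t}{a}\circ\kappa^{-1}A\tilde\theta_\alpha\bar u$, and the cubic piece $\int D_tG_c\,\bar u$.

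\emph{Vortex piece.} We bound it by Cauchy--Schwarz, using $\norm{u}_{L^2}\lesssim t^{1/4}$. By Lemma \ref{lem_G_d}, together with the refined bounds for $D_tq$ and $D_t^2q$ from Proposition \ref{pro_q} (including the version with $\lambda(\epsilon+\lambda)/H^{5/2}$), the $L^2$ norm of $D_tG_d$ is $O(\lambda(\epsilon+\lambda)H^{-3/2}t^{-1/2})$ up to acceptable terms. This yields the error $\lambda(\epsilon+\lambda)H^{-3/2}t^{-1/4}$.

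\emph{Taylor-coefficient piece.} We insert the expansion \eqref{a_t/a}. The vortex contributions there are treated exactly as $D_tG_d$. The quadratic irrotational terms $2i[D_t^2\zeta,\mathfrak H]\frac{\partial_\alpha D_t\bar\zeta}{\zeta_\alpha}+2i[D_t\zeta,\mathfrak H]\frac{\partial_\alpha D_t^2\bar\zeta}{\zeta_\alpha}$ are localized to $S(t)$, since $u$ is supported where $|\alpha-\nu t|\le 2\sqrt t$, which lies in $S(t)$ by Remark \ref{rem_S(t)}. On $S(t)$ we apply Lemma \ref{lem_(I-H)fg_main} with $f=D_t^2\zeta$, $g=\partial_\alpha D_t\zeta$, and its analogue, replacing each commutator by its local product. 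The local products $D_t^2\zeta\,\overline{\partial_\alpha D_t\zeta}$ and $\partial_\alpha D_t^2\bar\zeta\,D_t\zeta$ nearly cancel once we use $\partial_\alpha\approx \frac{it^2}{4\alpha^2}$ via \eqref{transfer_partial_alpha}. What survives is a real multiple of $|D_t^2\zeta|^2$, which we hold for the final collection. The errors, of size $t^{-(1-2\mu)}\epsilon^2\cdot\epsilon t^{\delta_0}$ times $\norm{u}_{L^1}\sim t^{1/2}$ divided by the decay of $\tilde\theta_\alpha$, give the $t^{-5/4+\mu+\delta_0}$ and $t^{-3/2+2\mu}$ errors.

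\emph{Cubic piece.} We write $D_tG_c=\partial_tG_c+bG_{c,\alpha}$ and drop the $b$ part using Lemma \ref{lem_b}. Following the proof of Theorem \ref{thm_d_t_E_s}, we expand $G_c$ into its leading cubic integrals (the $\mathfrak G$-type terms) and apply Corollary \ref{cor_integral_three} on $S(t)$. The version of the corollary with $\frac{t}{\alpha}\partial_tf_1$ produces, modulo $O(\epsilon^3t^{-1-\delta})$ in $L^2(S(t))$, a main term of the form $c\,\frac{it}{\alpha}|D_t^2\zeta|^2D_t^2\tilde\theta$. Pairing with $\bar u$ and freezing $\alpha=\nu t$ on the support (the commutation error is $O(t^{-1/2})$ relative) gives $\frac{i}{\nu}|D_t^2\zeta(\nu t,t)|^2\int D_t^2\tilde\theta\bar u$ times a constant. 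We then replace $\int D_t^2\tilde\theta\bar u$ by $\tilde E$ using \eqref{tilde_E_D_t_theta_u}; more precisely $2\int D_t^2\tilde\theta\bar u\approx \tilde E$. The Hölder-continuity error in freezing $|D_t^2\zeta|^2$ is controlled by $\partial_\alpha D_t^2\zeta$, and the $t^{-7/6}$ term comes from the weaker decay in Lemma \ref{lem_D_t_zeta_L^infty}.

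\emph{Combining.} We collect the real coefficients from the Taylor-coefficient and cubic pieces and check that they sum to $\frac{i}{4\nu}|D_t^2\zeta(\nu t,t)|^2$; the constant bookkeeping here is the main obstacle. The cleanest way to do this is to compute the constant for the linear profile $D_t\zeta\approx e^{it^2/4\alpha}$-type data, as in Wu's and Ionescu--Pusateri's normal form. Everything else is routine localization combined with Lemma \ref{lem_int_alpha_away_t} to discard contributions from outside $S(t)$.
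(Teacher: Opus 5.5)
Your skeleton for the cubic piece (Corollary \ref{cor_integral_three} on $S(t)$, freezing at $\alpha=\nu t$, then $2\int D_t^2\tilde\theta\,\bar u\approx\tilde E$) matches the paper. The gap is in the one thing the lemma asserts: the main term.

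\emph{The Taylor-coefficient piece contributes no main term.} The two quadratic commutators in \eqref{a_t/a} cancel \emph{exactly} at leading order. In the wave-packet picture, $D_t^2\zeta\,\partial_\alpha D_t\bar\zeta$ and $D_t\zeta\,\partial_\alpha D_t^2\bar\zeta$ equal $\mp\frac{t^3}{8\alpha^3}|D_t\zeta|^2$, so no multiple of $|D_t^2\zeta|^2$ survives. Lemma \ref{lem_a_t/a_est} records exactly this: $\norm{\frac{a_t}{a}\circ\kappa^{-1}A}_{L^2}\lesssim\epsilon^2t^{-1+\delta_0}\ln(2+t)+\dots$. A surviving multiple of $|D_t^2\zeta|^2$ would generically have $L^2$ norm of order $\epsilon^2t^{-1/2}$, which contradicts that bound. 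Hence $|\int i\frac{a_t}{a}\circ\kappa^{-1}A\tilde\theta_\alpha\bar u|\le\norm{\frac{a_t}{a}\circ\kappa^{-1}A}_{L^2}\norm{\tilde\theta_\alpha}_{L^\infty}\norm{u}_{L^2}\lesssim\epsilon^3t^{-5/4+\delta_0}\ln(2+t)$, plus vortex errors. The paper disposes of this piece in one line this way. Carrying a ``surviving real multiple'' from it into your final collection would produce the wrong coefficient.

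\emph{You never derive the coefficient, and your freezing step fails as stated.} The real constant $c$ in $c\frac{it}{\alpha}|D_t^2\zeta|^2D_t^2\tilde\theta$ is asserted, not derived. The check that everything sums to $\frac{i}{4\nu}|D_t^2\zeta(\nu t,t)|^2$ is deferred to a heuristic linear-profile computation, but that computation is the lemma. What Corollary \ref{cor_tilde_E} needs is precisely that this coefficient is $i$ times a real number, i.e.\ a pure phase. The paper does it concretely:
\begin{enumerate}
\item It rewrites $G_c$ with the kernel $\frac{(\bar\zeta-\zeta)(\alpha)-(\bar\zeta-\zeta)(\beta)}{|\zeta(\alpha)-\zeta(\beta)|^2}$ and splits off the $q$-terms $\mathfrak R$.
\item Applying the second form of Corollary \ref{cor_integral_three} with $f_1=D_t\zeta$, it obtains $G_0^{\tilde\sigma}\approx\frac{t}{\alpha}\partial_tD_t\zeta\,D_t^2\zeta(\bar\zeta_\alpha-1)$ in $L^2(\frac12t^{1-\mu}\le|\alpha|\le2t^{1+\mu})$.
\item It freezes the non-oscillating factor $\frac{t}{\alpha}\partial_tD_t\zeta(\bar\zeta_\alpha-1)$ at $\nu t$, via Hardy's inequality in $\alpha-\nu t$.
\item It writes $\frac1\nu\partial_tD_t\zeta(\nu t,t)=-2\partial_\alpha D_t\zeta(\nu t,t)+\frac{2}{\nu t}L_0D_t\zeta(\nu t,t)$ and uses $\tilde E\approx4\int D_t^2\zeta\,\bar u$.
\item Finally, $\partial_\alpha D_t\zeta=-\frac{t}{2\alpha}D_t^2\zeta+\frac{t}{2\alpha}b\partial_\alpha D_t\zeta+\frac1\alpha L_0D_t\zeta$ and $\zeta_\alpha-1=\zeta_\alpha(1-A)-iD_t^2\zeta$ turn $-\frac12\partial_\alpha D_t\zeta(\bar\zeta_\alpha-1)\tilde E$ into $\frac{i}{4\nu}|D_t^2\zeta|^2\tilde E$.
\end{enumerate}
Your freezing step, as described, does not work. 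Bounding the variation of $|D_t^2\zeta|^2$ over $|\alpha-\nu t|\lesssim\sqrt t$ by $\norm{\partial_\alpha D_t^2\zeta}_{L^\infty}$ gives $\sqrt t\cdot\epsilon^2t^{-1}=\epsilon^2t^{-1/2}$. That is larger than $|D_t^2\zeta|^2$ itself, not $O(t^{-1/2})$ relative, and it yields a non-integrable error $\epsilon^3t^{-1/2+\mu}$. You must use that the frozen factor is a product of oppositely oscillating terms, so its $\alpha$-derivative gains $t^{-1}$ through \eqref{transfer_d_1} and the vector-field bounds.
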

\begin{proof}
Recall (\ref{G_0^sigma}):
$$
G^{\tilde{\sigma}}_0=i\frac{a_t}{a}\circ\kappa^{-1} A\tilde{\theta}_{\alpha}+D_t(G_c+G_d).
$$
Lemma \ref{lem_a_t/a_est} shows that
$$
\norm{i\frac{a_t}{a}\circ\kappa^{-1} A\tilde{\theta}_{\alpha}}_{L^2}\lesssim\norm{\frac{a_t}{a}\circ\kappa^{-1} A}_{L^2}\norm{\tilde{\theta}_\alpha}_{L^\infty}\lesssim\left(\frac{\epsilon^2}{t^{1-\delta_0}}\ln (2+t)+\frac{\lambda\epsilon}{H^{\frac{3}{2}}t^{\frac{1}{2}}}+\frac{\lambda}{H^{\frac{5}{2}}}\right)\frac{\epsilon}{t^{\frac{1}{2}}}.
$$
For $D_tG_d$, we note that
$$
D_tG_d=-2D_t[\bar q,\mathfrak{H}]\frac{\bar{\mathfrak{F}}_\alpha}{\zeta_\alpha}-2D_t[D_t\zeta,\mathfrak{H}]\frac{\bar q_\alpha}{\zeta_\alpha}-4D_t^2q.
$$
So according to Proposition \ref{pro_q},
$$
\begin{aligned}
\norm{D_tG_d}_{L^2}\lesssim&\norm{\partial_\alpha D_tq}_{L^2}\norm{\mathfrak{F}}_{L^\infty}+\norm{q_\alpha}_{L^2}\norm{D_t\mathfrak{F}}_{L^\infty}\\
&+\norm{\partial_\alpha D_t^2\zeta}_{L^\infty}\norm{q}_{L^2}+\norm{\partial_\alpha D_t\zeta}_{L^\infty}\norm{D_tq}_{L^2}+\norm{D_t^2q}_{L^2}\\
\lesssim&\ \frac{\lambda(\epsilon+\lambda)^2}{H^3}+\frac{\lambda\epsilon}{H^{\frac{3}{2}}t^{\frac{1}{2}}}.
\end{aligned}
$$
We now turn to $D_tG_c$. Revisit (\ref{G_c}):
$$\begin{aligned}
G_c=&-\frac{2}{\pi i}\int\frac{[(D_t\zeta-\bar q)(\alpha,t)-(D_t\zeta-\bar q)(\beta,t)][(\bar\zeta-\zeta)(\alpha,t)-(\bar\zeta-\zeta)(\beta,t)]}{|\zeta(\alpha,t)-\zeta(\beta,t)|^2}\partial_\beta(D_t\zeta-\bar q)d\beta\\
&+\frac{1}{\pi i}\int\left(\frac{D_{t}\zeta(\alpha,t)-D_{t}\zeta(\beta,t)}{\zeta(\alpha,t)-\zeta(\beta,t)}\right)^2(\zeta-\bar{\zeta})_{\beta}d\beta\\
=&\ \frac{2}{\pi i}\int\frac{[(D_t\zeta-\bar q)(\alpha,t)-(D_t\zeta-\bar q)(\beta,t)][(\bar\zeta-\zeta)(\alpha,t)-(\bar\zeta-\zeta)(\beta,t)]}{|\zeta(\alpha,t)-\zeta(\beta,t)|^2}\partial_\beta\bar qd\beta\\
&+\frac{2}{\pi i}\int\frac{[\bar q(\alpha,t)-\bar q(\beta,t)][(\bar\zeta-\zeta)(\alpha,t)-(\bar\zeta-\zeta)(\beta,t)]}{|\zeta(\alpha,t)-\zeta(\beta,t)|^2}\partial_\beta D_t\zeta d\beta\\
&-\frac{2}{\pi i}\int\frac{[D_t\zeta(\alpha,t)-D_t\zeta(\beta,t)][(\bar\zeta-\zeta)(\alpha,t)-(\bar\zeta-\zeta)(\beta,t)]}{|\zeta(\alpha,t)-\zeta(\beta,t)|^2}\partial_\beta D_t\zeta d\beta\\
&+\frac{1}{\pi i}\int\left(\frac{D_{t}\zeta(\alpha,t)-D_{t}\zeta(\beta,t)}{\zeta(\alpha,t)-\zeta(\beta,t)}\right)^2(\zeta-\bar{\zeta})_{\beta}d\beta.\\
\end{aligned}
$$
Let
$$\begin{aligned}
\mathfrak{R}=&\ \frac{2}{\pi i}\int\frac{[(D_t\zeta-\bar q)(\alpha,t)-(D_t\zeta-\bar q)(\beta,t)][(\bar\zeta-\zeta)(\alpha,t)-(\bar\zeta-\zeta)(\beta,t)]}{|\zeta(\alpha,t)-\zeta(\beta,t)|^2}\partial_\beta\bar qd\beta\\
&+\frac{2}{\pi i}\int\frac{[\bar q(\alpha,t)-\bar q(\beta,t)][(\bar\zeta-\zeta)(\alpha,t)-(\bar\zeta-\zeta)(\beta,t)]}{|\zeta(\alpha,t)-\zeta(\beta,t)|^2}\partial_\beta D_t\zeta d\beta.
\end{aligned}$$
Then
$$
\norm{D_t\mathfrak{R}}_{L^2}\lesssim(\norm{\partial_\alpha D_t\zeta}_{L^\infty}+\norm{q_\alpha}_{L^{\infty}})\norm{\zeta_\alpha-1}_{L^\infty}\norm{q}_{L^2}\lesssim\frac{\lambda\epsilon^2}{H^{\frac{3}{2}}t}+\frac{\lambda^2\epsilon}{H^{\frac{9}{2}}t^{\frac{1}{2}}}.
$$
For the remaining terms, we consider one situation. Let
$$
\mathfrak{I}=\frac{1}{\pi i}\int\frac{(D_{t}\zeta(\alpha,t)-D_{t}\zeta(\beta,t))(D_{t}^2\zeta(\alpha,t)-D_{t}^2\zeta(\beta,t))}{(\zeta(\alpha,t)-\zeta(\beta,t))^2}(\zeta-\bar{\zeta})_{\beta}d\beta.
$$
Appealing to Corollary \ref{cor_integral_three} and Remark {\ref{rem_S(t)}} yields that
$$
\norm{\mathfrak{I}-\frac{t}{\alpha}\partial_tD_t\zeta D_t^2\zeta(\bar\zeta_\alpha-1)}_{L^2\left(\frac{1}{2}t^{1-\mu}\leq|\alpha|\leq2t^{1+\mu}\right)}\lesssim\frac{\epsilon^3}{t^{\frac{3}{2}-\mu}}.
$$
The other situations can be treated like this. In summary, there holds
$$
\begin{aligned}
\norm{G_0^{\tilde{\sigma}}-\frac{t}{\alpha}\partial_tD_t\zeta D_t^2\zeta(\bar\zeta_\alpha-1)}_{L^2\left(\frac{1}{2}t^{1-\mu}\leq|\alpha|\leq2t^{1+\mu}\right)}&\\
\lesssim\left(\frac{\epsilon^2}{t^{1-\delta_0}}\ln (2+t)+\frac{\lambda\epsilon}{H^{\frac{3}{2}}t^{\frac{1}{2}}}+\frac{\lambda}{H^{\frac{3}{2}}}\right)\frac{\epsilon}{t^{\frac{1}{2}}}&+\frac{\lambda(\epsilon+\lambda)^2}{H^3}+\frac{\epsilon^3}{t^{\frac{3}{2}-\mu}}.
\end{aligned}
$$
Therefore,
\begin{equation}\label{I_1_one}
\begin{aligned}
\left|\tilde{I}_1-\int\frac{t}{\alpha}\partial_tD_t\zeta D_t^2\zeta(\bar\zeta_\alpha-1)\bar ud\alpha\right|\lesssim&\norm{G_0^{\tilde{\sigma}}-\frac{t}{\alpha}\partial_tD_t\zeta D_t^2\zeta(\bar\zeta_\alpha-1)}_{L^2\left(\frac{1}{2}t^{1-\mu}\leq|\alpha|\leq2t^{1+\mu}\right)}\norm{u}_{L^2}\\
\lesssim&\left(\frac{\epsilon^2}{t^{1-\delta_0}}\ln (2+t)+\frac{\lambda\epsilon}{H^{\frac{3}{2}}t^{\frac{1}{2}}}+\frac{\lambda}{H^{\frac{3}{2}}}\right)\frac{\epsilon}{t^{\frac{1}{4}}}+\frac{\lambda(\epsilon+\lambda)^2t^{\frac{1}{4}}}{H^3}+\frac{\epsilon^3}{t^{\frac{5}{4}-\mu}}.
\end{aligned}
\end{equation}
Note that
$$
\begin{aligned}
&\int\frac{t}{\alpha}\partial_tD_t\zeta D_t^2\zeta(\bar\zeta_\alpha-1)\bar ud\alpha+2\partial_\alpha D_t\zeta(\nu t,t)(\bar\zeta_\alpha(\nu t,t)-1)\int D_t^2\zeta\bar u\\
=&\int\left[\frac{t}{\alpha}\partial_tD_t\zeta (\alpha,t)(\bar\zeta_\alpha(\alpha,t)-1)-\frac{1}{\nu}\partial_tD_t\zeta (\nu t,t)(\bar\zeta_\alpha(\nu t,t)-1)\right]D_t^2\zeta\bar ud\alpha\\
&+\frac{2}{\nu t}(\frac{t}{2}\partial_tD_t\zeta (\nu t,t)+\nu t\partial_\alpha D_t\zeta(\nu t,t))(\bar\zeta_\alpha(\nu t,t)-1)\int D_t^2\zeta\bar u\\
\end{aligned}
$$
which implies that
$$
\begin{aligned}
&\left|\int\frac{t}{\alpha}\partial_tD_t\zeta D_t^2\zeta(\bar\zeta_\alpha-1)\bar ud\alpha+2\partial_\alpha D_t\zeta(\nu t,t)(\bar\zeta_\alpha(\nu t,t)-1)\int D_t^2\zeta\bar u\right|\\
\lesssim&\norm{\partial_\alpha\left[\frac{t}{\alpha}\partial_tD_t\zeta(\bar\zeta_\alpha-1)\right]}_{L^2\left(\frac{1}{2}t^{1-\mu}\leq|\alpha|\leq2t^{1+\mu}\right)}\norm{D_t^2\zeta}_{L^\infty}\norm{u}_{L^2}\\
&+\frac{1}{t^{1-\mu}}\norm{L_0D_t\zeta}_{L^\infty}\norm{\zeta_\alpha-1}_{L^\infty}\norm{D_t^2\zeta}_{L^2}\norm{u}_{L^2}\\
\lesssim&\ \frac{\epsilon^3}{t^{\frac{5}{4}-\mu}}.
\end{aligned}
$$
Referring to (\ref{tilde_E_D_t_theta_u}) leads that
$$
\left|\tilde{E}-4\int D_t^2\zeta\bar u\right|\lesssim\left|\tilde{E}-2\int D_t^2\tilde{\theta}\bar u\right|+\norm{D_t^2\tilde{\theta}-2D_t^2\zeta}_{L^\infty}\norm{u}_{L^1}\lesssim\frac{\epsilon}{t^{\frac{1}{2}-2\mu}}+\frac{\epsilon}{t^{\frac{1}{2}-\mu-\delta_0}}+\frac{\epsilon^2}{t^{\frac{1}{6}}}+\frac{\lambda(\epsilon+\lambda)t^{\frac{1}{2}}}{H^3}.
$$
So
\begin{equation}\label{I_1_two}
\begin{aligned}
&\left|\int\frac{t}{\alpha}\partial_tD_t\zeta D_t^2\zeta(\bar\zeta_\alpha-1)\bar ud\alpha+\frac{1}{2}\partial_\alpha D_t\zeta(\nu t,t)(\bar\zeta_\alpha(\nu t,t)-1)\tilde{E}\right|\\
\lesssim&\ \frac{\epsilon^3}{t^{\frac{5}{4}-\mu}}+\norm{\partial_\alpha D_t\zeta}_{L^\infty}\norm{\zeta_\alpha-1}_{L^\infty}\left|\tilde{E}-4\int D_t^2\zeta\bar u\right|\\
\lesssim&\ \frac{\epsilon^3}{t^{\frac{5}{4}-\mu}}+\frac{\epsilon^2}{t}\left(\frac{\epsilon}{t^{\frac{1}{2}-2\mu}}+\frac{\epsilon}{t^{\frac{1}{2}-\mu-\delta_0}}+\frac{\epsilon^2}{t^{\frac{1}{6}}}+\frac{\lambda(\epsilon+\lambda)t^{\frac{1}{2}}}{H^3}\right).
\end{aligned}
\end{equation}
Recall that
$$
\partial_\alpha D_t\zeta=-\frac{t}{2\alpha}D_t^2\zeta+\frac{t}{2\alpha}b\partial_\alpha D_t\zeta+\frac{1}{\alpha}L_0D_t\zeta
$$
and
$$
\zeta_\alpha-1=\zeta_\alpha(1-A)-iD_t^2\zeta.
$$
Combining (\ref{I_1_one}) and (\ref{I_1_two}) yields (\ref{I_1_tilde_E}).
\end{proof}
\begin{lemma}\label{lem_tilde_I_2}
    For $t\in[1,T]$ and $t^{-\mu}\leq|\nu|\leq t^{\mu}$ there holds
    \begin{equation}\label{I_2_tilde_E}
    \begin{aligned}
    &\left|\tilde{I_2}+\frac{\hat{b}(\nu t,t)}{2\pi i\nu}\tilde{E}\right|\\
    \leq&\ \tilde{C}\frac{\epsilon}{t^{\frac{5}{4}-\mu-\delta_0}}+C\left(\frac{\epsilon^3}{t^{\frac{5}{4}-\mu-\delta_0}}\ln (2+t)+\frac{\epsilon^3}{t^{\frac{3}{2}-2\mu}}+\frac{\lambda(\lambda+\epsilon)\epsilon(1+t)^{\frac{1}{4}}}{H^3}+\frac{\lambda\epsilon}{H^2t^{\frac{1}{4}-\mu}}+\frac{\lambda\epsilon}{Ht^{\frac{1}{2}-\mu}}\right)
    \end{aligned}
    \end{equation}
    where
    $$
    \hat{b}(\alpha,t)=\operatorname{Re}{\int\frac{D_t\zeta(\alpha,t)-D_t\zeta(\beta,t)}{\alpha-\beta}\bar\zeta_{\beta t}d\beta}.
    $$
\end{lemma}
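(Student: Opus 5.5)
We expand $\tilde I_2=\int D_t\tilde\theta\,(D_t^2\bar u+iA\bar u_\alpha)\,d\alpha$ by writing $D_t=\partial_t+b\partial_\alpha$. This gives
\[
D_t^2\bar u+iA\bar u_\alpha=(\partial_t^2\bar u+i\bar u_\alpha)+i(A-1)\bar u_\alpha+2b\,\partial_\alpha\partial_t\bar u+(D_tb)\bar u_\alpha+b^2\bar u_{\alpha\alpha}+bb_\alpha\bar u_\alpha .
\]
The first piece is the linear residual of the wave packet. From \eqref{u_t} and the formula for $\partial_\alpha u$, the phase $e^{-\frac{it^2}{4\alpha}}$ is built so that $\partial_t^2\bar u+i\bar u_\alpha$ contains no $\chi$-term of size $O(1)$. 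Only terms carrying $\chi'$ or $\chi''$ with a factor $t^{-1/2}$ survive, together with $O(t^{-1})$ corrections. To pair these against $D_t\tilde\theta$ we reuse the trick from Lemma \ref{lem_tilde_E}. We subtract $D_t\tilde\theta(\nu t,t)$, use that $\int\chi'(\cdot)e^{\frac{it^2}{4\alpha}}$ is rapidly small by \eqref{int_u}-type integration by parts, and move a derivative through \eqref{transfer_d_var_1} and Hardy's inequality. This produces $\frac{1}{t}\|\partial_\alpha\Omega_0D_t\tilde\theta\|_{L^2}$ times norms of $(\alpha-\nu t)\partial u$. That contribution is exactly the $\tilde C\epsilon t^{-\frac54+\mu+\delta_0}$ term. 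Its constant is independent of $M$ because the $\Omega_0$ and $L_0$ norms are bounded through Lemma \ref{lem_Omega} and Corollary \ref{cor_E^L_0}, not through the bootstrap.

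The $(A-1)$ and $D_tb$ pieces are handled directly with Lemma \ref{lem_A}, \eqref{D_t_b_L2} and \eqref{u_alpha_L^2}. For the vortex parts, the $\lambda(\epsilon+\lambda)/H^2$-type bounds of $A-1$ and $D_tb$ against $\|D_t\tilde\theta\|_{L^2}\|u_\alpha\|_{L^2}$ give the $\lambda(\lambda+\epsilon)\epsilon(1+t)^{1/4}/H^3$ and $\lambda\epsilon/(H^2t^{1/4-\mu})$ contributions. Using the $L^\infty$ bound of $q$ against $\|u\|_{L^1}$ yields the $\lambda\epsilon/(Ht^{1/2-\mu})$ term. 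The $b^2$ and $bb_\alpha$ pieces are quartic and harmless.

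The genuinely non-perturbative piece is $2\int D_t\tilde\theta\, b\,\partial_\alpha\partial_t\bar u$. Since $\partial_\alpha\partial_t\bar u\approx \frac{it}{2\alpha}\cdot\frac{it^2}{4\alpha^2}\bar u$ up to lower order, this equals $-\int \frac{t^3}{4\alpha^3} b\, D_t\tilde\theta\,\bar u$ plus error. As in \eqref{transfer_partial_alpha}, we replace $\frac{t^2}{4\alpha^2}D_t\tilde\theta$ by $-i\partial_\alpha D_t\tilde\theta$ and then, through $\partial_\alpha D_t\tilde\theta\approx -\frac{t}{2\alpha}D_t^2\tilde\theta$, reduce to an integral of $b\,D_t^2\tilde\theta\,\bar u$ against a smooth weight.

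Next we split $b=\frac12[(I-\mathfrak H)b+(I-\bar{\mathfrak H})b+(\mathfrak H+\bar{\mathfrak H})b]$. By \eqref{b_exp}, $(I-\mathfrak H)b=-[D_t\zeta,\mathfrak H]\frac{\bar\zeta_\alpha-1}{\zeta_\alpha}+2q$. Both it and its conjugate are, to leading order, the integral $-\frac1{\pi i}\int\frac{D_t\zeta(\alpha)-D_t\zeta(\beta)}{\alpha-\beta}(\bar\zeta_\beta-1)d\beta$. After transferring the $t$-derivative from $D_t\tilde\theta$ onto $\bar\zeta-\alpha$ via Lemma \ref{lem_transfer_d}, the real part is exactly $\hat b$. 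The $(\mathfrak H+\bar{\mathfrak H})b$ part is cubic, by the logarithmic representation used for $K_3$, and is estimated by Lemma \ref{lem_b}.

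With $b$ replaced by $\hat b$, the final step freezes $\hat b$ at $\alpha=\nu t$. The error $\hat b(\alpha)-\hat b(\nu t)$ over $|\alpha-\nu t|\lesssim\sqrt t$ is controlled by $\sqrt t\,\|\partial_\alpha\hat b\|_{L^\infty}$, and $\hat b$ was introduced precisely because it has better derivative decay than $b$. We then use $\tilde E\approx 2\int D_t^2\tilde\theta\,\bar u$ from \eqref{tilde_E_D_t_theta_u} to identify the coefficient $-\hat b(\nu t,t)/(2\pi i\nu)$. The main obstacle is this leading $b$-term: the constant must come out exactly and the weight $t^3/\alpha^3$ must be tracked correctly, and the only control available is $\|L_0D_t\zeta\|_{H^{s-1}}$, with no bound on $\Omega_0D_t^2\zeta$. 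I would first try to replace $D_t^2\zeta$ by $\zeta_\alpha-1$ via \eqref{main_equation}, as in the footnote of Lemma \ref{lem_b}.
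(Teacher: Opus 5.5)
Your architecture matches the paper's: isolate the $b\,\partial_\alpha\partial_t\bar u$ term, split $b$ by the projections, produce $\hat b$, freeze it at $\nu t$, and compare with $\tilde E$. The gap is that the step which actually creates $\hat b$ is only asserted, and the route you sketch for it does not close.

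Each conversion you propose puts a vector field on an undifferentiated unknown. Applying \eqref{transfer_partial_alpha} with $f=D_t\tilde\theta$ leaves the error $\frac{t}{2\alpha^2}\Omega_0D_t\tilde\theta$. Transferring the $t$-derivative from $D_t\tilde\theta$ (or from $D_t^2\tilde\theta$) onto $\bar\zeta-\alpha$ by Lemma \ref{lem_transfer_d} leaves $\Omega_0\tilde\theta$ (or $\Omega_0D_t\tilde\theta$) together with $\Omega_0(\zeta-\alpha)$. None of these is bounded. Morally $\Omega_0D_t\tilde\theta\approx iL_0\tilde\theta$, a low-frequency quantity. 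The available $\Omega_0$-bounds are for $\tilde\theta_\alpha$ and $\zeta_\alpha-1$, and for $D_t\tilde\theta$ only after an $\alpha$-derivative. Transferring onto $\bar\zeta-\alpha$ would also produce $\bar\zeta_t$ rather than $\bar\zeta_{\beta t}$, so the resulting coefficient is not $\hat b$.

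The missing idea is the order of operations.
\begin{enumerate}
\item Integrate by parts first: $2\int D_t\tilde\theta\,b\,\partial_\alpha\bar u_t=-2\int\partial_\alpha D_t\tilde\theta\,b\,\bar u_t-2\int D_t\tilde\theta\,b_\alpha\bar u_t$. The last term is treated as in \eqref{d_t_tilde_E_three}. Keep $\partial_\alpha D_t\tilde\theta\approx\partial_t\tilde\theta_\alpha$.
\item The leading part of $-(I-\mathfrak H)b$ is $\frac{1}{\pi i}\int\frac{D_t\zeta(\alpha)-D_t\zeta(\beta)}{\alpha-\beta}(\bar\zeta_\beta-1)\,d\beta$. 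Apply the sum of \eqref{transfer_d_3} and \eqref{transfer_d_var_1} to it with $f=\tilde\theta_\alpha$ and $g=\zeta_\alpha-1$. The errors then involve only $\Omega_0\tilde\theta_\alpha$ and $\Omega_0(\zeta_\alpha-1)$, which Lemma \ref{lem_Omega} controls.
\item The $(\alpha-\beta)$-term cancels the kernel and leaves $-\frac{2}{\pi t}\int\tilde\theta_{\alpha t}\bar u_t\,d\alpha\int D_t\zeta\,\bar\zeta_{\beta t}\,d\beta$. Added to its $(I-\bar{\mathfrak H})b$ counterpart, this reduces to $b$-terms because $\int\partial_\beta|\zeta_t|^2\,d\beta=0$.
\item The two projections now combine into $-\frac{2}{\pi i}\int\hat b\,\tilde\theta_\alpha\bar u_t$. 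Only at this point replace $\tilde\theta_\alpha$ by $-iD_t^2\tilde\theta$, using the equation rather than a vector field, and $\bar u_t$ by $\frac{it}{2\alpha}\bar u$.
\end{enumerate}
This yields a single factor $\nu^{-1}$. Your chain instead produces the weight $t^2/\alpha^2\sim\nu^{-2}$, because it is the transfer that moves one factor $t/(2\beta)$ inside the $\beta$-integral defining $\hat b$.

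Two estimates in the freezing step must also change.

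First, bounding $\hat b(\alpha)-\hat b(\nu t,t)$ by $\sqrt t\,\norm{\hat b_\alpha}_{L^\infty}$ against $\norm{\tilde\theta_\alpha}_{L^\infty}\norm{u_t}_{L^1}$ costs $\epsilon t^{\frac12+\mu}\norm{\hat b_\alpha}_{L^\infty}$. With an $L^\infty$ rate $t^{-\frac32+\delta_0}$ this is $\epsilon^3t^{-1+\mu+\delta_0}$, which is not integrable. Note that $b_\alpha$ already has that rate by Lemma \ref{lem_b}, so if this worked $\hat b$ would be pointless. The gain of $\hat b$ is in $L^2$: use Hardy's inequality with $\norm{\hat b_\alpha}_{L^2}\lesssim\epsilon^2t^{-\frac32+\delta_0}$ and $\norm{(\alpha-\nu t)u_t}_{L^2}\lesssim t^{\frac34+\mu}$.

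Second, replacing $\int D_t^2\tilde\theta\,\bar u$ by $\frac12\tilde E$ costs the error in \eqref{tilde_E_D_t_theta_u} multiplied by $\hat b(\nu t,t)/(2\nu)$. The crude bound $|\hat b/\nu|\lesssim\epsilon^2t^{-1+\mu}\ln(2+t)$ makes this $\epsilon^3t^{-\frac54+2\mu+\delta_0}$, which is not integrable for $\mu=\frac15$. You need the sharper bound $|\hat b(\nu t,t)/\nu|\lesssim\epsilon^2t^{-1}\ln(2+t)$. It follows from $\hat b\approx-\operatorname{Re}\big(iD_t^2\zeta\int\frac{D_t^2\bar\zeta(\beta)}{\alpha-\beta}d\beta\big)$ together with the identity $\nu^{-1}D_t^2\zeta(\nu t,t)=-2\partial_\alpha D_t\zeta(\nu t,t)+\frac{2}{\nu t}L_0D_t\zeta(\nu t,t)+\nu^{-1}(b\partial_\alpha D_t\zeta)(\nu t,t)$.

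Two minor corrections: $\partial_\alpha\partial_t\bar u\approx+\frac{t^3}{8\alpha^3}\bar u$, and the term $bb_\alpha\bar u_\alpha$ is already contained in $(D_tb)\bar u_\alpha$.
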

\begin{proof}
    Since
    $$\begin{aligned}
    \tilde{I_2}=&\int D_t\tilde{\theta}(D_t^2\bar u+iA\bar u_\alpha)\\
    =&\int D_t\tilde{\theta}(\partial_t^2+i\partial_\alpha)\bar u+\int D_t\tilde{\theta}[b\partial_\alpha\bar u_t+b\partial_\alpha(b\bar u_\alpha)+\partial_t(b\bar u_\alpha)+i(A-1)\bar u_\alpha]\\
    =&\int D_t\tilde{\theta}(\partial_t^2+i\partial_\alpha)\bar u+\int\left(-2\partial_\alpha D_t\tilde{\theta}b\bar u_t-2D_t\tilde{\theta}b_\alpha\bar u_t-\partial_\alpha D_t\tilde{\theta}b^2\bar u_\alpha-D_t\tilde{\theta}b_\alpha b\bar u_\alpha\right)\\
    &+\int D_t\tilde{\theta}[b_t\bar u_\alpha+i(A-1)\bar u_\alpha]\\
    =&\int D_t\tilde{\theta}(\partial_t^2+i\partial_\alpha)\bar u+\int-2\partial_\alpha D_t\tilde{\theta}b\bar u_t\\
    &+\int-2D_t\tilde{\theta}b_\alpha\bar u_t-\int\partial_\alpha(-\partial_\alpha D_t\tilde{\theta}b^2-D_t\tilde{\theta}b_\alpha b+D_t\tilde{\theta}b_t+iD_t\tilde{\theta}(A-1))\bar u,
    \end{aligned}
    $$
    H\"older's inequality implies that
    \begin{equation}\label{I_2_four}
    \begin{aligned}
    &\left|\int-\partial_\alpha[-\partial_\alpha D_t\tilde{\theta}b^2-D_t\tilde{\theta}b_\alpha b+D_t\tilde{\theta}b_t+iD_t\tilde{\theta}(A-1)]\bar u\right|\\
    \lesssim&\left[\norm{D_t\tilde{\theta}}_{H^2}(\norm{b}_{W^{2,\infty}}^2+\norm{b_t}_{W^{1,\infty}}+\norm{A-1}_{W^{1,\infty}})\right]\norm{u}_{L^2}\\
    \lesssim&\ \frac{\epsilon^3}{t^{\frac{5}{4}-\delta_0}}(\ln (2+t))^2+\frac{\lambda(\lambda+\epsilon)\epsilon (1+t)^{\frac{1}{4}}}{H^3}.
    \end{aligned}
    \end{equation}
    and from \eqref{d_t_tilde_E_three}
    \begin{equation}\label{I_2_three}
    \begin{aligned}
    \left|\int D_t\tilde{\theta}b_\alpha\bar u_t\right|\lesssim\frac{\epsilon^3}{t^{\frac{5}{4}-\mu-\delta_0}}\left(\ln (2+t)\right)^2+\frac{\lambda\epsilon}{H^2t^{\frac{1}{4}-\mu}}.
    \end{aligned}
    \end{equation}
    For the term $\int-2\partial_\alpha D_t\tilde{\theta}b\bar u_t$ it requires more delicate operations. Decompose $b$ as $\frac{1}{2}[(I-\mathfrak{H})b+(I-\bar{\mathfrak{H}})b+(\mathfrak{H}+\bar{\mathfrak{H}})b]$ and we focus on the term
    $$
    \begin{aligned}
    \int-\partial_\alpha D_t\tilde{\theta}(I-\mathfrak{H})b\bar u_t=\ &\frac{1}{\pi i}\iint\partial_\alpha D_t\tilde{\theta}(\alpha,t)\frac{D_t\zeta(\alpha,t)-D_t\zeta(\beta,t)}{\zeta(\alpha,t)-\zeta(\beta,t)}(\bar\zeta_\beta(\beta,t)-1)\bar u_t(\alpha,t)d\alpha d\beta\\
    &-2\int\partial_\alpha D_t\tilde{\theta}q\bar u_t
    \end{aligned}
    $$
    where
    \begin{equation}\label{b_u_t_1}
    \left|\int\partial_\alpha D_t\tilde{\theta}q\bar u_t\right|\lesssim\norm{\partial_\alpha D_t\tilde{\theta}}_{L^\infty}\norm{q}_{L^1}\norm{u_t}_{L^\infty}\lesssim\frac{\lambda\epsilon}{Ht^{\frac{1}{2}-\mu}}.
    \end{equation}
    Meanwhile,
    \begin{equation}\label{b_u_t_2}
    \begin{aligned}
    &\left|\iint\partial_\alpha D_t\tilde{\theta}(\alpha,t)\left(\frac{1}{\zeta(\alpha,t)-\zeta(\beta,t)}-\frac{1}{\alpha-\beta}\right)(D_t\zeta(\alpha,t)-D_t\zeta(\beta,t))(\bar\zeta_\beta(\beta,t)-1)\bar u_t(\alpha,t)d\alpha d\beta\right|\\
    \lesssim\ &\norm{\partial_\alpha D_t\tilde{\theta}}_{L^\infty}\norm{u_t}_{L^2}\norm{D_t\zeta}_{L^2}\norm{\zeta_\alpha-1}^2_{L^\infty}\\
    \lesssim\ &\frac{\epsilon^4}{t^{\frac{5}{4}-\mu}}.
    \end{aligned}
    \end{equation}
    So we turn to
    $$
    \begin{aligned}
    &\frac{1}{\pi i}\iint\partial_\alpha D_t\tilde{\theta}(\alpha,t)\frac{D_t\zeta(\alpha,t)-D_t\zeta(\beta,t)}{\alpha-\beta}(\bar\zeta_\beta(\beta,t)-1)\bar u_t(\alpha,t)d\alpha d\beta\\
    =\ &\frac{1}{\pi i}\iint\frac{D_t\zeta(\alpha,t)-D_t\zeta(\beta,t)}{\alpha-\beta}[(\bar\zeta_\beta(\beta,t)-1)\partial_\alpha D_t\tilde{\theta}(\alpha,t)+\bar\zeta_{\beta t}(\beta,t)\tilde{\theta}_\alpha(\alpha,t)]\bar u_t(\alpha,t)d\alpha d\beta\\
    &-\frac{1}{\pi i}\iint\tilde{\theta}_\alpha(\alpha,t)\frac{D_t\zeta(\alpha,t)-D_t\zeta(\beta,t)}{\alpha-\beta}\bar\zeta_{\beta t}(\beta,t)\bar u_t(\alpha,t)d\alpha d\beta.
    \end{aligned}
    $$
    Adding \eqref{transfer_d_3} and \eqref{transfer_d_var_1} yields that
    $$
    f_t(\alpha,t)\bar g(\beta,t)+f(\alpha,t)\bar g_t(\beta,t)=\frac{2}{it}(\Omega_0f(\alpha,t)\bar g_t(\beta,t)-f_t(\alpha,t)\overline{\Omega_0g(\beta,t)})-\frac{2}{it}(\alpha-\beta)f_t(\alpha,t)\bar g_t(\beta,t).
    $$
    Hence,
    $$
    \begin{aligned}
    &\frac{1}{\pi i}\iint\partial_\alpha D_t\tilde{\theta}(\alpha,t)\frac{D_t\zeta(\alpha,t)-D_t\zeta(\beta,t)}{\alpha-\beta}(\bar\zeta_\beta(\beta,t)-1)\bar u_t(\alpha,t)d\alpha d\beta\\
    =\ &-\frac{1}{\pi i}\iint\tilde{\theta}_\alpha(\alpha,t)\frac{D_t\zeta(\alpha,t)-D_t\zeta(\beta,t)}{\alpha-\beta}\bar\zeta_{\beta t}(\beta,t)\bar u_t(\alpha,t)d\alpha d\beta\\
    &+\frac{2}{\pi t}\iint\tilde{\theta}_{\alpha t}(\alpha,t)(D_t\zeta(\alpha,t)-D_t\zeta(\beta,t))\bar\zeta_{\beta t}(\beta,t)\bar u_t(\alpha,t)d\alpha d\beta+R\\
    =\ &-\frac{1}{\pi i}\iint\tilde{\theta}_\alpha(\alpha,t)\frac{D_t\zeta(\alpha,t)-D_t\zeta(\beta,t)}{\alpha-\beta}\bar\zeta_{\beta t}(\beta,t)\bar u_t(\alpha,t)d\alpha d\beta\\
    &-\frac{2}{\pi t}\int\tilde{\theta}_{\alpha t}\bar u_td\alpha\int D_t\zeta\bar\zeta_{\beta t} d\beta+R
    \end{aligned}
    $$
    where
    \begin{equation}\label{b_u_t_3}
    \begin{aligned}
    |R|\lesssim\ &\frac{1}{t}\norm{u_t}_{L^2}\norm{D_t\zeta}_{L^\infty}\left(\norm{\Omega_0(\zeta_\alpha-1)}_{L^2}\norm{\tilde{\theta}_{\alpha t}}_{L^\infty}+\norm{\partial_\alpha \zeta_t}_{L^\infty}\norm{\Omega_0 \tilde{\theta}_\alpha}_{L^2}\right)\ln (2+t)\\
    &+\norm{b}_{L^\infty}\norm{\tilde{\theta}_{\alpha\alpha}}_{L^\infty}\norm{D_t\zeta}_{L^2}\norm{\zeta_\alpha-1}_{L^\infty}\norm{u_t}_{L^2}\\
    \lesssim\ &\frac{\epsilon^3}{t^{\frac{5}{4}-\mu-\delta_0}}\ln (2+t).
    \end{aligned}
    \end{equation}
    Combining \eqref{b_u_t_1}-\eqref{b_u_t_3} leads that
    $$
    \begin{aligned}
    \int-\partial_\alpha D_t\tilde{\theta}(I-\mathfrak{H})b\bar u_t=&-\frac{1}{\pi i}\iint\tilde{\theta}_\alpha(\alpha,t)\frac{D_t\zeta(\alpha,t)-D_t\zeta(\beta,t)}{\alpha-\beta}\bar\zeta_{\beta t}(\beta,t)\bar u_t(\alpha,t)d\alpha d\beta\\
    &-\frac{2}{\pi t}\int\tilde{\theta}_{\alpha t}\bar u_td\alpha\int D_t\zeta\bar\zeta_{\beta t} d\beta+R_1
    \end{aligned}
    $$
    where
    $$
    |R_1|\lesssim\frac{\epsilon^3}{t^{\frac{5}{4}-\mu-\delta_0}}\ln (2+t)+\frac{\lambda\epsilon}{Ht^{\frac{1}{2}-\mu}}.
    $$
    Similarly,
    $$
    \begin{aligned}
    \int-\partial_\alpha D_t\tilde{\theta}(I-\bar{ \mathfrak{H}})b\bar u_t=&-\frac{1}{\pi i}\iint\tilde{\theta}_\alpha(\alpha,t)\frac{D_t\bar \zeta(\alpha,t)-D_t\bar \zeta(\beta,t)}{\alpha-\beta}\zeta_{\beta t}(\beta,t)\bar u_t(\alpha,t)d\alpha d\beta\\
    &-\frac{2}{\pi t}\int\tilde{\theta}_{\alpha t}\bar u_td\alpha\int D_t\bar \zeta\zeta_{\beta t} d\beta+R_2
    \end{aligned}
    $$
    where $R_2$ satisfies the same inequality of $R_1$. Set
    $$
    \hat{b}(\alpha,t)=\operatorname{Re}{\int\frac{D_t\zeta(\alpha,t)-D_t\zeta(\beta,t)}{\alpha-\beta}\bar\zeta_{\beta t}d\beta}
    $$
    Imitating the proof of Lemma \ref{lem_b} shows that
    \begin{equation}\label{hat_b_alpha_L^2}
    \norm{\hat{b}_\alpha}_{L^2}\lesssim\frac{\epsilon^2}{t^{\frac{3}{2}-\delta_0}},\qquad \norm{\hat{b}}_{L^2}\lesssim\frac{\epsilon^2}{t^{\frac{1}{2}}}.
    \end{equation}
    Since in $\hat{b}$ we can transfer the t-derivative of $\bar\zeta_{\beta t}$ to $D_t\zeta$ via \eqref{transfer_d_var_1}, the following inequality holds:
    \begin{equation}\label{hat_b_L^infty}
    \norm{\hat{b}}_{L^\infty}\lesssim\frac{\epsilon^2}{t}\ln (2+t).
    \end{equation}
    Note that
    $$
    \begin{aligned}
    \int-2\partial_\alpha D_t\tilde{\theta}b\bar u_t=&-\int\partial_\alpha D_t\tilde{\theta}(I-\mathfrak{H})b\bar u_t-\int\partial_\alpha D_t\tilde{\theta}(I-\bar{\mathfrak{H}})b\bar u_t-\int\partial_\alpha D_t\tilde{\theta}(\mathfrak{H}+\bar{\mathfrak{H}})b\bar u_t\\
    =&-\frac{2}{\pi i}\int \hat{b}\tilde{\theta}_\alpha \bar u_t-\frac{2}{\pi t}\int\tilde{\theta}_{\alpha t}\bar u_td\alpha\int b\zeta_\beta\bar\zeta_{\beta t}+b\bar \zeta_\beta\zeta_{\beta t}+\partial_\beta(|\zeta_t|^2) d\beta+R_1+R_2\\
    &-\int\partial_\alpha D_t\tilde{\theta}(\mathfrak{H}+\bar{\mathfrak{H}})b\bar u_t
    \end{aligned}
    $$
    which implies that
    \begin{equation}\label{hat_b_1}
    \left|\int-2\partial_\alpha D_t\tilde{\theta}b\bar u_t+\frac{2}{\pi i}\int \hat{b}\tilde{\theta}_\alpha \bar u_t\right|\lesssim\frac{\epsilon^3}{t^{\frac{5}{4}-\mu-\delta_0}}\ln (2+t)+\frac{\lambda\epsilon}{Ht^{\frac{1}{2}-\mu}}.
    \end{equation}
    We claim that
    \begin{equation}\label{hat_b_2}
    \left|\int \hat{b}\tilde{\theta}_\alpha \bar u_t-\frac{\hat{b}(\nu t,t)}{4\nu}\tilde{E}\right|\lesssim\frac{\epsilon^3}{t^{\frac{3}{2}-2\mu}}+\frac{\epsilon^3}{t^{\frac{5}{4}-\mu-\delta_0}}.
    \end{equation}
    Since
    \begin{equation}\label{hat_b_2_1}
    \begin{aligned}
    \int \hat{b}\tilde{\theta}_\alpha \bar u_t=&\int \hat{b}\left(\tilde{\theta}_\alpha+iD_t^2\tilde{\theta} \right)\bar u_t-i\int \left(\hat{b}-\hat{b}(\nu t,t)\right)D_t^2\tilde{\theta} \bar u_t-i\hat{b}(\nu t,t)\int D_t^2\tilde{\theta} \left(\bar u_t-\frac{it}{2\alpha}\bar u\right)\\
    &+\hat{b}(\nu t,t)\int D_t^2\tilde{\theta}\left(\frac{t}{2\alpha}-\frac{1}{2\nu}\right)\bar u+\frac{\hat{b}(\nu t,t)}{2\nu}\left(\int D_t^2\tilde{\theta}\bar u-\frac{1}{2}\tilde{E}\right)+\frac{\hat{b}(\nu t,t)}{4\nu}\tilde{E},
    \end{aligned}
    \end{equation}
    we need to control $\frac{\hat{b}(\nu t,t)}{\nu}$. The definition of $\hat{b}$ shows that
    $$
    \begin{aligned}
    \hat{b}=\ &\operatorname{Re}\int\frac{1}{\alpha-\beta}\partial_t[(D_t\zeta(\alpha,t)-D_t\zeta(\beta,t))(\bar\zeta_\beta-1)]d\beta\\
     &-\operatorname{Re}\int\frac{D_t^2\zeta(\alpha,t)-D_t^2\zeta(\beta,t)}{\alpha-\beta}(\bar\zeta_\beta-1-iD_t^2\bar\zeta)(\beta,t)d\beta\\
     &+\operatorname{Re}\int\frac{(b\partial_\alpha D_t\zeta)(\alpha,t)-(b\partial_\alpha D_t\zeta)(\beta,t)}{\alpha-\beta}(\bar\zeta_\beta-1)d\beta\\
     &-\operatorname{Re}\int\frac{D_t^2\zeta(\alpha,t)-D_t^2\zeta(\beta,t)}{\alpha-\beta}iD_t^2\bar\zeta(\beta,t)d\beta.
    \end{aligned}
    $$
    Therefore,
    $$
    \begin{aligned}\norm{\hat{b}+\operatorname{Re}\left(iD_t^2\zeta\int\frac{D_t^2\bar\zeta(\beta,t)}{\alpha-\beta}d\beta\right)}_{L^\infty}=&\ \norm{\hat{b}+\operatorname{Re}\int\frac{D_t^2\zeta(\alpha,t)-D_t^2\zeta(\beta,t)}{\alpha-\beta}iD_t^2\bar\zeta(\beta,t)d\beta}_{L^\infty}\\
    \lesssim&\ \frac{\epsilon^2}{t^{\frac{5}{4}-\delta_0}}\ln (2+t).
    \end{aligned}
    $$
    We return to $\frac{\hat{b}(\nu t,t)}{\nu}$:
    $$
    \begin{aligned}
    \frac{\hat{b}(\nu t,t)}{\nu}=\ &\frac{1}{\nu}\left[\hat{b}(\nu t,t)+\operatorname{Re}\left(iD_t^2\zeta(\nu t,t)\int\frac{D_t^2\bar\zeta(\beta,t)}{\nu t-\beta}d\beta\right)\right]\\
    &-\operatorname{Re}\left(i\left(\frac{1}{\nu}(b\partial_\alpha D_t\zeta)(\nu t,t)+\frac{2}{\nu t}L_0D_t\zeta(\nu t,t)-{2}\partial_\alpha D_t\zeta(\nu t,t)\right)\int\frac{D_t^2\bar\zeta(\beta,t)}{\nu t-\beta}d\beta\right).
    \end{aligned}
    $$
    Thus,
    \begin{equation}\label{hat_b_2_2}
    \left|\frac{\hat{b}(\nu t,t)}{\nu}\right|\lesssim\frac{\epsilon^2}{t}\ln (2+t)+\frac{\epsilon^2}{t^{\frac{5}{4}-\mu-\delta_0}}\ln (2+t).
    \end{equation}
    From (\ref{u_t}) we see
    \begin{equation}\label{hat_b_2_3}
        \norm{u_t+\frac{it}{2\alpha}u}_{L^2}\lesssim\frac{1}{t^{\frac{1}{4}-\mu}}.
    \end{equation}
    By H\"older's inequality and Hardy's inequality:
    \begin{equation}\label{hat_b_2_4}
    \begin{aligned}
    \left|\int \left(\hat{b}-\hat{b}(\nu t,t)\right)D_t^2\tilde{\theta} \bar u_t\right|\lesssim\ &\norm{\frac{\hat{b}(\alpha,t)-\hat{b}(\nu t,t)}{\alpha-\nu t}}_{L^2}\norm{D_t^2\tilde{\theta}}_{L^\infty}\norm{(\alpha-\nu t)\bar u_t}_{L^2}\\
    \lesssim\ &\norm{\hat{b}_\alpha}_{L^2}\norm{D_t^2\tilde{\theta}}_{L^\infty}\norm{(\alpha-\nu t)\bar u_t}_{L^2}\\
    \lesssim\ &\frac{\epsilon^3}{t^{\frac{5}{4}-\mu-\delta_0}}
    \end{aligned}
    \end{equation}
    Combining \eqref{hat_b_2_1}-\eqref{hat_b_2_4} and \eqref{tilde_E_D_t_theta_u} yields \eqref{hat_b_2}. By \eqref{hat_b_1} and \eqref{hat_b_2} we deduce that
    \begin{equation}\label{I_2_two}
    \left|\int-2\partial_\alpha D_t\tilde{\theta}b\bar u_t+\frac{\hat{b}(\nu t,t)}{2\pi i\nu}\tilde{E}\right|\lesssim\frac{\epsilon^3}{t^{\frac{3}{2}-2\mu}}+\frac{\epsilon^3}{t^{\frac{5}{4}-\mu-\delta_0}}\ln (2+t)+\frac{\lambda\epsilon}{Ht^{\frac{1}{2}-\mu}}.
    \end{equation}
    For the term $\int D_t\tilde{\theta}(\bar u_{tt}+i\bar u_\alpha)$ we rely on Lemma 7.1 in \cite{su2025new}:
    \begin{equation}\label{I_2_one}
    \left|\int D_t\tilde{\theta}(\bar u_{tt}+i\bar u_\alpha)\right|\leq\tilde{C}\frac{\epsilon}{t^{\frac{5}{4}-\mu-\delta_0}}.
    \end{equation}
    The combination of (\ref{I_2_four}), (\ref{I_2_three}), (\ref{I_2_two}) and (\ref{I_2_one}) verifies (\ref{I_2_tilde_E}).
\end{proof}
Take (\ref{d_t_tilde_E_one}), (\ref{d_t_tilde_E_two}), Lemma \ref{lem_tilde_I_1} and Lemma \ref{lem_tilde_I_2} into consideration and we demonstrate the following theorem:
\begin{thm}\label{thm_d_t_tilde_E}
    For $t\in[1,T]$ and $t^{-\mu}\leq|\nu|\leq t^\mu$, if $\mu=\frac{1}{5}$,
    \begin{equation}
    \begin{aligned}
    &\left|\frac{d}{dt}\left(\tilde{E}(\nu,t)\right)-\frac{i|D_t^2\zeta(\nu t,t)|^2}{4\nu}\tilde{E}+i\frac{\hat{b}(\nu t,t)}{2\pi \nu}\tilde{E}\right|\\
    \leq&\ \tilde{C}\frac{\epsilon}{t^{1+\frac{1}{20}-\delta_0}}+C\left(\frac{\epsilon^3}{t^{1+\frac{1}{20}-\delta_0}}(\ln (2+t))^2+\frac{\lambda(\lambda+\epsilon)\epsilon(1+t)^{\frac{1}{4}}}{H^3}+\frac{\lambda\epsilon}{H^2t^{\frac{1}{20}}}+\frac{\lambda\epsilon}{Ht^{\frac{3}{10}}}+\frac{\lambda(\epsilon+\lambda)}{H^{\frac{3}{2}}t^{\frac{1}{4}}}\right).
    \end{aligned}
    \end{equation}
\end{thm}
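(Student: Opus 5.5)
The theorem assembles estimates already in hand, so the plan is to start from the exact identity for $\frac{d}{dt}\tilde E$ derived just before Lemma \ref{lem_tilde_I_1}:
\[
\frac{d}{dt}\tilde E=\int (D_t^2\tilde\theta\,\bar u-D_t\tilde\theta\,D_t\bar u)\,b_\alpha\,d\alpha+\tilde I_1-\tilde I_2-i\int A_\alpha D_t\tilde\theta\,\bar u\,d\alpha .
\]
Each piece is then replaced by its leading part, and all remaining errors are specialised at $\mu=\frac15$.

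The two pieces containing $b_\alpha$ are bounded by \eqref{d_t_tilde_E_one} and \eqref{d_t_tilde_E_three}. The $A_\alpha$ piece is bounded by \eqref{d_t_tilde_E_two}. None of these has a main term, so they go directly into the error.

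For $\tilde I_1$ I use Lemma \ref{lem_tilde_I_1}, which replaces $\tilde I_1$ by $\frac{i|D_t^2\zeta(\nu t,t)|^2}{4\nu}\tilde E$. For $\tilde I_2$ I use Lemma \ref{lem_tilde_I_2}, which gives $\tilde I_2=-\frac{\hat b(\nu t,t)}{2\pi i\nu}\tilde E+\text{error}$. Hence $-\tilde I_2=\frac{\hat b(\nu t,t)}{2\pi i\nu}\tilde E=-i\frac{\hat b(\nu t,t)}{2\pi\nu}\tilde E$, which accounts for the sign in the statement. Both lemmas hold uniformly for $t^{-\mu}\le|\nu|\le t^\mu$, and $\mu=\frac15<\frac14$ is admissible.

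The only real work is checking exponents at $\mu=\frac15$.
\begin{itemize}
\item $\frac54-\mu=\frac{21}{20}$, so every $t^{-(5/4-\mu-\delta_0)}$ becomes $t^{-(1+1/20-\delta_0)}$. This includes the $\tilde C\epsilon$ term from \eqref{I_2_one}, which must stay separated because $\tilde C$ does not depend on $M$.
\item $\frac32-2\mu=\frac{11}{10}$ and $\frac76$ both exceed $\frac{21}{20}$, so those terms, and the logarithms, are absorbed into $\epsilon^3t^{-(1+1/20-\delta_0)}(\ln(2+t))^2$. For $t\ge1$ this is possible by slightly shrinking the exponent, or one simply keeps $(\ln)^2$ as stated.
\item $\frac14-\mu=\frac1{20}$ gives $\frac{\lambda\epsilon}{H^2t^{1/20}}$, and $\frac12-\mu=\frac3{10}$ gives $\frac{\lambda\epsilon}{Ht^{3/10}}$.
\item The terms $\frac{\lambda\epsilon}{H^3}$ and $\frac{\lambda\epsilon(\epsilon+\lambda)(1+t)^{1/4}}{H^2}$ from \eqref{d_t_tilde_E_one}--\eqref{d_t_tilde_E_two} need absorbing into $\frac{\lambda(\lambda+\epsilon)\epsilon(1+t)^{1/4}}{H^3}$ or $\frac{\lambda\epsilon}{H^2t^{1/20}}$.
\end{itemize}

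The last item is the step I expect to be the main obstacle, since it does not follow from exponent arithmetic. For the $H^{-2}$ term I would redo \eqref{d_t_tilde_E_two}, bounding $\|A_\alpha\|_{L^\infty}$ by the $W^{s-1,\infty}$ estimate of Lemma \ref{lem_A}. If needed, I would trade $\|D_t\tilde\theta\|_{L^2}\|u\|_{L^2}$ for $\|D_t\tilde\theta\|_{L^\infty}\|u\|_{L^1}$, and otherwise extract the extra $H^{-1}$ from the structure of $q$ (the $\lambda(\epsilon+\lambda)/H^3$ part of $\partial_\alpha$ of the vortex terms). For $\frac{\lambda\epsilon}{H^3}$ I would use $H\gtrsim H_0$ together with $t\ge1$. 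Collecting everything yields the stated inequality.
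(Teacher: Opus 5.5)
Your assembly is the paper's own proof, which is just the one-line combination of \eqref{d_t_tilde_E_one}, \eqref{d_t_tilde_E_two}, Lemma~\ref{lem_tilde_I_1} and Lemma~\ref{lem_tilde_I_2}. Your identity for $\frac{d}{dt}\tilde E$, the sign of the $\hat b$ term and the exponent arithmetic at $\mu=\frac15$ are all correct. You are also right that $\frac{\lambda\epsilon}{H^3}$ (from \eqref{d_t_tilde_E_one}) and $\frac{\lambda\epsilon(\epsilon+\lambda)(1+t)^{1/4}}{H^2}$ (from \eqref{d_t_tilde_E_two}) do not appear on the stated right-hand side; the paper passes over this silently. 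The gap is that most of the remedies you propose for these two terms do not remove them.

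For the $A_\alpha$ term, your first two ideas fail.
\begin{itemize}
\item Both $L^\infty$-based bounds in Lemma~\ref{lem_A}, including the $W^{s-1,\infty}$ one, carry the same vortex part $\lambda(\epsilon+\lambda)H^{-2}$. The $W^{s-1,\infty}$ bound also has the weaker cubic part $\epsilon^2t^{-7/6+\delta_0}$. Multiplied by $\norm{D_t\tilde\theta}_{L^2}\norm{u}_{L^2}\sim\epsilon t^{1/4}$, this gives the non-integrable $\epsilon^3t^{-11/12+\delta_0}$. The paper's $W^{s-3,\infty}$ choice is the right one.
\item Switching to $\norm{D_t\tilde\theta}_{L^\infty}\norm{u}_{L^1}$ gains nothing, since this is again $\sim\epsilon t^{1/4}$.
\end{itemize}
Your third idea is the one that works. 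By the proof of Lemma~\ref{lem_A}, the vortex part of $(I-\mathfrak H)(A-1)$ is $-\frac{\lambda}{\pi}\bigl[\frac{\dot z_2}{(\zeta-z_2)^2}-\frac{\dot z_1}{(\zeta-z_1)^2}\bigr]$. By Lemma~\ref{lem_z_1}, its $\alpha$-derivative is $O\bigl(\lambda(\lambda+\epsilon)H^{-3}\bigr)$ in $L^\infty$. Apply Lemma~\ref{lem_real_proj} to $(I-\mathfrak H)A_\alpha=\partial_\alpha(I-\mathfrak H)(A-1)+[\zeta_\alpha,\mathfrak H]\frac{A_\alpha}{\zeta_\alpha}$. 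This gives vortex part $\lambda(\epsilon+\lambda)H^{-3}$ for $\norm{A_\alpha}_{L^\infty}$, which is the size \eqref{I_2_four} already uses tacitly. Then \eqref{d_t_tilde_E_two} produces exactly the stated $\frac{\lambda(\lambda+\epsilon)\epsilon(1+t)^{1/4}}{H^3}$.

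For $\frac{\lambda\epsilon}{H^3}$, appealing to $H\gtrsim H_0$ and $t\ge1$ cannot close the argument. That bound has no time decay and only $H^{-3}$, while every stated vortex term either trades powers of $H$ for decay in $t$ or carries a factor $(\epsilon+\lambda)(1+t)^{1/4}$. Concretely, fix $H_0$ large and independent of $\epsilon$, and take $\lambda=\epsilon/(c_sH_0^{1/4})$, which \eqref{lambda_H_0} allows. At $t\approx H_0/\lambda$ one has $H\sim H_0$. As $\epsilon\to0$, every term on the stated right-hand side, including $\tilde C\epsilon t^{-21/20+\delta_0}$, is then $o(\lambda\epsilon H^{-3})$.

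You have to re-estimate instead. Treat $\int D_t^2\tilde\theta\,\bar u\,b_\alpha$ the way the paper treats \eqref{d_t_tilde_E_three}:
\begin{itemize}
\item Split $b=\frac12\bigl[(I-\mathfrak H)b+(I-\bar{\mathfrak H})b+(\mathfrak H+\bar{\mathfrak H})b\bigr]$ and isolate the $2q_\alpha$ contribution given by \eqref{b_exp}.
\item Bound that piece by $\norm{D_t^2\tilde\theta}_{L^\infty}\norm{u}_{L^\infty}\norm{q_\alpha}_{L^1}\lesssim\frac{\lambda\epsilon}{H^2t^{1/2}}$. This is dominated by the stated $\frac{\lambda\epsilon}{H^2t^{1/20}}$.
\item Estimate the remaining part of $b_\alpha$ in $L^\infty$ exactly as in \eqref{d_t_tilde_E_three}.
\end{itemize}
With these two repairs, your assembly closes as you describe.
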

\begin{cor}\label{cor_tilde_E}
    \begin{equation}\label{tilde_E_beh}
    \norm{\tilde{E}(\cdot,t)}_{L^\infty(D(t))}\lesssim\tilde{C}\epsilon+C\epsilon^3.
    \end{equation}
    where $D(t)=\{\nu\in \mathbb{R}\setminus \{0\}\mid t^{-\frac{1}{5}}\leq|\nu|\leq t^{\frac{1}{5}}\}$
\end{cor}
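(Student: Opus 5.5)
The plan is to integrate the ODE inequality of Theorem \ref{thm_d_t_tilde_E} along $t$ for each fixed $\nu$. The structural point is that the coefficient
\[
\Phi(\nu,t):=\frac{|D_t^2\zeta(\nu t,t)|^2}{4\nu}-\frac{\hat b(\nu t,t)}{2\pi\nu}
\]
is \emph{real}. This holds because $\hat b$ is defined as a real part. The linear term $i\Phi\tilde E$ is therefore a pure phase rotation. Setting $W(\nu,t)=e^{-i\int_{t_0}^t\Phi(\nu,\tau)d\tau}\tilde E(\nu,t)$, we have $|W|=|\tilde E|$, and Theorem \ref{thm_d_t_tilde_E} gives $|\partial_tW|\le \mathcal R(t)$, where $\mathcal R$ is the right-hand side of that theorem. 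We do not even need to bound $\Phi$ itself (although by (B4) and \eqref{hat_b_2_2} it is $O(\epsilon^2t^{-1+\mu})$ anyway).

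The first step is to fix the starting time. For fixed $\nu\neq0$, the condition $\nu\in D(t)$ means $t\ge t_\nu:=\max(|\nu|^{5},|\nu|^{-5},1)$. At the entry time $t_0=t_\nu$ (or $t_0=1$), I would bound $\tilde E(\nu,t_0)$ directly from Lemma \ref{lem_tilde_E}: $|\tilde E|\lesssim \sqrt{t}\,|\tilde\theta_\alpha(\nu t,t)|+\epsilon t^{-1/2+2\mu}+\epsilon t^{-1/4+\mu+\delta_0}$. At the boundary $|\nu|=t^{\pm\mu}$ the point $\nu t$ lies on $\partial S(t)$, so Lemma \ref{lem_away_infty} together with Lemma \ref{lem_theta_alpha} gives $\sqrt t|\tilde\theta_\alpha|\lesssim \epsilon t^{1/2}(t^{-1+\delta_0}+t^{-1/4-2\mu}\ln t+t^{-1/2-\mu/2+\delta_0})\lesssim \tilde C\epsilon$ for $\mu=1/5$. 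Here one uses $1/2-1/4-2/5<0$ and $-\mu/2+\delta_0<0$. For $t_0=1$, $\tilde E$ is bounded by the initial/local energy, $\lesssim\epsilon$.

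The second step is to integrate $\mathcal R$ over $[t_0,\infty)$. The term $\epsilon t^{-1-1/20+\delta_0}$ integrates to $\tilde C\epsilon$ since $\delta_0<1/20$, and the cubic term gives $C\epsilon^3$. For the vortex terms I would use (B6), $H\ge c_0(H_0+\lambda t)$, together with the splitting integral from Corollary \ref{cor_E}. This gives:
\begin{itemize}
\item $\int\lambda\epsilon(\epsilon+\lambda) t^{1/4}H^{-3}\lesssim \epsilon(\epsilon+\lambda)\lambda^{-1/4}H_0^{-7/4}$;
\item $\int \lambda\epsilon H^{-2}t^{-1/20}\lesssim \epsilon\lambda^{1/20}\cdot$ const;
\item $\int \lambda\epsilon H^{-1}t^{-3/10}$, which is the delicate one.
\end{itemize}
The last behaves like $\epsilon\lambda^{3/10}$ only logarithmically after splitting. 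The last, $\int \lambda(\epsilon+\lambda)H^{-3/2}t^{-1/4}$, is $\lesssim(\epsilon+\lambda)\lambda^{1/6}$. Each of these is $\le Cc_s\epsilon$ or $C\lambda_0^{\kappa}\epsilon$ after using \eqref{lambda_H_0} and the smallness of $\lambda_0$, and they are absorbed into $\tilde C\epsilon$.

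The main obstacle is the term $\lambda\epsilon/(Ht^{3/10})$: $p+q=1+3/10$ with $p=1$. The splitting bound $\frac{p+q}{p+q-1}\lambda^{q/(p+q)}$ applies with $H_0$ scaling, giving $\lesssim \epsilon\lambda^{3/13}$, which is small when $\lambda\le\lambda_0$. I expect checking the exponents here, and confirming that $\lambda\le\lambda_0$ rather than $H_0$ must carry the smallness, to be the delicate point. Combining the two steps gives $|\tilde E(\nu,t)|\le|\tilde E(\nu,t_0)|+\int_{t_0}^t\mathcal R\lesssim\tilde C\epsilon+C\epsilon^3$, uniformly in $\nu\in D(t)$.
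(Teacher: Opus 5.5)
Your overall route is the paper's. The phase is real, so the rotated quantity $e^{-i\int\Phi}\tilde E$ has the same modulus as $\tilde E$. You integrate from the entry time $|\nu|^{\pm5}$ (the paper's $|\nu|^{\pm1/\mu}$), bound the entry value by Lemma~\ref{lem_away_infty} together with Lemma~\ref{lem_tilde_E}, and integrate the remainder of Theorem~\ref{thm_d_t_tilde_E} using (B6) and \eqref{lambda_H_0}. The entry-time step and the vortex terms that carry a factor of $\epsilon$ are fine.

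\textbf{The step that fails as written} is the term $\lambda(\epsilon+\lambda)H^{-3/2}t^{-1/4}$.
\begin{itemize}
\item Your bound $(\epsilon+\lambda)\lambda^{1/6}$ discards $H_0$, and its $\lambda^{7/6}$ part is not $O(\epsilon)$ in the admissible regime.
\item \eqref{lambda_H_0} allows $\lambda$ as large as $c_s\epsilon H_0^{1/2}$. For example, take $\lambda=\lambda_0$ fixed, $\epsilon=\lambda_0/(c_sH_0^{1/2})$ and $H_0\to\infty$; this satisfies all of \eqref{lambda_H_0} once $H_0$ is large. Then $\lambda^{7/6}/\epsilon=c_s\lambda_0^{1/6}H_0^{1/2}\to\infty$.
\item No smallness of $\lambda_0$ repairs this.
\end{itemize}
You must keep the depth. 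With $H\ge c_0(H_0+\lambda\tau)$ and $\tau\ge1$,
\[
\int_1^\infty\frac{\lambda(\epsilon+\lambda)}{H^{3/2}\tau^{1/4}}\,d\tau\le c_0^{-3/2}(\epsilon+\lambda)\int_0^\infty\frac{\lambda\,d\tau}{(H_0+\lambda\tau)^{3/2}}=2c_0^{-3/2}(\epsilon+\lambda)H_0^{-1/2}\le 4c_0^{-3/2}c_s\epsilon .
\]
The last step uses $H_0^{-1/2}\le c_s$ and $\lambda H_0^{-1/2}\le c_s\epsilon$. This is exactly the paper's $C(\epsilon+\lambda)H_0^{-1/2}$ term.

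So your closing remark is backwards. For the vortex contribution that carries no factor of $\epsilon$, it is $H_0$ (through $\lambda\le c_s\epsilon H_0^{1/2}$), not $\lambda_0$, that supplies the smallness.

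Conversely, the term $\lambda\epsilon H^{-1}t^{-3/10}$ that you flagged as the main obstacle is harmless, because it already carries $\epsilon$. The substitution $u=\lambda\tau$ gives $\lesssim\epsilon(\lambda/H_0)^{3/10}$, matching the paper's bound.
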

\begin{proof}
    Theorem \ref{thm_d_t_tilde_E} indicates that
    $$
    \begin{aligned}
    &\left|\frac{d}{dt}\left(e^{-i\int_1^t\frac{|D_t^2\zeta(\nu\tau,\tau)|^2}{4\nu}-\frac{\hat{b}(\nu \tau,\tau)}{2\pi \nu}d\tau}\tilde{E}(\nu,t)\right)\right|\\
    \leq&\ C\left(\frac{\epsilon^3}{(1+t)^{\frac{21}{20}-\delta_0}}(\ln (2+t))^2+\frac{\lambda(\lambda+\epsilon)\epsilon(1+t)^{\frac{1}{4}}}{H^3}+\frac{\lambda\epsilon}{H^2t^{\frac{1}{20}}}+\frac{\lambda\epsilon}{Ht^{\frac{3}{10}}}+\frac{\lambda(\epsilon+\lambda)}{H^{\frac{3}{2}}(1+t)^{\frac{1}{4}}}\right)\\
    &+\tilde{C}\frac{\epsilon}{(1+t)^{\frac{21}{20}-\delta_0}}.
    \end{aligned}
    $$
    The inequality holds in the domain $\mathcal{D}=\{(\nu,t)\mid t^{-\frac{1}{5}}\leq|\nu|\leq t^{\frac{1}{5}}\}$. If $|\nu|>1$, we integrate from $|\nu|^{\frac{1}{\mu}}$
 to $t$. If $|\nu|\leq1$, we integrate from $|\nu|^{-\frac{1}{\mu}}$
 to $t$. Then if $|\nu|>1$,
    $$
    \tilde{E}(\nu,t)\lesssim\tilde{E}(\nu,|\nu|^{\frac{1}{\mu}})+\tilde{C}\epsilon+C\epsilon^3+C\frac{\epsilon+\lambda}{H_0^{\frac{1}{2}}}+C\frac{\lambda^{\frac{3}{10}}\epsilon}{H_0^{\frac{3}{10}}}\lesssim\tilde{E}(\nu,|\nu|^{\frac{1}{\mu}})+(\tilde{C}+1)\epsilon+C\epsilon^3
    $$
    where the last inequality follows from \eqref{lambda_H_0}. 
    If $|\nu|\leq1$,
    $$\tilde{E}(\nu,t)\lesssim\tilde{E}(\nu,|\nu|^{-\frac{1}{\mu}})+(\tilde{C}+1)\epsilon+C\epsilon^3.$$
    From Lemma \ref{lem_away_infty} and Lemma \ref{lem_tilde_E} we obtain \eqref{tilde_E_beh}.
\end{proof}

\appendix

\section{Singular integrals and commutators}

Let $m\geq 1$ be an integer. Denote
\begin{equation}
S_1(A,f)=p.v.\int \prod_{j=1}^m \frac{A_j(\alpha)-A_j(\beta)}{\gamma_j(\alpha)-\gamma_j(\beta)} \frac{f(\beta)}{\gamma_0(\alpha)-\gamma_0(\beta)}d\beta.
\end{equation}

\begin{equation}
S_2(A,f)=\int \prod_{j=1}^m \frac{A_j(\alpha)-A_j(\beta)}{\gamma_j(\alpha)-\gamma_j(\beta)} f_{\beta}(\beta)d\beta.
\end{equation}
\noindent We have the following commutator estimates, which can be found in \cite{Totz2012}, \cite{Wu2009}.
\begin{lemma}\label{app_S1}
We have the following results:

(1) Assume each $\gamma_j (j=0,...,m)$ satisfies 
\begin{equation}\label{gammaj}
C_{0,j}|\alpha-\beta|\leq |\gamma_j(\alpha)-\gamma_j(\beta)|\leq C_{1,j}|\alpha-\beta|.
\end{equation}
Then both $\|S_1(A,f)\|_{L^2}$ and $\|S_2(A,f)\|_{L^2}$ are bounded by
$$C\prod_{j=1}^m \|A_j'\|_{X_j}\|f\|_{X_0},$$
where one of the $X_0, X_1, ...X_m$ is equal to $L^2$ and the rest are $L^{\infty}$. The constant $C$ depends on $\|\gamma_j'\|_{L^{\infty}}^{-1}, j=0,1,..,m$.

(2) Let $s\geq 3$ be given, and assume (\ref{gammaj}) for each $\gamma_j$, then 
$$\|S_2(A,f)\|_{H^s}\leq C\prod_{j=1}^m \|A_j'\|_{Y_j}\|f\|_Z,$$
where for all $j=1,...,m$, $Y_j=H^{s-1}$ or $W^{[s-1]+1,\infty}$ and $Z=H^s$ or $W^{[s]+1,\infty}$. At most one of these $Y_j, Z$ norms is in $H^k$ ($k=s-1$ for $Y_j$ or $s$ for $Z$). The constant $C$ depends on $\norm{\partial_\alpha\gamma_j-1}_{H^{s-1}},\ j=1,...,m$.

(3) Assume (\ref{gammaj}) for each $\gamma_j$, then
\[
\norm{S_1(A,f)}_{L^\infty}\le C\left(\prod_{j=1}^m\norm{A'_j}_{W^{1,\infty}}\norm{f}_{W^{1,\infty}}+\prod_{j=1}^m\norm{A_j'}_{L^\infty}\norm{f}_{L^\infty}\ln r+\prod_{j=1}^m\norm{A_j'}_{L^\infty}\norm{f}_{L^2}r^{-\frac{1}{2}}\right)
\]
for any $r>1$. The constant $C$ depends on $\|\gamma_j'\|_{L^{\infty}}^{-1}, j=0,1,..,m$.
\end{lemma}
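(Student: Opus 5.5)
These are the standard commutator estimates of Coifman--McIntosh--Meyer type, recorded in \cite{Wu2009, Totz2012}. I would reduce each part to the multilinear Calderón commutator theory rather than reprove it.

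For part (1), I would first treat $S_1$. Each factor $\frac{A_j(\alpha)-A_j(\beta)}{\gamma_j(\alpha)-\gamma_j(\beta)}$ is a bounded difference quotient, since its size is at most $\|A_j'\|_{L^\infty}/C_{0,j}$. Together with the kernel $\frac{1}{\gamma_0(\alpha)-\gamma_0(\beta)}$, this makes $S_1$ a Calderón--Zygmund operator whose kernel is a $C^1$-smooth function of the chord-arc data. Writing $\frac{1}{\gamma_j(\alpha)-\gamma_j(\beta)}$ as $\frac{1}{\alpha-\beta}F_j\big(\frac{\gamma_j(\alpha)-\gamma_j(\beta)}{\alpha-\beta}\big)$ with $F_j$ analytic near the range, and expanding, reduces everything to Calderón commutators $\int \prod \frac{B_k(\alpha)-B_k(\beta)}{\alpha-\beta}\frac{f(\beta)}{\alpha-\beta}d\beta$. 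These are bounded $L^2\to L^2$ with norm $\lesssim\prod\|B_k'\|_{L^\infty}$ by Coifman--McIntosh--Meyer.

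For the case where some $A_j'$ is in $L^2$ and $f\in L^\infty$, I would use duality together with the multilinear $T(1)$/Christ--Journé estimate for $L^\infty\times\dots\times L^2$ inputs. Alternatively, I would integrate by parts in the $A_j$ slot to move to the case already handled.

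The estimate for $S_2$ follows from integrating by parts in $\beta$. Moving the derivative off $f$ produces $S_1$-type terms with one fewer difference quotient replaced by $A_j'(\beta)$ times a kernel of the same type, or with one additional kernel factor. Each of these is bounded by the previous case. The boundary terms vanish because the difference quotients decay.

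For part (2), I would differentiate $S_2$ $s$ times in $\alpha$ using $\partial_\alpha+\partial_\beta$ acting on difference quotients. This distributes derivatives onto the $A_j$, the $\gamma_j$ and $f$, and yields sums of $S_2$-type operators, which are then handled by part (1). I would place the $H^k$ norm on whichever factor carries the most derivatives and put the rest in $L^\infty$. The dependence on $\|\partial_\alpha\gamma_j-1\|_{H^{s-1}}$ enters when derivatives fall on $\gamma_j$.

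For part (3), I would split the integral into three regions: $|\alpha-\beta|<1$, $1\le|\alpha-\beta|\le r$, and $|\alpha-\beta|>r$.

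On the near region, I would subtract the principal part and use Taylor expansion to get a bound by $W^{1,\infty}$ norms. This is the same argument as $S_{12}$ in Lemma \ref{lem_S_1}.

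On the middle region, the kernel is $\lesssim\prod\|A_j'\|_\infty|\alpha-\beta|^{-1}$, which gives the $\ln r$ term.

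On the far region, Cauchy--Schwarz gives the $r^{-1/2}\|f\|_{L^2}$ term.

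The main obstacle is the $L^2$ boundedness in (1) with one $A_j'$ in $L^2$ and $f\in L^\infty$, which relies on deep multilinear Calderón--Zygmund theory. I would cite it from \cite{Wu2009} rather than reprove it.
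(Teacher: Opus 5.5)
The paper gives no argument of its own for this lemma; it simply refers to \cite{Totz2012} and \cite{Wu2009}. Your plan of deferring to the Coifman--McIntosh--Meyer and multilinear Calder\'on--Zygmund theory is therefore the same route. The pieces you actually carry out are the standard ones:
the integration by parts that turns $S_2$ into $S_1$-type terms, with either $A_j'(\beta)$ in front of a kernel with one fewer quotient, or $\gamma_j'(\beta)$ in front of a kernel with an extra factor $1/(\gamma_j(\alpha)-\gamma_j(\beta))$; and the three-region splitting in (3), which produces the $\ln r$ and $r^{-1/2}\norm{f}_{L^2}$ terms exactly as you describe.

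Some of your sketched steps would not go through as written, though.

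(i) Expanding $1/(\gamma_j(\alpha)-\gamma_j(\beta))$ in powers of difference quotients converges only if the difference quotients of $\gamma_j$ stay in a disk avoiding $0$, e.g.\ $\norm{\gamma_j'-1}_{L^\infty}<1$. That is the regime in which the paper uses the lemma ($\gamma_j\in\{\zeta,\bar\zeta,\alpha\}$ with $\norm{\zeta_\alpha-1}_{L^\infty}\le M\epsilon$). It is not the regime of a general chord-arc curve: for $\gamma(x)=x|x|^{ic}$ the quotients $(\gamma(x)-\gamma(0))/x=|x|^{ic}$ fill the unit circle. In that generality you need David's theorem \cite{david1984operateurs}, or Coifman--David--Meyer for smooth functions of several difference quotients.

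(ii) The ``integrate by parts in the $A_j$ slot'' alternative for $A_j'\in L^2$, $f\in L^\infty$ is circular. With $m=1$, $\gamma_0(\alpha)=\gamma_1(\alpha)=\alpha$ and $F'=f$, it gives $S_1(A,f)=-\int\frac{F(\alpha)-F(\beta)}{(\alpha-\beta)^2}A'(\beta)\,d\beta+2\int\frac{(A(\alpha)-A(\beta))(F(\alpha)-F(\beta))}{(\alpha-\beta)^3}\,d\beta$. The last term has $A'\in L^2$, $F'\in L^\infty$ and the constant function $1$ in the $f$-slot. That is the same $L^2\times L^\infty$ situation, so you really do need the cited $T(1)$-type result there.

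(iii) In (2), consider the term where all $s$ derivatives fall on the numerator of the $A_j$ with $Y_j=H^{s-1}$. Part (1) does not cover it, since it would require control of $\partial_\alpha^{s+1}A_j$. You have to split $\partial_\alpha^sA_j(\alpha)-\partial_\alpha^sA_j(\beta)$. Then bound $\partial_\alpha^sA_j(\alpha)\int\frac{\prod_{k\ne j}(\cdots)\,f_\beta}{\gamma_j(\alpha)-\gamma_j(\beta)}\,d\beta$ as $L^2\times L^\infty$, using a pointwise bound on the integral. This is where the extra derivative in $W^{[s]+1,\infty}$ and $W^{[s-1]+1,\infty}$ is spent.

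(iv) Your near-region Taylor argument in (3), like the one in Lemma~\ref{lem_S_1}, uses $\gamma_j''\in L^\infty$. That dependence cannot be dropped: at a corner of $\gamma_0$ the truncated principal value $\int_{|\alpha-\beta|<1}\frac{d\beta}{\gamma_0(\alpha)-\gamma_0(\beta)}$ diverges logarithmically. It is harmless for the curves the paper actually uses.
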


\begin{lemma}\label{derivative_K_com}
    Let $K$ be such that $K$ or $(\alpha-\beta)K(\alpha,\beta;t)$ is continuous and bounded, and $K$ is smooth away from the diagonal $\Delta=\{(\alpha,\beta)| \alpha=\beta\}$. Denote
    \begin{equation}
        \boldsymbol{K}f(\alpha,t)=p.v. \int K(\alpha,\beta;t)f(\beta,t)d\beta
    \end{equation}
    Then we have 
    \begin{equation}
        \begin{split}
            [\partial_t, \boldsymbol{K}]f(\alpha,t)=&\int \partial_t K(\alpha,\beta;t)f(\beta,t)d\beta.\\
            [\partial_{\alpha},\boldsymbol{K}]f(\alpha,t)=& \int (\partial_{\alpha}+\partial_{\beta})K(\alpha,\beta;t)f(\beta,t)d\beta.\\
            [L_0, \boldsymbol{K}]f(\alpha,t)=& \int (\alpha\partial_{\alpha}+\beta\partial_{\beta}+\frac{1}{2}t\partial_t)K(\alpha,\beta;t)f(\beta,t)d\beta+\boldsymbol{K}f(\alpha,t)
        \end{split}
    \end{equation}
    for $f\in C^1(\mathbb{R};\mathbb{R}^d)$ which vanishes as $|\alpha|\rightarrow\infty$.
\end{lemma}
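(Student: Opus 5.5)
We argue directly from the definition of $\boldsymbol{K}$ as a principal value. Write $\boldsymbol{K}f=\lim_{\varepsilon\to0}\boldsymbol{K}_\varepsilon f$ with $\boldsymbol{K}_\varepsilon f(\alpha,t)=\int_{|\alpha-\beta|>\varepsilon}K(\alpha,\beta;t)f(\beta,t)\,d\beta$. We prove each identity first for $\boldsymbol{K}_\varepsilon$ and then let $\varepsilon\to0$. Passing to the limit requires that the differentiated kernels $\partial_tK$, $(\partial_\alpha+\partial_\beta)K$ and $(\alpha\partial_\alpha+\beta\partial_\beta+\tfrac12t\partial_t)K$ again define principal value integrals, which holds for the kernels used in the paper (differences of Cauchy-type kernels along $\zeta$). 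We also assume enough decay of $K$ and $f$ at infinity for the integrals over $|\alpha-\beta|>\varepsilon$ to converge absolutely; this is the standing framework for the operators to which the lemma is applied.

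\textbf{Time derivative.} The truncation region $\{|\alpha-\beta|>\varepsilon\}$ does not depend on $t$, and $K$ is smooth off the diagonal. Hence we may differentiate under the integral sign:
\[
\partial_t\boldsymbol{K}_\varepsilon f=\int_{|\alpha-\beta|>\varepsilon}\partial_tK\,f\,d\beta+\boldsymbol{K}_\varepsilon f_t .
\]
Letting $\varepsilon\to0$ gives the first formula.

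\textbf{Spatial derivative.} We substitute $\beta=\alpha+h$, so that
\[
\boldsymbol{K}_\varepsilon f(\alpha)=\int_{|h|>\varepsilon}K(\alpha,\alpha+h)f(\alpha+h)\,dh .
\]
The domain $\{|h|>\varepsilon\}$ is now independent of $\alpha$, which is exactly the translation invariance of symmetric truncation. Differentiating in $\alpha$ gives
\[
\int_{|h|>\varepsilon}\bigl[(\partial_\alpha+\partial_\beta)K\bigr](\alpha,\alpha+h)\,f(\alpha+h)\,dh+\boldsymbol{K}_\varepsilon f_\alpha .
\]
Changing variables back and letting $\varepsilon\to0$ gives $[\partial_\alpha,\boldsymbol{K}]f=\int(\partial_\alpha+\partial_\beta)K\,f\,d\beta$.

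\textbf{Scaling field $L_0$.} The $\tfrac12t\partial_t$ part of $L_0$ is handled by the first formula, so it remains to treat $\alpha\partial_\alpha$. Multiplying the spatial identity by $\alpha$ gives
\[
\alpha\partial_\alpha\boldsymbol{K}_\varepsilon f=\int_{|\alpha-\beta|>\varepsilon}\alpha(\partial_\alpha+\partial_\beta)K\,f\,d\beta+\int_{|\alpha-\beta|>\varepsilon}K\,\alpha f_\beta\,d\beta .
\]
In the last integral we write $\alpha f_\beta=\beta f_\beta+(\alpha-\beta)f_\beta$ and integrate the $(\alpha-\beta)f_\beta$ term by parts in $\beta$ on $(-\infty,\alpha-\varepsilon]\cup[\alpha+\varepsilon,\infty)$. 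Since $\partial_\beta\bigl[(\alpha-\beta)K\bigr]=-K+(\alpha-\beta)\partial_\beta K$, this produces
\[
\int_{|\alpha-\beta|>\varepsilon}K f\,d\beta-\int_{|\alpha-\beta|>\varepsilon}(\alpha-\beta)\partial_\beta K\,f\,d\beta+\text{boundary terms}.
\]
Collecting the bulk terms gives $\alpha\partial_\alpha K+\alpha\partial_\beta K-(\alpha-\beta)\partial_\beta K=(\alpha\partial_\alpha+\beta\partial_\beta)K$, together with the extra term $\boldsymbol{K}_\varepsilon f$ and the term $\boldsymbol{K}_\varepsilon(\beta f_\beta)$. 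Adding $\tfrac12t$ times the time formula then yields the stated expression for $[L_0,\boldsymbol{K}]f$.

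\textbf{Main obstacle: the boundary terms.} The step that needs care is showing that the boundary terms vanish.

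At $\beta\to\pm\infty$ they vanish because $(\alpha-\beta)K$ is bounded and $f\to0$.

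At the cut $\beta=\alpha\pm\varepsilon$ they equal
\[
-\bigl[(\alpha-\beta)Kf\bigr]_{\beta=\alpha+\varepsilon}+\bigl[(\alpha-\beta)Kf\bigr]_{\beta=\alpha-\varepsilon}.
\]
Two cases arise from the hypothesis.
\begin{itemize}
\item If $(\alpha-\beta)K$ is continuous up to the diagonal with diagonal value $\ell(\alpha)$, both brackets tend to $\ell(\alpha)f(\alpha)$, so their difference tends to $0$.
\item If instead $K$ itself is bounded, each bracket is $O(\varepsilon)$ and vanishes in the limit.
\end{itemize}
The same continuity and boundedness of $(\alpha-\beta)K$ ensures that the truncated integrals of the differentiated kernels converge as $\varepsilon\to0$. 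I therefore expect this $\varepsilon$-bookkeeping to be the only delicate point; the rest is exact algebra.
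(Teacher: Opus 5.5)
The paper does not prove this lemma. It states it in the appendix and takes it from Wu's work (\cite{Wu2009}, see also \cite{Totz2012}), so your symmetric-truncation argument is a self-contained replacement rather than a variant of an internal proof. Its main merit is that it shows exactly where each hypothesis is used. The $\partial_t$ and $\partial_\alpha$ identities need only two facts: the region $\{|\alpha-\beta|>\varepsilon\}$ is independent of $t$, and it is invariant under the substitution $\beta=\alpha+h$. The assumption that $K$ or $(\alpha-\beta)K$ is continuous and bounded is precisely what kills the cut terms $[(\alpha-\beta)Kf]_{\beta=\alpha-\varepsilon}-[(\alpha-\beta)Kf]_{\beta=\alpha+\varepsilon}$ produced by the integration by parts in the $L_0$ identity. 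Your algebra there is right: $\boldsymbol{K}_\varepsilon(\beta f_\beta)$ cancels against $\boldsymbol{K}(L_0f)$, and the extra $\boldsymbol{K}f$ comes out correctly.

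Three points should be tightened.

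\begin{enumerate}
\item Letting $\varepsilon\to0$ after differentiating $\boldsymbol{K}_\varepsilon f$ interchanges $\partial_t$ (or $\partial_\alpha$) with the limit. Pointwise convergence of the truncated derivative integrals does not justify this. You need locally uniform convergence in $t$ (resp.\ $\alpha$). Alternatively, integrate the truncated identity over $[t_1,t_2]$, pass to the limit by dominated convergence, and then differentiate.
\item Your final sentence overclaims. $K$ is only assumed smooth off $\Delta$, so continuity of $(\alpha-\beta)K$ gives no control of $\partial_tK$ or $(\partial_\alpha+\partial_\beta)K$ near the diagonal. Existence of the principal values on the right-hand sides must therefore remain an assumption, as you said at the start, or be checked for the kernels at hand. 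For the Cauchy kernel it holds in a stronger form: $\partial_t\frac{\zeta_\beta}{\zeta(\alpha)-\zeta(\beta)}=-\partial_\beta\frac{\zeta_t(\alpha)-\zeta_t(\beta)}{\zeta(\alpha)-\zeta(\beta)}$ is bounded near the diagonal. One more integration by parts then gives the paper's identity $[\partial_t,\mathfrak{H}]f=[\zeta_t,\mathfrak{H}]\frac{f_\alpha}{\zeta_\alpha}$.
\item Your statement that the cut terms at infinity vanish because $(\alpha-\beta)K$ is bounded does not cover the case where only $K$ is bounded. There it follows instead from your standing absolute-convergence assumption.
\end{enumerate}

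Finally, for a Cauchy-type kernel, $\boldsymbol{K}f_\alpha$ need not exist for merely $C^1$ data $f$. That is an imprecision in the statement of the lemma, not in your argument.
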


\end{document}